\def\imod#1{\allowbreak\mkern10mu({\operator@font mod}\,\,#1)}
\newtheorem{definition2}{Definition}[section]
\newtheorem{observation}[definition2]{Observation}
\theoremstyle{definition}
\newtheorem{defi}[definition2]{Definition}
\newtheorem{ques}[definition2]{Question}
\newtheorem{rema}[definition2]{Remark}
\theoremstyle{plain}
\newtheorem{theo}[definition2]{Theorem}
\newtheorem{prop}[definition2]{Proposition}
\newtheorem{lemm}[definition2]{Lemma}
\newtheorem{coro}[definition2]{Corollary}
\newtheorem{maintheo}[definition2]{Main Theorem}
\newtheorem{exam}[definition2]{Example}
\newtheorem{clai}[definition2]{Claim}
\newtheorem*{clai*}{Claim}
\DeclareMathOperator{\forces}{\Vdash}
\DeclareMathOperator{\PP}{\mathbb P}
\newcommand{\dom}{\mathrm{dom}}
\newcommand{\ideal}{\mathcal}
\newcommand{\F}{\ideal{F}}
\newcommand{\restr}{\upharpoonright}
\newcommand{\zero}{\mathbf 0}
\newcommand{\cf}{\mathrm{cf}}
\newcommand{\with}{\mid}
\DeclareMathSymbol{\shortminus}{\mathbin}{AMSa}{"39}
\newcommand{\om}{\omega}
\newcommand{\powerset}{\mathcal{P}}
\newcommand{\subs}{\subseteq}
\newcommand{\name}[1]{\dot{#1}}
\newcommand{\incomp}{\perp}
\newcommand{\comp}{\not\perp}
\newcommand{\QQ}{\mathbb{Q}}
\newcommand{\RR}{\mathbb{R}}
\newcommand{\CC}{\mathbb{C}}
\newcommand{\h}{\mathfrak{h}}
\newcommand{\tfrak}{\mathfrak{t}}
\newcommand{\bfrak}{\mathfrak{b}}
\newcommand{\afrak}{\mathfrak{a}}
\newcommand{\cc}{\mathfrak{c}}
\newcommand{\fin}{\text{fin}}
\newcommand{\concat}{^{\smallfrown}}
\newcommand{\fresh}[1]{\textsf{FRESH}}
\newcommand{\com}[1]{\textsf{COM}}
\newcommand{\weakest}{\mathds{1}}
\newcommand{\rang}{\mathrm{rng}}
\newcommand{\ord}{\mathrm{Ord}}
\newcommand{\FKWW}{\mathbb{Q}}
\newcommand{\init}{\lhd}
\newcommand{\initeq}{\unlhd}
\newcommand{\Resch}{{\textsc{\textsf{red}}}}
\newcommand{\OldResch}[1]{{p}^{\textsc{\textsf{red}}}}
\newcommand{\Osvaldo}[1]{({#1}^{<\omega})^+}
\newcommand{\B}{\mathcal{B}}
\newcommand{\Mathias}[1]{\mathbb{M}(#1)}
\newcommand{\doppelrestr}{\upharpoonright\!\upharpoonright}
\newcommand{\compl}{\lessdot}
\newcommand{\mtx}{\mathbb{Q}}
\newcommand{\tomspec}{spec(\tfrak)}
\newcommand{\homspec}{spec(\h)}
\newcommand{\Pom}{\powerset(\omega)/\fin}
\newcommand{\aomspec}{spec(\afrak)}
\newcommand{\coom}[1]{\textsf{COM}^{cof}}
\newcommand{\coombase}[1]{\textsf{COM}^{base+cof}}
\newcommand{\combase}[1]{\textsf{COM}^{base}}
\newcommand{\precc}{\mu}
\newcommand{\newalpha}{i}
\newcommand{\newbeta}{j}
\newcommand{\lala}{\lambda^{<\lambda}}
\newcommand{\gaga}{\gamma^{<\gamma}}
\newcommand{\below}[1]{\prescript{<#1}{}}
\newcommand{\belowt}[2]{\prescript{<#1+#2}{}}
\newcommand{\newi}{\nu}
\newcommand{\newj}{{\bar{\nu}}}
\newcommand{\untenbeta}{\beta}
\newcommand{\Fsum}{\mathop{\bigoplus}}
\newcommand{\fsum}{\mathop{\oplus}}
\newcommand{\FBa}{\mathfrak{F}}
\newcommand{\neu}{\textsf{new}}
\newcommand{\alt}{\textsf{old}}
\newcommand{\leftup}{\textrm{left-up}}
\newcommand{\reg}{{Reg}}
\newcommand{\strcom}[1]{||_\lambda^{strong}}
\newcommand{\omikron}{{\alpha}}
\newcommand{\zeromikron}{{0}}
\newcommand{\seqlangle}{\langle}
\newcommand{\seqrangle}{\rangle}
\newcommand{\genlangle}{\boldsymbol{\langle}}
\newcommand{\genrangle}{\boldsymbol{\rangle}}
\begin{document}

\title{On heights of distributivity matrices}

\author{Vera Fischer}
\address{Institute of Mathematics, University of Vienna, Kolingasse 14--16, 1090 Wien, Austria}
\email{vera.fischer@univie.ac.at}

\author{Marlene Koelbing}
\address{Institute of Mathematics, University of Vienna, Kolingasse 14--16, 1090 Wien, Austria}
\email{marlenekoelbing@web.de}

\author{Wolfgang Wohofsky}
\address{Institute of Mathematics, University of Vienna, Kolingasse 14--16, 1090 Wien, Austria}
\email{wolfgang.wohofsky@gmx.at}

\thanks{\emph{Acknowledgments.} The authors would like to thank the Austrian Science Fund (FWF) for the generous support through grants Y1012, I4039 (Fischer, Wohofsky) and P28420 (Koelbing). The second author is also grateful for the support by the  \"OAW Doc fellowship.}

\subjclass[2000]{03E35, 03E17}

\keywords{cardinal characteristics; distributivity matrices; forcing; generalized Baire spaces}

\maketitle

\begin{abstract}
We construct a model in which there exists a distributivity matrix of
regular height~$\lambda$ larger than~$\h$;
both $\lambda = \cc$ and $\lambda < \cc$
are possible. A distributivity matrix is a refining system of mad families without common refinement. Of particular interest in our proof is the preservation of $\B$-Canjarness.
\end{abstract}

\section{Introduction}

The Boolean algebra~$\Pom$ has attracted a lot of attention in the last decades.
One of the characteristics of
a partial order is its distributivity.
The distributivity of $\Pom$ is the well-known cardinal characteristic $\h$ 
(the 
\emph{distributivity number}) 
which has been
defined in~\cite{basematrix}, where the famous base matrix theorem is proved, 
and
 is tightly connected to many other structural properties of $\Pom$ involving towers and mad families.

Recall that $\h$ 
is the smallest number of mad families such that there is no single mad family refining\footnote{For basic definitions, see Section~\ref{sec:preliminaries}.} all of them 
(or, equivalently, the least cardinal on which $\Pom$ adds a new function into the ordinals). 
It is easy to see and well-known that such a system of $\h$ many mad families can be chosen to be refining (i.e., having property~(2) from Definition~\ref{defi:com}).

In this paper, we consider distributivity matrices of arbitrary height:

\begin{defi}\label{defi:com}
We say that
$\mathcal{A}=\{A_\xi \with \xi<\lambda\}$
is a \emph{distributivity matrix of height~$\lambda$} 
if
\begin{enumerate}

\item[(1)] $A_\xi$ is a mad family, for each $\xi < \lambda$,

\item[(2)] $A_\eta$ refines $A_\xi$ whenever $\eta \geq \xi$, and

\item[(3)] there is no common refinement, i.e., there is no 
mad family~$B$ 
which refines every $A_\xi$.

\end{enumerate}
\end{defi}

It is
straightforward to check
that the existence of
distributivity matrices
is only a matter of cofinality:
if $\delta$ is a singular cardinal with
$\cf(\delta) = \lambda$,
then there exists a distributivity matrix of height~$\delta$ 
if and only if
there exists one of height~$\lambda$.
Therefore, we are only interested in distributivity matrices of regular height.

Note that 
$\h$ is the minimal height of a distributivity matrix. 
On the other hand, 
it is easy to check that there can never be 
a distributivity matrix of regular height larger than~$\cc$. 
Distributivity matrices and similar objects
have been studied e.g.\ in \cite{basematrix}, 
 \cite{Dordal_model}, 
 \cite{Dow_tree_pi_bases}, 
 \cite{JossenSpinas}, 
\cite{MR2899694}, 
\cite{basematrix_general}, 
and 
\cite{SpinasWohofsky}.
However, to the best of our knowledge, all considered distributivity matrices are of height $\h$. The natural question arises whether a distributivity matrix is necessarily of height~$\h$. 
The main result of this paper
shows that it is consistent that there exists a distributivity matrix of
regular
height~$\lambda$
larger than~$\h$ 
(so, in particular, the existence of distributivity matrices of two different regular heights is consistent):

\begin{maintheo}\label{maintheo:maintheo_general}
Let $V_0$ be a model of ZFC
which
satisfies
GCH.
In~$V_0$, let
$\om_1 < \lambda \leq \precc$
be cardinals such that $\lambda$ is regular and
$\cf(\precc) > \om$.
Then there is a c.c.c.\ (and hence
cofinality
preserving)
extension~$W$ of $V_0$ 
in which 
there exists a distributivity matrix of height $\lambda$, and $\om_1 = \h = \bfrak <\cc = \precc$.
\end{maintheo}

We construct our model~$W$ as follows.
We start with~$V_0$
and first go to the
Cohen
extension
in which
$\cc = \precc$.
In this model~$V$, we define a forcing iteration
(see Section~\ref{subsec:forcing_definition})
which adds a distributivity matrix of height~$\lambda$. 
Building on ideas from~\cite{Hechler}, we use c.c.c.\ iterands which approximate the distributivity matrix by finite
conditions;
we have to use an iteration, because after a single step of the forcing, new reals are added, which prevents the
generically added
almost disjoint families
from being maximal.
We show that the generic object
is actually a distributivity matrix:
in particular,
the branches are towers
(see Section~\ref{subsection:branches}) and
the levels are mad families
(see Section~\ref{subsection:rows});
for that, 
we 
use 
complete subforcings 
(which are based on 
the notion of eligible set;
see Section~\ref{subsec:Q^c_complete})
to capture
new subsets of~$\om$
(see 
Section~\ref{subsection:auxiliary}).

To show that $\om_1=\h=\bfrak$, 
we show that $\bfrak=\omega_1$, and use the fact that $\h\leq \bfrak$ holds in ZFC.
In fact, we
show that the ground model reals~$\B = \om^\om \cap V_0$ remain unbounded.
For that, we
represent our iteration
as a finer iteration of Mathias forcings with
respect to filters (see Section~\ref{subsec:layering_and_F_beta}).
We use a characterization from~\cite{Osvaldo_B_Canjar} to show that
these filters
are $\B$-Canjar (see Section~\ref{sec:B_Canjar_filters}
and
Section~\ref{subsec:The_filters_are_B_Canjar}), 
i.e., that
the corresponding Mathias forcings
preserve the unboundedness of~$\mathcal{B}$. 
In~\cite{our_TOW_MAD},
the same is done
for
Hechler's original forcings~\cite{Hechler}
to add a tower or to add a mad family.

More precisely, 
we can use a genericity argument
to show that the filters are $\B$-Canjar at the stage where they appear, but we need the $\B$-Canjarness in
later stages
of the iteration.
Since the notion of $\B$-Canjarness of a filter is not absolute (see\footnote{We thank  Osvaldo Guzm\'{a}n~\cite{Osvaldo_personal} for providing an example of non-absoluteness.} Example~\ref{exam:not_absolute}), we have to develop
a method
how to guarantee that
the $\B$-Canjarness of a filter is
not destroyed by Mathias forcings with respect to certain other filters. 
One 
basic ingredient is defining 
a ``sum'' $\F_0 \fsum \F_1$ of two (or finitely many) filters $\F_0$ and $\F_1$ 
for which the following holds true 
(see Lemma~\ref{lemm:sum_preserves_Canjar}): 

\begin{prop}
If $\B \subseteq \omega^\omega$ is unbounded and $\F_0 \fsum \F_1$ is $\B$-Canjar, then
Mathias forcing with respect to~$\F_1$ 
forces ``$\F_0$ is $\B$-Canjar''.
\end{prop}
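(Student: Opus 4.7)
The plan is to derive the proposition from a factorization of Mathias forcings: I expect $\Math(\F_0 \fsum \F_1)$ to be naturally equivalent (via a dense embedding) to the product $\Math(\F_0) \times \Math(\F_1)$, and hence to the two-step iteration $\Math(\F_1) \ast \Math(\F_0)$. The design of $\F_0 \fsum \F_1$, which I expect to sit on a fixed partition of~$\om$ into two disjoint marked copies (one carrying $\F_0$, the other $\F_1$), should force every condition $(s,A) \in \Math(\F_0 \fsum \F_1)$ to split canonically into a $\Math(\F_1)$-part and a $\Math(\F_0)$-part in an order-preserving way, with the product ordering matching the original one.

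Granted that factorization, the argument becomes very short. By hypothesis $\F_0 \fsum \F_1$ is $\B$-Canjar, which, by the characterization used throughout Section~\ref{sec:B_Canjar_filters}, means precisely that $\Math(\F_0 \fsum \F_1)$ preserves the unboundedness of~$\B$. The dense embedding transports this preservation to $\Math(\F_1) \ast \Math(\F_0)$, and the standard unfolding of a two-step iteration then gives that $\Math(\F_1)$ forces ``$\Math(\F_0)$ preserves the unboundedness of~$\B$''. Combined with the density of $\Math(\F_0)^V$ in $\Math(\F_0)^{V[G_1]}$ (since every $A \in \F_0^{V[G_1]}$ contains some $A' \in \F_0 \cap V$), this is exactly ``$\F_0$ is $\B$-Canjar'' in $V^{\Math(\F_1)}$, as desired.

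The main obstacle will be verifying the factorization cleanly. I need to check both that restricting an $\F_0 \fsum \F_1$-condition to its $\F_0$-coordinate produces a genuine element of $\F_0$ in~$V$ (rather than merely of the filter it generates after forcing with $\Math(\F_1)$), and conversely that a $\Math(\F_1)$-generic followed by an $\Math(\F_0)^{V[G_1]}$-generic can be recombined into a single $\Math(\F_0 \fsum \F_1)$-generic over~$V$. The disjointness of the two copies of $\om$ carrying $\F_0$ and $\F_1$ is the key feature, since it guarantees that the $\Math(\F_1)$-generic leaves the $\F_0$-component untouched. This is also what lets the argument side-step the non-absoluteness phenomenon flagged by Example~\ref{exam:not_absolute}: the Canjar witnesses for $\F_0$ in $V^{\Math(\F_1)}$ do not need to be constructed in the extension, but are pulled from witnesses for the sum filter manufactured already in~$V$ and transported through the canonical decomposition.
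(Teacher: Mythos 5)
Your proposal is correct and follows essentially the same route as the paper: the factorization $\Mathias{\F_0\fsum\F_1}\simeq\Mathias{\F_0}\times\Mathias{\F_1}$ is exactly Lemma~\ref{lemm:Mathias_product}, and your subsequent rewriting of the product as $\Mathias{\F_1}\ast\check{\Mathias{\F_0}}$ together with the density of the ground-model Mathias poset in $\Mathias{\genlangle\check{\F_0}\genrangle}$ is precisely the paper's proof of Lemma~\ref{lemm:basic_Canjar_equivalence}, which combined give the statement. The only cosmetic caveat is that the product--sum equivalence is an isomorphism of dense subsets (even-length stems on one side, equal-length stems on the other) rather than a splitting of every condition, but this changes nothing.
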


We conclude the paper with further discussion and some open questions. 
In Section~\ref{sec:Dordal_branches}, 
we consider  
the nature of 
maximal branches through distributivity matrices.   
There are two possibilities for a maximal branch: either it is cofinal 
or not.
Consistently, there are 
distributivity matrices of height~$\h$ without cofinal branches (this was shown in \cite{Dordal_model} and~\cite{Dow_tree_pi_bases}). 
In the model of Main Theorem~\ref{maintheo:maintheo_general}, all maximal branches of the (generic) distributivity matrix of height $\lambda > \h$ 
are cofinal. In the Cohen model, however, there are no distributivity matrices of this type of height larger than $\h$. 
In Section~\ref{sec:h_spectrum}, we 
discuss the notion of a distributivity spectrum.

\section{Preliminaries}\label{sec:preliminaries}

In this section, we 
recall 
some very basic and well-known definitions and facts. 
The reader should feel free to skip this section and only come back if necessary.

Let $[\omega]^\omega$ denote the collection of infinite subsets of~$\om$, 
and let 
$\subseteq^*$
denote 
the pre-order of 
almost-inclusion:
$b \subs^* a$ if $b \setminus a$ is finite.
We write $a =^* b$ if $a \subs^* b$ and $b \subs^* a$.
We say that $a$ and $b$ are \emph{almost disjoint} if $a \cap b$ is finite. 
Moreover,
we say that $A \subs [\om]^\om$ is an
\emph{almost disjoint family} (or \emph{ad family}) if
$a$ and $a'$ are almost disjoint whenever $a, a' \in A$ with $a \neq a'$. 
An almost disjoint family~$A$ is \emph{maximal} (called \emph{mad family})
if
for each
$b \in [\om]^\om$ there exists $a \in A$ such that $|b \cap a| = \aleph_0$ (i.e., if $A$ is a maximal antichain in $([\omega]^\omega, \subseteq^*)$). 
For two almost disjoint families $A$ and $B$, we say that 
$B$ \emph{refines}~$A$ if for each $b \in B$ there exists an~$a \in A$ with $b \subs^* a$. 
Let 
\[
\aomspec := \{ \mu \with \mu
\textrm{ is an infinite cardinal and there is
a
mad family of size } \mu \}
\]
be the \emph{mad spectrum on~$\omega$}, and let
$\afrak := \min(\aomspec)$
be the \emph{almost disjointness number}.
It is well-known and easy to see that there are always mad families of size $\cc$, i.e., $\cc\in \aomspec$. Indeed,
  by identifying $2^{<\omega}$ with $\omega$ and taking the set of branches through the tree $2^{<\omega}$, we get an almost disjoint family of size~$\cc$, which can be extended to a mad family (using the axiom of choice).

Recall from Definition~\ref{defi:com} that 
a distributivity matrix $\{A_\xi \with \xi<\lambda\}$ 
is a 
refining system of mad families without common refinement. 
Such a system 
can be viewed as a tree (which we think of growing downwards): for each $\xi < \lambda$, the elements of the mad family $A_\xi$
form the
level~$\xi$ of the tree, and for $b \in A_\eta$ and $a \in A_\xi$ with $\eta > \xi$, the element $b$ is below the element $a$ in the tree if and only if $b \subs^*a$.
Due to the refining structure of the distributivity matrix, 
each element of $A_\eta$ is below exactly one element of $A_\xi$. Note that this tree is
necessarily splitting\footnotemark{} at some limit levels:
this is because there
always
appear
$\subs^*$-decreasing sequences of limit length
which have no weakest lower bound,
and so no single element below such a sequence can be enough to get maximality of the next level.

\footnotetext{See also the discussion in Section~\ref{subsec:forcing_definition}
about the generic distributivity matrix of
Main Theorem~
\ref{maintheo:maintheo_general},
whose underlying tree is splitting everywhere.
}

We say that~$\seqlangle a_\xi \with \xi < \delta \seqrangle$ is a
\emph{branch through the
distributivity matrix~$\mathcal{A} = \{A_\xi \with \xi<\lambda\}$} if $a_\xi \in A_\xi$ for each $\xi < \delta$, and
$a_\eta \subs^* a_\xi$ for each $\xi \leq \eta < \delta$.
We say that the branch is \emph{maximal} if there is no branch through~$\mathcal{A}$
strictly extending it. 
As a matter of fact, a maximal branch 
through a distributivity matrix 
can be cofinal 
or not; 
for a discussion of different types of distributivity matrices (in particular such without cofinal branches), 
see Section~\ref{sec:Dordal_branches}.

We say that $b \in [\om]^\om$
\emph{intersects} a distributivity matrix~$\mathcal{A} = \{A_\xi \with \xi<\lambda\}$ if for each $\xi<\lambda$ there is an $a \in A_\xi$ with $b \subs^* a$.
Note that Definition~\ref{defi:com}(3) is equivalent
to
\begin{enumerate}

\item[(3')] $\{ b \in [\om]^\om \with b \textrm{ intersects } \mathcal{A} \}$
is not dense in~$([\omega]^\omega, \subseteq^*)$, i.e., 
there is 
an 
$\bar{a} \in [\om]^\om$ such that 
no $b \subs^* \bar{a}$ intersects~$\mathcal{A}$;

\end{enumerate}
in particular, (3') holds if there is \emph{no}~$b$ intersecting~$\mathcal{A}$.
If this is the case, we call~$\mathcal{A}$ \emph{normal}. 
In fact, a distributivity matrix can always be turned into a 
normal 
distributivity matrix of the same height 
(basically by ``restricting'' the matrix to a witness $\bar{a}$ for~(3')).

We say that a distributivity matrix~$\mathcal{A} = \{A_\xi \with \xi<\lambda\}$ is a 
\emph{base matrix} if $\bigcup_{\xi<\lambda} A_\xi$ is 
dense in~$([\omega]^\omega, \subseteq^*)$. 
It is straightforward to check that 
a base matrix is always normal.

For a sequence $\seqlangle a_\xi \with \xi < \delta \seqrangle\subs [\om]^\om$,
we say that $b \in [\om]^\om$ is a \emph{pseudo-intersection} of $\seqlangle a_\xi \with \xi < \delta \seqrangle$ if $b \subs^* a_\xi$ for each $\xi < \delta$.
We say that
$\seqlangle a_\xi \with \xi < \delta \seqrangle$
is a \emph{tower of length~$\delta$}
if
$a_\eta \subs^* a_\xi$ for
any $\eta > \xi$, and
it does not have an infinite pseudo-intersection.
Let
\[
\tomspec := \{ \delta \with \delta \textrm{ is regular and there is a tower of length } \delta \}
\]
be the \emph{tower spectrum},
and let
$\tfrak := \min(\tomspec)$
be the \emph{tower number}.
Note that whenever $\seqlangle a_\xi \with \xi < \delta \seqrangle$ is a tower, then there is a (sub)tower of length~$\cf(\delta)$. On the other hand, each tower of length~$\cf(\delta)$ can be expanded to one of length~$\delta$ (by repeating elements). Therefore
the restriction to regular cardinals in the definition of the tower spectrum makes sense.

For $f,g\in \omega^\omega$, we write $f\leq^*g$ if $f(n)\leq g(n)$ for all but finitely many $n\in \omega$.
We say that $\B\subseteq \omega^\omega$ is an \emph{unbounded family}, if there exists no $g\in \omega^\omega$ with $f\leq^* g$ for all  $f\in \B$.
The \emph{(un)bounding number~$\bfrak$} is the smallest size of an unbounded family in $\omega^\omega$.
The following inequalities between the cardinal characteristics are well-known and not too hard to prove 
(see, e.g., \cite{Blass_cardinal_characteristics} for more details):
\begin{equation}\label{eq:ZFC_inequ}
\om_1 \leq \tfrak \leq \h \leq \bfrak \leq \afrak \leq \cc.
\end{equation}

\section{Forcing a distributivity matrix}\label{section:4}

In this section, we 
start 
with the proof of our main result, i.e., Main Theorem~\ref{maintheo:maintheo_general}: 
we define a forcing 
(see Section~\ref{subsec:forcing_definition}), 
show 
basic properties of the forcing and of the generic object (see Sections~\ref{subsec:ccc_etc} and~\ref{subsec:generic_matrix}), 
and 
prove a crucial lemma about 
complete subforcings 
(see Section~\ref{subsec:Q^c_complete}). 
In Section~\ref{section:om_2_in_COM}, we will finish the proof that the generic object is indeed a distributivity matrix. 
Section~\ref{sec:B_Canjar_filters} and Section~\ref{section:h_b_om_1} are devoted to the remaining part of the proof of Main Theorem~\ref{maintheo:maintheo_general}, i.e., to showing 
that 
$\om_1 = \h = \bfrak$ in the final model.

\subsection{Definition of the forcing iteration}\label{subsec:forcing_definition}

We will define a forcing for adding a distributivity matrix. The definition has been 
motivated by the forcing for adding towers and mad families from Hechler's paper~\cite{Hechler}.
The presentation of our forcing 
will be 
somewhat different from the presentation of Hechler's forcings 
in~\cite{Hechler}. In~\cite{our_TOW_MAD}, we represent
these forcings in a form which is
analogous
to our
definition of $\QQ_\alpha$ 
below.

We proceed as follows.
In $V_0$, let
$\CC_\mu$ be the usual forcing for adding $\mu$ many Cohen reals, and let $V$ be the extension by~$\CC_\mu$.
In~$V$, we perform our main forcing iteration of length~$\lambda$ which is going to add a distributivity matrix of height~$\lambda$. 
The iteration is going to be a finite support iteration whose iterands have the countable chain condition (see Lemma~\ref{lemm:FKW_ccc_Knaster}) and are of size continuum; in particular, the size of the continuum stays the same during the whole iteration, and hence $\cc = \mu$ holds true in the final model (see Lemma~\ref{lemm:size_of_c}).

As
discussed in 
Section~\ref{sec:preliminaries},
a distributivity matrix can be viewed as
a tree,
where each node is equipped with an element of~$[\om]^\om$. 
In fact,
our generic distributivity matrix
$\{A_{\xi+1} \with \xi<\lambda\}$
will be based on the tree~$\lambda^{<\lambda}$:
 each node $\sigma \in \lambda^{<\lambda}$ of successor length
will carry
an infinite set~$a_\sigma \subs \om$ such that for each
$\xi < \lambda$,
\[
A_{\xi+1} = \{ a_\sigma \with \sigma \in \lambda^{\xi+1} \}
\]
is a mad family, and
$a_\sigma \subs^* a_\tau$ if
$\sigma$ extends $\tau$.
In particular, all maximal branches
of our distributivity matrix will be cofinal.

We write $\tau\unlhd \sigma$ if $\tau \subseteq \sigma$ (i.e., if $\sigma$ extends $\tau$);
we write
$\tau \lhd \sigma$ if $\tau \unlhd \sigma$ and $\tau \neq \sigma$. The length of $\sigma$ is denoted by~$|\sigma|$.
We
think of
the tree~$\lambda^{<\lambda}$
as
``growing downwards'',
i.e., we
say that $\sigma$ is below $\tau$ if
$\tau \unlhd \sigma$;
moreover,
we say that
$\sigma\concat \newbeta$ is
to the
left of $\sigma\concat \newalpha$ whenever $\newbeta<\newalpha$.

Note that
our mad families~$A_{\xi+1}$ are indexed by successor ordinals only, for the following reason.
Since there are
$\subs^*$-decreasing sequences of limit length
which
do not have weakest lower bounds, and the
mad family
on the level directly below such a sequence has to be ``maximal below the sequence''
(i.e., each pseudo-intersection of the sequence is compatible with some element of the mad family),
it is necessary
that the underlying tree
``splits''
at such limit levels.
However,
$\lambda^{<\lambda}$ does not split at limit levels,
so it is convenient to
equip only
nodes~$\sigma$ of successor length
with infinite sets~$a_\sigma$, and use
nodes~$\rho$ of limit length to talk about the branch
$\seqlangle a_\sigma \with \sigma \init \rho \seqrangle$.

Before giving the precise definition of our forcing iteration, let us describe the idea informally.
We
start with the tree $\lala$ in~$V$, and generically add a set $a_\sigma \subs\om$ for every $\sigma\in \lala$ (of successor length) in such a way
that
$a_\tau \supseteq^* a_\sigma$ if
$\tau \unlhd \sigma$,
and $a_\sigma\cap a_\tau =^* \emptyset$ if
$|\sigma| = |\tau|$.
This results in a refining system of 
almost disjoint families. 
But these antichains are not maximal, which can be seen as follows.
The forcing adds new reals (any
$a_\tau$
is a new infinite subsets of $\omega$), so there are new branches
through
$\lambda^{<\omega}$. Let $\rho$ be such a new branch of length $\omega$; then $\seqlangle a_{\rho\restr n} \with n<\om\seqrangle$ is a $\subseteq^*$-decreasing sequence of length $\omega$ (in the extension), so (since $\omega<\tfrak$) it has an infinite pseudo-intersection~$b$. It is easy to see that $b$ is incompatible
with
all $a_\sigma$ with
$\sigma\in \lambda^{\omega+1}\cap V$,
 so the antichain
$\{ a_\sigma \with |\sigma|= \om+1\}$
is not maximal.

To solve this problem, we use a finite support iteration
\[
\{ \PP_\alpha, \name{\mtx}_\alpha \with \alpha < \lambda \}
\]
of length~$\lambda$.
At each step $\mtx_\alpha$ of the iteration,
a set $a_\sigma$ is added for every new node~$\sigma$ (of successor length)
of the tree $\lala$.
In the definition below,
we will use $T_\alpha$ to denote these new nodes
(the
nodes $\sigma$ for which no sets $a_\sigma$ have been added yet),
and
we will use  $T'_\alpha$ to denote the old nodes
(the nodes $\sigma$ for which there has already been added a set $a_\sigma$ at an earlier stage $\untenbeta < \alpha$ of the iteration).
The sets~$a_\sigma$ with $\sigma \in T'_\alpha$
will be used in the definition of the iterand~$\mtx_\alpha$.
In the definition of the very first forcing~$\mtx_0$ of the iteration, the set $T_0$ will be the collection of all nodes in $\lala$ 
(of successor length), 
and $T'_0$ will be empty (since no sets $a_\sigma$ have been defined yet).
After $\lambda$ many steps, we are finished, because no new nodes appear at stage~$\lambda$
(see Lemma~\ref{lemm:sigma_appears_somewhere}).

As usual, we 
abuse notation and 
identify $a_\sigma \subs \om$ with its characteristic function in~$2^\om$.

\begin{defi}\label{defi:main_definition}
Let $\alpha < \lambda$, and
assume that $\PP_\alpha$ has already been defined. For every $\untenbeta \leq \alpha$, let $G_\untenbeta$ be generic for~$\PP_\untenbeta$. We work in $V[G_\alpha]$ to define 
our iterand~$\mtx_\alpha$.
First
(letting
$\mathrm{succ}$
denote the sequences of ordinals of
successor length),
let
$$
T'_\alpha := \bigcup_{\untenbeta < \alpha} (\lala \cap \mathrm{succ})^{V[G_{\untenbeta}]},
$$
and let
$$
T_\alpha:= (\lala \cap \mathrm{succ})^{V[G_{\alpha}]} \setminus T'_\alpha.
$$

Note that for each $\sigma \in T'_\alpha$, there exists a minimal $\untenbeta < \alpha$ such that $\sigma \in V[G_\untenbeta]$, and hence, by induction, $a_\sigma$ has been added by $\mtx_\untenbeta$.
For each $\sigma \in T_\alpha$, the set $a_\sigma$ is not defined yet, and will be added by $\mtx_\alpha$
(see below, at the end of the definition).

Now $\QQ_\alpha$ is defined 
as follows:
$p \in \mtx_\alpha$ if
$p$ is a function
with finite domain,
$\dom(p) \subs T_\alpha$,
and for each $\sigma \in \dom(p)$,
we have
$$p(\sigma) = (s^p_\sigma, f^p_\sigma, h^p_\sigma) =
(s_\sigma, f_\sigma, h_\sigma),$$
where\footnote{The paragraph
after the definition
gives
a short
intuitive explanation
of the roles of $s_\sigma$, $f_\sigma$,
and~$h_\sigma$.}
\begin{enumerate}

\item $s_\sigma \in 2^{<\om}$,

 \item for each $\tau \in \dom(p)$ with $\tau \init \sigma$, $|s_\tau| \geq |s_\sigma|$,

 \item $\dom(f_\sigma) \subs (\dom(p)\cup T'_\alpha) \cap \{ \tau \in
T_\alpha \cup T'_\alpha \with \tau \init \sigma \}$, finite,\footnote{Note that in~(6), it automatically follows that $\dom(h_\sigma)$ is finite because $\dom(h_\sigma) \subs \dom(p)$, but not here
because $\dom(f_\sigma) \subs \dom(p)\cup T'_\alpha$.}

 \item $f_\sigma{:}\ \dom(f_\sigma) \rightarrow \om$,

 \item whenever $\tau \in \dom(f_\sigma)\cap
T_\alpha$, and\footnote{By~(2), here it is actually sufficient to require $n \in \dom(s_\sigma)$.} $n \in \dom(s_\tau) \cap \dom(s_\sigma)$ 
with 
$n \geq f_\sigma(\tau)$, we have
 $$s_\tau(n) = 0 \rightarrow s_\sigma(n) = 0,$$
 and whenever $\tau \in \dom(f_\sigma)\cap T'_\alpha$ and 
 $n \in \dom(s_\sigma)$ 
 with 
 $n\geq f_\sigma(\tau)$,  we
 have 
 $$a_\tau(n) = 0 \rightarrow s_\sigma(n) = 0,$$

 \item
 $\dom(h_\sigma) \subs \dom(p) \cap \{ \rho \concat \newbeta \with \newbeta < \newalpha \}$
(where
$\rho \in \lala$ and $\newalpha \in \lambda$ such that $\sigma = \rho \concat \newalpha$),

  \item $h_\sigma{:}\ \dom(h_\sigma) \rightarrow \om$,

 \item whenever $\tau \in \dom(h_\sigma)$, and 
 $n \in \dom(s_\tau) \cap \dom(s_\sigma)$ 
 with 
 $n \geq h_\sigma(\tau)$, we have
  $$s_\tau(n) = 0 \lor s_\sigma(n) = 0.$$

\end{enumerate}

The order
on $\mtx_\alpha$ is defined as follows:
$q \leq p$ (``$q$ is stronger than $p$'') if
\begin{enumerate}
 \item[(i)] $\dom(p) \subs \dom(q)$,

 \item[(ii)] and for each $\sigma \in \dom(p)$, we have
 \begin{enumerate}
 \item $s^p_\sigma \initeq s^q_\sigma$,

 \item $\dom(f^p_\sigma) \subs \dom(f^q_\sigma)$ and
 $f^p_\sigma(\tau) \geq f^q_\sigma(\tau)$
 for each $\tau\in \dom(f^p_\sigma)$,

 \item $\dom(h^p_\sigma) \subs \dom(h^q_\sigma)$ and
 $h^p_\sigma(\tau)\geq h^q_\sigma(\tau)$
 for each $\tau\in \dom(h^p_\sigma)$.

 \end{enumerate}

 \end{enumerate}

Given a generic filter $G$ for~$\QQ_\alpha$, we define, for each $\sigma \in
T_\alpha$,
$$a_\sigma := \bigcup \{ s^p_\sigma \with  p \in G \land \sigma \in \dom(p) \}.$$

This completes the definition of the forcing.
\end{defi}

Let us describe the role of the parts of a condition:
$s_\sigma$ is a finite approximation of the set $a_\sigma$
assigned to $\sigma$,
whereas the functions $f_\sigma$ and $h_\sigma$ are promises for guaranteeing
that the branches through the
 generic matrix are $\subs^*$-decreasing and the levels are almost disjoint families, respectively.
  More precisely, $f_\sigma$
  promises
  that $a_\sigma\setminus f_\sigma(\tau) \subseteq a_\tau$ for
  each $\tau\in \dom(f_\sigma)$ 
   and $h_\sigma$
  promises that
$a_\tau \cap a_\sigma \subseteq h_\sigma(\tau)$
for each $\tau\in \dom(h_\sigma)$ 
(see Lemma~\ref{lemm:forcing_almost_included_disjoint}).

\begin{rema}\label{rema:not_sep}
Note that $\QQ_\alpha$
is not separative. As an example, we can take $p$
and $q$ as follows: $\dom(p)=\dom(q)=\{\sigma,\tau\}$ (where $\sigma$ is to the left of $\tau$ within the same block), $p(\tau)=q(\tau)=(\seqlangle 1\seqrangle, \emptyset, h)$ where $h(\sigma)=0$ and $p(\sigma)=(\seqlangle \seqrangle, \emptyset, \emptyset)$ and $q(\sigma)=(\seqlangle 0 \seqrangle, \emptyset, \emptyset)$. It is easy to see  that $p\nleq q$, but $p\leq^*q$, i.e., any
condition stronger than~$p$
is compatible with~$q$.
Therefore, we later need to provide
certain iteration lemmas for the general case of non-separative forcings (see 
Lemma~\ref{lemm:general_iteration_reduction}). 
\end{rema}

\begin{rema}\label{rema:more_general}
Let us remark that it is possible to derive a bit more from the proof of Main Theorem~\ref{maintheo:maintheo_general} than what is stated in the theorem.
Our 
forcing construction
is based on the tree~$\lala$ (see Definition~\ref{defi:main_definition}) and therefore
results in a specific kind of
distributivity matrix of height~$\lambda$: first, all its maximal  branches are cofinal, and second, the underlying tree
has
$\lambda$-splitting, i.e., each node has exactly $\lambda$ many immediate successors. From the latter property, it immediately
follows that $\lambda \in \aomspec$ (in particular, $\afrak \leq \lambda$).

We can modify
the construction
(by changing the underlying tree) to obtain
different kinds of
distributivity matrices of height~$\lambda$.
In fact, the following generalization of
Main Theorem~\ref{maintheo:maintheo_general} holds true:
if
$\omega_1\leq \lambda\leq \cf(\theta)$ and $\theta\leq \precc$
with $\lambda$ regular and
$\cf(\precc) > \om$,
then (using $\theta^{<\lambda}$ as the underlying tree)
there is an extension such that
$\omega_1=\h=\bfrak$ and $\precc=\cc$, and there exists a distributivity matrix of height~$\lambda$ with $\theta$-splitting
(hence, in particular, $\theta \in \aomspec$).
The reason why we have to require $\lambda\leq \cf(\theta)$ is Lemma~\ref{lemm:finding_gamma_fresh} 
(see Remark~\ref{rema:why_cof_theta_large}). 
It would even be possible to have different splitting at different nodes, provided that all the
splitting
sizes
have cofinality at least~$\lambda$.
This way, we can get more values into $\aomspec$ (similar as in Hechler's paper \cite{Hechler},
where he constructs a model in which all uncountable regular
cardinals up to $\cc$ are in $\aomspec$).

Note that even $\lambda = \om_1$ is possible in our forcing construction. 
It is true that it does
not yield a 
distributivity matrix of regular height larger than~$\h$ (because $\h = \om_1$ holds true in our model), 
but
we
can obtain distributivity matrices of height~$\om_1$ with
additional features (e.g.,
by
choosing 
$\theta = \lambda = \om_1$, resulting in a matrix with $\om_1$-splitting). Observe that
it is always possible to turn a distributivity matrix with $\theta$-splitting into a distributivity matrix with $\cc$-splitting (of the same height), by just taking every $\om$th level (and deleting all other levels).
It is not clear
whether it is possible to do it the other way round, i.e.,
to get a distributivity matrix with
$\theta$-splitting (for $\theta < \cc$) from a
distributivity matrix with $\cc$-splitting (even if $\theta$ happens to be in~$\aomspec$). Therefore, we decided to state and prove Main Theorem~\ref{maintheo:maintheo_general} for
$\theta = \lambda$ (i.e., small splitting),
and not for $\theta = \cc$.
\end{rema}

As a matter of fact,
the Cohen model satisfies $\aomspec=\{\omega_1,\cc\}$ (see, e.g., \cite[Proposition~3.1]{Brendle_mad}).
Thus,
if
$\omega_1 < \theta < \cc$,
there are no mad families of size $\theta$,
and hence no distributivity matrix with $\theta$-splitting in the Cohen model.
If we choose, e.g., $\lambda=\omega_1$, $\theta=\omega_2$ and $\mu=\omega_3$ in the generalization of our main theorem described in the above remark, the generic matrix
cannot exist in the Cohen model
with
$\cc=\omega_3$.
On the other hand,
our forcing construction with
$\theta=\lambda=\omega_1$
and
$\omega_1 < \mu$
actually results in the Cohen model
with $\cc = \mu$:  
this can be seen by representing the iteration as an iteration of  Mathias forcings with respect to filters, as described in Section~\ref{subsec:layering_and_F_beta}; 
since 
$\theta=\lambda=\omega_1$, all the filters are countably generated, therefore the respective Mathias forcings are forcing equivalent to Cohen forcing.
Therefore, 
we can in particular derive the following from
our
proof of Main Theorem~\ref{maintheo:maintheo_general}:

\begin{observation}
Let $\mu>\omega_1$.
Then,
in the Cohen model
with $\cc = \mu$ (i.e., in the extension of a GCH model by $\CC_\mu$),
there exists a distributivity matrix of height~$\omega_1$ which is 
$\omega_1$-splitting 
everywhere.  
\end{observation}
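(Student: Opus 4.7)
The plan is to specialize the construction of Main Theorem~\ref{maintheo:maintheo_general}, as generalized in Remark~\ref{rema:more_general}, with the parameters $\theta = \lambda = \omega_1$ and continuum size~$\mu$. First I would start in a GCH ground model~$V_0$ and pass to~$V$, the $\CC_\mu$-extension, in which $\cc = \mu$. In~$V$, I would then perform the finite support iteration from Definition~\ref{defi:main_definition} of length $\omega_1$, using the tree $\omega_1^{<\omega_1}$ as the underlying tree. By the version of Main Theorem described in Remark~\ref{rema:more_general}, the resulting model~$W$ contains a distributivity matrix of height $\omega_1$ which is $\omega_1$-splitting at every node, and moreover $\cc$ is still equal to $\mu$ in~$W$.

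The remaining point is to identify $W$ with the Cohen model with $\cc = \mu$. For this I would rely on the alternative presentation of the iteration as an iteration of Mathias forcings with respect to filters~$\F_\beta$, developed in Section~\ref{subsec:layering_and_F_beta}. The key observation is that for $\theta = \lambda = \omega_1$, every filter~$\F_\beta$ arising in this decomposition is countably generated: at every stage, the promises $f_\sigma$ and $h_\sigma$ in a condition refer only to countably many nodes (each $\sigma \in \omega_1^{<\omega_1}$ has countable length, and each immediate-successor set $\{\rho \concat \newbeta : \newbeta < \newalpha\}$ appearing in clause~(6) is countable), so the filter that records the cumulative demands on the generic set has a countable base.

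Given this, I would invoke the standard fact that Mathias forcing with respect to a countably generated filter on~$\omega$ is forcing-equivalent to Cohen forcing: by diagonalisation, a countable descending base admits an infinite pseudo-intersection~$b$, and $\Math(\F)$ below $b$ is isomorphic to the finite conditions adding a generic subset of~$b$, i.e., Cohen forcing. Hence the finite support iteration of length~$\omega_1$ over~$V$ is equivalent to the finite support product $\CC_{\omega_1}^V$. Composing with the initial extension by $\CC_\mu$ over~$V_0$ and using the standard isomorphism $\CC_\mu \cong \CC_\mu \times \CC_{\omega_1}$ (valid whenever $\mu \geq \omega_1$), we get $W = V_0[G]$ for a single $\CC_\mu$-generic~$G$. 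Thus~$W$ is precisely the Cohen model of size~$\mu$, and contains the desired distributivity matrix.

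The main obstacle in this plan is verifying the countable generation of the filters~$\F_\beta$ when $\theta = \lambda = \omega_1$; this relies on the explicit combinatorial form of the filters given in Section~\ref{subsec:layering_and_F_beta}, which is not visible from the material preceding the statement. Once countable generation is in hand, the reduction to Cohen forcing and the absorption step are standard. I would also want to double-check, via the c.c.c.\ of each iterand (Lemma~\ref{lemm:FKW_ccc_Knaster} in the paper) and a standard nice-name argument, that the generic distributivity matrix has a hereditarily countable name and therefore actually lives inside the Cohen extension rather than merely inside some equivalent forcing extension.
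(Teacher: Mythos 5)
Your proposal is correct and follows essentially the paper's own route: specialize the construction with $\theta=\lambda=\omega_1$ as in Remark~\ref{rema:more_general}, represent the iteration as an iteration of Mathias forcings with respect to filters as in Section~\ref{subsec:layering_and_F_beta}, observe that with $\lambda=\omega_1$ all these filters are countably generated (each $\F_\alpha^\beta$ is generated by the countably many sets along the branch above $\sigma_\alpha^\beta$ and the complements of the fewer than $\omega_1$ many sets to its left in the block), hence each Mathias iterand is forcing equivalent to Cohen forcing, and absorb everything into~$\CC_\mu$ over the GCH ground model. One harmless slip: each iterand $\QQ_\alpha$ decomposes into a finer iteration of length $\Lambda_\alpha$ of cardinality $\cc=\mu$, so the main iteration is equivalent to $\CC_\mu$ rather than $\CC_{\omega_1}$; this does not affect the conclusion, since $\CC_\mu\times\CC_\mu\cong\CC_\mu$, and your final nice-name check is then unnecessary because $W$ is literally of the form $V_0[H]$ for a $\CC_\mu$-generic~$H$.
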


In particular, 
the distributivity matrix 
is $\omega_1$-splitting at limit levels;  
note that being 
$\om_1$-splitting at successors is not so much of interest because it is known that $\om_1 \in \aomspec$, and so such splitting at successors can be accomplished by hand.

\subsection{Countable chain condition and some implications}\label{subsec:ccc_etc}

We are now going to show that our iterands $\QQ_\alpha$ have the c.c.c.; it immediately follows that their finite support iteration~$\PP_\lambda$ has the c.c.c.\ as well, and therefore it does not change cofinalities or cardinalities.

\begin{lemm}\label{lemm:FKW_ccc_Knaster}
$\mtx_\alpha$ 
is precaliber~$\om_1$
(hence in particular c.c.c.) 
for every $\alpha < \lambda$.
\end{lemm}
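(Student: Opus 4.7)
The plan is to run the standard $\Delta$-system argument for this finite-condition forcing, prefaced by one uniformization step on the stems. The uniformization is necessary because clause~(2) of Definition~\ref{defi:main_definition} demands that stems be length-monotone along each branch, and directly amalgamating two conditions could easily violate this.

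Given $\{p_i : i < \omega_1\} \subseteq \QQ_\alpha$, for each $i$ let $N_i := \max\{|s^{p_i}_\sigma| : \sigma \in \dom(p_i)\}$ and extend every $s^{p_i}_\sigma$ to length $N_i$ by appending zeros, producing $p'_i \leq p_i$. Appending zeros preserves (1)--(8): clause~(2) becomes an equality, and (5) and (8) hold trivially at the new indices because the appended values are~$0$. Thin once so that all $N_i$ equal a common value~$N$. Then apply the $\Delta$-system lemma to the finite sets
\[
D_i := \dom(p'_i) \cup \bigcup_{\sigma \in \dom(p'_i)} \bigl(\dom(f^{p'_i}_\sigma) \cup \dom(h^{p'_i}_\sigma)\bigr)
\]
to obtain an uncountable subfamily with root $R$. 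Since $R$ is finite and both $2^{\leq N}$ and $\omega$ are countable, only countably many patterns appear for the data $\dom(p'_i)\cap R$, the stems $s^{p'_i}_\sigma$ for $\sigma$ in that intersection, and the restrictions $f^{p'_i}_\sigma\restriction R$, $h^{p'_i}_\sigma\restriction R$. Thin once more so that all these data are constant across~$i$, and write $R' := \dom(p'_i)\cap R$ for the fixed common intersection.

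Given any $i_1,\dots,i_n$ from the thinned family, define $r$ with $\dom(r) := \bigcup_k \dom(p'_{i_k})$ and each coordinate $s^r_\sigma, f^r_\sigma, h^r_\sigma$ set equal to the union of the corresponding coordinates of those $p'_{i_k}$ whose domain contains~$\sigma$: this is well-defined because on $R$ the coordinates agree by the thinning, while off $R$ the sets $D_{i_k}\setminus R$ are pairwise disjoint. Clause~(2) holds with equality since every stem in $r$ has length~$N$. For (5) and (8), whenever $\tau\in\dom(f^r_\sigma)\cap T_\alpha$ (respectively $\tau\in\dom(h^r_\sigma)$), clause~(3) (respectively (6)) forces $\tau$ to lie in $\dom(p'_{i_k})$ for the specific $k$ contributing that entry of the promise, so $s^r_\tau$ coincides with $s^{p'_{i_k}}_\tau$ and the required implication or disjunction transfers verbatim from $p'_{i_k}$ to~$r$; the case $\tau\in T'_\alpha$ in~(5) is analogous, using that $a_\tau\in V[G_\alpha]$ is independent of~$k$. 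The remaining clauses and the ordering conditions are immediate, so $r\le p'_{i_k}\le p_{i_k}$ for each~$k$, establishing precaliber~$\omega_1$. The only genuine technical obstacle is clause~(2), and it is dealt with entirely by the initial uniformization step.
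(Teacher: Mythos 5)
Your argument is correct and is essentially the paper's proof: pad all stems with $0$'s to a common length to avoid trouble with Definition~\ref{defi:main_definition}(2), extract a $\Delta$-system, uniformize the countably many possible data over the finite root, and take unions to obtain a common lower bound for any finitely many of the remaining conditions. The only (immaterial) difference is that you apply the $\Delta$-system lemma once to the enlarged sets $D_i$ (domains together with all $f$- and $h$-domains), whereas the paper applies it to $\dom(p_i)$ and then again to the sets $\dom(f^{p_i}_\sigma)\cap T'_\alpha$ for $\sigma$ in the root; your explicit verification of clauses (5) and (8) for the amalgam just fills in what the paper leaves as ``straightforward to check''.
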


In fact, $\QQ_\alpha$ is even $\sigma$-centered: in Section~\ref{section:h_b_om_1}, we are going to show that
each $\mtx_\alpha$
can be represented as a finite support iteration
of length strictly less than~$\cc^+$
of Mathias forcings with respect to
certain filters;
since
filtered Mathias forcings are always $\sigma$-centered (see Definition~\ref{defi:Mathias_with_filter} and the subsequent remark), and
$\sigma$-centeredness is preserved under finite support iterations of length strictly less than~$\cc^+$,
it follows that
$\QQ_\alpha$
is
$\sigma$-centered
(see also Corollary~\ref{coro:alles_sigma_centered}).

\begin{proof}[Proof of Lemma~\ref{lemm:FKW_ccc_Knaster}]
Let $\{p_i \with i<\omega_1\}\subseteq \mtx_\alpha$.
We want to show that the set cannot be an antichain.
First note that it is possible to extend\footnote{In 
Lemma~\ref{lemm:full}, 
we will show a stronger fact. 
}
all $s_\sigma^p$ (with $\sigma \in \dom(p)$) of a condition $p \in \QQ_\alpha$ to the same length~$N_p \in \om$, by just adding $0$'s at the end. Therefore we can assume without loss of generality
that there exists $N$ such that 
$|s_\sigma^{p_i}| = N$
for each $i \in \om_1$ and each
$\sigma \in \dom(p_i)$ 
(the reason why we want the $s_\sigma^{p_i}$ to have the same length, is to avoid trouble with Definition~\ref{defi:main_definition}(2)).
Since
$\dom(p_i)\subseteq T_\alpha \subs \lala$ is finite for every $i$, we can apply the $\Delta$-system lemma
to find a subset $X\subseteq \omega_1$ of size $\omega_1$ such that $\{ \dom(p_i)\with i\in X\}$ is a $\Delta$-system
with root~$R \subs T_\alpha$.
Also,
$\dom(f_\sigma^{p_i}) \cap T'_\alpha$
is finite for each $i\in X$ and
each $\sigma\in\dom(p_i)$,
so we can repeatedly apply\footnote{In case $\alpha = 0$, this is not necessary, because $T'_0 = \emptyset$ (in the definition of~$\QQ_0$).} the $\Delta$-system lemma, for each $\sigma\in R$ (hence finitely many times), to find
a subset $Y \subseteq X$ of size $\omega_1$ such that $\{ \dom(f_\sigma^{p_i}) \cap T'_\alpha \with i \in Y \}$ is a $\Delta$-system with root~$A_\sigma$ for each $\sigma \in R$.
Moreover, we can assume without loss of generality that for each  $\sigma \in R$, there are $s_\sigma^*$, $f_\sigma^*$, and $h_\sigma^*$ such that for all $i \in Y$, we have $s_\sigma^{p_i} = s_\sigma^*$, $f_\sigma^{p_i} \restr (R \cup A_\sigma) = f_\sigma^*$, and
$h_\sigma^{p_i} \restr R = h_\sigma^*$.
Now it is straightforward to check that any two
conditions from $\{ p_i \with i \in Y \}$ are compatible;  
in fact, any finitely many of them have a common lower bound. 
\end{proof}

We now show that the size of the continuum in the final model
is as desired; in fact, the following holds:

\begin{lemm}\label{lemm:size_of_c}
Let $\alpha \leq \lambda$. Then, in $V[\PP_{\alpha}]$,
we have
$\cc=\precc$. 
\end{lemm}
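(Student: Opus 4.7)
The plan is to establish by simultaneous induction on $\alpha \leq \lambda$ that both $|\PP_\alpha| \leq \precc$ and $V[\PP_\alpha] \models \cc = \precc$. The lower bound $\cc \geq \precc$ at every stage is immediate: $V = V_0[\CC_\mu]$ already satisfies $\cc = \mu = \precc$, and since $\PP_\alpha$ has the c.c.c.\ (by Lemma~\ref{lemm:FKW_ccc_Knaster} and the standard preservation of c.c.c.\ under finite support iterations), it preserves all cardinals, so in particular preserves this many reals. For the upper bound, assuming $|\PP_\alpha| \leq \precc$, every real in $V[\PP_\alpha]$ has a nice $\PP_\alpha$-name, and by c.c.c.\ the number of such names is at most $|\PP_\alpha|^{\aleph_0} \leq \precc^{\aleph_0}$; this equals $\precc$ because already in $V$ one has $\precc^{\aleph_0} = (2^{\aleph_0})^{\aleph_0} = 2^{\aleph_0} = \precc$.

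For the inductive bound on $|\PP_\alpha|$ the base case is trivial. At a successor $\alpha = \beta + 1$ I would first bound $|\QQ_\beta|$ in $V[\PP_\beta]$: a condition is a finite function whose domain sits inside $T_\beta \subseteq \lala$ and whose values $(s_\sigma, f_\sigma, h_\sigma)$ range over a countable set of finite objects, so $|\QQ_\beta|^{V[\PP_\beta]} \leq |\lala|^{V[\PP_\beta]} \cdot \aleph_0$. In $V_0$, GCH together with the regularity of $\lambda$ gives $|\lala| = \lambda \leq \precc$, and this bound is propagated up to $\precc$ through the intermediate c.c.c.\ forcings $\CC_\mu$ and $\PP_\beta$ by a further nice-name count, crucially using $\cf(\precc) > \omega$ so that $\precc^{\aleph_0} = \precc$. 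Hence $|\QQ_\beta|^{V[\PP_\beta]} \leq \precc$, and a standard iteration count yields $|\PP_{\beta+1}| \leq |\PP_\beta|^{\aleph_0} \cdot \precc \leq \precc$. At a limit $\alpha$, the finite support delivers $|\PP_\alpha| \leq |\alpha| \cdot \sup_{\beta < \alpha} |\PP_\beta| \leq \lambda \cdot \precc = \precc$.

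The only delicate point (the ``main obstacle'') is the careful propagation of $|\lala| \leq \precc$ through each intermediate c.c.c.\ model, which depends on the interplay between GCH in $V_0$, the regularity of $\lambda$, the hypothesis $\cf(\precc) > \omega$, and the c.c.c.\ nice-name count; once this is unpacked, every remaining step is completely standard finite-support c.c.c.\ bookkeeping, so I would compress the write-up to a short paragraph invoking the familiar iteration-cardinality lemmas.
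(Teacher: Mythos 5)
Your overall architecture matches the paper's: a simultaneous induction on $\alpha$ showing $|\PP_\alpha|\leq\precc$ together with the computation of the continuum, nice-name counting over the c.c.c.\ iteration, the successor step reduced to bounding $|\QQ_\beta|$ by the size of $\lala$ as computed in $V[\PP_\beta]$ (the paper carries $|T_\alpha|\leq(\lala)^{V[\PP_\alpha]}\leq\precc$ explicitly as part of its induction hypothesis), and the same treatment of limits via finite support. The lower bound $\cc\geq\precc$ and the count of reals via $|\PP_\alpha|^{\aleph_0}\leq\precc^{\aleph_0}=\precc$ are fine (your description of the values $(s_\sigma,f_\sigma,h_\sigma)$ as coming from a countable set is slightly off, since $f_\sigma,h_\sigma$ have domains consisting of nodes of $\lala$, but this does not affect the bound $|\QQ_\beta|\leq|\lala|^{V[\PP_\beta]}\cdot\aleph_0$).

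The genuine gap is in the step you yourself isolate as the only delicate point and then compress: propagating $|\lala|\leq\precc$ into $V=V_0[\CC_\mu]$ and into $V[\PP_\beta]$. A new element of $\lala$ is a sequence of ordinals of length $\delta$ for an arbitrary $\delta<\lambda$, and $\lambda>\omega_1$, so a nice name for it consists of $|\delta|$ many countable antichains, not countably many. The resulting count is $|\PP_\beta|^{<\lambda}\leq\precc^{<\lambda}$ (the paper uses the cruder bound $|\PP_\alpha|^{<\precc}\leq\precc^{<\precc}$), so the cardinal arithmetic actually needed is $\precc^{<\lambda}=\precc$, resp.\ $\precc^{<\precc}=\precc$, which the paper records at the very start of its proof as holding in $V$ (coming from GCH in $V_0$). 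The fact you cite as crucial, namely $\cf(\precc)>\omega$ giving $\precc^{\aleph_0}=\precc$, only controls countable objects such as reals and says nothing about, e.g., $\precc^{\omega_1}$: if $\cf(\precc)=\omega_1<\lambda$, then $\precc^{\omega_1}>\precc$ by K\"onig's theorem and your count of new nodes of length $\omega_1$ gives no bound at all. So the justification ``crucially using $\cf(\precc)>\omega$ so that $\precc^{\aleph_0}=\precc$'' is insufficient for exactly this step; what has to be supplied (and what the paper's proof supplies) is the $<\lambda$-exponent computation $\precc^{<\precc}=\precc$ from the GCH hypothesis on $V_0$, and this cannot be waved through as standard countable bookkeeping.
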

\begin{proof}
First note that $V\models \cc=\precc \land \precc^{<\precc} = \precc$, because it is the extension after adding $\precc$ many 
Cohen 
reals over a model which satisfies GCH.
We show simultaneously by induction on $\alpha\leq \lambda$ that 
\begin{enumerate}
\item $|\PP_\alpha|\leq \precc$ and
\item $V[\PP_{\alpha}]\models \cc=\precc\land |T_\alpha| \leq \lambda^{<\lambda}\leq \precc$.
\end{enumerate}

Clearly 
(1) and (2) hold for $\PP_0$ since $\PP_0$ is the trivial forcing.
Now assume that we have shown (1) and (2) for each $\alpha'<\alpha$.

To show~(1), argue as follows.
If $\alpha$ is a limit, then
 $|\PP_\alpha|\leq \precc$, because we use finite support, each $\PP_{\alpha'}\leq \precc$, and $\alpha\leq\precc$.
If $\alpha=\alpha'+1$ is a successor,
$\PP_\alpha=\PP_{\alpha'}*\QQ_{\alpha'}$. By induction, $\PP_{\alpha'}\forces |T_{\alpha'}| \leq \lala\leq\precc$, and so it is easy to check that
$|\QQ_{\alpha'}|\leq
\precc$, hence $|\PP_{\alpha'}*\QQ_{\alpha'}|\leq\precc$.

To show (2), we count nice names.
 For every real $x$ in $V[\PP_\alpha]$, there exists a nice name. Such a nice name
 consists of
 antichains $X_n$ in $\PP_\alpha$ for each entry $x(n)$. By the c.c.c., each $X_n$ is countable, so the number of nice names for reals is $|\PP_\alpha|^{\leq\omega}\leq\precc$, so there are
 only
 $\precc$ many reals in~$V[\PP_\alpha]$. Similarly, a nice name for an element of $\lambda^{<\lambda}$
consists of
 less than $\precc$
 many countable antichains, and since $|\PP_\alpha|^{<\precc}\leq \precc^{<\precc}=\precc$, there are
 at most
 $\precc$ many elements of $\lambda^{<\lambda}$ in $V[\PP_\alpha]$.
\end{proof}

The following lemma guarantees that, by the end of the iteration of length~$\lambda$,
a set~$a_\sigma$ has been added for every $\sigma \in \lala$ 
of successor length 
(so $T_\lambda$ would be empty,
hence $\QQ_\lambda$ would be the trivial forcing -- if we would continue the iteration after~$\lambda$ many stages):

\begin{lemm}\label{lemm:sigma_appears_somewhere}
Every node $\sigma\in \lala$ from the final model $V[\PP_{\lambda}]$
already
appears in some intermediate model $V[\PP_\alpha]$ with $\alpha<\lambda$.
\end{lemm}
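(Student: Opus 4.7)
The plan is to use a standard nice-name / finite-support counting argument, exploiting the c.c.c.\ of~$\PP_\lambda$ together with the regularity of~$\lambda$.

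Fix $\sigma \in \lala$ in $V[\PP_\lambda]$, and let $\mu := |\sigma| < \lambda$. First I would argue that the range of~$\sigma$ is bounded in~$\lambda$: since~$\PP_\lambda$ is c.c.c.\ (by Lemma~\ref{lemm:FKW_ccc_Knaster} together with the fact that finite-support iterations of c.c.c.\ forcings are c.c.c.), cofinalities are preserved, so $\lambda$ remains regular in~$V[\PP_\lambda]$. The range of~$\sigma$ has cardinality at most~$\mu < \lambda$, so by regularity it is bounded by some $\gamma < \lambda$. Hence $\sigma$ can be coded as a subset $X \subseteq \mu \times \gamma$, a set of cardinality $\max(\mu,\omega) < \lambda$.

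Next I would fix a nice $\PP_\lambda$-name $\dot X$ for~$X$: for each pair $(i,j) \in \mu \times \gamma$, attach a maximal antichain $A_{i,j} \subs \PP_\lambda$ deciding whether $(i,j) \in \dot X$. By the c.c.c., each~$A_{i,j}$ is countable, so
\[
C := \bigcup_{(i,j) \in \mu \times \gamma} A_{i,j}
\]
has cardinality at most $\mu \cdot \gamma \cdot \omega < \lambda$. Each condition $p \in C$ lives in a finite-support iterand, so $\supp(p) \subs \lambda$ is finite; therefore $S := \bigcup_{p \in C} \supp(p)$ is a subset of~$\lambda$ of cardinality strictly less than~$\lambda$.

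Since $\lambda$ is regular and $|S| < \lambda$, the set~$S$ is bounded; pick $\alpha < \lambda$ with $S \subs \alpha$. Then $\dot X$ is (equivalent to) a $\PP_\alpha$-name, so $X \in V[G_\alpha]$, and consequently $\sigma \in V[G_\alpha]$, as required. The argument is essentially the standard reflection of small objects in a c.c.c.\ finite-support iteration of regular length, and the only thing to verify with any care is that the regularity of~$\lambda$ is preserved into~$V[\PP_\lambda]$ -- which, as noted, is immediate from the c.c.c.
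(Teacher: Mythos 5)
Your proposal is correct and is essentially the paper's own argument: a nice-name counting using the c.c.c.\ (countable antichains), finite supports, and the regularity of~$\lambda$ to bound the supports below some $\alpha<\lambda$. The only cosmetic difference is that the paper bounds the possible lengths of~$\sigma$ via an antichain deciding the length, while you fix the length and a bound on the range in the extension (using that the c.c.c.\ preserves the regularity of~$\lambda$) and code~$\sigma$ as a subset of $\mu\times\gamma$; both variants are the same standard reflection argument.
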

\begin{proof}
Let $\dot{\sigma}$ be a nice $\PP_\lambda$-name for $\sigma$; more precisely, $\dot{\sigma}$ has the following form. First, $\dot{\sigma}$ contains an antichain which decides the length of $\dot{\sigma}$. Since $\PP_\lambda$ has the c.c.c., this antichain is countable, so there are only countably many values possible for the length; let $\xi<\lambda$ be larger than all the possible values.
Now, for all $\xi'<\xi$, there is an antichain deciding the entry of $\dot{\sigma}(\xi')$ (if $\xi'$ is less than the length of $\dot{\sigma}$). Again, by c.c.c.\ all these antichains are countable.
So there are $\xi$ many countable antichains which are in $\dot{\sigma}$; the union of these antichains contains less than $\lambda$ many elements. Since we use finite support, there exists an $\alpha<\lambda$ such that $\dot{\sigma}$ is a $\PP_\alpha$-name, hence $\sigma\in V[\PP_\alpha]$.
\end{proof}

\subsection{The generic distributivity matrix}\label{subsec:generic_matrix}

Let $G$ be a generic filter for the iteration~$\PP_\lambda$.
In the final model~$V[G]$,
we
derive our ``intended generic object'' (which is going to be a distributivity matrix of height~$\lambda$) from the generic filter $G$ as follows.
For each $\sigma \in \lala \cap \mathrm{succ}$, we can fix
the minimal~$\alpha < \lambda$ such that $\sigma \in V[G_\alpha]$ (see Lemma~\ref{lemm:sigma_appears_somewhere}).
Then in $V[G_\alpha]$, the node $\sigma$ belongs to $T_\alpha$,
 and,
letting $G(\alpha)$ be the corresponding filter for~$\QQ_\alpha$,
the set
\begin{equation}\label{eq:a_sigma}
a_\sigma = \bigcup \{ s^p_\sigma \with  p \in G(\alpha) \land \sigma \in \dom(p) \}
\end{equation}

is added by~$\QQ_\alpha$.
Back in the final model $V[G]$,
we let,
for
each
$\xi<\lambda$,
$$A_{\xi+1}:=\{ a_\sigma \with |\sigma|=\xi+1\}$$
(which is going to be a mad family).
Here, we are going to show that
the generic object
$\{ A_{\xi+1} \with \xi<\lambda \}$
is a refining system of 
almost disjoint families.

Our first lemma guarantees
that each $a_\sigma$ is going to have infinitely many~$1$'s. 
Note that, 
whenever
we write
``$s(m)=1$'', 
we
actually mean
``$m \in \dom(s)$ and $s(m)=1$''.

\begin{lemm}\label{lemm:total_domain}
Let $\alpha < \lambda$. 
For each $\sigma \in
T_\omikron$ and each $n \in \om$, the set
$$D_{\sigma,n} 
= \{ q \in \mtx_\omikron \with \sigma \in \dom(q) 
\textrm{ and } 
 |s^q_\sigma| \geq n \}$$ 
is dense in $\mtx_\omikron$.
In particular, $\dom(a_\sigma) = \om$ (i.e., $a_\sigma$ can be viewed as a subset of~$\om$). 
\end{lemm}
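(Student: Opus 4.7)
The plan is to prove density by an explicit two-step construction: given an arbitrary $p \in \QQ_\alpha$, I will first add $\sigma$ to the domain if it is not already there, and then pad the $s$-components of every node in the (possibly enlarged) domain to a common length $M \geq n$ by appending zeros. The only subtle point will be that condition (2) of Definition~\ref{defi:main_definition} forces the choice of $M$ to dominate all pre-existing lengths below $\sigma$.

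More concretely, I would set
\[
M := \max\bigl(n,\ \max\{|s^p_\tau| : \tau \in \dom(p)\}\bigr),
\]
define $\dom(q) := \dom(p) \cup \{\sigma\}$, and keep $f^q_\tau := f^p_\tau$ and $h^q_\tau := h^p_\tau$ unchanged for each $\tau \in \dom(p)$. For each $\tau \in \dom(p)$ the sequence $s^q_\tau$ is obtained from $s^p_\tau$ by appending zeros up to length exactly $M$; if $\sigma \notin \dom(p)$, I additionally declare $q(\sigma) := (0^M, \emptyset, \emptyset)$, where $0^M$ denotes the all-zero sequence of length~$M$. By construction, $\dom(q) \supseteq \dom(p)$, $s^p_\tau \initeq s^q_\tau$ for every $\tau \in \dom(p)$, and the promise data is unchanged, so $q \leq p$ would follow immediately from the order of Definition~\ref{defi:main_definition} once $q$ is shown to be a condition.

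The heart of the argument is therefore the verification $q \in \QQ_\alpha$. Conditions (1), (3), (4), (6), (7) are either unchanged from $p$ or vacuous at the newly added node~$\sigma$ (whose $f_\sigma, h_\sigma$ are empty). Condition (2) is trivial since $|s^q_\tau| = M$ for every $\tau \in \dom(q)$. The main point, and essentially the only one requiring a thought, is that conditions (5) and (8) survive padding by zeros: both forbid only certain $1$'s (in~(5) the consequent $s_\sigma(n)=0$ is automatically satisfied at the new positions, and in~(8) the disjunction is witnessed by a freshly added zero). Once $q$ is seen to be a condition, $\sigma \in \dom(q)$ and $|s^q_\sigma| = M \geq n$ give $q \in D_{\sigma,n}$.

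For the ``in particular'' clause I would just appeal to genericity: since $D_{\sigma,n}$ is dense for every $n \in \omega$, the generic filter meets it, so $\dom(a_\sigma) \supseteq [0,n)$ for each $n$, hence $\dom(a_\sigma) = \omega$. The main (minor) obstacle I expect is bookkeeping around condition (2) when $\sigma$ is introduced fresh: one must ensure that for every pre-existing $\tau \in \dom(p)$ with $\sigma \init \tau$, we have $|s^q_\sigma| \geq |s^q_\tau|$, which is exactly why $M$ is chosen to dominate all the lengths $|s^p_\tau|$; after uniform padding, all $s$-components have the same length $M$ and (2) becomes automatic.
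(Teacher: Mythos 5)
Your proposal is correct and follows essentially the same route as the paper: both arguments pad the $s$-components with zeros, observing that appending $0$'s can never violate clauses (5) or (8), and treat clause (2) purely as a length-bookkeeping constraint (your uniform padding to a common length $M$ is the same device used in the paper's Lemma~\ref{lemm:full}). In fact your version is slightly more careful than the paper's write-up of this lemma, which inserts a fresh $\sigma$ with the empty string and lengthens only the nodes $\initeq\sigma$ to length $n$, thereby glossing over the case where $\dom(p)$ already contains proper descendants of $\sigma$ with longer $s$-strings — a case your choice of $M$ dominating all existing lengths handles automatically.
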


\begin{proof}
Let $\sigma\in
T_\omikron$, $n\in\omega$ and $p\in \mtx_\omikron$.
If $\sigma\notin \dom(p)$, extend $p$ to $p\cup \{ (\sigma,(\seqlangle\seqrangle,\emptyset,\emptyset))\}$.
From now on, we assume that $\sigma\in\dom(p)$.

First note that 
(2),(5), and (8) 
in Definition~\ref{defi:main_definition}
give restrictions on how $s_\tau^p$'s can be extended; however, 
it is always possible to extend an $s_\tau^p$ by $0$'s (provided that all $s_{\tau'}^p$ with $\tau' \init \tau$ are at least as long, as demanded by~(2)): 
no matter what the $s_{\tau'}^p$ with $\tau' \init \tau$ are, 
extending $s_\tau^p$ by $0$
never makes (5) false (due to~(2), $s_{\bar{\tau}}^p$ for $\tau \init \bar{\tau}$ are shorter and therefore do not matter at all); 
similarly, no matter what the 
$s^p_\pi$ 
are with 
$\pi$ having the same predecessor as $\tau$, extending $s_\tau^p$ by $0$
never makes (8) false.

Now, for every $\tau\in \dom(p)$ with $\tau \initeq \sigma$, 
if $|s_\tau^p|<n$, 
extend $s_\tau^p$ with $0$'s 
to 
length~$n$. 
In particular, the resulting condition~$q$ satisfies 
$|s^q_\sigma| \geq n$, 
as desired. 
\end{proof}

The next lemma shows that we can always assume that the domain of $f^p_\sigma$ and the domain of $h^p_\sigma$ is as large as possible.
Parts of the lemma
will be essential
also later, for the notion of ``full condition'' (see Definition~\ref{defi:full}).

\begin{lemm}\label{lemm:promises_are_dense}
Let $p \in \mtx_\omikron$ and $\sigma \in \dom(p)$.
Then there exists a $q \leq p$ such that $\dom(q) = \dom(p)$, and the following holds:
\begin{enumerate}
 \item[(a)]
$\tau\in \dom(f_\sigma^q)$ for each $\tau  \in \dom(q)$ with $\tau\lhd \sigma$, and

 \item[(b)]
						(letting $\sigma = \rho \concat \newalpha$)
				$\rho \concat \newbeta \in \dom(h_{\sigma}^q)$ for each $\newbeta<\newalpha$ with  							$\rho\concat \newbeta \in \dom(q)$.

\end{enumerate}

In particular, the set
$$D := \{ q \in \mtx_\omikron \with \textrm{ (a) and (b) holds for each } \sigma \in \dom(q) \}$$
is dense in $\mtx_\omikron$.

Moreover, if
$\alpha > 0$, then the following holds:
whenever
$\tau'\init \sigma$ with
$\tau'\in T'_\alpha$ (i.e., $a_{\tau'}$ has already been added before),
there exists $q\leq p$ such that $\dom(q)=\dom(p)$,
$q \in D$,
and
 $\tau'\in \dom(f^q_\sigma)$.
\end{lemm}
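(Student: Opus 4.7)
The plan is to prove this by simply \emph{padding} the promise functions $f_\sigma$ and $h_\sigma$ with additional entries whose values are chosen so large that conditions (5) and (8) of Definition~\ref{defi:main_definition} are satisfied vacuously. Since the orderings on the promise functions allow us to freely enlarge their domains (condition~(ii)(b) and~(ii)(c) only restrict the values on the \emph{existing} domain), the main task is to verify that the enlarged object is still a condition.

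First I would treat the single-$\sigma$ case. Starting from $p\in\QQ_\alpha$ and a fixed $\sigma\in\dom(p)$, I keep all the $s$-parts and all other promise functions unchanged, and construct $q$ by extending only $f_\sigma^p$ and $h_\sigma^p$. For each $\tau\in\dom(p)$ with $\tau\lhd\sigma$ that is not yet in $\dom(f_\sigma^p)$, I declare $f_\sigma^q(\tau):=|s_\tau^p|$; by condition~(2) of Definition~\ref{defi:main_definition} applied to $p$, this is $\geq|s_\sigma^p|$, so for no $n\in\dom(s_\tau^q)\cap\dom(s_\sigma^q)$ do we have $n\geq f_\sigma^q(\tau)$, and condition~(5) is vacuous. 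Analogously, writing $\sigma=\rho\concat\newalpha$, for each $\newbeta<\newalpha$ with $\rho\concat\newbeta\in\dom(p)\setminus\dom(h_\sigma^p)$ I set $h_\sigma^q(\rho\concat\newbeta):=\max\bigl(|s_{\rho\concat\newbeta}^p|,|s_\sigma^p|\bigr)$, making condition~(8) vacuous. The conditions (1),(3),(4),(6),(7) are preserved by construction, so $q\in\QQ_\alpha$, and $q\leq p$ is immediate from the fact that we only enlarged domains while leaving all original values (and all $s$-parts) intact.

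For density of $D$, I would simply iterate the previous construction over every $\sigma\in\dom(p)$ (a finite set): processing one $\sigma$ at a time does not touch $f_{\sigma'}$ or $h_{\sigma'}$ for $\sigma'\neq\sigma$, so the properties (a) and (b) established at earlier stages persist. The resulting condition lies in $D$ and is below $p$.

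Finally, for the \emph{moreover} clause (the case $\alpha>0$ with a $\tau'\init\sigma$ in $T'_\alpha$), I would produce $q\in D$ as above, but additionally, before stopping, put $\tau'$ into $\dom(f_\sigma^q)$ with value $f_\sigma^q(\tau'):=|s_\sigma^p|$. The relevant half of condition~(5), namely the clause for $\tau'\in T'_\alpha$, then requires $a_{\tau'}(n)=0\to s_\sigma(n)=0$ only for $n\in\dom(s_\sigma^q)$ with $n\geq f_\sigma^q(\tau')=|s_\sigma^p|$; there are no such $n$, so the clause holds vacuously. The main (and only) thing to watch is that each newly inserted value is genuinely at least the length of the relevant $s$-part, so I expect no real obstacle beyond careful bookkeeping about which length to use — in particular using Definition~\ref{defi:main_definition}(2) to avoid worrying about whether $|s_\tau^p|\geq|s_\sigma^p|$.
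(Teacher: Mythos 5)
Your proposal is correct and follows essentially the same route as the paper: the paper also just enlarges $\dom(f_\sigma)$ and $\dom(h_\sigma)$ with new values chosen at least $|s_\sigma^p|$ (the paper uses $|s_\sigma^p|$ uniformly, you use $|s_\tau^p|$ resp.\ a maximum, which works for the same vacuousness reason on $\dom(s_\sigma)$), leaves all $s$-parts and old values untouched, and iterates over the finitely many $\sigma\in\dom(p)$, handling the $T'_\alpha$ node $\tau'$ in the moreover part identically.
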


\begin{proof}
For every $\tau \in \dom(p)\setminus \dom(f^p_\sigma)$ with $ \tau \init \sigma$, let
$f_\sigma^q(\tau):=|s_\sigma^p|$.
For every $\rho \concat \newbeta \in \dom(p)\setminus \dom(h^p_\sigma)$ with $\newbeta<\newalpha$, let $h_\sigma^q(\rho \concat \newbeta) :=
|s_{\sigma}^p|$.
For the moreover part, let
$f^q_{\sigma}(\tau'):=|s_\sigma^p|$.
If we (repeatedly) extend $p$ in this way to $q$, it is clear that $q$ is a condition with the properties we wanted.
\end{proof}

The next lemma will be used to show that, for $\tau \lhd \sigma$, the set of conditions which force $a_\sigma\subseteq^* a_\tau$ is dense, 
as well as 
to show
 that, for $\rho \concat \newalpha$ and $\rho \concat \newbeta$ with $\newalpha\neq \newbeta$, the set of conditions which force $a_{\rho \concat \newalpha}\cap a_{\rho \concat \newbeta}=^* \emptyset$ is dense.

\begin{lemm}\label{lemm:forcing_almost_included_disjoint}
Let $p \in \mtx_\omikron$ 
and 
$\sigma \in \dom(p)$. 

\begin{enumerate}

\item 
If $\tau \in \dom(f^p_\sigma)$, then
$p \forces a_\sigma \setminus f^p_\sigma(\tau) \subs a_\tau$
(in particular,
$p \forces a_\sigma \subs^* a_\tau$).

\item If $\tau \in \dom(h^p_\sigma)$, then
$p \forces a_\tau \cap a_\sigma \subs h^p_\sigma(\tau)$
(in particular,
$p \forces a_\tau \cap a_\sigma =^* \emptyset$).

\end{enumerate}
\end{lemm}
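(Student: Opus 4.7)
My plan is to deduce both parts directly from unpacking the promises encoded in clauses~(5) and~(8) of Definition~\ref{defi:main_definition}, by way of a short genericity argument. The key observation is that these clauses ensure that, once $\tau$ is in the domain of a promise, at all indices $n$ past the corresponding bound the values $s_\tau(n)$ and $s_\sigma(n)$ must obey the advertised implication (almost-inclusion in~(5), disjointness in~(8)). Moreover, the order on~$\mtx_\alpha$ only \emph{sharpens} these promises, since $f^q_\sigma(\tau) \leq f^p_\sigma(\tau)$ and $h^q_\sigma(\tau) \leq h^p_\sigma(\tau)$ whenever $q \leq p$; so the clauses apply at every $p'' \leq p$ to all $n$ that are already $\geq f^p_\sigma(\tau)$ or $\geq h^p_\sigma(\tau)$.

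For part~(2), I would argue by contradiction. Suppose $p' \leq p$ forces some $n \geq h^p_\sigma(\tau)$ to lie in $a_\tau \cap a_\sigma$. Then there is $p'' \leq p'$ with $\sigma,\tau \in \dom(p'')$, $|s^{p''}_\sigma|, |s^{p''}_\tau| > n$, and $s^{p''}_\sigma(n) = s^{p''}_\tau(n) = 1$, using the definition of $a_\sigma, a_\tau$ together with Lemma~\ref{lemm:total_domain} as needed. Since $\tau \in \dom(h^{p''}_\sigma)$ and $h^{p''}_\sigma(\tau) \leq n$, clause~(8) would demand $s^{p''}_\tau(n) = 0$ or $s^{p''}_\sigma(n) = 0$, a contradiction.

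For part~(1), given $p' \leq p$ and $n \geq f^p_\sigma(\tau)$ with $p' \forces n \in a_\sigma$, I would extend to $p'' \leq p'$ with $\sigma \in \dom(p'')$, $|s^{p''}_\sigma| > n$, and $s^{p''}_\sigma(n) = 1$. Clause~(3) forces $\tau$ to lie either in $T'_\alpha$ or in $\dom(p) \subseteq \dom(p'')$. If $\tau \in T'_\alpha$, then $a_\tau$ is already determined in some $V[G_\beta]$ with $\beta < \alpha$, and the second half of clause~(5) gives $a_\tau(n) = 0 \Rightarrow s^{p''}_\sigma(n) = 0$; combined with $s^{p''}_\sigma(n) = 1$, this yields $n \in a_\tau$ absolutely. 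If instead $\tau \in \dom(p'')$, then $\tau \init \sigma$ together with clause~(2) gives $|s^{p''}_\tau| \geq |s^{p''}_\sigma| > n$, so $n \in \dom(s^{p''}_\tau)$ automatically, and the first half of clause~(5) forces $s^{p''}_\tau(n) = 1$, i.e., $p'' \forces n \in a_\tau$.

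The only subtlety I expect is the role of clause~(2) in the last step: one cannot simply invoke Lemma~\ref{lemm:total_domain} to enlarge $\dom(s^{p''}_\tau)$, because that lemma's recipe of appending $0$'s would clash with clause~(5) once $s^{p''}_\sigma(n) = 1$. Instead, clause~(2) already forces $|s^{p''}_\tau| \geq |s^{p''}_\sigma|$ for free, so the position~$n$ is present in $\dom(s^{p''}_\tau)$ without any further extension. Beyond this, the argument is a routine check that the promises $f$ and $h$ do exactly what they were designed to do.
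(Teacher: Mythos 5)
Your proof is correct and follows essentially the same route as the paper: both arguments unpack clauses (5) and (8) of Definition~\ref{defi:main_definition}, use the fact that stronger conditions only sharpen the promises $f_\sigma$ and $h_\sigma$, split case (1) into $\tau\in T_\alpha$ versus $\tau\in T'_\alpha$, and invoke the definition of $a_\sigma$ as the union of the $s^q_\sigma$ over the generic filter. The paper's version is just more compressed (it argues directly about conditions in the generic filter below $p$ rather than via an explicit extension-and-contradiction), so there is no substantive difference.
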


\begin{proof}
For (1), we use 
Definition~\ref{defi:main_definition}(5). 
There are two cases: 
if $\tau\in T_\alpha$ (i.e., $\tau$ is a new node), it follows that $\tau\in \dom(p)$, 
and that 
$s^p_\tau(n)=1$ whenever 
$n \geq f_\sigma^p(\tau)$ and 
$s^p_\sigma(n)=1$; 
if $\tau\in T'_\alpha$ 
(i.e., $a_\tau$ has already been added by~$\QQ_\beta$ for some $\beta < \alpha$), 
it follows that $a_\tau(n)=1$ whenever 
$n \geq f_\sigma^p(\tau)$ 
and 
$s^p_\sigma(n)=1$. 
In both cases, 
using~\eqref{eq:a_sigma}, 
we get $p\forces a_\sigma \setminus f_\sigma^p(\tau) \subseteq a_\tau$.

The proof of~(2) is similar, using 
Definition~\ref{defi:main_definition}(8). 
\end{proof}

Finally, we can show that in
the final model~$V[\PP_\lambda]$, 
the sets along branches of~$\lala$ are $\subseteq^*$-decreasing, 
and 
the sets on 
any level of 
$\lala \cap \mathrm{succ}$
are pairwise almost disjoint.

\begin{coro}\label{coro:almost_included_disjoint}
In~$V[\PP_\lambda]$, the following hold:

\begin{enumerate}

\item 
If $\tau, \sigma \in
\lala \cap \mathrm{succ}$
such that $\tau \init \sigma$, then
$a_\sigma \subs^* a_\tau$.

\item 
If
$\rho \in
\lala$,
and $\newbeta < \newalpha < \lambda$, then $a_{\rho \concat \newbeta} \cap a_{\rho \concat \newalpha} =^* \emptyset$.
Indeed, the following holds. For each
$\sigma, \sigma' \in
\lala \cap \mathrm{succ}$
satisfying $|\sigma| = |\sigma'|$
and $\sigma \neq \sigma'$,
we have $a_{\sigma} \cap a_{\sigma'} =^* \emptyset$;
in other words,
for each $\xi < \lambda$,
$$A_{\xi+1}
=
\{ a_\sigma \with \sigma \in \lambda^{\xi+1} \}$$
is an almost disjoint family.

\end{enumerate}
\end{coro}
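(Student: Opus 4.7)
The plan is to prove both assertions by density arguments in the appropriate iterand $\QQ_\alpha$, reducing everything to the machinery already developed, namely Lemma~\ref{lemm:total_domain}, Lemma~\ref{lemm:promises_are_dense}, and Lemma~\ref{lemm:forcing_almost_included_disjoint}.

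For part~(1), given $\tau \init \sigma$ in $\lala \cap \mathrm{succ}$, I would first locate the minimal stage $\alpha < \lambda$ at which $\sigma$ is available (so $\sigma \in T_\alpha$); such an $\alpha$ exists by Lemma~\ref{lemm:sigma_appears_somewhere}. Since $\tau \init \sigma$ forces $\tau \in V[G_\alpha]$ as well, there are two cases: either $\tau \in T_\alpha$ (appears at the same stage) or $\tau \in T'_\alpha$ (already present earlier). In the first case, iterating Lemma~\ref{lemm:total_domain} to put both $\sigma$ and $\tau$ in the domain, and then applying part~(a) of Lemma~\ref{lemm:promises_are_dense}, the set of $q \in \QQ_\alpha$ with $\tau \in \dom(f^q_\sigma)$ is dense. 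In the second case, the ``moreover'' clause of Lemma~\ref{lemm:promises_are_dense} gives density directly. Either way, Lemma~\ref{lemm:forcing_almost_included_disjoint}(1) applied to such a $q \in G(\alpha)$ yields $a_\sigma \setminus f^q_\sigma(\tau) \subs a_\tau$, hence $a_\sigma \subs^* a_\tau$ in $V[G]$.

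For part~(2), I would first handle the sibling case $\sigma = \rho\concat\newalpha$, $\sigma' = \rho\concat\newbeta$ with $\newbeta < \newalpha$. The key observation is that since the successor coordinates $\newalpha,\newbeta < \lambda$ already belong to $V$, both $\sigma$ and $\sigma'$ first appear precisely at the minimal stage $\alpha$ at which $\rho$ appears, so both lie in $T_\alpha$ simultaneously. Two applications of Lemma~\ref{lemm:total_domain} make the set of conditions with $\{\sigma,\sigma'\} \subs \dom(p)$ dense, and then part~(b) of Lemma~\ref{lemm:promises_are_dense} further densifies to conditions with $\sigma' \in \dom(h^q_\sigma)$; Lemma~\ref{lemm:forcing_almost_included_disjoint}(2) closes the argument. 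For arbitrary distinct $\sigma,\sigma'$ of the same successor length, I would pass to their longest common initial segment $\rho$ together with the two distinct next coordinates $\newalpha \neq \newbeta$, producing siblings $\sigma_0 = \rho\concat\newalpha \initeq \sigma$ and $\sigma'_0 = \rho\concat\newbeta \initeq \sigma'$ (both of successor length $|\rho|+1$); applying the sibling case to the pair $(\sigma_0,\sigma'_0)$ and part~(1) to the extensions down to $\sigma,\sigma'$ gives the conclusion immediately, since $a_\sigma \cap a_{\sigma'} \subs^* a_{\sigma_0} \cap a_{\sigma'_0} =^* \emptyset$.

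The only real subtlety is the $T_\alpha$ versus $T'_\alpha$ split in part~(1): an ``old'' ancestor $\tau \in T'_\alpha$ can never be added to $\dom(p)$, so one cannot invoke Lemma~\ref{lemm:promises_are_dense}(a) there and must use the moreover clause of that lemma instead. For part~(2) no analogous issue arises, because, as noted, sibling nodes always enter the iteration at the same stage and so can both be forced into $\dom(p)$ of a single condition.
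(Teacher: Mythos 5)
Your proposal is correct and follows essentially the same route as the paper: fix the minimal stage at which the relevant node ($\sigma$ in part~(1), $\rho$ in part~(2)) appears, use Lemma~\ref{lemm:total_domain} and Lemma~\ref{lemm:promises_are_dense} (including its moreover clause for ancestors in $T'_\alpha$) to get density of conditions carrying the appropriate $f$- or $h$-promise, apply Lemma~\ref{lemm:forcing_almost_included_disjoint}, and reduce the general same-level case to the sibling case together with part~(1). The explicit $T_\alpha$ versus $T'_\alpha$ case split you spell out is exactly what the paper's terser citation of those lemmas encapsulates.
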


\begin{proof}
To show~(1), 
let $\eta<\lambda$ be minimal such that $\sigma \in (\lala)^{V[\PP_\eta]}$.
Lemma~\ref{lemm:total_domain}, 
Lemma~\ref{lemm:promises_are_dense}, and Lemma~\ref{lemm:forcing_almost_included_disjoint}(1) in particular imply that the set
$$\{ q \in \mtx_\eta \with q \forces a_\sigma \subs^* a_\tau \}$$
is dense. Hence $V[\PP_{\eta+1}]\models  a_\sigma \subs^* a_\tau$, and
this remains true in the final model.

To show~(2), let $\eta<\lambda$ be minimal such that $\rho \in (\lala)^{V[\PP_\eta]}$.
Lemma~\ref{lemm:total_domain}, 
Lemma~\ref{lemm:promises_are_dense}, and Lemma~\ref{lemm:forcing_almost_included_disjoint}(2) in particular imply that the set
$$\{ q \in \mtx_{\eta} \with q \forces a_{\rho \concat \newbeta} \cap a_{\rho \concat \newalpha} =^* \emptyset \}$$
is dense; this proves the first assertion of~(2).
To prove the second assertion of~(2), find
$\rho \in \lala$
with $\rho \init \sigma, \sigma'$ and
$\newalpha, \newbeta < \lambda$
with $\newbeta \neq \newalpha$
such that
$\rho \concat \newbeta \initeq \sigma$ and
$\rho \concat \newalpha \initeq \sigma'$, and apply the first assertion of~(2) 
as well as~(1).
\end{proof}

Finally, 
we show that each $a_\sigma$ 
is an infinite subset of~$\om$:

\begin{lemm}\label{lemm:infinite_set}
Let $\alpha < \lambda$. 
For each $\sigma \in
T_\omikron$ and each $n \in \om$, the set
$$D_{\sigma,n} := \{ q \in \mtx_\omikron \with \sigma \in \dom(q) 
\textrm{ and } 
\exists m \geq n (s^q_\sigma(m) = 1) \}$$
is dense in $\mtx_\omikron$.
In particular, 
$a_\sigma \in [\om]^{\om}$ 
(when viewed as a subset of~$\om$).
\end{lemm}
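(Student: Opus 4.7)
The plan is to prove density of $D_{\sigma,n}$ in $\mtx_\alpha$ by induction on $\alpha < \lambda$; the ``in particular'' clause then follows by genericity, since hitting $D_{\sigma,n}$ for every $n$ forces $a_\sigma$ to have $1$'s arbitrarily far out. The base case $\alpha = 0$ is easy since $T'_0 = \emptyset$, so the $T'_\alpha$-part of Definition~\ref{defi:main_definition}(5) imposes no constraint. For the inductive step, assume the statement for every $\alpha' < \alpha$; then each $a_\tau$ with $\tau \in T'_\alpha$ is an infinite subset of~$\omega$. By Corollary~\ref{coro:almost_included_disjoint}(1), the sets $a_{\tau'}$ for $\tau' \in T'_\alpha$ which are ancestors of $\sigma$ form a $\supseteq^*$-decreasing chain of infinite sets; hence for any finite subchain there are cofinally many $m$ at which all of them equal~$1$.

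Fix $p \in \mtx_\alpha$; we may assume $\sigma \in \dom(p)$. Let $A := \{\tau \in \dom(p) : \tau \initeq \sigma\}$, and let $A^* \subseteq A$ be the smallest subset containing $\sigma$ such that whenever $\tau \in A^*$ and $\tau' \in \dom(f^p_\tau) \cap T_\alpha$, then $\tau' \in A^*$ (by Definition~\ref{defi:main_definition}(3), every such $\tau'$ lies in $\dom(p)$ and is an ancestor of $\sigma$, hence in~$A$). Let $F^* := \bigcup_{\tau \in A^*} (\dom(f^p_\tau) \cap T'_\alpha)$, a finite set of $T'_\alpha$-ancestors of $\sigma$ which therefore forms a $\init$-chain.

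Now choose $m \geq n$ sufficiently large---larger than $|s^p_\eta|$ for every $\eta \in \dom(p)$ and larger than every value of any $f^p_\tau$ or $h^p_\tau$ appearing in $p$---with the additional property that $a_{\tau'}(m) = 1$ for every $\tau' \in F^*$, which is possible by the first paragraph. Define $q$ by $\dom(q) := \dom(p)$, $f^q := f^p$, $h^q := h^p$; keep $s^q_\tau := s^p_\tau$ for $\tau \notin A$, and for $\tau \in A$ extend $s^p_\tau$ by $0$'s up to length $m+1$, overwriting the last bit with $1$ precisely when $\tau \in A^*$.

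The main obstacle is verifying that $q \in \mtx_\alpha$ and $q \leq p$. Clause~(2) holds because all ancestors of $\sigma$ in $\dom(p)$ are extended to the common length $m+1$, while $m+1$ exceeds $|s^p_{\bar{\sigma}}|$ for every $\bar\sigma \in \dom(p) \setminus A$. Clause~(8) is either inherited from $p$ at positions $i < |s^p_\tau|$ or trivialized at new positions $|s^p_\tau| \leq i < m$ by $s^q_\tau(i) = 0$; at $i = m$ the relevant $\tau'' \in \dom(h^p_\tau)$ are siblings of $\tau$ and hence outside~$A$, so $m \notin \dom(s^p_{\tau''}) = \dom(s^q_{\tau''})$ by the choice of $m$. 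Clause~(5) is the delicate one, and is precisely what $A^*$ and $F^*$ were designed to handle: for $\tau \in A^*$, the closure definition of $A^*$ guarantees $s^q_{\tau''}(m) = 1$ for every $\tau'' \in \dom(f^p_\tau) \cap T_\alpha$, while $F^*$ ensures $a_{\tau''}(m) = 1$ for $\tau'' \in \dom(f^p_\tau) \cap T'_\alpha$; for $\tau \in A \setminus A^*$ the implication is automatic since $s^q_\tau(m) = 0$. Since $s^q_\sigma(m) = 1$ with $m \geq n$, we conclude $q \in D_{\sigma,n}$, completing the induction.
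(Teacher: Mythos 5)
Your proof is correct and follows essentially the same route as the paper's: induction on $\alpha$, using the inductive infiniteness of the previously added sets $a_\psi$ (for the finitely many relevant $\psi \in T'_\alpha$, which form a $\subseteq^*$-chain) to pick $m$ beyond all string lengths with $a_\psi(m)=1$, then padding with $0$'s and placing a $1$ at~$m$. The only difference is cosmetic: the paper places the $1$ at position $m$ on \emph{all} nodes of $\dom(p)$ that are $\initeq\sigma$ (and accordingly demands $a_\psi(m)=1$ for all old nodes in any of their $f$-domains), whereas you place it only on the $f$-closure $A^*$ of $\{\sigma\}$ and put $0$ elsewhere — both work, and your choice of $m$ above the lengths of \emph{all} strings in $\dom(p)$ even makes the clause~(8) check slightly cleaner.
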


\begin{proof}
The proof proceeds by induction on~$\alpha < \lambda$.

Let $\sigma\in
T_\omikron$, $n\in\omega$ and $p\in \mtx_\omikron$.
By Lemma~\ref{lemm:total_domain}, 
we can assume that $\sigma\in\dom(p)$.
Let $N_0 \in \om$ be bigger 
than the maximal length of all the $s_\tau^p$ with $\tau\in \dom(p)$ and $\tau \initeq \sigma$, and bigger than~$n$. 
Let 
$$A:=\bigcup\{ \dom(f_\tau^p)\cap T'_\alpha \with \tau\in \dom(p) \land \tau \initeq \sigma \}.$$ 
If $A$ is empty (which in particular 
holds in case $\alpha = 0$, due to 
$T'_0 
= \emptyset$), let $m \in \om$ be arbitrary with $m \geq N_0$. 
Otherwise, 
let 
$N_1 \geq N_0$ 
be large enough such that 
$a_{\psi'} \setminus N_1 \subseteq a_{\psi}$  
for all 
$\psi,\psi'\in A$ 
with $\psi \initeq \psi'$ 
(see Corollary~\ref{coro:almost_included_disjoint}). 
Moreover, 
let $\psi^*$ be the longest element 
of the finite set~$A$, and let 
$m \geq N_1$ such that 
$a_{\psi^*}(m)=1$ 
(this is possible, since $a_{\psi^*}$ is infinite 
by induction, 
due to the fact that $\psi^* \in T'_\alpha$, and hence  
$\psi^* \in T_\beta$ for some $\beta < \alpha$).
Therefore $a_{\psi}(m) = 1$ for each $\psi \in A$.

Now, for every $\tau\in \dom(p)$ with $\tau \initeq \sigma$, extend $s_\tau^p$ with $0$'s 
to length~$m$. 
Finally, we 
extend
$s_\tau^p$ to
${s_\tau^p}  \concat 1$
for every 
$\tau\in \dom(p)$ with $\tau \initeq \sigma$ 
(in particular, for $\tau = \sigma$). 
It is easy to check that the resulting $q \leq p$ is indeed a condition, 
and $s^q_\sigma(m) = 1$, 
as desired. 
\end{proof}

Altogether, we have proved that
$\{ A_{\xi+1} \with \xi < \lambda \}$
is a refining system of ad families,
i.e., for each $\xi < \lambda$, $A_{\xi+1}$ is
an almost disjoint family,
and
for all $\xi < \xi' < \lambda$, $A_{\xi'+1}$ refines $A_{\xi+1}$.

To show that $\{ A_{\xi+1} \with \xi < \lambda \}$ is actually
a distributivity matrix requires much more work. The proof will be completed in Section~\ref{section:om_2_in_COM}. After
a lot of preparatory
work,
it will be shown in Section~\ref{subsection:rows} that the sets $A_{\xi+1}$ are indeed maximal, and in Section~\ref{subsection:branches} that the
sets along branches are indeed towers, which implies that there is no set intersecting the whole family~$\{ A_{\xi+1} \with \xi < \lambda \}$ (and hence there is no common refinement).

\subsection{Eligible sets and complete subforcings}\label{subsec:Q^c_complete}

The goal of this section is to show that our forcing~$\mtx_\omikron$ 
has complete subforcings which use only part of 
$T_\alpha$ 
(see Lemma~\ref{lemm:C_complete_subforcing}).
In Section~\ref{subsection:auxiliary},
this will be extended to the whole iteration (see Lemma~\ref{lemm:hereditarily_is_complete}),
which will be an important ingredient of the proof that the generic object is a distributivity matrix (see Section~\ref{subsection:branches} and Section~\ref{subsection:rows}).
Moreover, we will show in Section~\ref{section:h_b_om_1} that each $\mtx_\alpha$ (and hence our whole iteration) can be seen as an iteration of Mathias forcings with respect to
certain filters; to show that these filters are
$\mathcal{B}$-Canjar, we will again use  Lemma~\ref{lemm:C_complete_subforcing}.
Let us start with a concept which is going to be very useful:

\begin{defi}\label{defi:full}
A condition $p\in \mtx_\omikron$ is called \emph{full} if there exists an $N\in \omega$ such that for all $\sigma\in \dom(p)$
\begin{enumerate}
	\item $ |s_\sigma^p|=N$,
\item
$N>\max(\rang(f_\sigma^p))$ and $N> \max(\rang(h_\sigma^p) )$,

			\item $\tau\in \dom(f_\sigma^p)$ for each $\tau  \in \dom(p)$ with $\tau\lhd \sigma$, and

						\item
						(letting $\sigma = \rho \concat \newalpha$)
				$\rho \concat \newbeta \in \dom(h_{\sigma}^p)$ for each $\newbeta<\newalpha$ with  							$\rho\concat \newbeta \in \dom(p)$.

\end{enumerate}
Moreover,
$p \in \PP_\lambda$ is
full if 
$p(0)$ is full.
\end{defi}

Later, we will consider quotients $\PP_\lambda/\PP_\eta$ and therefore use a modification, where $0$ is replaced by $\eta$, i.e., $p(\eta)$ is full; see Remark~\ref{rema:eta_tail}.

The set of full conditions is dense:

\begin{lemm}\label{lemm:full}
 For every condition $p\in\mtx_\omikron$ there exists a full condition $q$ with $q\leq p$ and $\dom(q)=\dom(p)$. 
Hence 
the set of full conditions in $\PP_\lambda$ is dense in $\PP_\lambda$.

\end{lemm}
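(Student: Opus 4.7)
The plan is to split the construction into two parts corresponding to the two kinds of fullness requirements: first use Lemma~\ref{lemm:promises_are_dense} to force the promise-functions $f_\sigma$ and $h_\sigma$ to have all the appropriate arguments in their domains (thereby securing clauses (3) and (4) of Definition~\ref{defi:full}), and then pad each $s_\sigma^p$ with zeros up to a common large length~$N$ (thereby securing clauses (1) and (2)). Both steps keep $\dom(q) = \dom(p)$, so they may be performed sequentially.

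Concretely, given $p \in \QQ_\alpha$, I would first (finitely many applications of) Lemma~\ref{lemm:promises_are_dense} to obtain $p' \leq p$ with the same domain for which $\tau \in \dom(f_\sigma^{p'})$ whenever $\tau \in \dom(p')$ and $\tau \lhd \sigma$, and $\rho \concat \newbeta \in \dom(h_\sigma^{p'})$ whenever $\newbeta < \newalpha$ and $\rho \concat \newbeta \in \dom(p')$ (where $\sigma = \rho \concat \newalpha$). Because $\dom(p')$ is finite and each $f_\sigma^{p'}$, $h_\sigma^{p'}$ has finite domain and range, I can then pick a single $N \in \omega$ strictly greater than $|s_\sigma^{p'}|$, every value $f_\sigma^{p'}(\tau)$, and every value $h_\sigma^{p'}(\tau)$ for $\sigma \in \dom(p')$. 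Defining $q$ by $s_\sigma^q := s_\sigma^{p'} \concat 0^{N - |s_\sigma^{p'}|}$, $f_\sigma^q := f_\sigma^{p'}$, $h_\sigma^q := h_\sigma^{p'}$, and $\dom(q) := \dom(p')$ immediately yields a candidate that satisfies all four fullness conditions and, assuming it is a condition, refines $p'$ (hence $p$) in the order of $\QQ_\alpha$.

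The main thing to verify — and the only mildly delicate point — is that padding with zeros preserves clauses (5) and (8) of Definition~\ref{defi:main_definition}. Here the key observation is that, by clause~(2) applied to $p'$, for $\tau \lhd \sigma$ we have $|s_\tau^{p'}| \geq |s_\sigma^{p'}|$, so the positions $n$ that are newly added in $\dom(s_\sigma^q)$ but not in $\dom(s_\sigma^{p'})$ are already in $\dom(s_\tau^{p'})$ or themselves newly padded zeros of $s_\tau^q$. In either case $s_\sigma^q(n) = 0$ at every new position, which makes the implication in (5) and the disjunction in (8) hold vacuously, both in the $T_\alpha$-case and in the $T'_\alpha$-case. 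At the original positions $n < |s_\sigma^{p'}|$, the corresponding implications/disjunctions are inherited from $p'$. A short case analysis along these lines is the main (routine) obstacle.

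Finally, for the ``in particular'' statement about $\PP_\lambda$, the definition of fullness in the iteration only asks $p(0)$ to be full in $\QQ_0$. Hence, given any $p \in \PP_\lambda$, I would apply the first part of the lemma to $p(0) \in \QQ_0$ (which lives in the ground model $V$) to obtain a full $q_0 \leq p(0)$, and then take $q \in \PP_\lambda$ defined by $q(0) := q_0$ and $q(\alpha) := p(\alpha)$ for every $0 < \alpha < \lambda$. This $q$ is below $p$ in the finite support iteration order and is full by construction, finishing the density claim.
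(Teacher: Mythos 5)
Your proposal is correct and follows essentially the same route as the paper's proof: first pass to the dense set from Lemma~\ref{lemm:promises_are_dense} to secure clauses (3) and (4) of Definition~\ref{defi:full}, then choose a single large $N$ and pad every $s_\sigma$ with $0$'s to length $N$. The paper leaves the verification that zero-padding preserves clauses (5) and (8) of Definition~\ref{defi:main_definition} (and the reduction of the $\PP_\lambda$ statement to the first coordinate) implicit, which you spell out correctly.
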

\begin{proof}
We can assume that $p$ belongs to the dense set $D$ from Lemma~\ref{lemm:promises_are_dense}, i.e.,
$p$ fulfills (3) and (4) for each $\sigma\in \dom(p)$.
Now let
\[
N>\max(\rang(f_\sigma^p)), \max(\rang(h_\sigma^p) ),  |s_\sigma^p|
\]
for every $\sigma\in\dom(p)$.
Finally, for every
$\sigma \in \dom(p)$, extend $s_\sigma^p$ with $0$'s to length~$N$.
It is easy to see that this
results in
a condition~$q$
which is full.
\end{proof}

We
now introduce
a notation for the
collection of conditions in~$\QQ_\omikron$
whose
domain is
contained in
a prescribed set of nodes:

\begin{defi}
Let $C \subs \lala$.
Define
\[
\FKWW_\omikron^C := \{ p \in \FKWW_\omikron \with \dom(p) \subs C \}.
\]
\end{defi}

In our completeness lemma below we are going to show that $\QQ_\omikron^C$ is a complete subforcing of~$\QQ_\omikron$ provided that $C$ has a
certain form.

\begin{defi}
Let $E\subseteq \lala$.
We call $E$ \emph{$\alpha$-$\leftup$-closed}
if
\begin{itemize}
\item for each $\sigma\in E$ and each $ \tau\lhd\sigma$ with $\tau\in T_\omikron$, we have
 $\tau \in E$,

	\item for each $\rho$ and $\newalpha$ with $\rho\concat \newalpha \in E$ and each  $ \newbeta<\newalpha$, we have $\rho\concat \newbeta \in E$.
\end{itemize}

We say that $C$ is \emph{$\alpha$-eligible} if 
$C=E \cup \bar{C}$,
where
$E \subs \lala$ is $\alpha$-$\leftup$-closed,
and either $\bar{C}$ is empty, or the following holds:
$E\subseteq \lambda^{<\gamma}$ for some $\gamma<\lambda$ and $\bar{C}\subseteq \lambda^{\gamma'}$ for some $\gamma' \geq \gamma$ (with~$\gamma'$ successor), and, 
for
$\sigma, \sigma' \in \bar{C}$,

\begin{enumerate}
\item either there
exist
$\rho$,
$i$ and $j$
such that $\rho\concat i=\sigma$ and $\rho\concat j = \sigma'$ 
(i.e., $\sigma$ and $\sigma'$ are in the same ``block''),

\item       or there exist
two incomparable nodes $\tau, \tau' \in E$
with
$\tau \lhd \sigma$ and $\tau' \lhd \sigma'$
(i.e., $\sigma$ and $\sigma'$ split within~$E$).

\end{enumerate}
\end{defi}

So 
an 
$\alpha$-eligible 
set consists of an $\alpha$-$\leftup$-closed part
together with nodes from 
one single later level.
Clearly, each 
$\alpha$-$\leftup$-closed set is 
$\alpha$-eligible.

The purpose of items (1) and (2) in the definition above is 
to ensure that after forcing with $\QQ_\omikron^C$, the two sets $a_{\sigma}$ and $a_{\sigma'}$ are almost disjoint (which is necessary for $\QQ_\omikron^C$ being a complete subforcing); this is guaranteed by either
item:
the two sets are forced to be almost disjoint either because it happens in the same block,
or because they are almost contained in almost disjoint sets which are already added by~$\QQ_\omikron^E$.

For $p\in \FKWW_\omikron$, let $p\doppelrestr C$ be the condition $p'$ with $\dom(p')=\dom(p)\cap C$, and $s_\sigma^{p'}=s_\sigma^{p}$, 
$f_\sigma^{p'}=f_\sigma^{p}\restr (C \cup T'_\alpha)$ 
and $h_\sigma^{p'}=h_\sigma^{p}\restr C$
for each $\sigma\in\dom(p')$.
Clearly, $p'$
is a condition
in~$\FKWW_\omikron^C$.
Note that if $C$ is $\alpha$-$\leftup$-closed, then $p\doppelrestr C=p\restr C$, because for every $\sigma\in\dom(p)\cap C$ clearly 
$f_\sigma^{p}\restr (C \cup T'_\alpha) = f_\sigma^{p}$ 
and $h_\sigma^{p}\restr C=h_\sigma^{p}$.

The
following crucial
completeness
lemma is given in a quite general form. This way, it can be used in Section~\ref{subsection:auxiliary}
as well as in Section~\ref{subsec:The_filters_are_B_Canjar}.
For Section \ref{subsection:auxiliary},
a somewhat
easier version would be enough
(see the proof of Lemma~\ref{lemm:hereditarily_is_complete}).

\begin{lemm}\label{lemm:C_complete_subforcing}
Let $C$
be 
$\alpha$-eligible. 
 Then
$\FKWW_\omikron^C$ is a complete subforcing
of~$\FKWW_\omikron$.
Moreover,
if $p\in \FKWW_\omikron$
is a full condition, then 
$p\doppelrestr C$ is a reduction of $p$ to $\FKWW_\omikron^C$.
\end{lemm}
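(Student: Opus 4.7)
The plan has two steps. The first is immediate: the inclusion $\FKWW_\omikron^C \hookrightarrow \FKWW_\omikron$ preserves order and incompatibility by inspection of the definitions, so completeness follows once every $p \in \FKWW_\omikron$ admits a reduction in $\FKWW_\omikron^C$. By density of full conditions (Lemma~\ref{lemm:full}) and the fact that a reduction of a strengthening is still a reduction of the original, it suffices to prove the ``moreover'' clause, which is the substantive content of the lemma.

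For the moreover clause, given a full $p$ and $q \in \FKWW_\omikron^C$ with $q \leq p \doppelrestr C$, I would construct a common extension $r \in \FKWW_\omikron$ of $p$ and $q$. First extend $q$ to a full $\tilde q \leq q$ (Lemma~\ref{lemm:full}) with common string length $N \geq N_p$; it is then enough to find $r \leq p, \tilde q$. Put $\dom(r) := \dom(p) \cup \dom(\tilde q)$, make each $|s^r_\mu| = N$, and set $s^r_\mu := s^{\tilde q}_\mu$ for $\mu \in \dom(\tilde q)$. For $\mu \in \dom(p) \setminus \dom(\tilde q)$, set $s^r_\mu(n) := s^p_\mu(n)$ for $n < N_p$, and for $n \in [N_p, N)$ put $s^r_\mu(n) := 1$ exactly when some $\sigma \in \dom(p) \cap \dom(\tilde q)$ with $\mu \lhd \sigma$ has $s^{\tilde q}_\sigma(n) = 1$. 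Define $f^r_\mu$ and $h^r_\mu$ as the unions of $f^p, f^{\tilde q}$ and $h^p, h^{\tilde q}$ respectively, giving priority to $\tilde q$'s values on the overlap; this is consistent because $\tilde q \leq p \doppelrestr C$ forces $f^{\tilde q} \leq f^p$ and $h^{\tilde q} \leq h^p$ wherever both are defined. Modulo $r$ being a condition, $r \leq p$ and $r \leq \tilde q \leq q$ are immediate from the construction.

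The real work is to verify that $r$ satisfies Definition~\ref{defi:main_definition}. Clauses (1), (3), (4), (6), (7) are bookkeeping, and (2) holds since every string has length $N$. The crux is (5) and (8). On positions $n < N_p$ these reduce to the corresponding constraints for $p$, using that $\tilde q$ inherits $p$'s data on $C$; on the new range $n \in [N_p, N)$ the ``forced'' values on $\mu \in \dom(p) \setminus \dom(\tilde q)$ must not clash with $\tilde q$'s values, and this is precisely where $\alpha$-eligibility of $C$ enters. A would-be violation of (8) at siblings $\tau, \tau' \in \dom(p)$ would pull each to value $1$ via a distinct $\sigma, \sigma' \in \dom(p) \cap \dom(\tilde q)$ extending them; by $\leftup$-closure of $E$ (first bullet) neither $\sigma$ nor $\sigma'$ can lie in $E$, so both lie in $\bar C$, and then the same-block vs.\ split-within-$E$ dichotomy yields a contradiction --- in the same-block case $\tau$ and $\tau'$ are both initial segments of the common parent of $\sigma, \sigma'$ of length $|\rho|+1$, forcing $\tau = \tau'$, while in the split-within-$E$ case the incomparable $E$-witnesses would both have to be initial segments of the common parent $\rho$ of $\tau, \tau'$, contradicting their incomparability. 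Constraint (5) is handled by the analogous argument, using the second bullet of $\leftup$-closure to control siblings and the fullness of $p$ and $\tilde q$ to guarantee that every relevant $f$- and $h$-promise is already in place with value below $N_p$, so that constraints on $[N_p, N)$ propagate consistently. The combinatorial bookkeeping for these cases is the principal obstacle; once it is discharged, $r$ is a legitimate common extension, establishing the reduction property and hence the completeness of the subforcing.
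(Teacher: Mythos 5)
Your overall strategy is the paper's: incompatibility preservation via restriction to $C$, reduction of the completeness statement to the ``moreover'' clause by density of full conditions, and then, given $q\leq p\doppelrestr C$ in $\FKWW_\omikron^C$, an explicit amalgam $r$ on $\dom(p)\cup\dom(q)$ which keeps $q$'s strings on $\dom(q)$, merges the $f$'s and $h$'s (with $q$ given priority), and lengthens the strings at nodes of $\dom(p)\setminus\dom(q)$ by pulling $1$'s up from $q$'s strings at descendants in $\dom(p)\cap\dom(q)$. Up to inessential normalizations (you pass to a full $\tilde q$ and pad everything to one length $N$, the paper lengthens only to the longest relevant $q$-string), this is the same construction as in the paper.

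The gap is in the verification, at exactly the two places where the hypotheses do real work. For clause (8) you only treat the configuration in which \emph{both} $h$-linked siblings $\tau,\tau'$ lie in $\dom(p)\setminus\dom(\tilde q)$ and each receives a pulled-up $1$; your closure/dichotomy argument is fine for that case. But the delicate case is the mixed one: one sibling is in $\dom(q)$, so its string in $r$ is $q$'s string, which may be long and may carry $1$'s wherever $q$ chose to put them, while the other sibling lies in $\dom(p)\setminus C$ and receives $1$'s pulled up from a descendant in $\bar{C}\cap\dom(q)$. Nothing in your argument (and nothing in fullness) rules out a clash there: if a node of $\bar{C}$ has an ancestor in $\dom(p)\setminus C$ whose same-block sibling lies in $E$, then $q$ is free to place a common fresh $1$ at that $E$-sibling and at the $\bar{C}$-node, and the pulled-up $1$ then violates the $h$-promise of $p$. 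This mixed case is precisely the paper's ``Case 3'' of the $h$-check, the one step where the particular structure of an $\alpha$-eligible set must be invoked, and it cannot be absorbed into the both-pulled case. A parallel omission occurs for clause (5) when $\alpha>0$: if $\sigma\in\dom(p)\setminus\dom(q)$ (hence $\sigma\notin C$) has an old node $\tau\in\dom(f^p_\sigma)\cap T'_\alpha$, then the pulled-up $1$'s at $\sigma$ must land inside $a_\tau$, and your appeal to fullness does not give this, since fullness only forces nodes of $\dom(p)$ --- not nodes of $T'_\alpha$ --- into the $f$-domains of their descendants; this is the case the paper itself only indicates (``analogous, with $a_\tau$ in place of $s_\tau$''), and it needs its own argument. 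So the proposal reproduces the paper's construction, but omits precisely the verifications in which $\alpha$-eligibility and the interaction with previously added sets are actually used.
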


Note that
the sets
$\lambda^1=\{ \sigma \in \lala\with |\sigma|=1\}$
and
$1^{<\lambda}= \{ \sigma \in \lala \with \sigma(\xi)=0$ $\text{for every } \xi\}$
are $0$-$\leftup$-closed, hence the forcings $\QQ_0^{(\lambda^1)}$ and $\QQ_0^{(1^{<\lambda})}$ are complete subforcings of $\QQ_0$ by the lemma. These forcings are
isomorphic
to the forcings introduced by Hechler \cite{Hechler} to
add a mad family and a tower, respectively
(compare with the respective definitions in~\cite{our_TOW_MAD}).

\begin{proof}[Proof of Lemma~\ref{lemm:C_complete_subforcing}]
We give the proof only for the case $\alpha=0$, and leave the (only slightly different) general case to the reader.

We first show that $\FKWW_\zeromikron^C\subseteq_{ic} \FKWW_\zeromikron$.
Let $p_0, p_1\in \FKWW_\zeromikron^C$ and $q\in \FKWW_\zeromikron$ with $q\leq p_0, p_1$. We have to show that there exists a condition $q'\in \FKWW_\zeromikron^C$ with $q'\leq p_0,p_1$.
Let $q':= q\doppelrestr C$. It is very easy to check that $q'$ is as we wanted.

Let $p\in \FKWW_\zeromikron$. 
To find a reduction, 
let $p'\leq p$ be a full condition
 with $\dom(p') = \dom(p)$
 (see Lemma~\ref{lemm:full}).
We will show that $p'\doppelrestr C$ is a reduction of $p$ to $\QQ_\zeromikron^C$.
Let 
$q\leq p'\doppelrestr C$ 
with $q\in \FKWW_\zeromikron^C$. We have to show that $q$ is compatible with $p$. To show this, we define a witness $r$ as follows.
Let $\dom(r):= \dom(p')\cup \dom(q)$.
For $\sigma\in \dom(q)$, let $s_\sigma^r:= s_\sigma^q$, and
for $\sigma\in \dom(q)\setminus \dom(p')$, let $f_\sigma^r:= f_\sigma^q$ and $h_\sigma^r:=h_\sigma^q$.

 For $\sigma\in\dom(q)\cap\dom(p')$, let $\dom(f_\sigma^r):= \dom(f_\sigma^q)\cup \dom(f_\sigma^{p'})$ and let
$f_\sigma^r(\sigma'):=\min(f_\sigma^q(\sigma'),f_\sigma^{p'}(\sigma'))$ for every $\sigma'\in \dom(f_\sigma^q)\cap  \dom(f_\sigma^{p'})$ and $f_\sigma^r(\sigma'):=f_\sigma^{p'}(\sigma')$ for $\sigma'\in \dom(f_\sigma^{p'})\setminus \dom(f_\sigma^q)$. Similarly, let
$\dom(h_\sigma^r):= \dom(h_\sigma^q)\cup \dom(h_\sigma^{p'})$ and let
$h_\sigma^r(\sigma'):=\min(h_\sigma^q(\sigma'),h_\sigma^{p'}(\sigma'))$ for every $\sigma'\in \dom(h_\sigma^q)\cap  \dom(h_\sigma^{p'})$ and $h_\sigma^r(\sigma'):=h_\sigma^{p'}(\sigma')$ for $\sigma'\in \dom(h_\sigma^{p'})\setminus \dom(h_\sigma^q)$.

 For $\sigma\in\dom(p')\setminus \dom(q)$, make the following definition. Let $f_\sigma^r:= f_\sigma^{p'}$ and $h_\sigma^r:= h_\sigma^{p'}$. If there is no $\tau\in \dom(q)$ with $\sigma\unlhd \tau$, let $s_\sigma^r:= s_\sigma^{p'}$. 
  If there exists $\tau\in \dom(q)$ with $\sigma\unlhd \tau$, extend $s_\sigma^{p'}$ to the maximal length of the $s_{\tau}^q$ for $\tau\in\dom(q)$ with $\sigma\lhd \tau$ in the following way: if $n\geq |s_\sigma^{p'}|$ and there exists $\tau\in \dom(p')$ which extends $\sigma$
  with $s_{\tau}^q(n)=1$,
let $s_\sigma^r(n)=1$, and let $s_\sigma^r(n)=0$
 otherwise. This makes sure that $s_\sigma^{r}(n)=1$ whenever $s_\tau^{r}(n)=1$ for $\sigma \init \tau$ and $\sigma\in \dom(f_\tau^{p'})$.

\begin{clai*}
$r$ is a condition.
\end{clai*}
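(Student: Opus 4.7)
The plan is to verify the eight clauses (1)--(8) of Definition~\ref{defi:main_definition} for $r$. Since $r$ is defined piecewise, for each $\sigma \in \dom(r) = \dom(p') \cup \dom(q)$ I would split into three cases according as $\sigma \in \dom(q) \setminus \dom(p')$, $\sigma \in \dom(p') \cap \dom(q)$, or $\sigma \in \dom(p') \setminus \dom(q)$; most clauses will further break on an analogous trichotomy for $\tau$ (or $\sigma'$).

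Clauses (1), (3), (4), (6), (7) are essentially bookkeeping and hold by construction: each $s_\sigma^r$ is a finite $0$-$1$ sequence, and the domains of $f_\sigma^r$ and $h_\sigma^r$ are unions (resp.\ min-combinations) of the corresponding domains from $p'$ and $q$, so the shape restrictions (``below $\sigma$'' for $f$, ``same block and strictly left'' for $h$) are preserved. For (2) I would do a short length-tracking argument, using (i) that $p'$ is full so $|s_\sigma^{p'}| = N$ uniformly on $\dom(p')$, (ii) that $q$ is itself a condition so $|s_\tau^q| \geq |s_\sigma^q|$ for $\tau \init \sigma$ in $\dom(q)$, and (iii) the extension rule, which explicitly raises $s_\tau^{p'}$ for $\tau \in \dom(p') \setminus \dom(q)$ to the maximum length of the $s_{\tau'}^q$ with $\tau \lhd \tau' \in \dom(q)$.

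The main obstacle is verifying the promise-consistency clauses (5) and (8), because the $s$-, $f$-, and $h$-parts of $r$ are patched from $p'$ and $q$, which were defined independently. To verify (5) I fix $\sigma \in \dom(r)$, $\tau \in \dom(f_\sigma^r)$, and $n \geq f_\sigma^r(\tau)$ with $s_\sigma^r(n) = 1$, and aim to conclude $s_\tau^r(n) = 1$. The tools I would combine are: (a) the constraint of $q$ itself, exploited whenever $\sigma \in \dom(q)$ (so $s_\sigma^r = s_\sigma^q$) together with $f_\sigma^r(\tau) \leq f_\sigma^q(\tau)$ on the overlap; (b) the relation $q \leq p' \doppelrestr C$, which means $q$ already respects those promises of $p'$ that live in $C$; (c) fullness of $p'$, giving $\max \rang(f_\sigma^{p'}) < N = |s_\sigma^{p'}|$, so the $p'$-promise is already realized inside $s_\sigma^{p'}$; and (d) the extension rule, which forces any new ``$1$'' in $s_\sigma^r$ above $|s_\sigma^{p'}|$ (in the case $\sigma \in \dom(p') \setminus \dom(q)$) to be witnessed by some $\tau' \in \dom(q)$ with $\sigma \lhd \tau'$ and $s_{\tau'}^q(n) = 1$. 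This witness then propagates along $\tau \init \sigma \lhd \tau'$ to $s_\tau^r(n) = 1$, either via the condition property of $q$ (if $\tau \in \dom(q)$) or via the analogous extension rule applied to $s_\tau^r$ (if $\tau \in \dom(p') \setminus \dom(q)$). Clause (8) follows the same case-analysis skeleton, now for the disjointness promises, again exploiting fullness of $p'$ so that the relevant $h$-promise range sits below $|s^{p'}|$; the one extra point is that eligibility of $C$ guarantees that whenever $p'$ and $q$ both prescribe $h$-promises on a same-block pair, these promises are compatible, either because the pair lies entirely in $C$ (so $q$ inherits the promise via $q \leq p' \doppelrestr C$) or because it is disentangled by the block structure. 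Once all eight clauses are checked, $r$ is a condition in $\QQ_0$.
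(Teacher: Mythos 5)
Your skeleton is the paper's: the same trichotomy on membership in $\dom(q)$ versus $\dom(p')\setminus\dom(q)$, and the same four tools (the condition property of $q$, the relation $q\le p'\doppelrestr C$, fullness of $p'$, and the copy-up extension rule for $s_\sigma^r$ when $\sigma\in\dom(p')\setminus\dom(q)$ has nodes of $\dom(q)$ below it). For clauses (1)--(4), (6), (7), for the length clause (2), and for the $f$-clause (5), your plan is essentially the verification the paper carries out.

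The gap is in clause (8): you locate the role of the eligibility of $C$ in the wrong place. The situation you single out -- ``$p'$ and $q$ both prescribe $h$-promises on a same-block pair'' -- is unproblematic and needs no structure of $C$ at all: if both nodes of the pair lie in $\dom(q)\subseteq C$, then $q\le p'\doppelrestr C$ already forces $h^q$ to be defined there with value $\le$ the $p'$-value, so $h^r$ is $q$'s value and the condition property of $q$ settles it. The genuinely dangerous case is a same-block pair $\rho,\rho'$ that is $h$-linked in $p'$ with exactly one of the two nodes in $\dom(q)$: then $q$ prescribes nothing about the pair, yet the copy-up rule may lengthen the $s$-part of the node outside $\dom(q)$ by importing $1$'s from a node of $\dom(q)$ below it, and these can collide at positions $\ge N$ with the $q$-determined $s$-part of the sibling (fullness only controls positions $<N$, where both strings still agree with $p'$). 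The paper closes this case not by reconciling promises but by a definedness argument: the form of an eligible $C$ guarantees that at most one of $\rho,\rho'$ has a node of $C$ extending it, hence for $m\ge N$ at most one of $s_\rho^r(m)$, $s_{\rho'}^r(m)$ is defined and clause (8) is vacuous there, while below $N$ everything reduces to $p'$ being a condition. This use of eligibility -- controlling which of the two patched strings can be long at all -- is the idea missing from your sketch, and without it the mixed case of clause (8) does not go through; ``disentangled by the block structure'' does not substitute for it, since the node outside $C$ can still be entangled through a $C$-node below it unless that is ruled out.
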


\begin{proof}
It is very easy to check that $s_\sigma^r$, $f_\sigma^r$ and $h_\sigma^r$ are well-defined with the right domains and ranges for all $\sigma\in \dom(r)$.

If $\sigma \unlhd \tau$, then $|s_\sigma^r|\geq |s_\tau^r|$: if $\sigma$ and $\tau$ are both in $\dom(q)$, so $s_\sigma^r=s_\sigma^q$ and $s_\tau^r=s_\tau^q$, so the length is ok, because they are both from $q$; if $\sigma\notin \dom(q)$, we lengthened $s_\sigma^{r}$ to make it as long as all the $s$'s of $\tau$'s which extend it.

Let $\sigma, \tau\in \dom(r)$ with 
$\tau\in \dom(f_\sigma^r)$ and $m\geq f_\sigma^r(\tau)$ and $s_\sigma^r(m)=1$; we have to show that $s_\tau^r(m)=1$ 
(note that, in the general case, i.e., 
for $\QQ_\alpha$ with $\alpha > 0$, one also has to deal with the case where
$\tau\in \dom(f_\sigma^r)$, but
$\tau \notin \dom(r)$, which is analogous, but concerned 
with $a_\tau^r(m)$ in place of $s_\tau^r(m)$). 
\emph{Case 1:} $\sigma$ and $\tau$ are both in $\dom(q)$. It follows that $\tau\in C\cap \dom(f_\sigma^{r})= \dom(f_\sigma^q)$, $f_\sigma^r(\tau)=f_\sigma^q(\tau)$ and $s_\sigma^r=s_\sigma^q$ and $s_\tau^r=s_\tau^q$, so they fit together, because $q$ is a condition.
\emph{Case 2:} $\sigma\in\dom(q)$, $\tau\notin\dom(q)$. Since $\tau\notin\dom(q)$ and $\tau\in \dom(f_\sigma^r)$, it follows that $\tau\in \dom(f_\sigma^{p'})$. In particular $f_\sigma^{p'}$ is defined, so $\sigma\in \dom(p')$.
 If $m<|s_\tau^{p'}|$, it follows that $s_\tau^r(m)=s_\tau^{p'}(m)$ and $s_\sigma^r(m)=s_\sigma^{p'}(m)$, and $f_\sigma^r(\tau)=f_\sigma^{p'}(\tau)$. So $s_\tau^r(m)=1$, because $p'$ is a condition.
If $m\geq |s_\tau^{p'}|=|s_\sigma^{p'}|$, then $s_\sigma^{r}(m)=1$ implies that $s_{\rho}^q(m)=1$ for some $\rho\unrhd \sigma$, but then $\rho \unrhd \tau$, and therefore also $s_\tau^{r}(m)=1$.
\emph{Case 3:} $\sigma\notin\dom(q)$. So $f_\sigma^r=f_\sigma^{p'}$, and it follows that $\tau\in\dom(f_\sigma^{p'})\subseteq \dom(p')$.
If $m<|s_\sigma^{p'}|$, it follows that $s_\tau^{p'}(m)=s_\tau^{r}(m)=1$, because $p'$ is a condition and $\tau\in \dom(f_\sigma^{p'})$.
 If $m\geq|s_\sigma^{p'}|$, then our definition implies that there exists a $\rho$ with $\rho\rhd \sigma$, $\rho\in\dom(p')$ and $s_{\rho}^{q}(m)=1$. So both $\rho$ and $\tau$ are in $\dom(p')$, $\tau\in \dom(f_{\rho}^{p'})$ and $m\geq f_{\rho}^{p'}(\tau)$, hence $s_{\rho}^{r}(m)=1$ implies that $s_{\tau}^{r}(m)=1$ 
 by definition of $s_\tau^r$.
This finishes the proof that $s_\sigma^r$ and $s_\tau^r$ fit together (with respect to $f_\sigma^r$).

Assume $\rho, \rho' \in \dom(r)$ and
$\rho'\in \dom(h_\rho^r)$, $m\geq h_\rho^r(\rho')$ and $s_\rho^r(m)=1$; we have to show that $s_{\rho'}^r(m)=0$, if it is defined.
\emph{Case 1:}
$\rho, \rho'\in \dom(q)$.
The requirement follows, because $q$ is a condition.
\emph{Case 2:} $\rho, \rho'\in \dom(p')\setminus \dom(q)$. In this case the requirement holds, because $p'$ is a condition.
\emph{Case 3:} One of them is in $\dom(q)$, the other one not. 
Since $\rho'\in \dom(h_\rho^r)$, both are in $\dom(p')$ and $h_\rho^r(\rho')=h_\rho^{p'}(\rho')$ (because $q$ cannot
provide an $h$-value for a pair of two nodes
if not both of them are in $\dom(q)$). So for $m<|s_\rho^{p'}|$, the requirement holds, because it depends only on $p'$.
The form of $C$ implies that for at most one of the two nodes $\rho$ and $\rho'$ there exists a node in $C$  extending it. Therefore, for $m\geq |s_\rho^{p'}|$, only one of $s_\rho^{r}(m)$ and $s_{\rho'}^r(m)$
is
defined, and we have nothing to show.
This finishes the proof that $s_\rho^r$ and $s_{\rho'}^r$ fit together (with respect to $h_\rho^r$).
\end{proof}

It is straightforward to check that
$r$ 
extends both $q$ and $p'$ (and therefore~$p$).
\end{proof}

\section{No refinement, and madness of levels}\label{section:om_2_in_COM}

This section is dedicated to the central
part of the proof
that the generic object added by our forcing iteration is a distributivity matrix of height~$\lambda$: we will show that the levels are mad families and that there is no further refinement. This will be done in Section~\ref{subsection:rows} and Section~\ref{subsection:branches}, respectively.
Before that, we provide several preliminary lemmas and concepts.

\subsection{On forcing iterations and correct systems}
\label{subsec:general_forcing}

In this section, we give some lemmas about forcing
iterations (and completeness) in general,
i.e., they are not specific for our forcing from Definition~\ref{defi:main_definition}. We will need
them
for our proofs. For a good source about forcing iteration, see~\cite{Tools}.
Here $\PP$, $\QQ$, etc.\
are arbitrary forcing notions.

For two
forcing notions $\PP$ and $\PP'$, let $\PP' \compl \PP$ denote that $\PP'$ is a complete subforcing of~$\PP$.
Recall that $\PP' \compl \PP$ if and only if
\begin{enumerate}

\item $\PP' \subseteq_{ic} \PP$, i.e.,
for each $q, q' \in \PP'$, we have
$q \incomp_{\PP'} q' \implies q \incomp_{\PP} q'$, and

\item
for each condition $p \in \PP$, there is
$q \in \PP'$ such that $q$ is a
\emph{reduction of~$p$ to~$\PP'$},
i.e., for each $r \in \PP'$ with $r \leq q$, we have $r \comp_{\PP} p$.

\end{enumerate}

Let us first recall two easy facts:

\begin{lemm}\label{lemm:compl_compl_is_compl}
Suppose that $\PP_0 \compl \PP$ and\footnote{In fact,
$\PP_1 \subseteq_{ic} \PP$ is
sufficient for the proof to go through.} $\PP_1 \compl \PP$ satisfying $\PP_0 \subs \PP_1$.
Then $\PP_0 \compl \PP_1$.
Moreover, if $q \in \PP_0$
is a reduction of $p \in \PP_1$ from~$\PP$ to $\PP_0$, then $q$ is also a reduction of~$p$ from $\PP_1$
to~$\PP_0$.
\end{lemm}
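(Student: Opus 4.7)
The plan is to verify the two standard requirements for $\PP_0 \compl \PP_1$, namely the incompatibility-preservation clause and the existence of reductions, and to observe that both verifications go through using $\PP_0 \compl \PP$ together with only $\PP_1 \subseteq_{ic} \PP$ (explaining the footnote's remark). The same witness $q$ will serve simultaneously for the reduction clause and the moreover statement.

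First, for $\PP_0 \subseteq_{ic} \PP_1$: given $q, q' \in \PP_0$ with $q \incomp_{\PP_0} q'$, I would apply $\PP_0 \compl \PP$ to get $q \incomp_\PP q'$. Since $\PP_1 \subs \PP$ with the inherited order, any common lower bound in $\PP_1$ would also be one in $\PP$, so $q \incomp_{\PP_1} q'$ follows immediately.

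Second, for the reduction property together with the moreover part: let $p \in \PP_1$, so in particular $p \in \PP$, and let $q \in \PP_0$ be a reduction of $p$ to $\PP_0$ as a complete subforcing of $\PP$, which exists by $\PP_0 \compl \PP$. To show that $q$ is also a reduction of $p$ to $\PP_0$ viewed as a subforcing of $\PP_1$, take any $r \in \PP_0$ with $r \leq q$; since the order on $\PP_0$ is inherited, we have $r \leq q$ in $\PP$, hence $r \comp_\PP p$ by the choice of $q$. Now $r, p \in \PP_1$, and the contrapositive of $\PP_1 \subseteq_{ic} \PP$ states precisely that compatibility in $\PP$ transfers back to compatibility in $\PP_1$; thus $r \comp_{\PP_1} p$, as required.

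There is no genuine obstacle here; the only conceptual point is to notice that $\PP_1 \compl \PP$ is used exclusively through $\PP_1 \subseteq_{ic} \PP$, and that the single witness $q$ supplied by $\PP_0 \compl \PP$ already does double duty once one checks that the order and (in)compatibility relations restrict correctly.
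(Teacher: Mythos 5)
Your proof is correct, and since the paper states this lemma as an easy fact without proof, your argument is exactly the standard one intended: both clauses are verified directly from the definitions, with the reduction $q$ supplied by $\PP_0 \compl \PP$ doing double duty for the moreover part. Your observation that $\PP_1 \compl \PP$ is only used through $\PP_1 \subseteq_{ic} \PP$ matches precisely the remark in the paper's footnote.
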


\begin{lemm}
\label{lemm:Resch_lemma}
Suppose that
$\PP' \compl \PP$.
Let $\varphi$ be some formula, let $\name{x}$, $\name{y}$, etc.\ be
$\PP'$-names,
and let $p \in \PP$ such that
$p \forces_{\PP} \varphi(\name{x}, \name{y}, \ldots)$.
Then
for each $p' \in \PP'$ which is a reduction of~$p$,
we have 
$p'
\forces_{\PP'}
\varphi(\name{x}, \name{y}, \ldots)$.
\end{lemm}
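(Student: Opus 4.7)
The plan is to argue by contradiction. Suppose $p' \in \PP'$ is a reduction of $p$ but $p' \nforces_{\PP'} \varphi(\name{x}, \name{y}, \ldots)$. Then I can fix $q' \in \PP'$ with $q' \leq p'$ such that $q' \forces_{\PP'} \neg\varphi(\name{x}, \name{y}, \ldots)$. Since $q' \leq p'$ in $\PP'$ and $p'$ is a reduction of $p$ to $\PP'$, the definition of reduction yields some $r \in \PP$ with $r \leq q'$ and $r \leq p$. From $r \leq p$ together with the hypothesis $p \forces_\PP \varphi(\name{x}, \name{y}, \ldots)$, we obtain $r \forces_\PP \varphi(\name{x}, \name{y}, \ldots)$.

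The heart of the argument is then the standard absoluteness fact: whenever $\PP' \compl \PP$, any forcing statement ``$q' \forces_{\PP'} \psi$'' involving only $\PP'$-names lifts to ``$q' \forces_\PP \psi$''. This rests on the observation that if $G$ is $\PP$-generic with $q' \in G$, then $G \cap \PP'$ is $\PP'$-generic and contains $q'$, and the interpretation of any $\PP'$-name with respect to $G$ coincides with its interpretation with respect to $G \cap \PP'$. Applying this to $q' \forces_{\PP'} \neg\varphi(\name{x}, \name{y}, \ldots)$ gives $q' \forces_\PP \neg\varphi(\name{x}, \name{y}, \ldots)$, and since $r \leq q'$, we conclude $r \forces_\PP \neg\varphi(\name{x}, \name{y}, \ldots)$, contradicting the previous paragraph.

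The only non-trivial point is the absoluteness step between the two forcing relations, but this is a textbook consequence of $\PP' \compl \PP$ (no separativity assumption is required, since reductions still guarantee that the restriction of a $\PP$-generic to $\PP'$ is $\PP'$-generic). I do not anticipate any further obstacle; the proof is essentially a two-line contradiction once this standard fact is invoked.
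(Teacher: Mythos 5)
Your skeleton is the natural one and is surely the intended argument (the paper states this lemma as a recalled ``easy fact'' and gives no proof of it): pick $q'\leq p'$ in $\PP'$ forcing $\neg\varphi$, use that $p'$ is a reduction of $p$ to get $r\in\PP$ with $r\leq q',p$, and derive a contradiction. The steps ``$q'$ is compatible with $p$ in $\PP$'' and ``$r\leq p$ implies $r\forces_\PP\varphi$'' are fine. The genuine gap is the step you call a textbook fact: it is \emph{not} true that for $\PP'\compl\PP$ and an arbitrary formula $\psi$ with $\PP'$-name parameters, $q'\forces_{\PP'}\psi$ lifts to $q'\forces_\PP\psi$. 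Your own justification only yields that $V[G\cap\PP']\models\psi$ (the names are evaluated the same way by $G$ and by $G\cap\PP'$); to conclude $V[G]\models\psi$ you need $\psi$ to be upward absolute from the intermediate extension $V[G\cap\PP']$ to $V[G]$, and this can fail. Concretely, let $\PP'$ be the trivial forcing and $\PP=\Coll(\om,\om_1)$: then $\PP'\compl\PP$, the weakest condition is a reduction of every $p\in\PP$, and it forces in $\PP'$ that the ground model $\om_1$ is uncountable, while in $\PP$ it forces the negation. The same example shows that the lemma itself is false if $\varphi$ is literally allowed to be ``some formula'', so no proof of the unrestricted statement can exist; the issue is absoluteness of $\varphi$, not separativity (your remark about separativity is correct but beside the point).

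What rescues both the lemma and your argument is an absoluteness hypothesis that the paper leaves implicit but satisfies in every application: there $\varphi$ is always a statement such as $m\in\name{b}$, $\name{b}\setminus n\subseteq a_\tau$, ``$\name{b}\cap a_\tau$ is (in)finite'', or membership in the inner model $V[G]$ in the proof of Lemma~\ref{lemm:wotan} --- statements whose truth value is absolute between transitive models containing the parameters, in particular between $V[G\cap\PP']$ and $V[G]$. Under such a hypothesis $\neg\varphi$ does transfer upward from $V[G\cap\PP']$ to $V[G]$, your lifting step becomes valid, and the contradiction closes exactly as you describe. So the fix is to make the absoluteness assumption explicit (or to restrict to the class of formulas actually used in the paper); as written, the unrestricted lifting claim --- precisely the point you flag as the ``only non-trivial point'' and then wave through --- is where the proof breaks.
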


Let us now recall the following well-known fact (see, e.g., \cite{Brendle_Vera}):
If $\{ \PP_\alpha, \name{\mtx}_\alpha \with \alpha < \delta \}$ and  $\{ \PP'_\alpha, \name{\mtx}'_\alpha \with \alpha < \delta \}$
are finite support iterations
such that
$\forces_{\PP_\alpha} \name{\mtx}'_\alpha \compl \name{\mtx}_\alpha$ for each $\alpha < \delta$, then $ \PP'_\delta$ is complete in $ \PP_\delta$. 
We will need the following 
technical strengthening 
of this fact.

\begin{lemm}\label{lemm:general_iteration_reduction}
Let $\{ \PP_\alpha, \name{\mtx}_\alpha \with \alpha < \delta \}$ and  $\{ \PP'_\alpha, \name{\mtx}'_\alpha \with \alpha < \delta \}$
be\footnotemark{} finite support iterations
such that for each $\alpha < \delta$,
\[
\forces_{\PP_\alpha} \name{\mtx}'_\alpha \compl \name{\mtx}_\alpha.
\]
Then
$\PP'_{\delta}$ is a complete subforcing
of $\PP_{\delta}$.

\footnotetext{We do not seem to need here that our finite support iterations are c.c.c.; however,
finite support iterations
of non-c.c.c.\ iterands
collapse cardinals. We will use the lemma for our forcings from Definition~\ref{defi:main_definition}, so,  in our application,
everything will have the c.c.c.\ anyway.}

Moreover, if
$\Resch{:}\ \mtx_0 \rightarrow \mtx'_0$
is a map such that $\Resch(q)$ is a reduction of~$q$ for each $q \in \mtx_0$,
then
for each $p \in \PP_\delta$,
there is a $p' \in \PP'_\delta$ such that $p'$ is a reduction of $p$,
and
$p'(0) = \Resch(p(0))$, and,
if
$\alpha \geq 1$ and
$p(\alpha)$
is a $\PP'_\alpha$-name with
$p \restr \alpha \forces p(\alpha) \in \name{\QQ}'_\alpha$,
then $p'(\alpha) = p(\alpha)$.
\end{lemm}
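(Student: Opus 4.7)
The plan is to proceed by induction on $\delta$, simultaneously establishing $\PP'_\delta \compl \PP_\delta$ and the existence of a reduction of the specific form demanded by the moreover clause. The base case $\delta = 1$ is immediate, since $\PP_1 \cong \mtx_0$ and $\PP'_1 \cong \mtx'_0$, and the assumption on $\Resch$ directly produces the desired reduction $p'(0) = \Resch(p(0))$.

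For the successor step $\delta = \beta + 1$, I would first use the inductive hypothesis to obtain $\PP'_\beta \compl \PP_\beta$. Since by assumption $\forces_{\PP_\beta} \dot{\mtx}'_\beta \compl \dot{\mtx}_\beta$, Lemma~\ref{lemm:Resch_lemma} applied with the formula ``$\dot{\mtx}'_\beta \compl \dot{\mtx}_\beta$'' then gives $\forces_{\PP'_\beta} \dot{\mtx}'_\beta \compl \dot{\mtx}_\beta$ as well (after rewriting $\dot{\mtx}'_\beta, \dot{\mtx}_\beta$ as $\PP'_\beta$-names, justified by Lemma~\ref{lemm:compl_compl_is_compl}). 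The standard two-step iteration fact then yields $\PP'_\beta * \dot{\mtx}'_\beta \compl \PP_\beta * \dot{\mtx}_\beta$. For the limit step, any $p \in \PP_\delta$ has finite support contained in some $\beta < \delta$, so the inductive hypothesis produces a reduction $p' \in \PP'_\beta \subs \PP'_\delta$; a routine check using the incompatibility criterion shows $p'$ remains a reduction inside the larger $\PP'_\delta$.

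For the moreover part, $p'$ is built recursively along the (finite) support of~$p$. At coordinate~$0$, set $p'(0) := \Resch(p(0))$. At $\alpha \geq 1$, if $p(\alpha)$ is already a $\PP'_\alpha$-name with $p \restr \alpha \forces p(\alpha) \in \dot{\mtx}'_\alpha$, set $p'(\alpha) := p(\alpha)$, which trivially reduces itself (noting that by induction $p' \restr \alpha$ is a reduction of $p \restr \alpha$, so every extension of $p'(\alpha) = p(\alpha)$ in $\dot{\mtx}'_\alpha$ below $p'\restr\alpha$ is compatible with $p(\alpha)$ in $\dot{\mtx}_\alpha$). At other coordinates, working below $p' \restr \alpha$ in $\PP'_\alpha$ and invoking $\forces_{\PP'_\alpha} \dot{\mtx}'_\alpha \compl \dot{\mtx}_\alpha$, one chooses a $\PP'_\alpha$-name $p'(\alpha)$ which is forced to be a reduction of $p(\alpha)$ from $\dot{\mtx}_\alpha$ to $\dot{\mtx}'_\alpha$; below the (not necessarily empty) set of conditions incompatible with $p' \restr \alpha$ pick any name for an element of $\dot{\mtx}'_\alpha$ for definiteness. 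The verification that the resulting $p' \in \PP'_\delta$ is a reduction of $p$ reduces to a stage-by-stage argument: given $r \leq p'$ in $\PP'_\delta$, assemble a common extension of $r$ and $p$ in $\PP_\delta$ coordinate by coordinate, using at stage $\alpha$ the reduction property of $p'(\alpha)$ together with Lemma~\ref{lemm:Resch_lemma} to transfer forcing statements from $\PP'_\alpha$ up to $\PP_\alpha$.

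The main obstacle will be the non-separativity of the iterands (see Remark~\ref{rema:not_sep}), which blocks the cleanest ``canonical reduction via the Boolean completion'' approach: the reduction names at individual coordinates must be constructed explicitly (rather than via quotient maps of Boolean algebras) so that the coordinate-wise compatibility argument in the verification step goes through without appealing to separativity. In addition, one must be careful with the distinction between $p(\alpha)$ being a $\PP'_\alpha$-name and merely a $\PP_\alpha$-name, which is why the moreover statement is formulated as a conditional that only fixes $p'(\alpha) = p(\alpha)$ in the case where the former holds.
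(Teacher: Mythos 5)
First, a point of comparison: the paper does not actually prove this lemma — it is stated as a ``technical strengthening'' of the well-known completeness fact quoted just before it (with a reference for the latter), so your argument has to be judged on its own. Your skeleton is the natural one: induction on $\delta$, the standard two-step fact at successors, direct limits at limit stages, and for the moreover part a coordinate-wise construction with $\Resch(p(0))$ at $0$ and $p'(\alpha)=p(\alpha)$ at the special coordinates, verified by a recursion that transfers $\leq$-statements from $\PP'_\alpha$ into the $\PP_\alpha$-extension. The base case, coordinate $0$, and the special coordinates do go through essentially as you describe, and you are right that non-separativity is the reason one cannot hide behind Boolean completions.

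There are, however, two genuine problems. First, the statement ``$\forces_{\PP'_\beta} \name{\QQ}'_\beta \compl \name{\QQ}_\beta$'' is not meaningful: $\name{\QQ}_\beta$ is a $\PP_\beta$-name, and $\QQ_\beta$ need not exist in the $\PP'_\beta$-extension at all (in the paper's application the underlying set $T_\alpha$ of $\QQ_\alpha$ contains nodes added only by the full iteration, not by $\below{\gamma}\PP_\alpha$); Lemma~\ref{lemm:compl_compl_is_compl} provides no such ``rewriting of names''. This slip is harmless for the plain completeness part, since the standard two-step fact needs exactly the stated hypothesis $\forces_{\PP_\beta}\name{\QQ}'_\beta\compl\name{\QQ}_\beta$. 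But it becomes fatal at the crux of the moreover part: at a non-special coordinate you propose to ``choose a $\PP'_\alpha$-name $p'(\alpha)$ which is forced to be a reduction of $p(\alpha)$''. Such a name need not exist, because the value of a $\PP'_\alpha$-name depends only on the restricted generic while $p(\alpha)$ depends on the whole $\PP_\alpha$-generic. Concretely, take $\QQ'_0=\{\weakest\}$ inside $\QQ_0=$ Cohen forcing adding $c$, and let $\name{\QQ}_1=\name{\QQ}'_1$ be ground-model Cohen forcing $2^{<\om}$, with $p(1)$ the name for $\langle c(0)\rangle$: no ground-model condition $q'\in 2^{<\om}$ is forced by $\weakest_{\QQ_0}$ to be a reduction of $p(1)$ (below any $q'$ there is a condition incompatible with $\langle c(0)\rangle$ on a nonempty set of generics), yet reductions of the required shape do exist, e.g.\ $(\weakest,\langle\rangle)$, precisely because in the verification one may first see $r$ and then extend the $\PP_\alpha$-part to steer the value of $p(\alpha)$. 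Since your verification explicitly invokes ``the reduction property of $p'(\alpha)$'' at every coordinate, it collapses exactly at the non-special coordinates. A correct proof must either restrict attention to the case the paper actually uses — where every coordinate $\alpha\geq 1$ of the support of $p$ is special, so only coordinate $0$ is altered and your subcase analysis suffices — or handle non-special coordinates by a different argument (a density/predensity argument in $\PP_{\alpha+1}$ exploiting the freedom to extend below the $\PP_\alpha$-part of a given $r\leq p'$), rather than by demanding a coordinate-wise ``reduction name''.
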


In fact, 
the iterands 
in the above lemma 
need not be 
separative, which is 
essential, 
because 
we are going to apply 
it 
to 
our forcings~$\QQ_\alpha$ 
from Definition~\ref{defi:main_definition}, which are not separative (see Remark~\ref{rema:not_sep}).

The following concept has been introduced by Brendle (see, e.g., \cite{Luminy_lecture_notes} and
\cite{brendle2021forcing}):

\begin{defi}
A system of forcings $\RR_0,\RR_1 \compl \RR$ with $\RR_0\cap \RR_1\compl \RR_0, \RR_1$ is \emph{correct} if
any two conditions $p_0\in \RR_0$ and $p_1\in \RR_1$ which have a common reduction in $\RR_0\cap \RR_1$ are compatible in $\RR$.
\end{defi}

 In the following lemma, we are considering a system where $\RR=\PP * \name{\QQ}$,
$\RR_0=\PP$, and $\RR_1=\PP' * \name{\QQ}'$.
It is easy to check that, under the assumptions of the lemma, this is a correct system.
We do not know, however, whether the conclusion of the lemma holds for every correct system.

\begin{lemm}\label{lemm:wotan}
Let $\PP * \name{\QQ}$ and
$\PP' * \name{\QQ}'$ be two-step iterations satisfying
$\PP' \compl \PP$ and
$\forces_{\PP} \name{\QQ}' \compl \name{\QQ}$.
Then
\[
V[\PP' * \name{\QQ}']
\cap V[\PP] = V[\PP'].
\]
\end{lemm}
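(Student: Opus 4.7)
My plan is to fix a generic filter $G * H$ for $\PP * \name{\QQ}$ over $V$ and work inside $V[G][H]$, where all three subextensions sit. Set $G' := G \cap \PP'$ (which is $\PP'$-generic over $V$ since $\PP' \compl \PP$) and $H' := H \cap \QQ'$, where $\QQ' := \name{\QQ}'[G']$; by Lemma~\ref{lemm:general_iteration_reduction} (applied with $\delta = 2$), $\PP' * \name{\QQ}'$ is a complete subforcing of $\PP * \name{\QQ}$, and $G' * H'$ is precisely the induced generic. The easy inclusion $V[G'] \subseteq V[G' * H'] \cap V[G]$ is immediate from $\PP' \compl \PP$ and $\PP' \compl \PP' * \name{\QQ}'$ (the latter being the standard fact that the first step is complete in a two-step iteration).

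For the reverse inclusion, the strategy is to apply the classical product-forcing intersection lemma over the model $V[G']$ with the two forcings $\PP/G'$ and $\QQ'$. The crucial point is that $H'$ is not merely $\QQ'$-generic over $V[G']$ but in fact $\QQ'$-generic over all of $V[G]$. This follows from the hypothesis $\forces_\PP \name{\QQ}' \compl \name{\QQ}$: in $V[G]$, $\QQ'$ is a complete subforcing of $\QQ$, so the restriction $H \cap \QQ' = H'$ of the $\QQ$-generic $H$ is $\QQ'$-generic over $V[G]$.

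Combining this with the standard fact that $G/G'$ is $\PP/G'$-generic over $V[G']$, I then invoke the product-forcing lemma in its asymmetric-to-symmetric form: if $A$ is $\mathbb{A}$-generic over $M$ and $B$ is $\mathbb{B}$-generic over $M[A]$, then $(A,B)$ is $(\mathbb{A}\times \mathbb{B})$-generic over $M$. Applied with $M = V[G']$, $\mathbb{A} = \PP/G'$, $\mathbb{B} = \QQ'$, $A = G/G'$, $B = H'$, this gives that $(G/G', H')$ is $((\PP/G')\times \QQ')$-generic over $V[G']$. The classical intersection lemma for product forcing then yields
\[
V[G'][G/G'] \cap V[G'][H'] = V[G'],
\]
which is exactly $V[G] \cap V[G' * H'] = V[G']$, completing the proof.

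The argument is built entirely from standard machinery, and no step looks genuinely hard. The main place where I would take care is in verifying the identification of $\name{\QQ}'$ as both a $\PP'$-name (in the two-step iteration) and as a $\PP$-name (via the embedding $\PP' \compl \PP$, needed for the hypothesis $\forces_\PP \name{\QQ}' \compl \name{\QQ}$ to make sense), and in noting that $\name{\QQ}'[G'] = \name{\QQ}'[G]$ under this identification, so that the two candidate values for $\QQ'$ and $H'$ really coincide. Once these routine checks are in place, the product-forcing lemmas do all of the work.
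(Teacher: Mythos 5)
Your proof is correct, but it takes a different route from the paper. The paper argues by hand: it fixes the $\PP'$-generic $G$, assumes some $(p,\name{q})\in\PP*\name{\QQ}$ forces $\name{f}_0=\name{f}_1\notin V[G]$ for a $\PP$-name $\name{f}_0$ and a $\PP'*\name{\QQ}'$-name $\name{f}_1$, passes to a reduction $(p',\name{q}')\in\PP'*\name{\QQ}'$, and then amalgamates a condition $(p_1,\name{q}_1)\leq(p',\name{q}')$ with $p_1\in G$ deciding $\name{f}_1(\gamma)$ with a condition $p_0\leq p$ in $\PP/G$ deciding $\name{f}_0(\gamma)$ differently, reaching a contradiction; moreover it only writes this out for ordinal-valued functions, remarking that the general case is similar. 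You instead recognize the configuration as a genuine mutual-genericity situation over $V[G']$: the hypothesis $\forces_{\PP}\name{\QQ}'\compl\name{\QQ}$ is precisely what makes $H'=H\cap\QQ'$ generic over all of $V[G]$, the quotient fact makes $G$ generic for $\PP/G'$ over $V[G']$, and since $\QQ'=\name{\QQ}'[G']\in V[G']$ the product lemma applies over $V[G']$, so the classical intersection lemma for mutually generic extensions gives $V[G]\cap V[G'][H']=V[G']$ at once. What each approach buys: yours is more modular, immediately yields the full statement rather than just the special case, and isolates the real content of the hypothesis ($H'$ is generic over $V[G]$, not merely over $V[G']$); the paper's is self-contained, avoiding the quotient/product machinery, and its amalgamation argument is essentially the proof of the intersection lemma you quote, relativized to the two-step iterations. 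The identification issues you flag (viewing $\name{\QQ}'$ as both a $\PP'$-name and a $\PP$-name with $\name{\QQ}'[G']=\name{\QQ}'[G]$, and that $V[K]=V[G'][H']$ for the induced generic $K$) are exactly the right places to be careful, and they go through routinely; note also that all the standard facts you invoke hold for arbitrary preorders, so the non-separativity of the forcings this lemma is later applied to causes no problem.
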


\begin{proof}
We will only show the special case which we will need later (it is straightforward to extend the proof
to the general case): for any $\delta, \varepsilon \in \ord$,
\[
\delta^\varepsilon
\cap
V[\PP' * \name{\QQ}']
\cap V[\PP] \subs V[\PP'].
\]

Let $G$ be a generic filter for $\PP'$, and let $\name{f}_0$ be a $\PP$-name, and let $\name{f}_1$ be a $\PP' * \name{\QQ}'$-name.
Work in $V[G]$.
Assume towards contradiction that there is a condition
$(p,\name{q}) \in \PP * \name{\QQ}$ with $p \in \PP/G$ such that
\begin{equation}\label{eq:names_are_equal}
(p,\name{q}) \forces \name{f}_0 = \name{f}_1 \in \ord^{<\ord}  \;\land\;
\name{f}_0 \notin V[G].
\end{equation}

Let $p' \in G$ be a reduction of $p$ to~$\PP'$.
By standard arguments, we can fix 
a 
$\PP'$-name $\name{q}'$
such that $p \forces``\name{q}'$ is a reduction of $\name{q}$'' and $(p',\name{q}')\in \PP'*\name{\QQ}'$.

Since $p$ is reduction of $(p,\name{q})$ to $\PP$,
it follows from~\eqref{eq:names_are_equal} and Lemma~\ref{lemm:Resch_lemma} that
$p \forces \name{f}_0 \notin V[G]$.
Therefore, we can fix $\gamma \in \varepsilon$ such that  $p$ does not decide $\name{f}_0(\gamma)$ in $\PP/G$.
Let $(p_1,\name{q}_1)\leq (p',\name{q}')$ and $\xi_1\in \delta$ such that $p_1\in G$ and $(p_1,\name{q}_1)\forces \name{f}_1(\gamma)=\xi_1$. Since $p$ does not decide $\name{f}_0$ at $\gamma$, we can fix $p_0\in \PP/G$ with $p_0\leq p$ and $ \xi_0 \in \delta$ with $\xi_0\neq \xi_1$ such that $p_0\forces \name{f}_0(\gamma)=\xi_0$.
Now we want to find a condition $(p^*,\name{q}^*)$ which is stronger than $(p,\name{q}), (p_1,\name{q}_1)$ and $(p_0,\weakest)$.

First note that $p_0$ and $p_1$ are compatible, because $p_0\in \PP/G$ and $p_1\in G$, and fix $p^*\leq p_0,p_1$.
Since $p^*\leq p, p_1$ it follows that $p^*\forces ``\name{q}'$ is a reduction of $\name{q}$ and $\name{q}_1\leq \name{q}'$'' hence $p^*\forces \name{q}_1 \comp \name{q}$. Let $\name{q}^*$ be a $\PP$-name such that $p^*\forces \name{q}^*\leq \name{q}_1, \name{q}$. It is easy to check that $(p^*,\name{q}^*)\leq (p,\name{q}), (p_1,\name{q}_1), (p_0,\weakest)$.
Now $(p^*,\name{q}^*)\forces \name{f}_0=\name{f}_1\land \name{f}_0(\gamma)=\xi_0 \land \name{f}_1(\gamma)=\xi_1$, but $\xi_0\neq \xi_1$, a contradiction.
\end{proof}

We conclude
with
an
easy observation we will need later on:

\begin{lemm}\label{lemm:finde_r_und_m}
Suppose that $\PP' \compl \PP$, and $\name{b}$ is a $\PP'$-name, and $p \in \PP$ is such that
$p \forces \name{b} \in [\om]^{\om}$.
Then for each $N \in \om$ there exists $r \in \PP'$ and $m > N$ such that
$r \forces m \in \name{b}$
and $r$ is compatible with $p$.
\end{lemm}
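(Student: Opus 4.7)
The plan is to use that $p$ forces $\name{b}$ to be infinite, find a witness inside $\name{b}$ above $N$ by extending $p$, and then reduce back into $\PP'$ via the reduction property of the complete subforcing.

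More precisely, fix $N \in \omega$. Since $p \forces \name{b} \in [\om]^\om$, in particular $p \forces \exists m > N (m \in \name{b})$, so I can find an extension $p^* \leq_{\PP} p$ and some $m > N$ such that $p^* \forces_{\PP} m \in \name{b}$. Now I apply the completeness $\PP' \compl \PP$ to pick a reduction $r \in \PP'$ of $p^*$ to~$\PP'$.

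The key point is that $\name{b}$ is a $\PP'$-name and the statement ``$m \in \name{b}$'' is a statement in the forcing language of $\PP'$. By Lemma~\ref{lemm:Resch_lemma}, since $p^* \forces_{\PP} m \in \name{b}$ and $r$ is a reduction of $p^*$, we conclude $r \forces_{\PP'} m \in \name{b}$. Finally, $r$ is compatible with $p^*$ (apply the definition of reduction to $r \leq r$ in $\PP'$), and hence with~$p$, since $p^* \leq p$. Thus $r$ and $m$ are as required.

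There is no real obstacle here; the only thing to be mindful of is that we must extend $p$ first so that we have an actual $\PP$-condition forcing a specific value $m \in \name{b}$ (rather than merely ``$\exists m$''), because we need a particular $m$ in the conclusion. After that extension, the reduction step together with Lemma~\ref{lemm:Resch_lemma} does all the work.
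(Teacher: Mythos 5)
Your proof is correct: the paper states this lemma as an easy observation and omits the argument, and what you give is exactly the intended standard one — extend $p$ to decide a specific $m>N$ with $p^*\forces_{\PP} m\in\name{b}$, take a reduction $r\in\PP'$ of $p^*$, and apply Lemma~\ref{lemm:Resch_lemma} (legitimately, since ``$m\in\name{b}$'' concerns the $\PP'$-name $\name{b}$ and is absolute between the $\PP'$- and $\PP$-extensions) to get $r\forces_{\PP'} m\in\name{b}$, with compatibility of $r$ and $p$ following from the definition of reduction applied to $r\leq r$ and $p^*\leq p$. No gaps.
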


\subsection{Complete subforcings: hereditarily below $\gamma$}\label{subsection:auxiliary}

In this section, we give some technical definitions and lemmas as a preparation for the main proofs in Section~\ref{subsection:branches} and Section~\ref{subsection:rows}.
More precisely, we define, for each $\gamma<\lambda$, the subforcings of ``hereditarily below~$\gamma$'' conditions of our iteration and show that they form complete subforcings (see Lemma~\ref{lemm:hereditarily_is_complete}). Furthermore, we show that each condition
is hereditarily below~$\gamma$ for some $\gamma<\lambda$ (see Lemma~\ref{lemm:finding_gamma_fresh}).

Let us now provide the following recursive definition (we give the definition for the entire iteration but we will actually need it for tails of the iteration; see Remark~\ref{rema:eta_tail}):

\begin{defi}
Let $\gamma<\lambda$. By recursion on $\alpha\leq \lambda$ we define when a condition $p\in \PP_\alpha$ is 
\emph{hereditarily below~$\gamma$} (and introduce the notation $\below{\gamma}\PP_\alpha$):
\begin{enumerate}
\item $p\in \FKWW_0$ is hereditarily below $\gamma$, if $\dom(p)\subseteq  \gamma^{<\gamma}$.
\item Let $\below{\gamma}\PP_{\alpha}:= \{ p\in \PP_\alpha \with p \text{ hereditarily below } \gamma\}$.

	\item $p\in \PP_{\alpha+1}$ is hereditarily below $\gamma$, if $p\restr \alpha$ is hereditarily below $\gamma$ and $p(\alpha)$ is a $\below{\gamma}\PP_{\alpha}$-name, and $p\restr \alpha \forces \dom(p(\alpha))\subseteq \gamma^{{<}\gamma}.$
	\item For $\alpha$ limit, $p\in \PP_{\alpha}$ is hereditarily below $\gamma$, if $p\restr \untenbeta$ is hereditarily below $\gamma$, for every $\untenbeta<\alpha$.

\end{enumerate}
For $\alpha\leq \lambda$
a $\PP_{\alpha}$-name $\dot{b}$ is \emph{hereditarily below $\gamma$}, if for all $(\dot{x},p)\in \dot{b}$,  $p\in \below{\gamma}\PP_\alpha$ and $\dot{x}$ is hereditarily below $\gamma$ (this is by recursion).
\end{defi}

Clearly, if $p\in\PP_\alpha$ is hereditarily below $\gamma$ and $\gamma'>\gamma$, then $p$ is also hereditarily below $\gamma'$. The same holds for a $\PP_\alpha$-name $\dot{b}$.

\begin{rema}\label{rema:epsilon_delta_neu}
In the more general situation described in Remark~\ref{rema:more_general}, i.e., if we work with the tree $\theta^{<\lambda}$ in place of $\lala$, we would rather need a pair of ordinals $(\varepsilon,\delta)$ in place of $\gamma$, where $\varepsilon<\theta$ and $\delta<\lambda$, and $\gaga$ in the definition 
above (as well as in Definition~\ref{defi:almost_hereditarily_below_gamma} below)
would be replaced by $\varepsilon^{<\delta}$ (see also Remark~\ref{rema:why_cof_theta_large}).
\end{rema}

\begin{defi}\label{defi:almost_hereditarily_below_gamma}
Let $\gamma<\lambda$ and $\tau\in \lala$. By recursion on $\alpha\leq \lambda$ we define when a condition $p\in \PP_\alpha$ is \emph{almost hereditarily below $\gamma$ except for $\tau$} (and introduce the notation~$\belowt{\gamma}{\tau}\PP_\alpha$):
\begin{enumerate}
	\item $p\in \FKWW_0$ is almost  hereditarily below $\gamma$ except for $\tau$, if $\dom(p)\subseteq\gamma^{<\gamma}\cup \{\tau\}$.
\item Let $\belowt{\gamma}{\tau}\PP_{\alpha}:= \{ p\in \PP_\alpha \with p \text{ almost hereditarily below } \gamma \text{ except for }\tau\}$.

	\item $p\in \PP_{\alpha+1}$ is almost hereditarily below $\gamma$ except for $\tau$, if $p\restr \alpha$ is almost hereditarily below $\gamma$ except for $\tau$ and $p(\alpha)$ is a\footnote{\label{footnote:not_a_typo}This is not a typo:
we really require $p(\alpha)$ to be a $\below{\gamma}\PP_{\alpha}$-name,
not just a $\belowt{\gamma}{\tau}\PP_{\alpha}$-name.} $\below{\gamma}\PP_{\alpha}$-name, and $p\restr \alpha \forces \dom(p(\alpha))\subseteq \gamma^{{<}\gamma}.$
	\item For $\alpha$ limit, $p\in \PP_{\alpha}$ is almost hereditarily below $\gamma$ except for $\tau$, if $p\restr \untenbeta$ is almost hereditarily below $\gamma$ except for $\tau$, for every $\untenbeta<\alpha$.
\end{enumerate}
For $\alpha\leq \lambda$
a $\PP_{\alpha}$-name $\dot{b}$ is \emph{almost hereditarily below $\gamma$ except for $\tau$}, if for all $(\dot{x},p)\in \dot{b}$, both $p$ and $\dot{x}$ are almost hereditarily below $\gamma$ except for $\tau$.
We will write \emph{almost hereditarily below $\gamma$} and omit the $\tau$ if it is clear from the context which $\tau$ is meant.
\end{defi}

Clearly, if $p\in\PP_\alpha$ is almost hereditarily below $\gamma$ and $\gamma'>\gamma$, then $p$ is also almost hereditarily below $\gamma'$ and if $p\in\PP_\alpha$ is hereditarily below $\gamma$, then it is almost hereditarily below $\gamma$ except for $\tau$ for every $\tau$. The same holds for a $\PP_\alpha$-name $\dot{b}$.

\begin{rema}\label{rema:eta_tail}
As mentioned above, we will need several of our concepts for tails of the iteration instead of the whole iteration.
We will later have the following situation:
$\eta < \lambda$ will be fixed, and we will work in $V[G_\eta]$ for a fixed generic filter $G_\eta \subs
\PP_\eta$.
We will use variants of the above definitions and the subsequent lemmas for
the tail iteration
$\{ \PP_\alpha/G_\eta, \name{\mtx}_\alpha \with \eta \leq \alpha < \lambda \}$.
In the definitions and lemmas,
$\QQ_\eta$ plays the role of~$\QQ_0$
(see for example Lemma~\ref{lemm:statt_oben}(3)).
So, e.g., in the definition of almost hereditarily below $\gamma$ except for $\tau$
(with  $\tau \in  (\lala)^{V[G_\eta]}$),
we want $\dom(p(\eta))\subseteq (\gaga)^{V[G_\eta]}\cup \{\tau\}$.
\end{rema}

Before proving completeness,
let us recall that
$\gaga$ is 
$\alpha$-$\leftup$-closed; actually, we will
need a bit more:

\begin{lemm}\label{lemm:lala_is_left_up}
Assume $\PP'_\alpha$ is a complete subforcing of $\PP_\alpha$, and $G$ is generic for~$\PP_\alpha$.
Then in $V[G]$, the set $(\gaga)^{V[G \cap \PP'_\alpha]}$ is $\alpha$-$\leftup$-closed.
\end{lemm}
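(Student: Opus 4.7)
The plan is to verify the two bullet points in the definition of ``$\alpha$-$\leftup$-closed'' directly, by tracing through what membership in $(\gaga)^{V[G\cap \PP'_\alpha]}$ requires. Write $V' := V[G\cap \PP'_\alpha]$ and $E := (\gaga)^{V'}$. Recall that any $\sigma \in E$ is, by definition, a function $\sigma : \delta \to \gamma$ for some ordinal $\delta < \gamma$, with $\sigma \in V'$; and since $\gamma < \lambda$, such a $\sigma$ is automatically an element of $(\lala)^{V[G]}$, so $E$ is indeed a subset of $\lala$ as computed in $V[G]$. The key observation driving everything is that all ordinals below $\gamma$ already lie in the ground model (hence in $V'$), so operations on $\sigma \in V'$ that only use $\sigma$ and ordinals cannot leave $V'$.

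For the first bullet, suppose $\sigma \in E$ and $\tau \lhd \sigma$. Then $|\tau|$ is an ordinal strictly less than $|\sigma| < \gamma$, and $\tau = \sigma \restr |\tau|$, so $\tau$ is constructed in $V'$ from $\sigma \in V'$ and an ordinal. Thus $\tau \in V'$. Moreover $|\tau| < \gamma$ and the values of $\tau$ are values of $\sigma$, hence lie in $\gamma$, so $\tau \in \gaga$ and therefore $\tau \in E$. (Note that the assumption $\tau \in T_\alpha$ is not even needed here.)

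For the second bullet, suppose $\rho \concat \newalpha \in E$ and $\newbeta < \newalpha$. By taking the length-$|\rho|$ initial segment and reading off the last entry, both $\rho$ and $\newalpha$ lie in $V'$; and $\newbeta < \newalpha < \gamma$ is a ground-model ordinal, hence $\newbeta \in V'$ as well. So $\rho \concat \newbeta$ is built in $V'$, has the same length as $\rho \concat \newalpha$ (which is less than $\gamma$), and takes values either among those of $\rho$ or equal to $\newbeta < \gamma$. Consequently $\rho \concat \newbeta \in (\gaga)^{V'} = E$.

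There is no real obstacle in this lemma; it amounts to the observation that the tree $\gaga$ is closed under initial segments and under replacing a successor-position entry by a smaller ordinal, and that these operations are absolute between $V'$ and $V[G]$ because they only ever refer to the given sequence together with ordinals below $\gamma$. The role of $\PP'_\alpha \compl \PP_\alpha$ is only to ensure that $V'$ is an (inner) model sitting inside $V[G]$, so that the expression $(\gaga)^{V'}$ is meaningful inside $V[G]$ to begin with.
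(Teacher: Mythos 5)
Your proof is correct and follows essentially the same route as the paper's: the paper also just observes that $\sigma\restr(\xi+1)$ and $\rho\concat j$ (for $j<i$) are definable from elements of $V[G\cap\PP'_\alpha]$ and ordinals, hence lie in that model and in $(\gaga)^{V[G\cap\PP'_\alpha]}$. Your write-up merely spells out the length/value bookkeeping and the role of completeness a bit more explicitly than the paper does.
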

\begin{proof}
Suppose $\sigma$ and $ \rho\concat i $ belong to $ (\gaga)^{V[G \cap \PP'_\alpha]}$.
Note that the following holds in~$V[G]$:
$\sigma\restr (\xi+1) \in V[G \cap \PP'_\alpha]$
for each $\xi<|\sigma|$,
and $\rho \concat j \in V[G \cap \PP'_\alpha]$  for each $j < i$.
Therefore $(\gaga)^{V[G \cap \PP'_\alpha]}$ is $\alpha$-$\leftup$-closed.
\end{proof}

We can now show that the subforcing of conditions which are (almost) hereditarily below $\gamma$ is a complete subforcing:

\begin{lemm}\label{lemm:hereditarily_is_complete}
Let $\gamma < \lambda$.
Then
$\below{\gamma}\PP_{\lambda}$ is a complete subforcing of $\PP_{\lambda}$.

Also, if $\tau\in\lala$ is such that either
\begin{enumerate}

\item[(1)] $|\tau| \geq \gamma$, or
\item[(2)]
$\tau$ is such that $\gamma^{<\gamma} \cup \{\tau\}$ is $0$-$\leftup$-closed,
\end{enumerate}
then $\belowt{\gamma}{\tau}\PP_{\lambda}$ is a complete subforcing of $\PP_{\lambda}$.

Moreover, if $p$ is full and almost hereditarily below $\gamma$ except for $\tau$, then
$$
(p(0)\restr \gamma^{<\gamma}, p(1), p(2),\dots)
$$
is a reduction of $p$ to $\below{\gamma}\PP_\lambda$.
\end{lemm}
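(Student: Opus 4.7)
The plan is to set up a parallel ``hereditarily below $\gamma$'' finite support iteration whose $\alpha$-th iterand is a complete subforcing of $\QQ_\alpha$ of the form $\QQ_\alpha^C$, and then invoke Lemma~\ref{lemm:general_iteration_reduction} to lift completeness from the iterands to the whole iteration. The main tool is Lemma~\ref{lemm:C_complete_subforcing}, combined with the fact that $\gaga$ is $\alpha$-$\leftup$-closed (Lemma~\ref{lemm:lala_is_left_up}).

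More concretely, I would proceed by induction on $\alpha \leq \lambda$, proving $\below{\gamma}\PP_\alpha \compl \PP_\alpha$. Granted the inductive hypothesis, $\below{\gamma}\QQ_\alpha$ is defined in $V[\below{\gamma}\PP_\alpha]$ as the set of $q \in \QQ_\alpha$ with $\dom(q) \subs (\gaga)^{V[\below{\gamma}\PP_\alpha]}$. By Lemma~\ref{lemm:lala_is_left_up}, this set is $\alpha$-$\leftup$-closed in $V[\PP_\alpha]$, hence $\alpha$-eligible, so Lemma~\ref{lemm:C_complete_subforcing} yields $\below{\gamma}\QQ_\alpha \compl \QQ_\alpha$ in $V[\PP_\alpha]$. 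Lemma~\ref{lemm:general_iteration_reduction} then gives $\below{\gamma}\PP_{\alpha+1} \compl \PP_{\alpha+1}$; the limit case follows immediately from the same lemma.

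For the almost-hereditarily case, the iteration differs only at stage~$0$, where the iterand is $\QQ_0^{C}$ with $C := \gaga \cup \{\tau\}$. I would verify that $C$ is $0$-eligible in each of the two hypothesized situations. If $|\tau| \geq \gamma$, take $E := \gaga$ and $\bar{C} := \{\tau\}$; since $\tau \in T_0 \subs \lala \cap \mathrm{succ}$ has successor length $\gamma' \geq \gamma$, we have $\bar{C} \subs \lambda^{\gamma'}$ as required, and the two-element conditions (1), (2) hold vacuously for the singleton $\bar{C}$. If instead $C$ is already $0$-$\leftup$-closed, take $E := C$ and $\bar{C} := \emptyset$. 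Then Lemma~\ref{lemm:C_complete_subforcing} gives completeness of the $0$-th iterand, and the rest of the induction proceeds just as before, since the iterands at stages $\alpha \geq 1$ are by Definition~\ref{defi:almost_hereditarily_below_gamma} and footnote~\ref{footnote:not_a_typo} still $\below{\gamma}\QQ_\alpha$, handled by the first part.

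For the moreover clause: since $p(0)$ is full and $\gaga$ is $0$-$\leftup$-closed, Lemma~\ref{lemm:C_complete_subforcing} gives that $p(0)\doppelrestr \gaga = p(0)\restr \gaga$ is a reduction of $p(0)$ to $\below{\gamma}\QQ_0$; at each later stage $p(\alpha)$ is already a $\below{\gamma}\PP_\alpha$-name forcing $\dom(p(\alpha)) \subs \gaga$, so it reduces to itself. Fixing a map $\Resch\colon \QQ_0 \to \below{\gamma}\QQ_0$ that agrees with $q \mapsto q\restr \gaga$ on full conditions (and is an arbitrary reduction elsewhere), the moreover part of Lemma~\ref{lemm:general_iteration_reduction} delivers $(p(0)\restr \gaga, p(1), p(2), \dots)$ as the desired reduction of $p$ to $\below{\gamma}\PP_\lambda$. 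The main obstacle is the bookkeeping about which model evaluates which subforcing: $\below{\gamma}\QQ_\alpha$ is defined inside $V[\below{\gamma}\PP_\alpha]$ yet must be shown complete in $\QQ_\alpha$ as it sits in $V[\PP_\alpha]$, and the careful inductive use of Lemma~\ref{lemm:lala_is_left_up} is precisely what bridges that gap.
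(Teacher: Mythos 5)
Your overall strategy is the same as the paper's: induct on $\alpha$, use Lemma~\ref{lemm:lala_is_left_up} plus Lemma~\ref{lemm:C_complete_subforcing} to get completeness of the restricted iterand at each stage, conclude via Lemma~\ref{lemm:general_iteration_reduction}, handle the $\belowt{\gamma}{\tau}{}$ version by changing only the first iterand, and get the moreover clause from the reduction map $q \mapsto q \restr \gaga$ on full conditions. However, there is a genuine gap at the successor step. To view $\below{\gamma}\PP_{\alpha+1}$ as $\below{\gamma}\PP_\alpha \ast \dot{\QQ}'_\alpha$ and to apply Lemma~\ref{lemm:general_iteration_reduction}, the iterand $\QQ_\alpha^{E}$ (with $E=(\gaga)^{V[G\cap\below{\gamma}\PP_\alpha]}$) must be given by a $\below{\gamma}\PP_\alpha$-name, i.e., it must be an element of the intermediate model $V[G\cap\below{\gamma}\PP_\alpha]$. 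You assert that it "is defined in $V[\below{\gamma}\PP_\alpha]$", but its definition refers to objects of $V[G]$: the partition of the nodes into $T_\alpha$ and $T'_\alpha$ and the sets $a_\tau$ for old nodes $\tau$. A priori it is not clear that the small model can decide which of its nodes are old, i.e., that $E\cap T'_\alpha\in V[G\cap\below{\gamma}\PP_\alpha]$: a node of the small model might appear at an early stage $\beta<\alpha$ of the full iteration without visibly appearing at the corresponding stage of the restricted iteration. Lemma~\ref{lemm:lala_is_left_up}, which you invoke as the bridge, does not address this at all — it only shows that $E$ is $\alpha$-$\leftup$-closed \emph{inside $V[G]$}, which is what feeds Lemma~\ref{lemm:C_complete_subforcing}, not what puts $\QQ_\alpha^E$ into the smaller model.

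The paper isolates exactly this point as Claim~\ref{clai:s_f_h_basically_irrelevant} and proves it with a separate argument: using the intersection lemma $V[\PP'\ast\name{\QQ}']\cap V[\PP]=V[\PP']$ (Lemma~\ref{lemm:wotan}, applied to $\PP_\beta$ and the quotients, with the inductive completeness hypothesis and Lemma~\ref{lemm:general_iteration_reduction} for tails), one gets
$\gaga\cap V[G\cap\below{\gamma}\PP_\alpha]\cap V[G_\beta]=\gaga\cap V[G_\beta\cap\below{\gamma}\PP_\alpha]$,
hence $E\cap T'_\alpha=\bigcup_{\beta<\alpha}(\gaga\cap\mathrm{succ})^{V[G_\beta\cap\below{\gamma}\PP_\alpha]}\in V[G\cap\below{\gamma}\PP_\alpha]$, from which the definability of $\QQ_\alpha^E$ in the small model follows. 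Without this step (or some substitute), the induction cannot be carried out, because $\dot{\QQ}'_\alpha$ is not known to be a $\below{\gamma}\PP_\alpha$-name and Lemma~\ref{lemm:general_iteration_reduction} cannot be applied. The remaining details of your proposal (eligibility of $\gaga\cup\{\tau\}$ in the two cases, the use of Lemma~\ref{lemm:compl_compl_is_compl} to see that the name works over $\belowt{\gamma}{\tau}\PP_\alpha$ as well, and the moreover clause) match the paper and are fine.
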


\begin{proof}
We show by induction on $\alpha$ that $\below{\gamma}\PP_{\alpha}$ (as well as $\belowt{\gamma}{\tau}\PP_{\alpha}$) is a complete subforcing of $\PP_{\alpha}$, for $1\leq\alpha\leq \lambda$.
In fact, we will define
$\below{\gamma}{\PP_\alpha}$-names $\dot{\QQ}'_\alpha$ such that $\below{\gamma}{\PP_\delta}$ (or $\belowt{\gamma}{\tau}{\PP_\delta}$, respectively) is the finite support iteration of the $\dot{\QQ}'_\alpha$'s; the only difference of the two iterations will be the first iterand~$\QQ'_0$.

(Initial step $\alpha=1$)
Note that $\below{\gamma}\PP_{1} = \QQ_0^{\gaga}$ is a complete subforcing of $\PP_{1} = \QQ_0$:
this is an easy
instance
of Lemma~\ref{lemm:C_complete_subforcing}, 
because $\gaga$ is
$0$-$\leftup$-closed.
Similarly,
$\belowt{\gamma}{\tau}\PP_{1} = \QQ_0^{\gaga \cup \{\tau\}}$ is a complete subforcing of $\QQ_0$:
in case~(2) holds, 
$\gaga \cup \{\tau\}$ is $0$-$\leftup$-closed by assumption;
in case~(1) holds, 
$\gaga \cup \{\tau\}$ 
is easily seen to be 
$0$-eligible. 
Take $\QQ'_0= \QQ_0^{\gaga}$ in the iteration representing $\below{\gamma}\PP_\lambda$, and take $\prescript{}{}{\QQ'_0}= \QQ_0^{\gaga\cup\{\tau\}}$  in the iteration representing $\belowt{\gamma}{\tau}\PP_\lambda$.

(Successor step $\alpha+1$)
Assume that $\below{\gamma}{\PP_\alpha}$ and $\belowt{\gamma}{\tau}{\PP_\alpha}$ are complete subforcings of $\PP_\alpha$. We show that  $\below{\gamma}{\PP_{\alpha+1}}$ and $\belowt{\gamma}{\tau}{\PP_{\alpha+1}}$ are complete subforcings of $\PP_{\alpha+1}$. In $V[G]$, for $G$ generic for~$\PP_\alpha$, let\footnote{Note that $E$ is really defined this way for both cases (see also footnote~\ref{footnote:not_a_typo}).
} $E:=(\gaga)^{V[G\cap\below{\gamma}{\PP_\alpha}]}$; by Lemma~\ref{lemm:lala_is_left_up}, $E$ is $\alpha$-$\leftup$-closed, so Lemma~\ref{lemm:C_complete_subforcing} implies 
that in $V[G]$,  $\QQ_\alpha^E$ is a complete subforcing of $\QQ_\alpha$.
We use the following, which we will prove
after finishing the proof of the lemma:

\begin{clai}\label{clai:s_f_h_basically_irrelevant}
$\QQ_\alpha^E$ is an element of $V[G\cap\below{\gamma}{\PP_\alpha}]$.
\end{clai}
\noindent
Using the claim,
we can fix a $\below{\gamma}{\PP_\alpha}$-name $\dot{\QQ}'_\alpha$ for  $\QQ_\alpha^{E}$.
Since $\below{\gamma}{\PP_\alpha} \subs \belowt{\gamma}{\tau}{\PP_\alpha}$ and
both are complete subforcings of $\PP_\alpha$,
Lemma~\ref{lemm:compl_compl_is_compl} implies that
$\below{\gamma}{\PP_\alpha}$ is a complete
subforcing of $\belowt{\gamma}{\tau}{\PP_\alpha}$,
so the $\below{\gamma}{\PP_\alpha}$-name $\dot{\QQ}'_\alpha$ is also a $\belowt{\gamma}{\tau}{\PP_\alpha}$-name.
So we can apply Lemma~\ref{lemm:general_iteration_reduction} to obtain that $\below{\gamma}{\PP_\alpha}*\dot{\QQ}'_\alpha$ and $\belowt{\gamma}{\tau}{\PP_\alpha}*\dot{\QQ}'_\alpha$ are complete subforcings of $\PP_{\alpha+1}$.
By definition, $\below{\gamma}{\PP_\alpha}*\dot{\QQ}'_\alpha$ is equivalent to  $\below{\gamma}\PP_{\alpha+1}$, and $\belowt{\gamma}{\tau}{\PP_\alpha}*\dot{\QQ}'_\alpha$ is equivalent to  $\belowt{\gamma}{\tau}\PP_{\alpha+1}$, so the successor step is finished.

(Limit step $\alpha$)
It follows by Lemma~\ref{lemm:general_iteration_reduction} that the limit of the finite support iteration of the $\dot{\QQ}'_{\alpha'}$ with $\alpha'<\alpha$ is a complete subforcing of $\PP_\alpha$, and by definition, $\below{\gamma}\PP_\alpha$ (or $\belowt{\gamma}{\tau}\PP_\alpha$ in the other case)  is equivalent to the limit of this finite support iteration.

 Now let us show the moreover part.
By the moreover part of Lemma~\ref{lemm:C_complete_subforcing}, $p(0)\restr \gaga$ is a reduction of $p(0)$ to $\below{\gamma} \PP_1$ (which is $\QQ'_0$ in the iteration representing $\below{\gamma}\PP_\lambda$). Since $p\in \belowt{\gamma}{\tau}\PP_\lambda$, which is the iteration of the $\dot{\QQ}'_\alpha$ (for $\alpha\geq 1$ the iterands of the two iterations coincide),
so $p \restr \alpha \forces p(\alpha) \in \QQ'_\alpha$ for $\alpha\geq 1$,
therefore Lemma~\ref{lemm:general_iteration_reduction} completes the proof.
\end{proof}

\begin{proof}[Proof of Claim~\ref{clai:s_f_h_basically_irrelevant}]
We work in $V[G]$.
Let $G_\beta := G \cap \PP_\beta$.
Let $T'_\alpha=  \bigcup_{\untenbeta < \alpha} (\lala \cap \mathrm{succ})^{V[G_{\untenbeta}]}$ and
$T_\alpha = (\lala \cap \mathrm{succ})^{V[G]} \setminus T'_\alpha$, as in the definition of $\mtx_\alpha$.

It is straightforward to check that
$\mtx_\alpha^E$ can be defined in $V[G\cap\below{\gamma}\PP_\alpha]$ provided that
$E\cap T'_\alpha$ (and hence also $E\cap T_\alpha$)  belongs to
$V[G\cap\below{\gamma}\PP_\alpha]$.
First note that
$$
E= (\gaga)^{V[G \cap \below{\gamma}\PP_\alpha]}=\gaga\cap V[G \cap \below{\gamma}\PP_\alpha]
$$
 and
$$
T'_\alpha=\bigcup_{\beta<\alpha}(\lala \cap \mathrm{succ})^{V[G_{\untenbeta}]}=\bigcup_{\beta<\alpha}(\lala \cap \mathrm{succ}\cap{V[G_{\untenbeta}]}).
$$
 So
$$
E\cap T'_\alpha=\bigcup_{\beta<\alpha}(\gaga\cap \mathrm{succ}\cap V[G \cap \below{\gamma}\PP_\alpha] \cap{V[G_{\untenbeta}]}).
$$
Apply Lemma~\ref{lemm:wotan} to
$\PP_\beta * \name{\QQ}$, where $\name{\QQ}$ is the quotient $\PP_\alpha/\PP_\beta$,  and
$\below{\gamma}\PP_\beta * \name{\QQ}'$,
where $\name{\QQ}'$ is the quotient $\below{\gamma}\PP_\alpha/\below{\gamma}\PP_\beta$ (which is possible since
$\below{\gamma}\PP_\beta \compl \PP_\beta$ by induction hypothesis, and
$\forces_{\PP_\beta} \name{\QQ}' \compl \name{\QQ}$
by Lemma~\ref{lemm:general_iteration_reduction} for the
tail iterations)
to obtain
\[
\gaga \cap V[G \cap \below{\gamma}\PP_\alpha] \cap{V[G_{\untenbeta}]} = \gaga\cap V[G_{\untenbeta}\cap \below{\gamma}\PP_\alpha].
\]
Therefore,
\[
E\cap T'_\alpha=\bigcup_{\beta<\alpha}(\gaga\cap \mathrm{succ})^{V[G_{\untenbeta}\cap \below{\gamma}\PP_\alpha]},
\]
which clearly belongs to $V[G\cap\below{\gamma}\PP_\alpha]$, as desired.
\end{proof}

The next lemma shows that every condition in $\PP_\lambda$ is (essentially) hereditarily below~$\gamma$ for some $\gamma<\lambda$.

\begin{lemm}\label{lemm:finding_gamma_fresh}
For every $p\in \PP_{\lambda}$, there exists a $\gamma<\lambda$ and a condition
 $p'\in \below{\gamma}\PP_{\lambda}$ which is 
forcing 
equivalent to~$p$.
\end{lemm}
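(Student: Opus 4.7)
The plan is to prove, by induction on $\alpha \leq \lambda$, the stronger claim: for every $p \in \PP_\alpha$, there exist $\gamma < \lambda$ and $p' \in \below{\gamma}\PP_\alpha$ forcing equivalent to $p$. The desired statement is the case $\alpha = \lambda$.

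For $\alpha = 1$, a condition $p \in \QQ_0$ involves only finitely many sequences in $\lala$ (those in $\dom(p)$ and, for each $\sigma \in \dom(p)$, in $\dom(f^p_\sigma)$ and $\dom(h^p_\sigma)$), each of length $<\lambda$ with coordinates $<\lambda$. Since $\lambda$ is regular and uncountable, we can fix $\gamma < \lambda$ bounding all these lengths and coordinates, and then $p$ itself already lies in $\below{\gamma}\PP_1$. For a limit ordinal $\alpha \leq \lambda$, the finite support of $p$ is contained in some $\beta+1 < \alpha$, and we apply the induction hypothesis at stage $\beta+1$, viewing $p$ as a condition in $\PP_{\beta+1}$.

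The main case is the successor $\alpha \to \alpha+1$. Writing $p = (p\restr\alpha, p(\alpha))$, the induction hypothesis furnishes $\gamma_0 < \lambda$ and $p'' \in \below{\gamma_0}\PP_\alpha$ equivalent to $p\restr\alpha$. The $\PP_\alpha$-name $p(\alpha)$ encodes a condition in $\QQ_\alpha$ whose data consists of a finite domain $D \subs T_\alpha$ together with, for each $\sigma \in D$, a finite stem $s_\sigma \in 2^{<\omega}$ and functions $f_\sigma, h_\sigma$ of finite domain. A crucial use of the c.c.c.\ is the following: any $\PP_\alpha$-name for an ordinal has only countably many possible values, since the antichains forcing the different values assemble into a single antichain of $\PP_\alpha$, which must be countable by Lemma~\ref{lemm:FKW_ccc_Knaster}. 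Applying this as in the proof of Lemma~\ref{lemm:sigma_appears_somewhere}, each sequence appearing in any possible value of the data of $p(\alpha)$ has its length determined by countably many possible ordinals (so bounded below $\lambda$ since $\cf(\lambda) > \omega$), and then each coordinate is likewise determined by countably many values; since $\lambda$ is regular, the sup over the fewer-than-$\lambda$-many coordinates stays below $\lambda$. Hence there is $\gamma_1 < \lambda$ such that $p\restr\alpha$ forces every sequence appearing in the data of $p(\alpha)$ to lie in $\gaga$. Moreover, a nice name for $p(\alpha)$ itself involves only countably many $\PP_\alpha$-conditions (in its various antichains), and by induction each is equivalent to some condition in $\below{\gamma_i}\PP_\alpha$ with $\gamma_i < \lambda$. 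Set $\gamma := \max(\gamma_0, \gamma_1, \sup_i \gamma_i)$, which is $<\lambda$ because $\cf(\lambda) > \omega$. Using that $\below{\gamma}\PP_\alpha \compl \PP_\alpha$ (Lemma~\ref{lemm:hereditarily_is_complete}), we rewrite the nice name for $p(\alpha)$ by substituting each $\PP_\alpha$-condition with its equivalent in $\below{\gamma}\PP_\alpha$; the resulting $\below{\gamma}\PP_\alpha$-name $\dot{q}'$ has the same value as $p(\alpha)$ in every generic extension. Then $p' := (p'', \dot{q}') \in \below{\gamma}\PP_{\alpha+1}$ is forcing equivalent to $p$.

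The main obstacle is the simultaneous bookkeeping in the successor step: the c.c.c.\ must be exploited both to bound the names for sequences in $\lala$ appearing in the data of $p(\alpha)$ (so that the domain is forced into $\gaga$) and to bound the $\PP_\alpha$-conditions used in the nice name structure (so that, after invoking induction, the rewritten name is genuinely a $\below{\gamma}\PP_\alpha$-name). The key technical points are the observation that c.c.c.\ forces countably many possible values for any ordinal name, and the completeness of $\below{\gamma}\PP_\alpha$ in $\PP_\alpha$; the regularity of $\lambda$, combined with the finite-support nature of $\PP_\lambda$, allows all these bounds to stay below $\lambda$.
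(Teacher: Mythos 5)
Your proposal follows essentially the same route as the paper's proof: induction on the stage, with the base and limit cases handled by finiteness of the domain and of the support, and the successor case handled by using the c.c.c.\ to bound the names for the nodes and data of $p(\alpha)$ below some $\gamma<\lambda$ and then substituting, via the induction hypothesis, forcing-equivalent conditions from $\below{\gamma}\PP_\alpha$ into the name (this is exactly the content of the paper's Claim~\ref{clai:osterhase}). One small inaccuracy: a name for $p(\alpha)$ need not involve only countably many $\PP_\alpha$-conditions, since a node of $\lala$ may have uncountable length and each of its $<\lambda$ many coordinates is decided by a countable antichain, so the name involves up to $|\delta|\cdot\aleph_0<\lambda$ many conditions; the supremum of the corresponding $\gamma_i$'s is still below $\lambda$, but by the regularity of $\lambda$ (as in the paper's argument), not merely because $\cf(\lambda)>\omega$.
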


\begin{proof}
We will actually show by induction on $\alpha$ that for every $p\in \PP_{\alpha}$, there exists a $\gamma<\lambda$ and a condition~$p'$ (forcing) equivalent to $p$ such that $p'\in \below{\gamma}\PP_{\alpha}$.

(Initial step $\alpha=1$) Given $p\in\PP_1=\QQ_0$, note that $\dom(p)\subseteq \lala$ is finite. So the maximum length of the nodes $\sigma$ in the domain as well as the maximal entry of the nodes are bounded, i.e., there is $\gamma<\lambda$ such that $\dom(p)\subseteq \gaga$. So $p\in \below{\gamma}\PP_{1}$.

(Limit step $\alpha$) Let $p\in \PP_\alpha$. By induction hypothesis, for each $\beta<\alpha$ there exists $\gamma_\beta$ such that $p\restr \beta \in \below{\gamma_\beta}\PP_{\beta}$. Since we are using finite support, there exists $\beta^*<\alpha$ which is an upper bound of the support of $p$. Then $p\in \below{\gamma_{\beta^*}}\PP_\alpha$.

(Successor step $\alpha+1$)
Let $(p,\dot{q})\in \PP_{\alpha}*\dot{\QQ}_{\alpha}$.
First, by the induction hypothesis, we can assume without loss of generality that there exists $\gamma_p<\lambda$ such that $p\in \below{\gamma_p}\PP_\alpha$.
We will describe a name~$\dot{q}'$ which is equivalent to $\dot{q}$ (more precisely, $p\forces \dot{q}=\dot{q}'$) and analyze it, to find a $\gamma<\lambda$ such that $\dot{q}'$ is a $\below{\gamma}\PP_\alpha$-name and $p\forces \dom(\dot{q}')\subseteq \gaga$.

\begin{clai}\label{clai:osterhase}
Let $\dot{\sigma}$ be a
$\PP_\alpha$-name of a
sequence of
ordinals of length
less than~$\lambda$;
then there exists a $\gamma < \lambda$
such that
there exists a
$\below{\gamma}\PP_\alpha$-name which is equivalent to~$\dot{\sigma}$.
The same holds true if $\name{\sigma}$ is a name for a finite sequence of such
sequences.
\end{clai}

\begin{proof}
Clearly, by c.c.c., there exists a $\delta<\lambda$ which is an upper bound for the length of~$\dot{\sigma}$.
Since $\PP_\alpha$ has the c.c.c.,
for each $\xi < \delta$, $\dot{\sigma}(\xi)$ is represented by a countable antichain.
So only $|\delta| \cdot \aleph_0$ many (hence less than $\lambda$ many) conditions appear in $\dot{\sigma}$.
By inductive hypothesis we can assume that each of these conditions belongs to $\below{\gamma}\PP_\alpha$ for some $\gamma<\lambda$, so we can fix a $\gamma_{\dot{\sigma}}<\lambda$ which is an upper bound of all the appearing $\gamma$.
So $\dot{\sigma}$ is actually a $\below{\gamma_{\dot{\sigma}}}\PP_\alpha$-name.
The statement about names for finite sequences of sequences follows easily.
\end{proof}

Now, let $\dot{N}$ be a
$\PP_\alpha$-name such that $p\forces |\dom(\dot{q})|=\dot{N}$; by
(a simple instance of)
Claim~\ref{clai:osterhase}, we can fix $\gamma_{\dot{N}}<\lambda$
and assume that
$\dot{N}$ is a
$\below{\gamma_{\dot{N}}}\PP_\alpha$-name.
To represent $\dot{q}$, we provide $\omega$-sequences $\seqlangle \dot{\sigma}_k \with k\in \omega\seqrangle$ (of names for potential members of $\dom(\dot{q})$)
and $\seqlangle (\dot{s}_k, \dot{f}_k, \dot{h}_k) \with k\in \omega\seqrangle$
such that
\[
p\forces \dom(\dot{q})= \{\dot{\sigma}_k \with k \in \dot{N}\}
\;\land\; \forall k \in \dot{N}
\;( \dot{q}(\dot{\sigma}_k) = (\dot{s}_k, \dot{f}_k, \dot{h}_k)),
\]
where $\dot{\sigma}_k$ is forced to be a sequence of ordinals of length less than $\lambda$,
and
$\dot{f}_k$ and $\dot{h}_k$
 can be represented
as finite sequences of such sequences, together with finite sequences of natural numbers,
 and
$\dot{s}_k$ is forced to be an element of $2^{<\om}$.
Using Claim~\ref{clai:osterhase}, we can find $\gamma' < \lambda$, larger than $\gamma_{\dot{N}}$,
such that
there exist
$\below{\gamma'}\PP_\alpha$-names $\dot{\sigma}'_k$, $\dot{s}'_k$, $\dot{f}'_k$, and $\dot{h}'_k$ which are equivalent
to $\dot{\sigma}_k$, $\dot{s}_k$, $\dot{f}_k$, and $\dot{h}_k$, respectively.
By replacing
all $\dot{\sigma}_k$, $\dot{s}_k$, $\dot{f}_k$, and $\dot{h}_k$
in $\dot{q}$
by their respective equivalent names,
we get a $\below{\gamma'}\PP_\alpha$-name $\dot{q}'$ such that $p\forces \dot{q}=\dot{q}'$.

Again by the c.c.c., there exist $\varepsilon,\delta<\lambda$ such that $p\forces \dot{\sigma}_k\in \varepsilon^{<\delta}$ for every $k<\omega$.
Let $\gamma:=\max(\gamma_p,\gamma',\varepsilon,\delta)$ $<\lambda$.
Then $(p,\dot{q}')\in \below{\gamma}\PP_{\alpha+1}$, and it is
 equivalent to $(p,\dot{q})$, which finishes the proof.
\end{proof}

\begin{rema}\label{rema:why_cof_theta_large} In the more general situation described in Remark~\ref{rema:more_general}, i.e., if we work with the tree $\theta^{<\lambda}$ in place of $\lala$ 
(see also Remark~\ref{rema:epsilon_delta_neu}), 
we have to require that $\cf(\theta)\geq \lambda$. The reason is that  $|\sigma_k|$ can be arbitrarily large below $\lambda$: if $\cf(\theta)<\lambda$, it could happen that there does not exist an $\varepsilon<\theta$ which is needed in the end of the generalization of the above proof.
\end{rema}

\begin{lemm}\label{lemm:finding_gamma}
Let $G$ be $\PP_{\lambda}$-generic and $V[G]\models b\subseteq \omega$. Then there exists a $\gamma<\lambda$ and a
$\PP_{\lambda}$-name $\dot{b}$
for~$b$
which is hereditarily below $\gamma$.
\end{lemm}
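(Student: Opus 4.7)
The plan is to apply Lemma~\ref{lemm:finding_gamma_fresh} to each of the countably many conditions appearing in a nice $\PP_\lambda$-name for~$b$, and then to take a supremum, using that $\lambda$ is regular uncountable (hence $\cf(\lambda) > \om$).

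Concretely, I would first fix a nice $\PP_\lambda$-name $\dot b_0 = \{(\check n, p) : n \in \om,\ p \in A_n\}$ for~$b$, where each $A_n$ is an antichain in $\PP_\lambda$. By the c.c.c.\ (Lemma~\ref{lemm:FKW_ccc_Knaster}), each $A_n$ is countable, so only countably many conditions $\{p_k : k < \om\}$ occur in $\dot b_0$ altogether. Lemma~\ref{lemm:finding_gamma_fresh} then supplies, for each $k < \om$, an ordinal $\gamma_k < \lambda$ and a condition $p_k' \in \below{\gamma_k}\PP_\lambda$ forcing equivalent to~$p_k$. Setting $\gamma := \sup_{k < \om} \gamma_k$, the uncountable cofinality of~$\lambda$ yields $\gamma < \lambda$, and every $p_k'$ then lies in $\below{\gamma}\PP_\lambda$.

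I would then define $\dot b := \{(\check n, p_k') : (\check n, p_k) \in \dot b_0\}$. Check-names $\check n$ are built solely from the trivial condition $\mathbf{1}$, which has empty domain at every coordinate of the iteration and so belongs to $\below{\gamma}\PP_\lambda$; together with $p_k' \in \below{\gamma}\PP_\lambda$, this shows that $\dot b$ is hereditarily below~$\gamma$. To verify that $\dot b$ still names~$b$, I would invoke the forcing equivalence of $p_k$ and $p_k'$ (in the sense of Remark~\ref{rema:not_sep}, i.e., $p_k \leq^* p_k'$ and $p_k' \leq^* p_k$), which guarantees $p_k \in G \iff p_k' \in G$ for every generic~$G$; hence $\dot b_G = (\dot b_0)_G = b$.

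No serious obstacle is anticipated: the argument is essentially bookkeeping combined with Lemma~\ref{lemm:finding_gamma_fresh}. The only delicate point is interpreting ``forcing equivalent'' correctly in the non-separative setting of the $\QQ_\alpha$, but the equivalence furnished by that lemma is strong enough to preserve membership in any generic filter, which is all that is needed here.
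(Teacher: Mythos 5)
Your proposal is correct and follows essentially the same route as the paper: fix a nice name, use the c.c.c.\ to see that only countably many conditions occur in it, apply Lemma~\ref{lemm:finding_gamma_fresh} to each, take the supremum of the resulting ordinals (below $\lambda$ since $\lambda$ is regular uncountable), and replace each condition by its equivalent one in $\below{\gamma}\PP_\lambda$. Your extra remarks (that check-names are hereditarily below $\gamma$, and that forcing-equivalent conditions lie in a generic filter simultaneously, so the new name still evaluates to~$b$) are correct and simply make explicit what the paper leaves implicit.
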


\begin{proof}
For every condition $p\in  \PP_{\lambda}$, let $\gamma_p<\lambda$ be such that there exists
a condition in $\below{\gamma_p}\PP_\lambda$ which is forcing equivalent to~$p$
(which is possible by Lemma~\ref{lemm:finding_gamma_fresh}).
Let $\dot{b}'$ be a nice name for $b$, and let $\dot{b}$ be
a name where every condition~$p$ appearing in~$\dot{b}'$ is replaced by an equivalent condition in
$\below{\gamma_p}\PP_{\lambda}$.
 Since $b$ is a countable set and $\PP_{\lambda}$ has the c.c.c., the set $B$ of conditions which appeared in $\dot{b}'$ is countable. Let $\gamma:=\sup\{ \gamma_p\with p\in B\}<\lambda$;
then
$\dot{b}$ is a $\below{\gamma}\PP_{\lambda}$-name.
\end{proof}

We conclude
with
a technical lemma which will be  crucial later on:

\begin{lemm}\label{lemm:statt_oben}
Suppose $\tau \in \lala \setminus \gamma^{<\gamma}$.
Let $p, r \in \PP_{\lambda}$ such that
$p$ is a full condition which is almost hereditarily below $\gamma$ except for $\tau$,
and $r$ is hereditarily below $\gamma$,
and
$p$ and $r$ are compatible (in~$\PP_{\lambda}$).
Then there exists
a $p^* \in \PP_{\lambda}$ such that
\begin{enumerate}

\item $p^*$ is almost hereditarily below~$\gamma$ except for~$\tau$,

\item $p^* \leq p, r$, and

\item $p^*(0)(\tau) = p(0)(\tau)$.

\end{enumerate}
\end{lemm}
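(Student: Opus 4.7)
The plan is to use Lemma~\ref{lemm:hereditarily_is_complete} to reduce $p$ to the complete subforcing $\below{\gamma}\PP_\lambda$, find a common extension $s$ of this reduction with $r$ inside $\below{\gamma}\PP_\lambda$, and then reattach the $\tau$-coordinate of $p$ to obtain~$p^*$.

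In detail: since $p$ is full and almost hereditarily below $\gamma$ except for $\tau$, the moreover part of Lemma~\ref{lemm:hereditarily_is_complete} yields that
\[
p^{\mathrm{red}} := (p(0) \doppelrestr \gamma^{<\gamma},\, p(1), p(2), \ldots)
\]
is a reduction of $p$ to $\below{\gamma}\PP_\lambda$. Clearly $p \leq p^{\mathrm{red}}$, so the assumed compatibility of $p$ with $r$ in $\PP_\lambda$ implies compatibility of $p^{\mathrm{red}}$ and $r$ in $\PP_\lambda$; since $\below{\gamma}\PP_\lambda \compl \PP_\lambda$ and both conditions lie in $\below{\gamma}\PP_\lambda$, I can pick $s \in \below{\gamma}\PP_\lambda$ with $s \leq p^{\mathrm{red}}, r$. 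By Lemma~\ref{lemm:full} I may additionally assume $s$ is full, with common length at least $N$, where $N$ is the length witnessing fullness of $p$ (this takes care of length issues for condition~(2) of Definition~\ref{defi:main_definition} below). Now define $p^*$ by $p^*(\alpha) := s(\alpha)$ for $\alpha \geq 1$, and $p^*(0)$ to be the $\QQ_0$-condition with $\dom(p^*(0)) := \dom(s(0)) \cup \{\tau\}$, agreeing with $s(0)$ on $\dom(s(0))$, and with $p^*(0)(\tau) := p(0)(\tau)$.

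The main (and essentially only nontrivial) obstacle is verifying that $p^*(0)$ is actually a condition in $\QQ_0$; the only delicate points are items~(2), (5) and~(8) of Definition~\ref{defi:main_definition} at $\sigma = \tau$, and this is where fullness of $p$ is crucial. Any $\tau'' \in \dom(f^p_\tau) \cup \dom(h^p_\tau)$ is either a proper initial segment or a left-sibling of $\tau$; since $\dom(p(0)) \subs \gamma^{<\gamma} \cup \{\tau\}$ and $\tau'' \neq \tau$, it must lie in $\dom(p(0)) \cap \gamma^{<\gamma} \subs \dom(s(0))$. Fullness of $p$ gives $\dom(s^p_{\tau''}) = [0,N) = \dom(s^p_\tau)$, and $s \leq p^{\mathrm{red}}$ implies that $s^s_{\tau''}$ extends $s^p_{\tau''}$, so their values agree on $[0,N)$. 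Hence the disjunctions in~(5) and~(8) for $p^*$ at $\sigma = \tau$ on the overlap $\dom(s^{p^*}_\tau) \cap \dom(s^{p^*}_{\tau''}) = [0,N)$ reduce to those already satisfied by~$p$; condition~(2) at $\sigma = \tau$ holds thanks to the assumption on $s$'s common length.

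The remaining verifications are routine. For each $\sigma \in \dom(p(0)) \cap \gamma^{<\gamma}$, both $\dom(f^p_\sigma)$ and $\dom(h^p_\sigma)$ are contained in $\gamma^{<\gamma}$ (since $\tau$ is neither a proper initial segment nor a left-sibling of any $\sigma \in \gamma^{<\gamma}$), so $p^{\mathrm{red}}(0)$ agrees with $p(0)$ on $\dom(p(0)) \cap \gamma^{<\gamma}$; combined with $p^*(0)(\tau) = p(0)(\tau)$ and $p^*(\alpha) = s(\alpha) \leq p(\alpha)$ for $\alpha \geq 1$, this yields $p^* \leq p$. Since $r$ is hereditarily below $\gamma$ we have $\tau \notin \dom(r(0))$, so $p^*(0) \leq s(0) \leq r(0)$ together with $p^*(\alpha) = s(\alpha) \leq r(\alpha)$ gives $p^* \leq r$. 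By construction $p^*$ is almost hereditarily below $\gamma$ except for $\tau$, and $p^*(0)(\tau) = p(0)(\tau)$, as required.
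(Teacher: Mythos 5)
Your proof is correct, and its skeleton is the same as the paper's: use the moreover part of Lemma~\ref{lemm:hereditarily_is_complete} to see that $(p(0)\restr\gamma^{<\gamma},p(1),p(2),\dots)$ is a reduction of $p$ lying in $\below{\gamma}\PP_{\lambda}$, find a common extension of it and $r$ inside this complete subforcing, and re-attach $(\tau,p(0)(\tau))$. The genuine difference is in how conditionhood of $p^*(0)$ is verified. The paper inserts a second compatibility step: it uses Lemma~\ref{lemm:C_complete_subforcing} to see that $p(0)\restr\gamma^{<\gamma}$ is a reduction of $p(0)$ to $\QQ_0^{\gamma^{<\gamma}}$, extends $q^*(0)$ to a full common extension $\bar{q}(0)\leq p(0),q^*(0)$, and sets $p^*(0):=\bar{q}(0)\restr\gamma^{<\gamma}\cup\{(\tau,p(0)(\tau))\}$, so that the $f_\tau$- and $h_\tau$-constraints are inherited from $\bar{q}(0)\leq p(0)$. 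You instead attach $\tau$ directly to $s(0)$ and check items (2), (5), (8) at $\sigma=\tau$ by hand; this works precisely because fullness of $p$ makes every relevant $s^p_{\tau''}$ have length exactly $N=|s^p_\tau|$, so on the overlap $[0,N)$ the new condition agrees with $p(0)$ (and, since $T'_0=\emptyset$, all of $\dom(f^p_\tau)\cup\dom(h^p_\tau)$ lies in $\dom(p(0))\cap\gamma^{<\gamma}\subseteq\dom(s(0))$). Your route saves the second use of Lemma~\ref{lemm:C_complete_subforcing} at the price of explicit length bookkeeping. Two small points you should make explicit: Lemma~\ref{lemm:full} as stated does not provide a prescribed common length $\geq N$, but padding the $s$-parts with $0$'s as in its proof does; and the construction tacitly assumes $\tau\in\dom(p(0))$, which the paper also assumes (via its ``without loss of generality'' at the start) and which holds in all applications of the lemma.
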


\begin{proof}
Without loss of generality we can assume that
$\dom(p(0))\supsetneqq \{\tau\}$.
Since $p$ is full and almost hereditarily below~$\gamma$,
by (the ``moreover part'' of) Lemma~\ref{lemm:hereditarily_is_complete}
\[
\OldResch{p} := (p(0)\restr \gamma^{<\gamma}, p(1), p(2),\dots)
\]
is a reduction of $p$ to $\below{\gamma}\PP_\lambda$.
We show that $\OldResch{p}  \comp_{\below{\gamma}\PP_{\lambda}} r$.
Assume not. Since $\below{\gamma}\PP_{\lambda}$ is a complete subforcing of $\PP_{\lambda}$, it follows that $\OldResch{p} \incomp_{\PP_{\lambda}} r$. But $p\leq \OldResch{p}$, so $p\incomp_{\PP_{\lambda}} r$, which is a contradiction to the assumption of the lemma.

Let $q^*\in \below{\gamma}\PP_{\lambda}$ be such that $q^*\leq \OldResch{p}, r$; without loss of generality, we can assume that $q^*(0)$ is full.
Since $q^*(0)\leq \OldResch{p}(0)=p(0)\restr \gamma^{<\gamma}$
and $p(0)\restr \gamma^{<\gamma}$ is a reduction of $p(0)$ by Lemma~\ref{lemm:C_complete_subforcing} (recall that $p(0)\doppelrestr \gamma^{<\gamma} = p(0)\restr \gamma^{<\gamma}$ because $\gaga$ is $0$-$\leftup$-closed), it follows that $q^*(0)$ is compatible with $p(0)$. Let $\bar{q}(0)$ be a full witness for that. So $\bar{q}(0)\leq p(0), r(0), q^*(0)$.

Let $p^*(0):=\bar{q}(0)\restr \gamma^{<\gamma} \cup \{(\tau, p(0)(\tau))\}$, and for $\alpha>0$, let $p^*(\alpha):=q^*(\alpha)$.

\begin{clai*}
$p^*(0)$ is a condition.
\end{clai*}

\begin{proof}
For $\sigma, \sigma'\in \dom(\bar{q}(0)\restr \gamma^{<\gamma} )$, it is clear that the requirements for being a condition are fulfilled, because $\bar{q}(0)$ is a condition.

Let $\sigma \unlhd \tau$ and $\sigma \in \dom(p^*(0))$.
Let $\sigma'\in \dom(p(0))\setminus \{\tau\}$.
Clearly, $s_\sigma^{p^*(0)}=s_\sigma^{\bar{q}(0)}$ and $s_\tau^{p^*(0)}=s_\tau^{p(0)}$. Since $\bar{q}(0)$ and $p$ are full, it follows that
$|s_\sigma^{\bar{q}(0)}|=|s_{\sigma'}^{\bar{q}(0)}|$ and hence
 $|s_\sigma^{p^*(0)}|=|s_{\sigma'}^{p^*(0)}|\geq |s_{\sigma'}^{p(0)}|=|s_\tau^{p(0)}|=|s_\tau^{p^*(0)}|$.

Let $\sigma\in \dom(f_\tau^{p^*(0)})$
and assume that $s_\tau^{p^*(0)}(m)=1$ for some $m\geq f_\tau^{p^*(0)}(\sigma)$.
We have to show that $s_\sigma^{p^*(0)}(m)=1$. Since $\bar{q}(0)$ extends $p(0)$, we have $s_\tau^{\bar{q}(0)}(m)=1$ and $\dom(f_\tau^{p^*(0)})\subseteq \dom(f_\tau^{\bar{q}(0)})$, and for $\sigma\in\dom(f_\tau^{p^*(0)})$ it holds that $f_\tau^{p^*(0)}(\sigma)\geq f_\tau^{\bar{q}(0)}(\sigma)$, so $\sigma\in \dom(f_\tau^{\bar{q}(0)})$ and $m\geq f_\tau^{\bar{q}(0)}(\sigma)$. Since $\bar{q}(0)$ is a condition, it follows that $s_\sigma^{p^*(0)}(m)=s_\sigma^{\bar{q}(0)}(m)=1$.

Let $\sigma\in \dom(h_\tau^{p^*(0)})$
and assume that $s_\tau^{p^*(0)}(m)=1$ for some $m\geq h_\tau^{p^*(0)}(\sigma)$.
We have to show that $s_\sigma^{p^*(0)}(m)=0$. Since $\bar{q}(0)$ extends $p(0)$, we have $s_\tau^{\bar{q}(0)}(m)=1$ and $\dom(h_\tau^{p^*(0)})\subseteq \dom(h_\tau^{\bar{q}(0)})$, and for $\sigma\in\dom(h_\tau^{p^*(0)})$ it holds that $h_\tau^{p^*(0)}(\sigma)\geq h_\tau^{\bar{q}(0)}(\sigma)$, so $\sigma\in \dom(h_\tau^{\bar{q}(0)})$ and $m\geq h_\tau^{\bar{q}(0)}(\sigma)$. Since $\bar{q}(0)$ is a condition, it follows that $s_\sigma^{p^*(0)}(m)=s_\sigma^{\bar{q}(0)}(m)=0$.
\end{proof}

Moreover, $p^*(0)\leq q^*(0)$, because $\bar{q}(0)\leq q^*(0)$ and $q^*$ hereditarily below $\gamma$ except for $\tau$. So $p^*$ is a condition. Clearly $p^*$ is almost hereditarily below $\gamma$ and $p^*(0)(\tau)=p(0)(\tau)$.

Since $r(0)$ is hereditarily below $\gamma$ and $p(0)$ is almost hereditarily below $\gamma$, and $\bar{q}(0)\leq r(0), p(0)$, it is clear that $p^*(0)$
extends $r(0)$ and $p(0)$.
So clearly $p^*\leq r, p$.
\end{proof}

\subsection{No refinement: branches are towers}\label{subsection:branches}

Now we are ready to prove that the generic matrix has no refinement. More precisely, we show that the sets along any branch in our tree have no pseudo-intersection, i.e., they form a tower.

\begin{lemm}
In $V[\PP_\lambda]$,
the sequence $\seqlangle a_{\sigma\restr \xi} \with \xi < \lambda \seqrangle$
is a tower
for each~$\sigma\in \lambda^\lambda$.
\end{lemm}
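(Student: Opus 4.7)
The sequence is $\subseteq^*$-decreasing by Corollary~\ref{coro:almost_included_disjoint}(1), so the content of the lemma is that it admits no infinite pseudo-intersection. Suppose, towards a contradiction, that $b\in[\omega]^\omega$ in $V[\PP_\lambda]$ satisfies $b\subseteq^* a_{\sigma\restr\xi}$ for every successor $\xi<\lambda$. By Lemma~\ref{lemm:finding_gamma} fix $\gamma<\lambda$ and a $\PP_\lambda$-name $\dot b$ for~$b$ which is hereditarily below~$\gamma$. Set $\tau := \sigma\restr(\gamma+1)$, so $\tau$ has successor length and $|\tau|>\gamma$, and use Lemma~\ref{lemm:sigma_appears_somewhere} to fix the least $\eta<\lambda$ with $\tau\in V[G_\eta]$. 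From here on the argument takes place in $V[G_\eta]$ using the tail-iteration variants of the lemmas of Section~\ref{subsection:auxiliary} announced in Remark~\ref{rema:eta_tail}, so that $\tau\in T_\eta$ and $\QQ_\eta$ plays the role of~$\QQ_0$. The tail version of Lemma~\ref{lemm:hereditarily_is_complete}(1) then yields $\belowt{\gamma}{\tau}\PP_\lambda\compl\PP_\lambda$, and both $\dot b$ and $a_\tau$ are $\belowt{\gamma}{\tau}\PP_\lambda$-names.

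The plan is to combine a witness for $m\in\dot b$ coming from the strictly smaller subforcing $\below{\gamma}\PP_\lambda$ with a condition on the $a_\tau$-side whose $s_\tau$ is short enough that $s_\tau(m)$ is still free. Since $b\subseteq^* a_\tau$ in $V[\PP_\lambda]$ and both names live in~$\belowt{\gamma}{\tau}\PP_\lambda$, absoluteness yields some $n^*\in\omega$ and, after applying Lemma~\ref{lemm:full}, a full $\bar p\in \belowt{\gamma}{\tau}\PP_\lambda$ forcing $\dot b\setminus n^*\subseteq a_\tau$. Put $N:=\max(n^*,|s_\tau^{\bar p(\eta)}|)$. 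Applying Lemma~\ref{lemm:finde_r_und_m} to the completeness $\below{\gamma}\PP_\lambda\compl\belowt{\gamma}{\tau}\PP_\lambda$ (valid by Lemma~\ref{lemm:compl_compl_is_compl}), with the name $\dot b$ and condition $\bar p$, I obtain $r\in\below{\gamma}\PP_\lambda$ and $m>N$ such that $r\Vdash m\in\dot b$ and $r$ is compatible with~$\bar p$ in~$\PP_\lambda$. Lemma~\ref{lemm:statt_oben} then provides a common extension $p^*\leq \bar p,r$ which is almost hereditarily below~$\gamma$ except for~$\tau$ and, crucially, satisfies $p^*(\eta)(\tau)=\bar p(\eta)(\tau)$; in particular $|s_\tau^{p^*(\eta)}|\leq N<m$.

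To conclude, I would define $p^{**}\leq p^*$ by appending zeros to each $s_\rho^{p^*(\eta)}$, $\rho\in\dom(p^*(\eta))$, until every such sequence has length $m+1$, while leaving all $f$- and $h$-promises and all coordinates $p^*(\alpha)$ with $\alpha\neq\eta$ unchanged. As in the proof of Lemma~\ref{lemm:total_domain}, zero-extensions never violate conditions~(2), (5), or~(8) of Definition~\ref{defi:main_definition}, so $p^{**}$ is a condition below~$p^*$. By construction $s_\tau^{p^{**}(\eta)}(m)=0$, whence $p^{**}\Vdash m\notin a_\tau$; on the other hand $p^{**}\leq r$ forces $m\in\dot b$ and, together with $p^{**}\leq\bar p$ and $m\geq n^*$, forces $m\in a_\tau$, the desired contradiction. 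The delicate point — and the reason for the somewhat elaborate statement of Lemma~\ref{lemm:statt_oben} — is that a naive common extension of $\bar p$ and $r$ might enlarge $s_\tau$ past~$m$ and thereby destroy our freedom to set $s_\tau(m)=0$; insisting on $p^*(\eta)(\tau)=\bar p(\eta)(\tau)$ is precisely what preserves that freedom, and is what makes the final zero-extension step legitimate.
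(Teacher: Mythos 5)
Your proof is correct and takes essentially the same route as the paper's: a hereditarily-below-$\gamma$ name (Lemma~\ref{lemm:finding_gamma}), the minimal stage $\eta$ and the tail iteration of Remark~\ref{rema:eta_tail}, the complete subforcings $\below{\gamma}\PP_\lambda$ and $\belowt{\gamma}{\tau}\PP_\lambda$, Lemma~\ref{lemm:finde_r_und_m} to produce $r$ and $m$, Lemma~\ref{lemm:statt_oben} to amalgamate while freezing the $\tau$-coordinate, and a zero-extension at $m$ for the contradiction. The only cosmetic deviations are taking $\tau=\sigma\restr(\gamma+1)$ where the paper assumes $\gamma$ is a successor, and obtaining $\bar p$ by working directly in the $\belowt{\gamma}{\tau}\PP_\lambda$-extension rather than via a reduction and Lemma~\ref{lemm:Resch_lemma}; just state explicitly the harmless assumption that $\tau\in\dom(\bar p(\eta))$ (and that $\bar p$ forces $\dot b$ to be infinite), since $|s^{\bar p(\eta)}_\tau|$, Lemma~\ref{lemm:finde_r_und_m}, and Lemma~\ref{lemm:statt_oben}(3) presuppose these.
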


\begin{proof} Let $G_\lambda$ be generic for $\PP_\lambda$ and work in $V[G_{\lambda}]$.
Fix $\sigma\in \lambda^\lambda$.
By Corollary~\ref{coro:almost_included_disjoint}(1), $\seqlangle a_{\sigma\restr \xi} \with \xi < \lambda \seqrangle$ is $\subseteq^*$-decreasing.
Let us show that  $\seqlangle a_{\sigma\restr \xi} \with \xi < \lambda \seqrangle$ is actually a tower. Let $b\subseteq \omega$ be infinite,
and assume towards a contradiction that
$b\subseteq^* a_{\sigma\restr \xi}$ for every $\xi<\lambda$.

Apply Lemma~\ref{lemm:finding_gamma} to get
$\gamma < \lambda$ and a
$\mathbb{P}_{\lambda}$-name $\dot{b}$ for $b$ which is
hereditarily below~$\gamma$. Without loss of generality we can assume that $\gamma$ is a successor ordinal.
Fix~$\eta < \lambda$ minimal such that~$\sigma \restr \gamma \in V[G_\eta]$
(such an $\eta$ exists by Lemma~\ref{lemm:sigma_appears_somewhere}).
From now on, we work in $V[G_\eta]$,
and we consider\footnote{Here we use our modifications discussed in Remark~\ref{rema:eta_tail}.}
the tail forcing~$\PP_{\lambda}/G_\eta$.
The $\PP_\lambda$-name $\dot{b}$ can be understood as a  $\PP_{\lambda}/G_\eta$-name for $b$ which is hereditarily below~$\gamma$.

Since $b \subs^* a_{\sigma\restr \gamma}$ holds in~$V[G_{\lambda}]$, we can pick $n \in \om$ and $p \in \PP_{\lambda}/G_\eta$ such that
\[
p \forces \name{b} \setminus n \subs
a_{\sigma \restr \gamma}.
\]

From now on, whenever we say ``almost hereditarily below~$\gamma$'', we shall mean
``almost hereditarily below~$\gamma$
except for~$\sigma \restr \gamma$''.
Note that (the canonical name for) $a_{\sigma \restr \gamma}$ is
almost hereditarily below~$\gamma$;
also
$\name{b}$ is almost hereditarily below~$\gamma$
(because $\name{b}$ is hereditarily below~$\gamma$).

By Lemma~\ref{lemm:hereditarily_is_complete} and
Lemma~\ref{lemm:Resch_lemma},
we can fix~$p'$ which is
almost hereditarily below~$\gamma$
such that
\[
p' \forces \name{b} \setminus n \subs
a_{\sigma \restr \gamma}.
\]
Recall that $\eta$ is minimal with $\sigma\restr\gamma\in V[G_\eta]$, so $\QQ_\eta$ will assign a set $a_{\sigma\restr \gamma}$ to $\sigma\restr \gamma$.
Therefore we can assume
without loss of generality that  $\sigma \restr \gamma \in \dom(p'(\eta))$, and we can assume that $p'$ is a full\footnote{Here we use the modification of Definition~\ref{defi:full}, where $0$ is replaced by $\eta$, i.e., $p'(\eta)$ is full.} condition.

By Lemma~\ref{lemm:finde_r_und_m} there is
$r \in \PP_{\lambda}/G_\eta$ hereditarily below~$\gamma$ and
$m > n, |s^{p'(\eta)}_{\sigma \restr \gamma}|$
such that $r$ is compatible with~$p'$,
and $r \forces m \in \name{b}$.
Apply Lemma~\ref{lemm:statt_oben} to obtain
$p'' \leq p', r$
such that $p''$ is
almost hereditarily below~$\gamma$,
and moreover
\[
p''(\eta)(\sigma \restr \gamma) = p'(\eta)(\sigma \restr \gamma).
\]
It follows that
$p{''} \forces m \in \dot{b}$.
In particular $m>|s^{p''(\eta)}_{\sigma\restr \gamma}|$, thus we can strengthen $p''$ to a condition $q$ (only strengthening $p''(\eta)$) by extending $s^{p''(\eta)}_{\sigma\restr \gamma}$ to length $>m$ with $s^{q(\eta)}_{\sigma\restr \gamma}(m)=0$. Then $q\forces m\in \dot{b} \land m\notin a_{\sigma\restr \gamma}$,
which
is a contradiction to the fact that $p'$
forces $\dot{b}\setminus n\subseteq a_{\sigma\restr \gamma}$.
\end{proof}

\subsection{Levels are mad families}\label{subsection:rows}

Finally, we want to show
that the levels of the generic matrix form 
mad families.

\begin{lemm}
In $V[\PP_\lambda]$,
the family
$A_{\xi+1} = \{ a_{\sigma} \with |\sigma| = \xi +1 \}$ is mad
for each~$\xi < \lambda$.
\end{lemm}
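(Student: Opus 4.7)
The plan is to mirror the argument from Section~\ref{subsection:branches} (where branches were shown to be towers), with one essential modification. Since Corollary~\ref{coro:almost_included_disjoint}(2) already gives that $A_{\xi+1}$ is an almost disjoint family, only maximality remains. Given an arbitrary infinite $b \in V[\PP_\lambda]$, I would first fix $\gamma < \lambda$ (without loss of generality a successor with $\gamma > \xi+1$) and a $\PP_\lambda$-name $\dot b$ for $b$ hereditarily below~$\gamma$, via Lemma~\ref{lemm:finding_gamma}. The crucial move is to choose the witness node to be some $\tau \in V \cap \lambda^\gamma$ with $\tau(0) \geq \gamma$: then $\tau \notin \gaga$, while $\tau$ already exists in the ground model $V = V[G_0]$, so the minimal $\eta$ with $\tau \in V[G_\eta]$ is $\eta = 0$, and in particular $T'_0 = \emptyset$.

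Assuming for contradiction that $b \cap a_\sigma$ is finite for every $\sigma \in \lambda^{\xi+1}$, in particular $b \cap a_{\tau \restr (\xi+1)}$ is finite, so by Corollary~\ref{coro:almost_included_disjoint}(1) $b \cap a_\tau$ is finite as well. I then fix $p \in \PP_\lambda$ and $n \in \omega$ with $p \forces \dot b \cap a_\tau \subseteq n$. From here the argument parallels Section~\ref{subsection:branches}: since $|\tau| = \gamma$, Lemma~\ref{lemm:hereditarily_is_complete} ensures $\belowt{\gamma}{\tau}\PP_\lambda \compl \PP_\lambda$, so Lemma~\ref{lemm:Resch_lemma} yields a reduction $p' \in \belowt{\gamma}{\tau}\PP_\lambda$ of $p$ still forcing $\dot b \cap a_\tau \subseteq n$ (which I may assume to be full with $\tau \in \dom(p'(0))$). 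Lemma~\ref{lemm:finde_r_und_m} produces $r \in \below{\gamma}\PP_\lambda$ compatible with $p'$ and some $m > \max(n, |s^{p'(0)}_\tau|)$ with $r \forces m \in \dot b$, and Lemma~\ref{lemm:statt_oben} merges $p'$ and $r$ into $p^* \leq p', r$ satisfying $p^*(0)(\tau) = p'(0)(\tau)$.

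The main obstacle is the final step, where the argument diverges from the tower case: rather than extending $s^{p^*(0)}_\tau$ by a $0$ at position $m$ (always legal), I must extend $p^*$ to a condition $q$ forcing $m \in a_\tau$, i.e., setting $s^{q(0)}_\tau(m) = 1$, which is restricted by items~(5) and~(8) of Definition~\ref{defi:main_definition} through the $f$- and $h$-promises at $\tau$. The whole point of choosing $\tau \in V$ with $\tau(0) \geq \gamma$ is to defuse these restrictions: since $T'_0 = \emptyset$, item~(3) of Definition~\ref{defi:main_definition} gives $\dom(f^{p'(0)}_\tau) \subseteq \dom(p'(0)) \cap \{\tau' : \tau' \lhd \tau\}$, and since $\dom(p'(0)) \subseteq \gaga \cup \{\tau\}$ but every $\tau' \lhd \tau$ of positive length inherits the first entry $\tau(0) \geq \gamma$ and hence lies outside $\gaga$, $\dom(f^{p'(0)}_\tau)$ must be empty. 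The same type of argument (using $\gamma \geq 2$, so that $\tau$'s predecessor has positive length) gives $\dom(h^{p'(0)}_\tau) = \emptyset$, and likewise shows that no other $\sigma \in \dom(p^*(0))$ can have $\tau$ in its $f$- or $h$-domain. Hence defining $q$ by $s^{q(0)}_\tau := s^{p^*(0)}_\tau \concat 0^{m - |s^{p^*(0)}_\tau|} \concat \langle 1 \rangle$ (with all other data of $p^*$ unchanged) produces a valid condition $q \leq p^*$ with $q \forces m \in a_\tau$. Combined with $q \leq r \forces m \in \dot b$ and $q \leq p' \forces \dot b \cap a_\tau \subseteq n$, the inequality $m > n$ delivers the desired contradiction.
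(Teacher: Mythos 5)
There is a genuine gap, and it is located exactly at the point you call the ``whole point'' of your modification: choosing $\tau$ with $\tau(0)\geq\gamma$. Your argument never really uses the assumption that $b$ fails to meet \emph{all} of $A_{\xi+1}$; after fixing $p\forces \dot b\cap a_\tau\subseteq n$ it runs entirely below that one statement, so if it worked it would prove that \emph{every} infinite $b$ with a name hereditarily below~$\gamma$ has infinite intersection with $a_\tau$ for your pre-chosen $\tau$. That is false: take $b:=a_{\langle 0\rangle}$ (the set attached to the node $\langle 0\rangle\in\gaga$, which lies in $V[G\cap\below{\gamma}\PP_\lambda]$ and hence has a name hereditarily below~$\gamma$). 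Since $a_\tau\subseteq^* a_{\tau\restr 1}$ and $a_{\tau\restr 1}\cap a_{\langle 0\rangle}=^*\emptyset$ (Corollary~\ref{coro:almost_included_disjoint}), $b\cap a_\tau$ is finite. A madness witness must be found by climbing down the tree following $b$, which is exactly what the paper's inductive ``local madness below a branch'' claim does; it cannot be a single node fixed independently of $b$.

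Concretely, the step that breaks is the transfer of $p\forces\dot b\cap a_\tau\subseteq n$ to a condition $p'\in\belowt{\gamma}{\tau}\PP_\lambda$. For your $\tau$, no predecessor or sibling of $\tau$ lies in $\gaga$, and at stage $0$ none is old ($T'_0=\emptyset$); hence a condition of $\QQ_0^{\gaga\cup\{\tau\}}$ can carry \emph{no} $f$- or $h$-promise at $\tau$ at all, and the subforcing has no way to tie $a_\tau$ to $a_{\tau\restr 1}$ and thereby to the level-one nodes of $\gaga$. In fact it is dense in $\QQ_0^{\gaga\cup\{\tau\}}$ to put arbitrarily large common elements into $s_\tau$ and $s_{\langle 0\rangle}$, while in $\QQ_0$ it is dense to force $a_\tau\cap a_{\langle 0\rangle}$ bounded (via the promises $f_\tau(\tau\restr 1)$ and $h_{\tau\restr 1}(\langle 0\rangle)$, which are invisible to the subforcing); so the trace of the full generic on $\QQ_0^{\gaga\cup\{\tau\}}$ is not generic, the restriction $p\doppelrestr(\gaga\cup\{\tau\})$ of a full $p$ carrying such promises is \emph{not} a reduction, and the completeness/Resch machinery cannot deliver a $p'$ with empty promises at $\tau$ still forcing $\dot b\cap a_\tau\subseteq n$ --- indeed your own final step shows no such condition can force this. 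The very feature you use to ``defuse'' items (5) and (8) is what destroys the reduction step. This is why the paper instead takes the witness node $\rho\concat\gamma$ with $\gamma>\sup(\rang(\rho)),|\rho|+1$, so that all predecessors and left siblings of the witness lie in $\gaga$ (or in $T'_\eta$), accepts the resulting nonempty promise sets $R,R'$, and compensates by working with the refined name $\dot x=\bigcap_{\tau\in R'}(\dot b\cap a_\tau)\setminus\bigcup_{\tau\in R}a_\tau$ inside the inductive construction of a branch along which $b$ stays positive; that extra structure is not removable by moving $\tau$ out of $\gaga$.
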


\begin{proof}
 Let $G_\lambda$ be generic for $\PP_\lambda$ and work in $V[G_{\lambda}]$.
The main work
lies in the following claim, which guarantees ``local madness'' below branches.
We will prove it
after finishing the proof of the lemma.

\begin{clai}\label{clai:main_claim_for_mad}
Let $\rho \in \lala$, and let $b\subseteq \omega$ be infinite such that $b\cap a_{\rho \restr \zeta}$ is infinite for every successor $\zeta\leq |\rho|$.
Then there exists an $\newalpha<\lambda$ such that $b\cap a_{\rho \concat \newalpha}$ is infinite.
\end{clai}

Fix $\xi<\lambda$.
By Corollary~\ref{coro:almost_included_disjoint}(2), $A_{\xi+1}$ is an almost disjoint family.
Using the claim, we will show that $A_{\xi+1}$ is actually mad.
Let $b\subseteq \omega$ be infinite.
To find $\sigma\in \lambda^{\xi+1}$ such that $b\cap a_{\sigma}$ is infinite, we construct, by induction on $\zeta$, a
branch
$\seqlangle \rho_\zeta \with \zeta\leq \xi+1 \seqrangle$ with $|\rho_\zeta|=\zeta$ for each $\zeta$, and $\rho_{\zeta'} \unlhd \rho_\zeta$ for $\zeta'\leq\zeta$, such that $b\cap a_{\rho_\zeta}$ is infinite for every successor $\zeta\leq \xi+1$.

Let $\rho_0:= \seqlangle \seqrangle$.
Now assume we have constructed $\seqlangle  \rho_{\zeta'} \with \zeta'<\zeta  \seqrangle$.
If $\zeta$ is a limit, just let
 $\rho_\zeta:= \bigcup \{\rho_{\zeta'} \with \zeta'<\zeta\}$.
If $\zeta=\zeta'+1$ is a successor, $\rho_{\zeta'}$
fulfills the assumptions of the claim by induction. Let $i<\lambda$ be given by the claim, and let $\rho_\zeta:={\rho_{\zeta'}}\concat i$. Then $b\cap a_{\rho_\zeta}$ is infinite, as required. Finally, $\sigma:= \rho_{\xi+1}$ is as desired.
\end{proof}

\begin{proof}[Proof of Claim~\ref{clai:main_claim_for_mad}]
 Assume
towards contradiction that $b\cap a_{\rho\restr \zeta}$ is infinite for every successor $\zeta\leq|\rho|$, but $b\cap a_{\rho\concat i}$ is finite for every $i<\lambda$.

 Let $\eta$ be minimal with $\rho\in V[G_\eta]$
(such an $\eta$ exists by Lemma~\ref{lemm:sigma_appears_somewhere}).
Thus $a_{\rho\concat i}$ (for any $i$) is not defined in $V[G_\eta]$ but it will get defined in the next step of the forcing iteration.
 From now on, we work in $V[G_\eta]$,
and we consider\footnote{Here, again, we use our modifications discussed in Remark~\ref{rema:eta_tail}.}
the tail forcing~$\PP_{\lambda}/G_\eta$,
and
apply Lemma~\ref{lemm:finding_gamma} to get
 a $\mathbb{P}_{\lambda}/G_\eta$-name $\dot{b}$ for $b$ and $\gamma' < \lambda$ such that
$\dot{b}$ is hereditarily below~$\gamma'$.
Let $\gamma < \lambda$ be any ordinal strictly above
$|\rho| + 1$,
$\sup(\rang(\rho))$, and $\gamma'$.

Note that we can pick $n \in \om$ and $p \in \PP_{\lambda}/G_\eta$ such that

\begin{enumerate}

\item $p \forces \name{b} \cap a_{\rho \concat \gamma} \subs n$,

\item
$p \forces \name{b} \cap a_{\rho \concat i} \textrm{ is finite}$, for each $i < \gamma$, and

\item
$p \forces \name{b} \cap a_{\rho \restr \zeta} \textrm{ is infinite}$, for each successor $\zeta \leq |\rho|$.

\end{enumerate}

From now on, whenever we say ``almost hereditarily below~$\gamma$'', we shall mean
``almost hereditarily below~$\gamma$
except for~$\rho \concat \gamma$''.
Note that (the canonical name for) $a_{\rho \concat \gamma}$ is
almost hereditarily below~$\gamma$;
also
$\name{b}$ is almost hereditarily below~$\gamma$
(because $\name{b}$ is hereditarily below~$\gamma$),
and similarly $a_{\rho \concat i}$ is almost hereditarily below~$\gamma$ for each $i < \gamma$, and
$a_{\rho \restr \zeta}$ is almost hereditarily below~$\gamma$ for each successor $\zeta \leq |\rho|$.

By Lemma~\ref{lemm:hereditarily_is_complete} and
Lemma~\ref{lemm:Resch_lemma},
we can fix~$p'$ which is
almost hereditarily below~$\gamma$
such that items~(1), (2), and (3) above hold true for $p'$ in place of~$p$.
Without loss of generality, we can assume that
$\rho \concat \gamma \in \dom(p'(\eta))$, as well as that $p'$ is a full\footnote{Here, again, we use the modification of Definition~\ref{defi:full}, where $0$ is replaced by $\eta$, i.e., $p'(\eta)$ is full.} condition.

Define $R:= \dom(p'(\eta))\cap \{\rho\concat i \mid i<\gamma\}$,
and
$R':= \dom(f^{p'(\eta)}_{\rho\concat \gamma})$.
Let $\name{x}$ be a $\mathbb{P}_{\lambda}/G_\eta$-name
such that
\[
\forces \name{x} = \bigcap \limits_{\tau\in R'} (\dot{b}\cap a_\tau)\setminus \bigcup \limits_{\tau\in R} a_\tau;
\]
since the conditions which are hereditarily below~$\gamma$ form a complete subforcing of~$\PP_{\lambda}/G_\eta$ by
Lemma~\ref{lemm:hereditarily_is_complete}, and all names
which are used to define~$\name{x}$ are hereditarily below~$\gamma$,
we can assume that $\name{x}$ has been chosen to be hereditarily below~$\gamma$ as well.
Note that
since $R$ and $R'$ are finite,
$p'$ forces $\name{x}$ to be infinite.

 By Lemma~\ref{lemm:finde_r_und_m} there is
$r \in \PP_{\lambda}/G_\eta$ hereditarily below~$\gamma$ and
$m > n, |s^{p'(\eta)}_{\rho \concat \gamma}|$
such that $r$ is compatible with~$p'$,
and $r \forces m \in \name{x}$.
Apply Lemma~\ref{lemm:statt_oben} to obtain
$p'' \leq p', r$
such that $p''$ is
almost hereditarily below~$\gamma$,
and moreover
\[
p''(\eta)(\rho \concat \gamma) = p'(\eta)(\rho \concat \gamma).
\]
It follows that
$p{''} \forces m \in \dot{x}$, as well as
$p{''}\forces m\in a_\tau$ for $\tau\in R'$ and $p{''}\forces m\notin a_\tau$ for $\tau\in R$.

 Now extend $p{''}$ to a condition $q$ as follows. Let $q(\alpha)=p{''}(\alpha)$ for $\alpha>\eta$. For $\tau\in (R\cup R')\cap \dom(p{''}(\eta))$ extend $s^{q(\eta)}_\tau$ such that $|s^{q(\eta)}_\tau|>m$.
 It follows
 (for $\tau\in R'$)
 that $s^{q(\eta)}_\tau(m)=1$ for $\tau\in R'\cap \dom(p{''}(\eta))$, and $a_\tau(m)=1$ for $\tau\in R'\setminus \dom(p{''}(\eta))$ because $p{''}\forces m\in a_\tau$ for $\tau\in R'$; moreover, $s^{q(\eta)}_\tau(m)= 0$ for $\tau\in R$ because $p{''}\forces m\notin a_\tau$ for $\tau\in R$. Additionally fill $s^{q(\eta)}_{\rho\concat \gamma}$ with $0$ for entries smaller than $m$ and with $1$ at $m$. That is possible, because the $s^{q(\eta)}_\tau(m)$ are accordingly for $\tau \in R$ and $\tau\in R'\cap \dom(p{''}(\eta))$ respectively and $a_\tau(m)=1$ for $\tau\in R'\setminus \dom(p{''}(\eta))$.

It follows that $q\forces m\in \dot{x}\cap a_{\rho\concat \gamma}$, which is a contradiction to the fact that
$p'$
forces
$\dot{x}\cap a_{\rho\concat \gamma} \subseteq n$.
\end{proof}

This finishes the proof that the generic matrix is a distributivity matrix of height~$\lambda$. 
To
finish the proof of Main Theorem~\ref{maintheo:maintheo_general}, it remains to prove that $\bfrak$ (and hence~$\h$) is small in our final model; this is the subject of
Sections~\ref{sec:B_Canjar_filters} and~\ref{section:h_b_om_1}.

\section{$\B$-Canjar filters}\label{sec:B_Canjar_filters}

In this section, we will give the neccessary preliminaries about $\mathcal{B}$-Canjar filters and the preservation of unboundedness, which are needed in
Section~\ref{section:h_b_om_1}.

For $\mathfrak{F} \subs \mathcal{P}(\omega)$, let 
$\genlangle \mathfrak{F} \genrangle$ denote the filter generated by $\mathfrak{F}$ 
together with the Frech\'et filter.

\begin{defi}\label{defi:Mathias_with_filter}
Let $\mathcal{F}\subseteq \mathcal{P}(\omega)$ be a filter containing the Frech\'et filter. \emph{Mathias forcing with respect to $\mathcal{F}$} (denoted by $\Mathias{\mathcal{F}}$) is the set of pairs
$(s,A)$ with $s\in 2^{<\omega}$ and $A\in \mathcal{F}$, where the order is defined as follows: $(t,B)\leq (s,A)$ if
\begin{enumerate}
\item $t\unrhd s$, i.e., $t$ extends $s$
\item $B\subseteq A$
\item  for each $n\geq |s|$, if $t(n)=1$, then $n\in A$.
\end{enumerate}
\end{defi}

Note that
$\Mathias{\F}$ is $\sigma$-centered: for $s \in 2^{<\om}$, the set $\{ (s,A) \with A \in \F \}$ is clearly centered (i.e., finitely many conditions have a common lower bound).
Also note that Mathias forcing with respect to the Frech\'et filter is forcing equivalent to Cohen forcing~$\CC$.

A filter~$\F$ is Canjar if $\Mathias{\F}$ does not add a dominating real over the ground model (i.e., the ground model reals remain unbounded). We need the following generalization of Canjarness:

\begin{defi}
Let $\B \subs \om^\om$ be an unbounded family.
A filter $\F$ on~$\om$ is \emph{$\mathcal{B}$-Canjar} if
$\Mathias{\F}$
preserves the unboundedness of~$\B$
(i.e., $\B$ is still unbounded in the extension by~$\Mathias{\F}$).
\end{defi}

\subsection{A combinatorial characterization of $\B$-Canjarness}

Later, we
will
prove that certain filters are $\B$-Canjar; for that, we use the following combinatorial characterization of $\B$-Canjarness by Guzm\'{a}n-Hru\v{s}\'{a}k-Mart\'{i}nez \cite{Osvaldo_B_Canjar}.
This characterization generalizes a  characterization of Canjarness by Hru\v{s}\'{a}k-Minami~\cite{Minami}.

Let~$\F$ be a filter on~$\om$; recall that a set $X \subs [\om]^{<\om}$ is in
$\Osvaldo{\F}$ if and only if for each $A \in \F$ there is an~$s \in X$ with $s \subs A$.
Note that if $\mathcal{G}\subseteq \F$ are filters and $X\in \Osvaldo{\F}$, then $X\in\Osvaldo{\mathcal{G}}$.

Given $\bar{X} = \seqlangle X_n \with n \in \om \seqrangle$
(with
$X_n \subs [\om]^{<\om}$ for each $n \in \om$), and $f \in \om^\om$, let
\[
\bar{X}_f = \bigcup_{n \in \om} (X_n \cap \powerset(f(n))).
\]

\begin{theo}
\label{theo:OsvaldoHrusakMartinez}
Let $\B \subs \om^\om$ be an unbounded family.
A filter $\mathcal{F}$ on~$\om$ is $\mathcal{B}$-Canjar if and only if
the following holds:
for each sequence $\bar{X} = \seqlangle X_n \with n \in \om \seqrangle \subs \Osvaldo{\F}$, there exists an $f \in \B$ such that
$\bar{X}_f \in \Osvaldo{\F}$.
\end{theo}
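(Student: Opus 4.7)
The plan is to prove both implications by exploiting how the combinatorial property ``$X \in \Osvaldo{\F}$'' interacts with the density structure of Mathias forcing $\Mathias{\F}$: informally, $X \in \Osvaldo{\F}$ is equivalent, via a density argument on conditions $(t, A)$, to saying that some $s \in X$ sits inside the Mathias generic~$m_G$.

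For the direction $(\Rightarrow)$, I assume $\F$ is $\B$-Canjar and fix $\bar{X} = \langle X_n \rangle \subseteq \Osvaldo{\F}$. First I would replace each~$X_n$ by $\{s \in X_n : \min s \geq n\}$, which remains in $\Osvaldo{\F}$ (since $A \cap [n, \infty) \in \F$) and only shrinks $\bar{X}_f$. In $V^{\Mathias{\F}}$, I would define a name by letting $\dot{h}(n)$ be the least $k$ for which some $s \in X_n \cap \powerset(k)$ is contained in~$m_G$; totality of $\dot{h}$ follows from the density argument that extends an arbitrary stem $t$ using $s \in X_n$ chosen inside $A \cap (t \cup [|t|, \infty)) \in \F$. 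By $\B$-Canjarness, $\B$ remains unbounded in the extension, so there exist $f \in \B$ and a condition $(t^*, A^*)$ forcing $\{n : \dot{h}(n) < f(n)\}$ to be infinite (otherwise every $f \in \B$ would be dominated by $\dot{h}$ in the extension). Given $B \in \F$, I would strengthen to $(t^*, A^* \cap B)$ and extract a further extension $(t', A')$ deciding that some $s \in X_n \cap \powerset(f(n))$, with $n > |t^*|$, lies in~$m_G$; the constraint $\min s \geq n > |t^*|$ forces $s$ into the post-stem part, giving $s \subseteq A^* \cap B \subseteq B$, hence $\bar{X}_f \in \Osvaldo{\F}$.

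For the direction $(\Leftarrow)$, I would argue contrapositively. Assume $\F$ is not $\B$-Canjar and fix $(t_0, A_0)$ and a name~$\dot{g}$ with $(t_0, A_0) \Vdash \dot{g}$ dominates $\B$. I would define $X_n$ to consist of the finite sets $s \subseteq A_0$ with $\min s > |t_0| + n$ for which some stem $t^* \supseteq t_0$ with 1-set $s$ beyond $t_0$ and some $A \in \F$, $A \subseteq A_0$, satisfy $(t^*, A) \Vdash \dot{g}(n) \leq \max s$. The argument that $X_n \in \Osvaldo{\F}$ parallels $(\Rightarrow)$: strengthen $(t_0, A_0)$ to $(t_0, A_0 \cap B \cap [|t_0|+n+1, \infty))$ for the given $B \in \F$, read a value $k$ for $\dot{g}(n)$ in a generic, take a deciding condition and pad its stem inside the promise so the 1-set beyond $t_0$ reaches at least~$k$. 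Now if some $f \in \B$ satisfies $\bar{X}_f \in \Osvaldo{\F}$, then for any $r = (t', A') \leq (t_0, A_0)$ and any~$M$, applying $\bar{X}_f \in \Osvaldo{\F}$ with $B = A' \cap [|t'|+M+1, \infty)$ produces $s \in X_n \cap \powerset(f(n))$ with $s \subseteq B$; using the flexibility to pick the middle segment of the witnessing stem $t^*$ in $[|t_0|, \min s)$ so that it agrees with $t'$ on $[|t_0|, |t'|)$, I obtain a common extension of~$r$ forcing $\dot{g}(n) < f(n)$, and a pigeonhole argument on $\max s < f(n)$ versus arbitrarily large~$\min s$ ensures that $n$ can be taken $> M$. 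This density below $(t_0, A_0)$ yields $(t_0, A_0) \Vdash \{n : \dot{g}(n) < f(n)\}$ is infinite, contradicting $(t_0, A_0) \Vdash f \leq^* \dot{g}$.

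The main subtlety in both directions is ensuring that the witnessing finite set~$s$ lies entirely inside the prescribed $\F$-set~$B$ and is compatible with any fixed stem: in $(\Rightarrow)$ this is handled by the $\min s \geq n$ trick so that $s$ sits beyond the fixed stem~$t^*$; in $(\Leftarrow)$ the analogous device requires the definition of $X_n$ to be flexible about the ``middle'' part of the witnessing stem together with the pigeonhole argument controlling the relevant indices~$n$.
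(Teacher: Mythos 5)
The paper does not actually prove this statement; it cites \cite[Proposition~1]{Osvaldo_B_Canjar}, so your attempt has to be judged on its own. Your forward direction is essentially correct: pruning $X_n$ to $\{s\in X_n : \min s\geq n\}$, defining $\dot h(n)$ as the least $k$ with some $s\in X_n\cap\powerset(k)$ inside the Mathias generic, and using that a condition forcing $s\subseteq m_G$ must already have $s$ inside the $1$-part of its stem (hence inside $A^*\cap B$) is a sound argument, and the step producing a single $f\in\B$ and a condition forcing $\exists^\infty n\,(\dot h(n)<f(n))$ is standard.

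The backward direction, however, has a genuine gap at the merging step. With your definition, a witness for $s\in X_n$ is a condition $(t^*,A)$ whose $1$-set beyond $t_0$ is \emph{exactly} $s$, so $t^*$ is forced to be $0$ on all of $[\,|t_0|,\min s)$ where it is defined. In the density argument you must work below an \emph{arbitrary} $r=(t',A')\leq(t_0,A_0)$, and $t'$ will in general have $1$'s in $[\,|t_0|,|t'|)$; then $(t^*,A)$ and $(t',A')$ are simply incompatible, and the condition you propose instead --- same $s$ but with the middle segment replaced so as to agree with $t'$ --- is \emph{not} an extension of $(t^*,A)$, so nothing guarantees it still forces $\dot g(n)\leq\max s$ (a name $\dot g$ can be chosen so that $\dot g(n)$ depends drastically on the generic's values in that middle region). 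The ``flexibility about the middle part'' you invoke is not present in your $X_n$, and an existential quantifier over middle stems does not supply it either: you need the forcing statement to hold for the \emph{specific} middle part $t'\restr[\,|t_0|,|t'|)$ handed to you, i.e.\ the definition of $X_n$ must demand a witness for \emph{every} stem $t\supseteq t_0$ of length at most (say) $|t_0|+n$ with $1$'s in $A_0$. That in turn forces you to redo the positivity proof of $X_n$: for a given $B\in\F$ you must build one set $s\subseteq B$ that works simultaneously for all of the finitely many such stems, by a finite recursion in which the deciding conditions' extra $1$'s are absorbed into $s$ and the promise is shrunk at each step. Your ``pigeonhole'' device for pushing the index $n$ above $M$ (taking $\min B$ above $\max\{f(n): n\leq M\}$ so that $s\in\powerset(f(n))$ rules out small $n$) is fine, but it does not address this stem problem, which is exactly the combinatorial heart of this direction in Guzm\'an--Hru\v{s}\'ak--Mart\'inez's proof; as written, your argument only handles conditions $r$ whose stem has no $1$'s beyond $t_0$, and these are not dense.
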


\begin{proof}
See \cite[Proposition~1]{Osvaldo_B_Canjar}.
\end{proof}

It is well-known that Cohen forcing~$\CC$
preserves 
the unboundedness of every unbounded family 
(in fact, $\CC$ is even almost bounding).
As mentioned above,
Mathias forcing with respect to the Frech\'et filter
is forcing equivalent to~$\CC$, and hence
the Frech\'et filter is
$\B$-Canjar for every unbounded family~$\B$.
To illustrate the characterization of $\B$-Canjarness from   Theorem~\ref{theo:OsvaldoHrusakMartinez}, we also want to provide the following easy combinatorial proof of this fact:

\begin{lemm}\label{lemm:Frechet_is_Canjar}
Let $\B$ be an unbounded family.
Then
the Frech\'et filter is $\B$-Canjar.
\end{lemm}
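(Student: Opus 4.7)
The plan is to apply the combinatorial characterization from Theorem~\ref{theo:OsvaldoHrusakMartinez}, so that proving $\B$-Canjarness of the Fréchet filter $\mathrm{Fr}$ reduces to the following statement: for every sequence $\bar{X} = \seqlangle X_n \with n \in \om \seqrangle \subs \Osvaldo{\mathrm{Fr}}$, there exists $f \in \B$ such that $\bar{X}_f \in \Osvaldo{\mathrm{Fr}}$. The first step is to unravel what membership in $\Osvaldo{\mathrm{Fr}}$ means: a set $X \subs [\om]^{<\om}$ belongs to $\Osvaldo{\mathrm{Fr}}$ if and only if for every finite $F \subs \om$ there exists $s \in X$ with $s \cap F = \emptyset$ (equivalently, with $\min(s)$ arbitrarily large, assuming we discard the trivial case $\emptyset \in X_n$).

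Next, I would define a single function $g \in \om^\om$ which encodes, at each coordinate~$n$, how large a natural number one must reach to witness a late-starting element of $X_n$. Concretely, set
\[
g(n) := \min\{ m \in \om \with \exists s \in X_n \ (\min(s) \geq n \,\land\, \max(s) < m) \},
\]
which is well-defined because $X_n \in \Osvaldo{\mathrm{Fr}}$. Now use the hypothesis that $\B$ is unbounded to pick $f \in \B$ with $f \not\leq^* g$, so that the set $I := \{ n \in \om \with f(n) \geq g(n) \}$ is infinite.

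Finally, I would verify that $\bar{X}_f \in \Osvaldo{\mathrm{Fr}}$ directly. Given an arbitrary finite $F \subs \om$, put $k := \max(F) + 1$ and choose $n \in I$ with $n \geq k$. By the definition of $g(n)$, there exists $s \in X_n$ with $\min(s) \geq n \geq k$ and $\max(s) < g(n) \leq f(n)$; hence $s \subs f(n)$, so $s \in X_n \cap \powerset(f(n)) \subs \bar{X}_f$, while $s \cap F = \emptyset$. This confirms $\bar{X}_f \in \Osvaldo{\mathrm{Fr}}$.

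No step here looks hard: the only subtlety is choosing the correct ``diagonal'' function $g$ so that unboundedness of $\B$ can be invoked, and noticing that to avoid an arbitrary finite set $F$ it suffices to produce a single late coordinate~$n \in I$ rather than to control $f$ above some specific threshold. Both points are handled by letting $g(n)$ quantify over elements of $X_n$ whose minimum is already at least~$n$, which couples ``lateness in $F$'' to ``largeness of the index~$n$'' and lets a single witness $n \in I$ do double duty.
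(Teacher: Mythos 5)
Your proof is correct and follows essentially the same route as the paper: apply the characterization of Theorem~\ref{theo:OsvaldoHrusakMartinez}, note that positivity for the Frech\'et filter means having elements with arbitrarily large minimum, dominate witnesses from each $X_n$ by a single function $g$, and use unboundedness of $\B$ to find $f$ exceeding $g$ infinitely often. The only difference is cosmetic: the paper fixes concrete witnesses $s_n \in X_n$ with $\min(s_n)\geq n$ and takes $g(n)>\max(s_n)$, whereas you define $g(n)$ as a minimum over all such witnesses, which changes nothing essential.
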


\begin{proof}
Let $\F$ be the Frech\'et filter. To show that
$\F$ is $\B$-Canjar, we use
Theorem~\ref{theo:OsvaldoHrusakMartinez}.
So let $\bar{X} = \seqlangle X_n \with n \in \om \seqrangle \subs \Osvaldo{\F}$. Note that a set
$X \subs [\om]^{<\om}$ is in
$\Osvaldo{\F}$ if and only if for each
$n \in \om$ there is an~$s \in X$ with $\min(s) \geq n$.
For each $n \in \om$, pick $s_n \in X_n$ such that $\min(s_n) \geq n$, and let $g \in \om^\om$ such that $g(n) > \max(s_n)$ for each $n \in \om$. Since $\B$ is unbounded, we can pick $f \in \B$ such that $f(n) > g(n)$ for infinitely many~$n$. It is easy to check that $s_n \in \bar{X}_f$ for infinitely many $n$, and this implies that
$\bar{X}_f \in \Osvaldo{\F}$, as desired.
\end{proof}

The following observation will be crucial later on:

\begin{lemm}\label{lemm:Canjar_countable_ext}
Let $\B\subseteq \omega^\omega$ be an unbounded family, $\F$ a $\B$-Canjar filter extending the Frech\'et filter and $\{ a_n \with n<\omega\}$ such that $\F\cup \{ a_n \with n<\omega\}$ is a filter base.
Then 
$\genlangle \F\cup \{ a_n \with n<\omega\} \genrangle$ is $\B$-Canjar.
\end{lemm}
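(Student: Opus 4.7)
The plan is to apply the characterization from Theorem~\ref{theo:OsvaldoHrusakMartinez} to $\F' := \genlangle \F \cup \{a_n \with n < \omega\} \genrangle$. First I would reduce to a cleaner situation: without loss of generality, the sequence $\seqlangle a_n \with n < \omega \seqrangle$ is $\subseteq$-decreasing with $a_0 = \omega$ (by replacing $a_n$ with $\bigcap_{m \leq n} a_m$, which remains infinite since $\F \cup \{a_n \with n < \omega\}$ is a filter base, and by prepending $\omega$ if needed). Consequently, every element of $\F'$ contains a set of the form $F \cap a_k$ for some $F \in \F$ and some $k \in \omega$.

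Given a sequence $\bar{X} = \seqlangle X_n \with n \in \omega \seqrangle \subseteq \Osvaldo{\F'}$, the key construction is to set $Y_n := \{s \in X_n \with s \subseteq a_n\}$. Since $A \cap a_n \in \F'$ for every $A \in \F'$, applying $X_n \in \Osvaldo{\F'}$ to $A \cap a_n$ yields a witness already in $Y_n$, so $Y_n \in \Osvaldo{\F'} \subseteq \Osvaldo{\F}$. Invoking the $\B$-Canjarness of $\F$ via Theorem~\ref{theo:OsvaldoHrusakMartinez} applied to the sequence $\bar{Y} = \seqlangle Y_n \with n \in \omega \seqrangle$, I obtain $f \in \B$ with $\bar{Y}_f \in \Osvaldo{\F}$. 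Since $\bar{Y}_f \subseteq \bar{X}_f$, it suffices to upgrade this to $\bar{Y}_f \in \Osvaldo{\F'}$.

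This final upgrade is where the Frech\'et part of $\F$ is used, and it is also where the main obstacle lies: a priori, the witness $s \in \bar{Y}_f$ provided by the $\B$-Canjarness of $\F$ for a set $F \in \F$ could come from $Y_n$ with a small index $n$, in which case the inclusion $s \subseteq a_n$ would not suffice to give $s \subseteq a_k$. To overcome this, given a generating element $A = F \cap a_k$ of $\F'$, I would pick $M \in \omega$ with $M > f(n)$ for all $n < k$ and apply $\bar{Y}_f \in \Osvaldo{\F}$ to the set $F \setminus M \in \F$ to obtain $s \in \bar{Y}_f$ with $s \subseteq F \setminus M$. Writing $s \in Y_n \cap \powerset(f(n))$, one has $\max s < f(n)$ while $\min s \geq M$, which forces $n \geq k$ by the choice of $M$. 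Hence $s \subseteq a_n \subseteq a_k$, and combined with $s \subseteq F$ this gives $s \subseteq A$, completing the proof.
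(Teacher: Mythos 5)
Your proposal is correct and follows essentially the same route as the paper's proof: restrict each $X_n$ to witnesses contained in the intersection of the first $n$ many $a_k$'s, apply the characterization of $\B$-Canjarness of $\F$ to the resulting $\bar{Y}$, and then use the Fr\'ech\'et filter to push the witness for a generator $F \cap a_k$ to have large minimum, forcing it to come from a level $\geq k$. Your choice of $M > f(m)$ for $m<k$ is just a slightly more explicit version of the paper's counting argument (infinitely many witnesses cannot all lie in the finitely many sets below level $k$), so the two proofs are essentially identical.
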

\begin{proof}
Let $\bar{X} = \seqlangle X_n \with n \in \om \seqrangle \subs \Osvaldo{\genlangle \F\cup \{a_n\with n<\omega\} \genrangle }$.
Let $$Y_n:= \{s\in X_n \with s\subseteq \cap_{k<n} a_k \}$$
and $\bar{Y}:=\seqlangle Y_n \with n\in \omega\seqrangle$.
It is easy to see that $Y_n\in \Osvaldo{\F}$ for each $n$.
By the assumption and Theorem~\ref{theo:OsvaldoHrusakMartinez} there exists $f\in\B$ such that $\bar{Y}_f\in \Osvaldo{\F}$.

To show that $\bar{Y}_f\in \Osvaldo{\genlangle \F\cup \{a_n\with n<\omega\} \genrangle }$ let $B\in \genlangle \F\cup \{a_n\with n<\omega\} \genrangle$, i.e., there exists $A\in \F$ and $n\in \omega$ with $B\supseteq A\cap\bigcap_{k<n}a_k$. Since $\F$ contains the Frech\'et filter and $\bar{Y}_f \in \Osvaldo{\F}$, there exist
infinitely many $s\in \bar{Y}_f$ with $s\subs A$. So there exists $m\geq n$ and $s\in Y_m\cap \bar{Y}_f$ with $s\subs A$; note that $s\in Y_m$ implies $s\subseteq \bigcap_{k<n}a_k$, so $s\subseteq B$, as desired.

Clearly $\bar{Y}_f\subseteq \bar{X}_f$, so $\bar{X}_f\in \Osvaldo{\genlangle \F\cup \{a_n\with n<\omega\} \genrangle }$.
\end{proof}

We also get the following:

\begin{lemm}\label{lemm:countable_Canjar}
Let $\B$ be an unbounded family.
Then
every countably generated filter is $\B$-Canjar.
\end{lemm}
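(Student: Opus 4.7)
The plan is to combine the two preceding lemmas: Lemma~\ref{lemm:Frechet_is_Canjar} tells us that the Frech\'et filter is $\B$-Canjar, and Lemma~\ref{lemm:Canjar_countable_ext} lets us extend a $\B$-Canjar filter by countably many sets while preserving $\B$-Canjarness. So the natural approach is to enumerate the generators of the given countably generated filter and feed them into Lemma~\ref{lemm:Canjar_countable_ext} on top of the Frech\'et filter.

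More concretely, I would proceed as follows. Let $\F$ be a countably generated filter on~$\om$ (as usual in this context, extending the Frech\'et filter), and fix an enumeration $\{a_n \with n < \om\}$ of a generating family, closed under finite intersections. Then $\F = \genlangle \{a_n \with n<\om\} \genrangle$ (where $\genlangle \cdot \genrangle$ adjoins the Frech\'et filter by convention). Let $\F_0$ denote the Frech\'et filter; by Lemma~\ref{lemm:Frechet_is_Canjar}, $\F_0$ is $\B$-Canjar. Since $\F$ is a proper filter extending $\F_0$, every finite intersection $a_{n_0} \cap \cdots \cap a_{n_k}$ is infinite and lies in $\F$, so $\F_0 \cup \{a_n \with n < \om\}$ is a filter base. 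Thus Lemma~\ref{lemm:Canjar_countable_ext} applies, and it yields that $\genlangle \F_0 \cup \{a_n \with n<\om\} \genrangle = \F$ is $\B$-Canjar, as required.

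The argument is essentially a one-line application of the previous two lemmas, so there is no real obstacle; the only point requiring a moment of care is verifying that $\F_0 \cup \{a_n \with n<\om\}$ is indeed a filter base, which follows from $\F$ being a proper filter containing both the Frech\'et filter and each $a_n$.
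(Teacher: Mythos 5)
Your proposal is correct and follows exactly the paper's argument: the paper also proves this lemma by combining Lemma~\ref{lemm:Frechet_is_Canjar} with Lemma~\ref{lemm:Canjar_countable_ext}, feeding the countably many generators into the latter on top of the Frech\'et filter. Your extra check that the generators together with the Frech\'et filter form a filter base is a fine (and harmless) elaboration of the same one-line proof.
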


\begin{proof}
This follows immediately from Lemma~\ref{lemm:Frechet_is_Canjar} and Lemma~\ref{lemm:Canjar_countable_ext}.
\end{proof}

\subsection{Preservation of unboundedness at limits}

We will 
use the following theorem by 
Judah-Shelah~\cite{Judah} about preservation of unboundedness in finite support iterations. 
In fact, \cite[Theorem~2.2]{Judah} is
a much more general version than the 
theorem 
presented here.

\begin{theo}
\label{theo:preservation_unboundedness_JudahShelah}
Suppose $\{ \PP_\alpha, \name{\QQ}_\alpha \with \alpha <\delta \}$ is a finite support iteration of c.c.c.\ partial orders of limit length~$\delta$, and $\B \subs \om^\om$ is unbounded; also suppose that 
$\B$ is countably directed, i.e., 
it 
satisfies
\begin{equation}\label{eq:countably_bounded}
\forall \mathcal{A} \subs \B \; (|\mathcal{A}| = \aleph_0 \rightarrow \exists f \in \B \; \forall g \in \mathcal{A} \; g \leq^* f);
\end{equation}
moreover, suppose that
\[
\forall \alpha < \delta \; \forces_{\PP_\alpha} \textrm{``$\B$ is an unbounded family''}.
\]
Then $\forces_{\PP_\delta}$ ``$\B$ is an unbounded family''.
\end{theo}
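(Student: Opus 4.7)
The plan is to argue by contradiction. Suppose $p^{*} \in \PP_{\delta}$ and $\dot{g}$ is a $\PP_{\delta}$-name with $p^{*} \forces_{\PP_{\delta}}$ ``$\dot{g} \in \om^{\om}$ dominates $\B$'', i.e., $f \leq^{*} \dot{g}$ for every $f \in \B$. I split according to $\cf(\delta)$.

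If $\cf(\delta) > \om$, the argument is a straightforward support calculation. By the c.c.c.\ of $\PP_{\delta}$, for each $n$ fix a countable maximal antichain $A_{n} \subs \PP_{\delta}$ below $p^{*}$ deciding $\dot{g}(n)$; each $p \in A_{n}$ has finite support, so $S := \bigcup_{n}\bigcup_{p \in A_{n}} \supp(p)$ is a countable subset of $\delta$, and $\cf(\delta) > \om$ yields $\alpha := \sup S < \delta$. Then $\dot{g}$ is (forcing equivalent to) a $\PP_{\alpha}$-name, and $p^{*} \restr \alpha \forces_{\PP_{\alpha}}$ ``$\dot{g}$ dominates $\B$'', contradicting the hypothesis at stage $\alpha$.

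The substantial case is $\cf(\delta) = \om$. Fix an increasing cofinal sequence $\seqlangle \alpha_{n} \with n \in \om \seqrangle$ in $\delta$ with $\alpha_{0}$ above $\sup \supp(p^{*})$. The plan is to build $\PP_{\alpha_{n}}$-name approximations $\dot{g}_{n}$ of $\dot{g}$ as follows. Using antichains $A_{m}$ as above, with $p \forces \dot{g}(m) = k_{p}$ for $p \in A_m$, set $\dot{g}_{n}(m) := k_{p}$ for the unique $p \in A_{m}$ satisfying $\supp(p) \subs \alpha_{n}$ and $p \in \dot{G}_{\alpha_{n}}$ (if such $p$ exists; uniqueness uses the incompatibility of $A_{m}$ in $\PP_{\alpha_n}$), and $\dot{g}_{n}(m) := 0$ otherwise. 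In $V[G_{\delta}]$, for each $m$ the deciding condition $p_{m} \in A_{m} \cap G_{\delta}$ has finite support, so $\supp(p_{m}) \subs \alpha_{N(m)}$ for some $N(m)$, whence $g_{n}(m) = g(m)$ for all $n \geq N(m)$. By the inductive hypothesis, $\B$ is unbounded in each $V[G_{\alpha_{n}}]$, providing $f_{n} \in \B$ with $f_{n} \not\leq^{*} g_{n}$. Now, $\PP_{\delta}$ being c.c.c., every $V[G_{\delta}]$-countable subset of $\B \subs V$ lies in a $V$-countable subset (each name for a ground-model element admits only countably many decided values), so the countable-directedness assumption~\eqref{eq:countably_bounded} on $\B$ persists in $V[G_{\delta}]$ and yields $f \in \B$ with $f_{n} \leq^{*} f$ for every $n$.

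The main obstacle is to conclude $f \not\leq^{*} g$ and thereby contradict ``$\dot{g}$ dominates $\B$''. Naively combining $f \geq^{*} f_{n}$ with $f_{n} \not\leq^{*} g_{n}$ does not close: the witnessing $m$'s (where $f_n(m) > g_n(m)$) could all satisfy $g_{n}(m) = 0$ (i.e., $\supp(p_{m}) \not\subs \alpha_{n}$), and such $m$ need not satisfy $f_{n}(m) > g(m)$. Overcoming this requires either strengthening the definition of $\dot{g}_{n}$ so that it dominates $\dot{g}$ pointwise on a sufficiently large set (for instance, via a carefully truncated ``sup over $A_{m}$-conditions compatible with $\dot{G}_{\alpha_n}$'' definition) or replacing the pure countable-directedness step by a diagonal construction of $f$ from $\seqlangle f_{n} \with n \in \om \seqrangle$ that respects the stage indices $N(m)$. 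This delicate interplay between the c.c.c.\ structure of $\PP_{\delta}$ and the countable directedness of $\B$ is the combinatorial heart of the Judah--Shelah limit lemma.
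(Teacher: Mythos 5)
Your uncountable-cofinality case is fine: the support/reflection argument correctly reduces the name $\dot{g}$ to a $\PP_\alpha$-name for some $\alpha<\delta$ and contradicts the hypothesis at stage $\alpha$ (note that this half does not even need countable directedness). But in the case $\cf(\delta)=\om$ — which is the entire content of the theorem — your argument has a genuine gap, and you acknowledge it yourself: you never establish $f \not\leq^* g$, so no contradiction is reached. Concretely, with your definition of $\dot{g}_n$ (the value decided by the unique member of $A_m$ lying in $\dot{G}_{\alpha_n}$ with support inside $\alpha_n$, and $0$ otherwise), the coordinates $m$ witnessing $f_n(m)>g_n(m)$ may all be ``default'' coordinates where $g_n(m)=0$ carries no information about $g(m)$, so non-domination does not transfer from the approximations to $g$. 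Nor is this fixable by the obvious patches you gesture at: reading $\dot{g}_n(m)$ as the \emph{least} value of $\dot{g}(m)$ forced by some $q\leq p^*$ with $q\restr\alpha_n\in \dot{G}_{\alpha_n}$ makes $g_n$ well-defined but gives $g_n\leq g$ pointwise (the condition in $G_\delta$ deciding $g(m)$ is itself a witness once its support is inside $\alpha_n$, and the interpretations increase with $n$), which is the wrong direction for your scheme; a ``sup over the antichain'' reading is infinite without truncation, and truncation reintroduces exactly the uncontrolled coordinates. So the step where the inductive hypothesis at the stages $\alpha_n$, the countable directedness of $\B$, and the finite supports must interact — the heart of the Judah--Shelah lemma — is missing, and the argument as structured (choose $f_n$ unbounded over $g_n$, take an upper $\leq^*$-bound $f$, conclude $f$ is unbounded over $g$) does not close without a genuinely different idea (e.g., arguing by contradiction from ``$p^*\forces \dot{g}$ dominates $\B$'' that some condition with support in some $\alpha_n$ already forces a stage-$\alpha_n$ interpretation of $\dot{g}$ to $\leq^*$-dominate a suitable member of $\B$, the point being that a condition whose support lies in $\alpha_n$ \emph{is} compatible with all witnesses used to compute $g_n$).

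For what it is worth, the paper itself gives no proof of this statement: it is quoted as a known preservation theorem of Judah--Shelah with a citation to \cite[Theorem~3.5.2]{Vera_thesis}, so there is no in-paper argument to compare against; judged on its own, your proposal is an honest but incomplete sketch, correct only in the easy case $\cf(\delta)>\om$.
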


\begin{proof}
See~\cite[Theorem~3.5.2]{Vera_thesis}.
\end{proof}

\subsection{Preservation of $\B$-Canjarness and finite sums of filters}\label{subsec:method_to_preserve}

The notion of $\B$-Canjarness of a filter 
is 
not absolute in general:

\begin{exam}[from~\cite{Osvaldo_personal}]\label{exam:not_absolute}
Let 
$\B$ be the ground model reals and $\mathcal{U}$ be a $\B$-Canjar ultrafilter. Let $\PP$ be Grigorieff forcing with respect to~$\mathcal{U}$, which forces that $\mathcal{U}$ cannot be extended to a P-point. It is
well-known
that $\PP$ preserves the unboundedness of $\B$, and it can be shown that $\mathcal{U}$ is not a $P^+$-filter in $V[\PP]$;
since any Canjar filter is a $P^+$-filter, it follows that $\mathcal{U}$ is no longer $\B$-Canjar.

Note that Grigorieff forcing is proper, but not c.c.c.;
however, Grigorieff forcing can be decomposed into a $\sigma$-closed and a c.c.c.\ forcing (see~\cite{Repicky}).
Since a $\sigma$-closed forcing does not destroy the $\B$-Canjarness of a filter, the above example also yields
an example 
of a c.c.c.\ forcing destroying the $\B$-Canjarness of a filter. 
\end{exam}

We will now provide
a method
how to guarantee that
the $\B$-Canjarness of a filter is
not destroyed by Mathias forcings with respect to certain other filters.
As a tool, we introduce
finite
sums of filters and consider
Mathias forcings with respect to these sums.

\begin{lemm}\label{lemm:basic_Canjar_equivalence}
Let 
$\F$ be a
filter,
$\B \subs \om^\om$,
and $\PP$ be a forcing notion.
Then
the following are equivalent:

\begin{enumerate}

\item $\PP$ forces that\footnotemark{} $\F$ is $\B$-Canjar.

\footnotetext{To be more precise, one should write
$\genlangle\check{\F}\genrangle$
instead of~$\F$.}

\item $\Mathias{\F} \times \PP$ forces that $\B$ is unbounded.

\end{enumerate}
\end{lemm}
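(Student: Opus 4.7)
The plan is to show that $\PP \times \Mathias{\F}$ and the two-step iteration $\PP \ast \Mathias{\genlangle\check{\F}\genrangle}$ are forcing equivalent, after which the equivalence $(1) \Leftrightarrow (2)$ reduces to unpacking the definition of $\B$-Canjarness.

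First, by the standard product-iteration interchange, $\PP \times \Mathias{\F}$ is forcing equivalent to the two-step iteration $\PP \ast \check{\Mathias{\F}}$, since $\Mathias{\F}$ lies entirely in $V$. Second, I would verify the key density claim: in $V^\PP$, the partial order $\Mathias{\F}$ (viewed from the ground model) is dense in $\Mathias{\genlangle\check{\F}\genrangle}$ (computed in $V^\PP$). Indeed, given $(s,A) \in \Mathias{\genlangle\check{\F}\genrangle}$, by definition of $\genlangle\cdot\genrangle$ there exist $F_0,\dots,F_{n-1}\in \F$ and a finite $E\subseteq \om$ with $\bigcap_{i<n} F_i \setminus E \subseteq A$. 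Setting $F := \bigcap_{i<n} F_i \cap (\om \setminus E)$, the Frech\'et-filter assumption gives $F \in \F$, so $(s,F) \in \Mathias{\F}$ and $(s,F) \leq (s,A)$ (the latter because $F \subseteq A$ and the third clause of the order is vacuous when the first coordinates coincide).

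Combining these two points, $V^{\PP \times \Mathias{\F}}$ and $V^{\PP \ast \Mathias{\genlangle\check{\F}\genrangle}}$ are the same generic extension of $V$. Consequently, statement~(2) -- that $\B$ remains unbounded in $V^{\PP \times \Mathias{\F}}$ -- is exactly the statement that $\PP$ forces $\Mathias{\genlangle\check{\F}\genrangle}$ to preserve the unboundedness of $\B$, which is~(1) by the definition of $\B$-Canjarness (cf.\ the footnote). I do not anticipate any real obstacle; the only thing to keep careful track of is the distinction between $\Mathias{\F}$ as computed in $V$ and $\Mathias{\genlangle\check{\F}\genrangle}$ as computed in $V^\PP$, and to notice that the density step genuinely relies on $\F$ extending the Frech\'et filter.
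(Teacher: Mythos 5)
Your proposal is correct and follows essentially the same route as the paper: rewrite $\Mathias{\F}\times\PP$ as the iteration $\PP \ast \check{\Mathias{\F}}$, observe that in $V^{\PP}$ the ground-model $\Mathias{\F}$ is dense in $\Mathias{\genlangle\check{\F}\genrangle}$, and then unravel the definition of $\B$-Canjarness. The only difference is that you spell out the density verification (using that $\F$ extends the Frech\'et filter), which the paper merely asserts.
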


Even though we will apply the lemma only in case 
$\B$ is unbounded 
and 
$\F$ is $\B$-Canjar in the ground model, this is not necessary for the proof. If one of these assumptions fails, 
both (1) and (2) are false.

\begin{proof}[Proof of Lemma~\ref{lemm:basic_Canjar_equivalence}]
Let $\QQ := \Mathias{\F}$.
Note that
(1) holds if and only if
$\PP$ forces
$$\Mathias{\genlangle\check{\F}\genrangle} \forces  ``\B \textrm{ unbounded''}.$$
Further note that $\PP$ forces that $\check{\QQ}$ is (dense in, and hence) forcing equivalent to $\Mathias{\genlangle\check{\F}\genrangle}$.
So, (1) holds if and only if $\PP * \check{\QQ}$ forces that $\B$ is unbounded, which is the same as~(2) (since
$\PP * \check{\QQ}$ is equivalent to $\PP \times \QQ = \QQ \times \PP$).
\end{proof}

\begin{defi}\label{defi:filter_sum}
For two sets $A, B\subseteq \omega$, let
$A \fsum B:= \{ 2n \with n\in A\} \cup \{2m+1\with m\in B\}$.
For two filters $\F_0$ and $\F_1$, let $\F_0\fsum \F_1:=\{ A\fsum B \with A\in \F_0, B\in \F_1\} $.
More generally, inductively define $\Fsum_{k<m+1} \F_k := \left(\Fsum_{k<m} \F_k\right) \fsum \F_{m}$.
\end{defi}

Note that $\F_0\fsum \F_1$ is a filter if $\F_0$ and $\F_1$ are filters, and hence also the finite sum of filters is a filter.
The order of the sum is not important: more presicely, the filter $\Fsum_{k<m}\F_k$ is isomorphic (based on a bijection on $\omega$) to all reorderings of this sum. For example $(\F_0\fsum \F_1)\fsum \F_2$ is isomorphic to $(\F_2\fsum \F_0)\fsum \F_1$.
This implies that the $\B$-Canjarness of a finite sum of filters does not depend on the order of the sum.

\begin{lemm}\label{lemm:Mathias_product}
Let $\F_0$ and $\F_1$ be two filters.
Then $\Mathias{\F_0}\times\Mathias{\F_1}$ is
forcing
equivalent to~$\Mathias{\F_0 \fsum \F_1}$.
\end{lemm}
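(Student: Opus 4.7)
The plan is to exhibit order-isomorphic dense subsets of the two forcings, which suffices for forcing equivalence.

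First I would observe that every $C\in\F_0\fsum\F_1$ has a unique decomposition $C=A\fsum B$ with $A\in\F_0$ and $B\in\F_1$, namely $A=\{n:2n\in C\}$ and $B=\{n:2n+1\in C\}$. This is immediate from the definition of $\fsum$: the set $A\fsum B$ has precisely the evens coming from $A$ and the odds from $B$.

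Next I would identify the dense subsets. Let $D\subseteq\Mathias{\F_0\fsum\F_1}$ consist of the conditions $(s,C)$ with $|s|$ even; density is witnessed by appending a single $0$ to an odd-length stem (which is a legal extension). Let $D'\subseteq\Mathias{\F_0}\times\Mathias{\F_1}$ consist of the pairs $\bigl((s_0,A_0),(s_1,A_1)\bigr)$ with $|s_0|=|s_1|$; density is witnessed by padding the shorter stem with $0$'s, which is likewise legal under the Mathias order on each coordinate.

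Then I would define the interleaving bijection $\Phi\colon D\to D'$ by $\Phi(s,A\fsum B):=\bigl((s_0,A),(s_1,B)\bigr)$, where $|s|=2n$ and $s_i(k):=s(2k+i)$ for $k<n$, $i\in\{0,1\}$; its inverse $\Psi\colon D'\to D$ reverses the interleaving, sending $\bigl((s_0,A),(s_1,B)\bigr)$ to $(s,A\fsum B)$ with $s(2k+i):=s_i(k)$. By the uniqueness observation above, both maps are well-defined bijections.

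The remaining task, which is entirely routine, is the verification that $\Phi$ preserves and reflects the Mathias order. Each clause decomposes cleanly along the even/odd partition of $\omega$: stem extension $s\unlhd s'$ for even-length stems translates to $s_0\unlhd s_0'$ and $s_1\unlhd s_1'$; $C'\subseteq C$ becomes $A'\subseteq A$ and $B'\subseteq B$; and the side condition ``$s'(n)=1\Rightarrow n\in C$ for all $n\geq|s|$'' splits into the corresponding conditions for $s_0'$ with respect to $A$ and for $s_1'$ with respect to $B$, where the assumption that $|s|$ is even ensures the thresholds $|s_0|$ and $|s_1|$ align correctly with the even and odd indices above $|s|$. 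I do not foresee any substantive obstacle.
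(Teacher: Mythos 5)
Your proposal is correct and is essentially the paper's own argument: the paper likewise restricts to the dense sets of even-length stems in $\Mathias{\F_0\fsum\F_1}$ and of equal-length stems in $\Mathias{\F_0}\times\Mathias{\F_1}$, and exhibits the same interleaving isomorphism (in the opposite direction, which is immaterial). The only cosmetic difference is that you make the uniqueness of the decomposition $C=A\fsum B$ and the density verifications explicit, which the paper leaves implicit.
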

\begin{proof}
Let $D_\times \subs \Mathias{\F_0}\times\Mathias{\F_1}$ be the
set of all
$((s_0,A_0),(s_1,A_1)) \in \Mathias{\F_0}\times\Mathias{\F_1}$ with $|s_0| = |s_1|$, and let
$D_{\oplus} \subs \Mathias{\F_0 \fsum \F_1}$
be the set of all $(s,A) \in \Mathias{\F_0 \fsum \F_1}$ with $|s|$
being an even number. Note that
$D_\times$ is a dense subforcing of  $\Mathias{\F_0}\times\Mathias{\F_1}$, and $D_{\oplus}$ is a dense subforcing of $\Mathias{\F_0 \fsum \F_1}$.

For $s_0, s_1 \in 2^{<\om}$ with $L:=|s_0| = |s_1|$, let
$s_0 \fsum s_1 \in 2^{<\om}$ be such that $|s_0 \fsum s_1| =
2 L$
and satisfies
$(s_0 \fsum s_1)(2n) = s_0(n)$ and
$(s_0 \fsum s_1)(2n+1) = s_1(n)$.

Define $\iota{:}\ D_\times \rightarrow D_{\oplus}$ as follows:
\[
((s_0,A_0),(s_1,A_1)) \mapsto (s_0 \fsum s_1, A_0 \fsum A_1).
\]
It is easy to see that $\iota$ is an isomorphism between the forcings $D_\times$ and $D_{\oplus}$.
Consequently,
$\Mathias{\F_0}\times\Mathias{\F_1}$
and $\Mathias{\F_0 \fsum \F_1}$
are forcing equivalent.
\end{proof}

The following lemma will be the main ingredient of the
``successor step'' of the induction (for old filters) in Lemma~\ref{lemm:main_Canjar_sum_induction}:

\begin{lemm}\label{lemm:sum_preserves_Canjar}
If $\F_0 \fsum \F_1$ is $\B$-Canjar, then
$\Mathias{\F_1}$ forces that $\F_0$ is $\B$-Canjar.
\end{lemm}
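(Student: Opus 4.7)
The plan is to recognize this as an essentially immediate consequence of Lemma~\ref{lemm:basic_Canjar_equivalence} and Lemma~\ref{lemm:Mathias_product}; no new combinatorics is needed, only a chain of forcing equivalences.

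First, I would unwind the assumption: saying that $\F_0 \fsum \F_1$ is $\B$-Canjar means, by definition, that $\Mathias{\F_0 \fsum \F_1}$ preserves the unboundedness of $\B$. Next, Lemma~\ref{lemm:Mathias_product} identifies $\Mathias{\F_0 \fsum \F_1}$ (up to forcing equivalence) with the product $\Mathias{\F_0} \times \Mathias{\F_1}$; since forcing equivalent posets have the same generic extensions, the product $\Mathias{\F_0} \times \Mathias{\F_1}$ also forces that $\B$ is unbounded. Finally, I would apply Lemma~\ref{lemm:basic_Canjar_equivalence} with $\PP := \Mathias{\F_1}$ and $\F := \F_0$: condition~(2) of that lemma is exactly what we have just established, so condition~(1) yields that $\Mathias{\F_1}$ forces $\F_0$ to be $\B$-Canjar, as desired.

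Since everything reduces to invoking the two previous lemmas, there is no real obstacle here; the only subtlety worth mentioning is that in applying Lemma~\ref{lemm:basic_Canjar_equivalence} one interprets ``$\F_0$ is $\B$-Canjar'' in the extension via the canonical generated filter $\genlangle \check{\F}_0 \genrangle$, which matches the convention used in the statement of that lemma. Note that $\B$ is automatically unbounded in the ground model because $\F_0 \fsum \F_1$ being $\B$-Canjar already presupposes this (Canjarness is only defined for unbounded $\B$), so all hypotheses of the invoked lemmas are met without further verification.
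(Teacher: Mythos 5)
Your proof is correct and follows exactly the same route as the paper: invoke Lemma~\ref{lemm:Mathias_product} to see that $\Mathias{\F_0}\times\Mathias{\F_1}$ forces $\B$ to be unbounded, then apply Lemma~\ref{lemm:basic_Canjar_equivalence} with $\PP=\Mathias{\F_1}$. The remarks about the convention $\genlangle\check{\F}_0\genrangle$ and the unboundedness of $\B$ are fine but not needed, since Lemma~\ref{lemm:basic_Canjar_equivalence} is stated without assuming $\B$ unbounded.
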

\begin{proof}
By assumption and Lemma~\ref{lemm:Mathias_product},
$\Mathias{\F_0}\times\Mathias{\F_1}$ forces that
$\B$ is unbounded;
apply Lemma~\ref{lemm:basic_Canjar_equivalence} to finish the proof.
\end{proof}

The following lemma will be the main ingredient of the
``limit step'' of the induction (for old filters) in Lemma~\ref{lemm:main_Canjar_sum_induction}:

\begin{lemm}\label{lemm:limits_preserve_B_Canjar}
Let $\B$ be a 
countably directed 
family 
(see~\eqref{eq:countably_bounded} of Theorem~\ref{theo:preservation_unboundedness_JudahShelah}),
let $\alpha$ be a limit, and let 
$\{ \PP_\beta,\name{\QQ}_\beta \with \beta<\alpha\}$
be a finite support iteration.
Suppose that $\PP_\beta$ forces that $\F$ is $\B$-Canjar
for every $\beta<\alpha$.
Then
$\PP_\alpha$ forces that $\F$ is $\B$-Canjar.
\end{lemm}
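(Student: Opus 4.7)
\medskip

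The plan is to convert the $\B$-Canjarness hypotheses/conclusion into preservation-of-unboundedness statements via Lemma~\ref{lemm:basic_Canjar_equivalence}, and then invoke Theorem~\ref{theo:preservation_unboundedness_JudahShelah} at the limit.

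First, I will rewrite the hypothesis. For each $\beta<\alpha$, the assumption that $\PP_\beta$ forces ``$\F$ is $\B$-Canjar'' is equivalent, by Lemma~\ref{lemm:basic_Canjar_equivalence} (applied with $\PP:=\PP_\beta$), to the statement that $\Mathias{\F}\times\PP_\beta$ forces ``$\B$ is unbounded''. Similarly, the desired conclusion that $\PP_\alpha$ forces ``$\F$ is $\B$-Canjar'' is equivalent to $\Mathias{\F}\times\PP_\alpha$ forcing ``$\B$ is unbounded''. Thus it suffices to prove the latter.

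Next, I will present $\Mathias{\F}\times\PP_\alpha$ as a finite support c.c.c.\ iteration of length $\alpha$ to which Theorem~\ref{theo:preservation_unboundedness_JudahShelah} applies. Concretely, define the new iteration $\{\PP'_\beta,\name{\QQ}'_\beta\with \beta<\alpha\}$ by putting $\QQ'_0:=\Mathias{\F}\times\QQ_0$ and $\name{\QQ}'_\beta:=\name{\QQ}_\beta$ for $0<\beta<\alpha$ (with the same supports as before). Since $\Mathias{\F}$ is $\sigma$-centered (as noted after Definition~\ref{defi:Mathias_with_filter}) and hence c.c.c., and since all the $\name{\QQ}_\beta$ are forced to be c.c.c.\ (this is built into the context; in our applications the $\QQ_\beta$ are our iterands from Definition~\ref{defi:main_definition}, which are c.c.c.\ by Lemma~\ref{lemm:FKW_ccc_Knaster}), every $\PP'_\beta$ is a finite support c.c.c.\ iteration. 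By construction, $\PP'_\beta$ is forcing equivalent to $\Mathias{\F}\times\PP_\beta$ for each $\beta<\alpha$, and the limit $\PP'_\alpha$ is forcing equivalent to $\Mathias{\F}\times\PP_\alpha$ (finite support on both sides matches, since any condition in $\Mathias{\F}\times\PP_\alpha$ has finitely supported second coordinate).

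By the first paragraph, for each $\beta<\alpha$, $\PP'_\beta$ forces ``$\B$ is unbounded''. Since $\B$ is countably directed by hypothesis, Theorem~\ref{theo:preservation_unboundedness_JudahShelah} applies to $\{\PP'_\beta,\name{\QQ}'_\beta\with \beta<\alpha\}$ at the limit stage $\alpha$, yielding that $\PP'_\alpha$ (hence $\Mathias{\F}\times\PP_\alpha$) forces ``$\B$ is unbounded''. Applying Lemma~\ref{lemm:basic_Canjar_equivalence} once more (in the other direction, with $\PP:=\PP_\alpha$), we conclude that $\PP_\alpha$ forces ``$\F$ is $\B$-Canjar''.

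The only real subtlety is the bookkeeping in step two, namely checking that the ``product'' $\Mathias{\F}\times\PP_\alpha$ really does coincide with the finite support limit of the $\Mathias{\F}\times\PP_\beta$; this is immediate once one notes that finite support in the second coordinate is the same as finite support in the combined iteration whose $0$-th iterand absorbs $\Mathias{\F}$. Everything else is a direct application of the two cited results.
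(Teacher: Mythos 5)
Your proof is correct and follows essentially the same route as the paper: translate the hypotheses and conclusion via Lemma~\ref{lemm:basic_Canjar_equivalence}, realize $\Mathias{\F}\times\PP_\alpha$ as the limit of a finite support iteration of the $\Mathias{\F}\times\PP_\beta$, apply Theorem~\ref{theo:preservation_unboundedness_JudahShelah}, and translate back. The only difference is cosmetic: you make the iteration explicit by absorbing $\Mathias{\F}$ into the $0$-th iterand, whereas the paper simply observes that the product is the direct limit of the products $\Mathias{\F}\times\PP_\beta$.
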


\begin{proof}
By assumption and Lemma~\ref{lemm:basic_Canjar_equivalence},
$\Mathias{\F} \times \PP_\beta$ forces that $\B$ is unbounded for every $\beta < \alpha$.
Observe that $\Mathias{\F} \times \PP_\alpha$ is the direct limit of the sequence
$\seqlangle \Mathias{\F} \times \PP_\beta \with \beta < \alpha \seqrangle$
(and $\Mathias{\F} \times \PP_\beta$ is complete in $\Mathias{\F} \times \PP_\alpha$),
so
it can be written as the limit of a finite support iteration,
therefore, by Theorem~\ref{theo:preservation_unboundedness_JudahShelah},
also
$\Mathias{\F} \times \PP_\alpha$ forces that
$\B$ is unbounded.
We obtain the conclusion by again applying Lemma~\ref{lemm:basic_Canjar_equivalence}.
\end{proof}

\begin{lemm}\label{lemm:Canjar_oplus_countably}
Let $\F_0$ be $\B$-Canjar and $\F_1$ be countably generated.
Then $\F_0 \fsum \F_1$ is $\B$-Canjar.
\end{lemm}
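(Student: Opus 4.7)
The plan is to combine Lemma~\ref{lemm:Mathias_product} with an iterative application of the equivalence between $\B$-Canjarness and preservation of unboundedness of~$\B$. By Lemma~\ref{lemm:Mathias_product}, $\Mathias{\F_0\fsum\F_1}$ is forcing equivalent to the product $\Mathias{\F_0}\times\Mathias{\F_1}$, which in turn can be presented as the two-step iteration that first forces with~$\Mathias{\F_0}$ and afterwards with the ground-model forcing $\Mathias{\F_1}$ inside the intermediate extension.

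To see that this product preserves the unboundedness of~$\B$, I would argue in two stages. In the first stage, $\Mathias{\F_0}$ preserves $\B$ unbounded by the very assumption that $\F_0$ is $\B$-Canjar. In the second stage, I would work inside the extension $V_1:=V^{\Mathias{\F_0}}$: there $\B$ is still unbounded (by the first stage), and the countable generating family $\{a_k\with k<\omega\}\subseteq V$ of $\F_1$ still generates a countably generated filter, which I denote $\F_1^{V_1}$. Hence Lemma~\ref{lemm:countable_Canjar}, applied inside~$V_1$, yields that $\F_1^{V_1}$ is $\B$-Canjar in~$V_1$; equivalently, $\Mathias{\F_1^{V_1}}$ preserves $\B$ unbounded over~$V_1$. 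Since every condition $(s,A)$ with $A\in\F_1^{V_1}$ can be refined by some $(s,a_k)$ with $a_k\in V$ (as any $A\in\F_1^{V_1}$ must contain some ground-model generator $a_k$), the ground-model forcing $\Mathias{\F_1}$ is a dense subforcing of $\Mathias{\F_1^{V_1}}$ and hence forcing equivalent to it, so the second stage applies to the actual second factor of the product.

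Concatenating the two stages, the product $\Mathias{\F_0}\times\Mathias{\F_1}$, and hence $\Mathias{\F_0\fsum\F_1}$, preserves the unboundedness of~$\B$; by the very definition of $\B$-Canjarness this means $\F_0\fsum\F_1$ is $\B$-Canjar, as desired. There is no serious obstacle in this argument; the only point needing minor care is the absoluteness of Lemma~\ref{lemm:countable_Canjar}, whose proof proceeds through Lemma~\ref{lemm:Frechet_is_Canjar} and Lemma~\ref{lemm:Canjar_countable_ext} and only uses the ambient unboundedness of~$\B$, so it goes through verbatim in any outer model in which $\B$ remains unbounded.
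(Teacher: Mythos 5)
Your argument is correct and follows essentially the same route as the paper: reduce to the product $\Mathias{\F_0}\times\Mathias{\F_1}$ via Lemma~\ref{lemm:Mathias_product}, use the $\B$-Canjarness of $\F_0$ for the first factor, and apply Lemma~\ref{lemm:countable_Canjar} to (the filter generated by) $\F_1$ in the intermediate extension for the second factor --- the paper merely packages your second-stage two-step/density argument into Lemma~\ref{lemm:basic_Canjar_equivalence}, whose proof contains exactly the observation that the ground-model Mathias forcing is dense in the Mathias forcing of the generated filter. One tiny slip: an element of the filter generated by $\F_1$ in $V_1$ need only contain a finite intersection of generators minus a finite set (i.e., a ground-model element of $\F_1$), not a single generator $a_k$, but this is exactly what the density claim requires, so the argument stands.
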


Using the fact that sums can be reordered (see the remark after Definition~\ref{defi:filter_sum}), we obtain the following stronger statement:
Let $\F_0,\dots,\F_{m-1}$ be filters such that (some of them are countably generated and) the sum of the filters which are not countably generated is $\B$-Canjar; then $\Fsum_{k<m}\F_k$ is $\B$-Canjar.

\begin{proof}[Proof of Lemma~\ref{lemm:Canjar_oplus_countably}]
\begin{sloppypar}
We have to show that $\Mathias{\F_0 \fsum \F_1}$
forces that $\B$ is
unbounded.
By Lemma~\ref{lemm:Mathias_product},
$\Mathias{\F_0 \fsum \F_1}$
is forcing equivalent to
$\Mathias{\F_0}\times\Mathias{\F_1}$.
\end{sloppypar}

Since $\F_0$ is $\B$-Canjar by assumption,
$\B$ is unbounded in the extension by $\Mathias{\F_0}$.
Since $\F_1$ is countably generated,
the same holds in the extension by~$\Mathias{\F_0}$: more precisely, the filter generated by~$\F_1$ is countably generated.
Therefore, in the extension by $\Mathias{\F_0}$,
(the filter generated by) $\F_1$ is
$\B$-Canjar by Lemma~\ref{lemm:countable_Canjar}. So,
by Lemma~\ref{lemm:basic_Canjar_equivalence},
$\Mathias{\F_0}\times\Mathias{\F_1}$ forces that $\B$ is unbounded, as desired.
\end{proof}

\section{Preserving unboundedness: $\h = \bfrak = \om_1$}\label{section:h_b_om_1}

In this section,
we will finish the proof of Main Theorem~\ref{maintheo:maintheo_general} 
by showing 
that $\mathfrak{b}$ is small (i.e., $\bfrak = \omega_1$) in 
the 
final model~$W$. 
 Recall
 the following well-known ZFC inequalities (see~\eqref{eq:ZFC_inequ} in 
Section~\ref{sec:preliminaries}):
$$\omega_1\leq \h\leq \bfrak$$

 So, if we have $\mathfrak{b}=\omega_1$, 
 it follows 
 that $\mathfrak{h}=\omega_1$ 
(in particular, 
there exists
a distributivity matrix of height~$\omega_1$).
We have 
shown that 
there is 
a distributivity matrix of height~$\lambda > \om_1$ in~$W$; 
so 
there exists a distributivity matrix of regular height larger than~$\h$. 

In Section~\ref{subsec:layering_and_F_beta},
we will show that our iteration $\PP_\lambda$ can be represented as a finer iteration whose iterands are Mathias forcings with respect to filters. In Section~\ref{subsec:The_filters_are_B_Canjar}, we show that the filters which are used are $\mathcal{B}$-Canjar
(i.e.,
the corresponding Mathias forcings
preserve the unboundedness of~$\mathcal{B}$), where $\B$ is the set of reals
of $V_0$.
A similar
(but less involved)
argument shows that
Hechler's original forcings~\cite{Hechler}
to add a tower or to add a mad family
can be represented as an iteration of
Mathias forcings
with respect to $\B$-Canjar filters as well (see~\cite{our_TOW_MAD}).

\subsection{Finer iteration via filtered Mathias forcings}\label{subsec:layering_and_F_beta}

As described in Section~\ref{subsec:forcing_definition},
$\{ \PP_\alpha, \name{\mtx}_\alpha \with \alpha < \lambda \}$
is our main finite support iteration
which we force with over~$V$. Its limit~$\PP_\lambda$ adds a distributivity matrix of height~$\lambda$.
We will now represent our iteration
as a ``finer'' iteration:
we write each iterand~$\QQ_\alpha$
as a finite support iteration
of Mathias forcings with respect to certain filters.
Fix~$\alpha<\lambda$.

As a preparation, we
introduce
a ``nice'' enumeration of~$T_\alpha$ (recall that $\sigma\in T_\alpha$ if and only if $a_\sigma$ is added by $\QQ_\alpha$).
We go
through the nodes in
$T_\alpha$ level by level, and ``blockwise''. A \emph{block} is a set of nodes $\{ \rho \concat i \with i < \lambda \}$ for some $\rho \in \lala$.
More precisely,
let
$\{ \sigma_\alpha^\newi \with \newi < \Lambda_\alpha \}$ be an enumeration of
$T_\alpha$
(note that $|T_\alpha| = \cc$ and hence $\Lambda_\alpha$ is an ordinal with
$\cc<\Lambda_\alpha<\cc^+$)
such that

\begin{enumerate}

\item (``level by level'') $|\sigma_\alpha^\newj| < |\sigma_\alpha^\newi| \rightarrow \newj < \newi$,

\item (``blockwise'') for each
$\rho \in \lala$ with $\{ \rho \concat i \with i < \lambda \} \subs T_\alpha$, there is $\newi < \Lambda_\alpha$ such that
$$
\rho \concat i = \sigma_\alpha^{\newi + i}
\textrm{ for each } i < \lambda.
$$

\end{enumerate}

Recall that $\FKWW_\alpha^C$ denotes $\{ p \in \FKWW_\alpha \with \dom(p) \subs C \}$ (for $C \subs \lala$).
For any $\beta \leq \Lambda_\alpha$,
let
\[
\FKWW_\alpha^{<\beta} := \FKWW_\alpha^{\{ \sigma_\alpha^\newi \with \newi < \beta \}},
\]
and for $\beta < \Lambda_\alpha$,
\[
\FKWW_\alpha^{\leq \beta} := \FKWW_\alpha^{\{ \sigma_\alpha^\newi \with \newi \leq \beta \}}.
\]

Note that
$\QQ_\alpha^{<\Lambda_\alpha} = \QQ_\alpha$, and
that $\{ \sigma_\alpha^\newi \with \newi < \beta \}$ is $\alpha$-$\leftup$-closed for each $\beta \leq \Lambda_\alpha$ (due to~(1) and (2) above).
Therefore,
by Lemma~\ref{lemm:C_complete_subforcing}, 
$\FKWW_\alpha^{<\beta}$ is a complete subforcing of~$\QQ_\alpha$. 
By Lemma~\ref{lemm:compl_compl_is_compl},
$\QQ_\alpha^{<\beta}$ is a complete subforcing of $\QQ_\alpha^{\leq\beta}$,
so we can form the quotient~$\QQ_\alpha^{\leq \beta}/\QQ_\alpha^{<\beta}$.
Moreover,
because conditions in~$\QQ_\alpha$ have finite domain,
\[
\FKWW_\alpha^{<\beta} = \bigcup_{\delta < \beta} \FKWW_\alpha^{<\delta}
\]
for each limit
ordinal~$\beta \leq \Lambda_\alpha$;
in other words, $\FKWW_\alpha^{<\beta}$ is the direct limit of the forcings $\FKWW_\alpha^{<\delta}$ for $\delta < \beta$.
So $\FKWW_\alpha$ is forcing equivalent to the finite support iteration of the
quotients
$\QQ_\alpha^{\leq \beta}/\QQ_\alpha^{<\beta}$
for
$\beta < \Lambda_\alpha$.

Recall that
$\Mathias{{\F}}$ denotes Mathias forcing with respect to the filter~${\F}$ (see Definition~\ref{defi:Mathias_with_filter}).
We are now going to
show that $\QQ_\alpha^{\leq \beta}/\QQ_\alpha^{<\beta}$ is forcing equivalent to
$\Mathias{{\F}_\alpha^{\beta}}$ for a filter~${\F}_\alpha^{\beta}$.
Work in an extension by~$\PP_\alpha * \QQ_\alpha^{<\beta}$,
and note that, for each $\tau\in T_\eta$ with $\eta<\alpha$, a set $a_\tau$ has been added by $\PP_\alpha$, and for
each $\nu < \beta$, a set $a_{\sigma_\alpha^\nu}$ has been added by $\PP_\alpha * \QQ_\alpha^{<\beta}$.
These sets\footnotemark{} are used to define $\F_\alpha^\beta$ as follows.
Let $\rho \in \lala$ and $i < \lambda$
be such that $\sigma_\alpha^\beta = \rho \concat i$,
and let
 $$ \FBa_\alpha^\beta:=  \{ a_{\rho \restr (\xi+1)} \with \xi+1 \leq |\rho| \} \cup
\{ \om \setminus a_{\rho \concat j} \with j < i \},$$
i.e., $\FBa_\alpha^\beta$ is the collection of all
sets assigned to the nodes above~$\sigma_\alpha^\beta$
and the complements of the sets assigned to the nodes to the left of~$\sigma_\alpha^\beta$ within the  same block.
Note that $\FBa_\alpha^\beta$ is a filter base, i.e., any intersection of finitely many elements is infinite: indeed, for finite $I\subseteq i$ and $\xi+1\leq|\rho|$, let $j^*\in \lambda\setminus I$; then $a_{\rho\concat j^*} \subs^* a_{\rho\restr (\xi+1)}\cap\bigcap_{j\in I}(\omega\setminus a_{\rho\concat j})$.
Then let
\[ \F^{\beta}_\alpha
:=
\genlangle \FBa_\alpha^\beta \genrangle,
\]
i.e., $\F^{\beta}_\alpha$ is the filter
generated by taking finite intersections
of sets from $\FBa_\alpha^\beta$
and the Frech\'{e}t filter and taking the upwards closure.

\footnotetext{It is possible
(see the base step $\beta^* = 0$ of the proof of
Lemma~\ref{lemm:main_Canjar_sum_induction}(3))
that only sets $a_\tau$ with $\tau\in T_\eta$ for some $\eta<\alpha$ are used. This is the case if $\rho$ is pre-$T_\alpha$-minimal and $i=0$.
}

The quotient~$\QQ_\alpha^{\leq \beta}/\QQ_\alpha^{<\beta}$
adds the set~$a_\sigma$
where $\sigma=\sigma_\alpha^\beta$.
The following lemma
will provide a dense embedding from
$\QQ_\alpha^{\leq \beta}/\QQ_\alpha^{<\beta}$
to~$\Mathias{\F_\alpha^\beta}$
which preserves (the finite appoximations of) the generic
real~$a_\sigma$.
Therefore,
$a_\sigma$ is also the generic real
for~$\Mathias{{\F_\alpha^\beta}}$.
Recall that
the generic real
for $\Mathias{{\F}}$ is a pseudo-intersection of~$\F$, and the definition
of~$\F_\alpha^\beta$ ensures that
a pseudo-intersection of it
is almost contained in
$a_{\rho \restr (\xi+1)}$ whenever  $\xi+1 \leq |\rho|$ and almost disjoint from
$a_{\rho \concat j}$ for each $j < i$,
as it is the case for the real $a_\sigma$.

\begin{lemm}\label{lemm:quotient_is_mathias}
$\FKWW_\alpha^{\leq \beta}/\FKWW_\alpha^{<\beta}$ is densely embeddable into $\Mathias{\F_\alpha^{\beta}}$.
\end{lemm}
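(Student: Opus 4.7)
The plan is to exhibit an explicit dense embedding from (a dense subset of) the quotient $\QQ_\alpha^{\leq\beta}/\QQ_\alpha^{<\beta}$ into $\Mathias{\F_\alpha^\beta}$ by reading off a Mathias pair directly from the data a condition carries at $\sigma := \sigma_\alpha^\beta = \rho \concat i$. For a condition $p$ in the quotient that has $\sigma \in \dom(p)$, define
\[
\iota(p) := \bigl( s_\sigma^p, A_p \bigr), \qquad
A_p := \bigcap_{\tau \in \dom(f_\sigma^p)} a_\tau \ \cap\ \bigcap_{\tau \in \dom(h_\sigma^p)} (\omega \setminus a_\tau).
\]
By Definition~\ref{defi:main_definition}(3) and~(6), $\dom(f_\sigma^p)$ consists of successor-length $\tau \init \sigma$ and $\dom(h_\sigma^p)$ consists of left-siblings $\rho \concat j$ with $j < i$; these are precisely the (complemented) generators of $\F_\alpha^\beta$, so $A_p$ is a finite intersection of generators and therefore lies in $\F_\alpha^\beta$. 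By Lemma~\ref{lemm:full}, the full conditions are dense in $\QQ_\alpha^{\leq\beta}$, and it is harmless to further restrict $\iota$ to full conditions with $\sigma \in \dom(p)$; this is the domain on which $\iota$ will be a dense embedding.

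The verifications then proceed in three steps. First, order-preservation: if $q \leq p$ are full, then $s_\sigma^p \unlhd s_\sigma^q$ is immediate, and $A_q \subseteq A_p$ follows since $\dom(f_\sigma^q) \supseteq \dom(f_\sigma^p)$ and $\dom(h_\sigma^q) \supseteq \dom(h_\sigma^p)$. The nontrivial clause is the third Mathias clause, namely that every $n$ with $|s_\sigma^p| \leq n < |s_\sigma^q|$ and $s_\sigma^q(n) = 1$ belongs to $A_p$. Fullness of $p$ guarantees $f_\sigma^p(\tau), h_\sigma^p(\tau) < |s_\sigma^p| \leq n$, so Definition~\ref{defi:main_definition}(5) and~(8) applied to $q$ (combined with clause~(2) which forces $|s_\tau^q| \geq |s_\sigma^q|$ when $\tau \init \sigma$) yield $s_\tau^q(n) = 1$ for $\tau \in \dom(f_\sigma^p) \cap T_\alpha$, $a_\tau(n) = 1$ for $\tau \in \dom(f_\sigma^p) \cap T'_\alpha$, and $s_\tau^q(n) = 0$ for $\tau \in \dom(h_\sigma^p)$; remembering that in the quotient each such $\tau \in T_\alpha$ has index strictly less than $\beta$ (by enumeration condition~(1)) so that $s_\tau^q$ is an initial segment of the already-defined $a_\tau$, this gives $n \in A_p$.

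Second, incompatibility-preservation (equivalently, reflection of Mathias-compatibility): given $p,q$ full in the quotient with $\iota(p),\iota(q)$ compatible in $\Mathias{\F_\alpha^\beta}$, WLOG $s_\sigma^p \unlhd s_\sigma^q$, the Mathias compatibility yields exactly the constraint needed to glue $p$ and $q$ at $\sigma$ (taking $s_\sigma^r = s_\sigma^q$ and using pointwise minima on $\dom(f_\sigma^p)\cup\dom(f_\sigma^q)$ and $\dom(h_\sigma^p)\cup\dom(h_\sigma^q)$), while the restrictions to indices below $\beta$ lie in the generic filter $G_\alpha^{<\beta}$ and hence already admit a common refinement there. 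Third, density: given $(s,A) \in \Mathias{\F_\alpha^\beta}$, write $A \supseteq \bigcap_{\tau\in F_1} a_\tau \cap \bigcap_{\tau \in F_2}(\omega\setminus a_\tau)$ modulo a finite set, then pick $p_0 \in G_\alpha^{<\beta}$ containing $F_1\cap T_\alpha$ and $F_2$ in its domain with all relevant $s^{p_0}_\tau$ sufficiently long, and adjoin to it a full condition at $\sigma$ with $s_\sigma^p = s$ padded with $0$'s and with $\dom(f_\sigma^p) = F_1$, $\dom(h_\sigma^p) = F_2$ having $f$- and $h$-values chosen beneath $|s_\sigma^p|$; then $\iota(p) \leq (s,A)$.

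The main obstacle is the third Mathias clause in step one: it is the only place where the combinatorial clauses (5) and~(8) of Definition~\ref{defi:main_definition} must be translated into a set-theoretic membership statement in the already-realized sets $a_\tau$. The role of fullness (moving all promise thresholds below $|s_\sigma^p|$) and of enumeration condition~(1) (ensuring that every $\tau \init \sigma$ in $T_\alpha$ has index below $\beta$, hence $a_\tau$ is fully defined in the intermediate model) are exactly what is needed to pass from a finite-support-style promise to an honest statement about $A_p$; once this is in place, the compatibility-reflection and density arguments are essentially bookkeeping modelled on the $\Delta$-system construction in Lemma~\ref{lemm:FKW_ccc_Knaster}.
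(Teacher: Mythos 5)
Your strategy is the same as the paper's -- read off a Mathias pair from the data of a condition at $\sigma=\sigma_\alpha^\beta$ and verify order-preservation, incompatibility-preservation and density by hand -- but your choice of second coordinate is too coarse, and this breaks the density step. The paper maps $p$ to $(s_\sigma, A)$ with $A:=\bigcap_{\tau\in\dom(f_\sigma)}(a_\tau\cup f_\sigma(\tau))\cap\bigcap_{\rho'\in\dom(h_\sigma)}\bigl((\omega\setminus a_{\rho'})\cup h_\sigma(\rho')\bigr)\setminus|s_\sigma|$; the final truncation below $|s_\sigma|$ is essential. With your $A_p=\bigcap_{\tau\in\dom(f_\sigma^p)}a_\tau\cap\bigcap_{\tau\in\dom(h_\sigma^p)}(\omega\setminus a_\tau)$, the Mathias order requires $A_p\subseteq A$ in order to have $\iota(p)\leq(s,A)$, but a general $A\in\F_\alpha^\beta$ only \emph{almost} contains a finite intersection of generators: the finitely many missing points may lie in every generator, hence in every possible $A_p$, and then no image of $\iota$ refines $(s,A)$. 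Concretely, if $\sigma=\rho\concat 0$ with $|\rho|=1$, then $\dom(h_\sigma^p)=\emptyset$ and $\dom(f_\sigma^p)\subseteq\{\rho\}$ for every $p$, so $A_p$ is always $\omega$ or $a_\rho$; for any $n\in a_\rho$ the condition $(\langle\rangle, a_\rho\setminus\{n\})$ lies in $\Mathias{\F_\alpha^\beta}$ but has no $\iota(p)$ below it. Truncating $A_p$ below $|s_\sigma^p|$ (or, as the paper does, adjoining the thresholds to the generators and cutting below $|s_\sigma|$) repairs this; your density argument (padding $s$ with $0$'s past $N$) and your order-preservation argument then go through essentially as you wrote them.

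Two further points. First, you restrict $\iota$ to full conditions citing Lemma~\ref{lemm:full}, but that lemma gives density of full conditions in $\QQ_\alpha^{\leq\beta}$, not in the quotient: its proof pads the $s_\tau$'s with $0$'s, and at a node $\tau$ of index below $\beta$ this can contradict the already-decided $a_\tau$ and push the condition out of the quotient; to stay in the quotient one must extend such $s_\tau$ along $a_\tau$. The paper's definition of $A$ sidesteps fullness entirely, so its $\iota$ is defined on the whole quotient. Second, in your incompatibility step the restrictions of two quotient conditions to nodes of index below $\beta$ do not in general ``lie in the generic filter''; what is true (and what the gluing really uses, for the $f$- and $h$-promises at those nodes and at $\sigma$ when $n$ exceeds $|s_\tau^q|$) is that their $s$-parts are initial segments of the decided sets $a_\tau$ and their promises are honored by those sets, since otherwise they would be incompatible with some element of $G$. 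With these repairs your argument coincides with the paper's proof.
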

\begin{proof}
For simplicity of notation, let $\sigma:={\sigma_\alpha^\beta}$ for the rest of this proof.
Let $G$ be a generic filter for
$\FKWW_\alpha^{<\beta}$. We work in the extension by $G$, so  $$\FKWW_\alpha^{\leq \beta}/\FKWW_\alpha^{<\beta}= \{ p\in \FKWW_\alpha^{\leq \beta} \with \forall q\in G (p \text{ is compatible with } q)\}.$$
Let us define an embedding $\iota{:}\ \FKWW_\alpha^{\leq \beta}/\FKWW_\alpha^{<\beta}\rightarrow \Mathias{\F_\alpha^\beta}$ as follows:
for $p \in \FKWW_\alpha^{\leq \beta}/\FKWW_\alpha^{<\beta}$, let
$p(\sigma) = (s_\sigma,f_\sigma,h_\sigma)$, and
let
$\iota(p)= (s_{\sigma},A)$, where
$$A =
\bigcap\limits_{\tau\in\dom(f_{\sigma})}( a_\tau\cup f_{\sigma}(\tau))\cap \bigcap\limits_{\rho\in\dom(h_{\sigma})}( (\om\setminus a_\rho) \cup h_{\sigma}(\rho))\setminus |s_{\sigma}|.$$
To see that it is a dense embedding, we have to check the following conditions:
\begin{enumerate}
\item (Density) For every condition $(s,A)\in \Mathias{\F_\alpha^{\beta}}$, there exists a condition $p$ such that $\iota(p)\leq (s,A)$.
\item (Incompatibility preserving) If $p$ and $p'$ are incompatible, then so are $\iota(p)$ and $\iota(p')$.
\item (Order preserving) If $p'\leq p$, then $\iota(p')\leq \iota(p)$.
\end{enumerate}

To show (1), let $(s,A)\in \Mathias{\F_\alpha^\beta}$. Since $A\in \F_\alpha^\beta$, there exist
finite sets $\{\rho_i\with i<m\}$, $\{ \tau_j \with j<l\}$ and $N\in \om$ such that
$\bigcap_{j<l}a_{\tau_j}\cap \bigcap_{i<m} (\om\setminus a_{\rho_i})\setminus N \subseteq A$.
 Extend~$s$ with 0's to $s_\sigma$ such that $|s_\sigma|=\max(|s|,N)$, and
let $\dom(h_{\sigma}):=\{\rho_i\with i<m\}$ and $h_\sigma(\rho_i):=|s_\sigma|$ for every $i$, and $\dom(f_\sigma):=\{ \tau_j \with j<l\}$ and $f_\sigma(\tau_j):=|s_\sigma|$ for every $j$.
Let $p:=\{(\sigma,(s_\sigma,f_\sigma,h_\sigma))\}\cup \{ (\tau,(\seqlangle \seqrangle, \emptyset, \emptyset))\with \tau\in (\dom(f_\sigma)\cap T_\alpha)\cup \dom(h_\sigma)\}$.

To see that $p$ is in the quotient, let $q\in G$ 
be arbitrary; it is easy to check that
$q\cup \{(\tau,(s_\tau, f_\tau, h_\tau))\with \tau\in \dom(p)\setminus \dom(q)\} \leq p, q$.

By definition, $\iota(p)= (s_\sigma,A')$, where
$$A'=\bigcap\limits_{\tau\in\dom(f_{\sigma})}( a_\tau\cup f_{\sigma}(\tau))\cap \bigcap\limits_{\rho\in\dom(h_{\sigma})}( (\om\setminus a_\rho) \cup h_{\sigma}(\rho))\setminus |s_{\sigma}|.$$
It follows that $$A'\stackrel{(*)}{=}\bigcap\limits_{\tau\in\dom(f_{\sigma})}a_\tau\cap \bigcap\limits_{\rho\in\dom(h_{\sigma})}(\om\setminus a_\rho)\setminus |s_{\sigma}|\subseteq
\bigcap\limits_{j<l}a_{\tau_j}\cap \bigcap\limits_{i<m} (\om\setminus a_{\rho_i})\setminus N  \subseteq A$$ (where (*) holds because $|s_\sigma|\geq f_\sigma(\tau), h_\sigma(\rho)$ for every $\tau, \rho$ in
the respective domains). Therefore $s_\sigma \unrhd s$, $A'\subseteq A$, and $s_\sigma(n)=0$ for all $n\geq |s|$.
So $\iota(p)=(s_\sigma,A')\leq (s,A)$.

We prove (2) by showing the contrapositive. Assume
$\iota(p)$ and $\iota(p')$ are compatible.
Define $q$ as follows. Let $\dom(q):= \dom(p)\cup \dom(p')$.  For every $\tau\in \dom(q)$, let $s_\tau^q:= s_\tau^p\cup s_\tau^{p'}$, $\dom(f_\tau^q):= \dom(f_\tau^p)\cup\dom(f_\tau^{p'})$ and for $\rho\in \dom(f_\tau^q)$ let $f_\tau^q(\rho)=\min(f_\tau^p(\rho),f_\tau^{p'}(\rho))$, and the same for $h$: $\dom(h_\tau^q):= \dom(h_\tau^p)\cup\dom(h_\tau^{p'})$ and for $\rho\in \dom(h_\tau^q)$ let $h_\tau^q(\rho)=\min(h_\tau^p(\rho),h_\tau^{p'}(\rho))$. It is easy to check
that $q$ is a condition in the quotient and $q\leq p,p'$.

To show (3), let $p'\leq p$. So, by definition, 
$s_\sigma^{p'} \unrhd s_\sigma^p$,
and $\dom(h_\sigma^{p'})\supseteq \dom(h_\sigma^{p})$ and $\dom(f_\sigma^{p'})\supseteq \dom(f_\sigma^{p})$, and $f_\sigma^{p'}(\tau)\leq f_\sigma^{p}(\tau)$ for $\tau\in \dom(f_\sigma^{p})$ and $h_\sigma^{p'}(\rho)\leq h_\sigma^{p}(\rho)$ for $\rho\in \dom(h_\sigma^{p})$; so
\begin{align*}
A' := & \bigcap\limits_{\tau\in\dom(f_{\sigma}^{p'})}( a_\tau\cup f_{\sigma}^{p'}(\tau))\cap  \bigcap\limits_{\rho\in\dom(h_{\sigma}^{p'})}( (\om\setminus a_\rho) \cup h_{\sigma}^{p'}(\rho))\setminus |s_{\sigma}^{p'}| \\
\subseteq &  \bigcap\limits_{\tau\in\dom(f_{\sigma}^{p})}( a_\tau\cup f_{\sigma}^{p}(\tau))\cap  \bigcap\limits_{\rho\in\dom(h_{\sigma}^{p})}( (\om\setminus a_\rho) \cup h_{\sigma}^{p}(\rho))\setminus |s_{\sigma}^{p}|=:A.
\end{align*}
By definition, $\iota(p) = (s_\sigma^p,A)$ and $\iota({p'}) = (s_\sigma^{p'},A')$.
To show that $(s_\sigma^{p'},A')\leq (s_\sigma^p,A)$, it remains to show that
for
  $n\geq |s_\sigma^p|$ 
with
 $s_\sigma^{p'}(n)=1$, we have 
  $n\in A$. 
 First fix $\rho\in\dom(h_\sigma^p)$ and show that $n\in (\om\setminus a_\rho) \cup h_\sigma^p(\rho)$. If $n<h_\sigma^p(\rho)$, this is clear. If $n\geq h_\sigma^p(\rho)$, we know that $s_\sigma^{p'}$ respects $h_\sigma^p$, and so $n\in \om\setminus a_\rho$. So in both cases, $n\in (\om\setminus a_\rho) \cup h_\sigma^p(\rho)$.
 Now fix $\tau\in\dom(f_\sigma^p)$ and show that $n\in a_\tau \cup f_\sigma^p(\tau)$. This is the same argument as for~$h$. If $n<f_\sigma^p(\tau)$, this is clear. If $n\geq f_\sigma^p(\tau)$, we know that $s_\sigma^{p'}$ respects $f_\sigma^p$, and so $n\in a_\tau$. So in both cases, $n\in a_\tau \cup f_\sigma^p(\tau)$, finishing the proof.
\end{proof}

The following fact will be needed in the proof of Claim~\ref{clai:proof_needs_sigma_centered}:

\begin{coro}\label{coro:alles_sigma_centered}
$\PP_\alpha$ is
$\sigma$-centered
for each $\alpha \leq \lambda$.
\end{coro}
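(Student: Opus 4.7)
The plan is to exploit the finer iterated representation of $\QQ_\alpha$ developed in Section~\ref{subsec:layering_and_F_beta}. By Lemma~\ref{lemm:quotient_is_mathias} and the discussion surrounding it, each iterand $\QQ_\alpha$ is forcing equivalent to a finite support iteration of length $\Lambda_\alpha$, with $\cc < \Lambda_\alpha < \cc^+$, whose iterands are (densely embedded copies of) Mathias forcings $\Mathias{\F_\alpha^\beta}$ with respect to filters. Any such filtered Mathias forcing is trivially $\sigma$-centered via the decomposition
\[
\Mathias{\F} = \bigcup_{s \in 2^{<\om}} \{(s,A) \with A \in \F\},
\]
since conditions $(s,A_0),\dots,(s,A_{n-1})$ with common stem $s$ have $(s, A_0 \cap \cdots \cap A_{n-1})$ as a common extension.

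Next, I would invoke the standard preservation theorem that $\sigma$-centeredness is preserved under finite support iterations of length strictly less than~$\cc^+$, provided each iterand is forced to be $\sigma$-centered. Applied to the inner layering of $\QQ_\alpha$ (of length $\Lambda_\alpha < \cc^+$), this yields that $\QQ_\alpha$ is $\sigma$-centered, for each $\alpha < \lambda$.

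Finally, the corollary itself is proved by induction on $\alpha \leq \lambda$. The case $\alpha = 0$ is trivial. For the successor step, $\PP_{\alpha+1} = \PP_\alpha * \name{\QQ}_\alpha$ is $\sigma$-centered since $\PP_\alpha$ is $\sigma$-centered by induction and $\PP_\alpha$ forces $\QQ_\alpha$ to be $\sigma$-centered (a two-step iteration of $\sigma$-centered forcings is $\sigma$-centered). For limit $\alpha$, one applies the preservation theorem a second time: $\PP_\alpha$ is the finite support limit of the $\sigma$-centered forcings $\PP_{\alpha'}$, $\alpha' < \alpha$, and the length $\alpha \leq \lambda \leq \precc = \cc < \cc^+$ satisfies the required bound (using Lemma~\ref{lemm:size_of_c} to identify $\cc$ in the relevant intermediate models).

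There is no genuine obstacle here; the only thing to be careful about is verifying that both length bounds, $\Lambda_\alpha < \cc^+$ for the inner layering and $\alpha \leq \cc$ for the outer iteration, really do fit under the hypothesis of the preservation theorem. Both are immediate from the setup.
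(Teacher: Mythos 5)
Your proof is correct and takes essentially the same route as the paper: both arguments rest on Lemma~\ref{lemm:quotient_is_mathias}, the $\sigma$-centeredness of filtered Mathias forcing, and the fact that a finite support iteration of $\sigma$-centered forcings of length strictly less than~$\cc^+$ is $\sigma$-centered. The only (cosmetic) difference is that the paper applies this preservation fact once to the entire fine iteration, whose total length is still below $\cc^+$ because each $\Lambda_\eta<\cc^+$ and $\alpha\leq\lambda\leq\cc$, instead of running your nested induction with separate successor and limit steps.
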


More generally, the same holds for $\PP_\alpha/\PP_\eta$ for $\eta<\alpha$.

\begin{proof}[Proof of Corollary~\ref{coro:alles_sigma_centered}]
Since Mathias forcing with respect to a filter is always $\sigma$-centered (see the remark after Definition~\ref{defi:Mathias_with_filter}) and $\QQ_\alpha^{\leq\beta}/\QQ_\alpha^{< \beta}$ is densely embeddable into such a forcing by the above lemma, also  $\QQ_\alpha^{\leq\beta}/\QQ_\alpha^{< \beta}$ is $\sigma$-centered.

Recall that $\Lambda_\eta<\cc^+$ for every $\eta<\alpha$,
and $\alpha \leq \lambda \leq \cc$,
so $\PP_\alpha$ is a finite support iteration of $\sigma$-centered forcings of length strictly less than~$\cc^+$.
As a matter of fact,
the finite support iteration of $\sigma$-centered
forcings
of length strictly less
than~$\cc^+$
is $\sigma$-centered
(the result was mentioned without proof in \cite[proof of Lemma~2]{tall}; for a proof, see \cite{Blass_mathoverflow} or \cite[Lemma~5.3.8]{Fiorella}).
\end{proof}

Also the following 
 lemma will be used in the proof of
Claim~\ref{clai:proof_needs_sigma_centered}.
It is 
similar to 
a well-known fact about branches of certain trees; 
see, e.g., \cite[Lemma~3.8]{Mitchell_paper} or \cite{Mitchell_thesis}.
For the convenience of the reader, we provide an explicit proof here.

\begin{lemm}\label{lemm:square_cc_fresh}
If $\mathbb{P}\times \mathbb{P}$ has the c.c.c.\ and $\cf(\delta)>\omega$, then, 
in $V[\PP]$, 
every new function 
from $\delta$ to the ordinals has an initial segment which is new. 
\end{lemm}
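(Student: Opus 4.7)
The plan is to prove the contrapositive via forcing: assume $p_0 \in \PP$ forces that $\dot f \colon \delta \to \Ord$ is not in $V$ while every initial segment $\dot f \restr \alpha$ with $\alpha<\delta$ is in $V$, and derive a contradiction from the c.c.c.\ of $\PP\times\PP$ together with $\cf(\delta)>\omega$. I would work in the product $\PP\times\PP$, writing $\dot f_L$ and $\dot f_R$ for the interpretations of $\dot f$ by the left and right generic respectively, and carry out the argument in three steps.

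Step~1: show that $(p_0,p_0) \Vdash \dot f_L \neq \dot f_R$. Suppose toward contradiction that some $(q_0,q_1) \leq (p_0,p_0)$ forces $\dot f_L = \dot f_R$. Since $q_0 \leq p_0$ and $p_0 \Vdash \dot f \notin V$, the condition $q_0$ cannot uniquely determine $\dot f(\beta)$ for every $\beta<\delta$ (else the map $\beta \mapsto$ that value would witness $q_0 \Vdash \dot f \in V$), so there exist $\beta<\delta$ and $r_0, r_0' \leq q_0$ with $r_0 \Vdash \dot f(\beta) = \gamma_0$ and $r_0' \Vdash \dot f(\beta) = \gamma_1$ for distinct ordinals $\gamma_0 \neq \gamma_1$. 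Because $q_1$ cannot force $\dot f(\beta)$ to equal both values, I may pick $\gamma \in \{\gamma_0,\gamma_1\}$ with $q_1 \not\Vdash \dot f(\beta) = \gamma$ and a matching $r \in \{r_0,r_0'\}$ forcing $\dot f(\beta) = \gamma$; then some $r_1 \leq q_1$ forces $\dot f(\beta) \neq \gamma$, and $(r, r_1) \leq (q_0,q_1)$ forces $\dot f_L(\beta) \neq \dot f_R(\beta)$, contradicting $(q_0,q_1) \Vdash \dot f_L = \dot f_R$.

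Step~2: exploit c.c.c.\ plus $\cf(\delta)>\omega$. For each $\alpha<\delta$ set
\[
E_\alpha := \{(r_0,r_1) \leq (p_0,p_0) : (r_0,r_1) \Vdash \dot f_L \restr \alpha \neq \dot f_R \restr \alpha\},
\]
noting $E_\alpha \subseteq E_{\alpha'}$ for $\alpha \leq \alpha'$. By Step~1, $\bigcup_{\alpha<\delta} E_\alpha$ is dense below $(p_0,p_0)$. A maximal antichain $A$ inside this union is countable by the c.c.c.\ of $\PP\times\PP$, so by $\cf(\delta)>\omega$ there is $\alpha^* < \delta$ with $A \subseteq E_{\alpha^*}$; since $A$ is then a maximal antichain below $(p_0,p_0)$, this yields $(p_0,p_0) \Vdash \dot f_L \restr \alpha^* \neq \dot f_R \restr \alpha^*$.

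Step~3: the diagonal contradiction. Since $p_0 \Vdash \dot f \restr \alpha^* \in V$, I can pick $p' \leq p_0$ and $s \in V$ with $p' \Vdash \dot f \restr \alpha^* = \check s$. But then $(p',p') \leq (p_0,p_0)$ forces $\dot f_L \restr \alpha^* = \check s = \dot f_R \restr \alpha^*$ (one from each coordinate), directly contradicting the conclusion of Step~2. The only delicate point is the case analysis in Step~1; everything else is a standard c.c.c.\ antichain pigeonhole on levels below~$\delta$.
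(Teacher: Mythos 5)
Your proof is correct, but it runs along a genuinely different track from the paper's. The paper argues by a transfinite recursion of length $\omega_1$: assuming $p$ forces $\dot f\notin \check V$ while every proper initial segment lies in $V$, it constructs pairs $(p_i,q_i)\leq p$ for $i<\omega_1$ together with a strictly increasing sequence $\alpha_i<\delta$ such that $p_i$ and $q_i$ decide $\dot f\restr(\alpha_i+1)$ and first disagree exactly at $\alpha_i$ (here $\cf(\delta)>\omega$ is what keeps $\sup_{j<i}\alpha_j$ below $\delta$ at each countable stage); the c.c.c.\ of $\PP\times\PP$ then yields $i_0<i_1$ with $(p_{i_0},q_{i_0})$ and $(p_{i_1},q_{i_1})$ compatible, and a common extension must simultaneously agree and disagree about $\dot f\restr(\alpha_{i_0}+1)$. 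You instead force with the product itself: your Step~1 (the standard ``undecided value'' argument, including the observation that $q_1$ cannot force both values) gives $(p_0,p_0)\Vdash \dot f_L\neq\dot f_R$; you then bound the level of disagreement by a single $\alpha^*<\delta$ via a countable maximal antichain inside the open dense set $\bigcup_{\alpha<\delta}E_\alpha$, which is exactly where the c.c.c.\ and $\cf(\delta)>\omega$ enter for you; and you finish with the diagonal condition $(p',p')$ deciding $\dot f\restr\alpha^*$. Both arguments use precisely the same two hypotheses; yours trades the paper's $\omega_1$-recursion and pigeonhole for a product-name and maximal-antichain argument, which localizes the cofinality assumption to one countable supremum and avoids transfinite bookkeeping, while the paper's version never needs names or genericity for $\PP\times\PP$ at all, only compatibility of pairs of $\PP$-conditions.
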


\begin{proof}
Assume 
towards a contradiction that 
there exists $p\in \PP$ and a $\mathbb{P}$-name $\dot{f}$ such that $
p$ forces $\dot{f}{:}\ \delta\rightarrow \ord$ is not in $V$
and $ \dot{f}\restr \gamma\in V$ for each $\gamma <\delta$.
Therefore, we can,
by induction on $i<\om_1$,
construct
$\alpha_i<\delta$, $p_i \leq p$, and $q_i \leq p$
such that $p_i$ and $q_i$ decide $\dot{f}$ up to $\alpha_i$, and $\alpha_i$ is the first point about which $p_i$ and $q_i$ disagree; more precisely,
there is $s_i{:}\ \alpha_i+1\rightarrow \ord$ and $t_i{:}\ \alpha_i+1\rightarrow \ord$ such that
\begin{enumerate}

 \item
 $\alpha_j < \alpha_i$ for each $j < i$,

 \item
 $p_i\Vdash \dot{f}\restr(\alpha_i+1)=s_i$,

 \item
 $q_i\Vdash \dot{f}\restr(\alpha_i+1)=t_i$,

 \item $s_i\neq t_i$, and $s_i\restr \alpha_i=t_i\restr \alpha_i$.

 \end{enumerate}
Consider
$\seqlangle (p_i,q_i) \with i<\om_1 \seqrangle$ and use that
$\mathbb{P}\times \mathbb{P}$ 
has the c.c.c.\
to obtain
$i_0<i_1$ such that
$(p_{i_0},q_{i_0})$ and $(p_{i_1},q_{i_1})$ are compatible, and fix
$(\bar{p},\bar{q})$
with
$(\bar{p},\bar{q}) \leq (p_{i_0},q_{i_0})$
and
$(\bar{p},\bar{q}) \leq (p_{i_1},q_{i_1})$.
It follows that both $\bar{p}$ and $\bar{q}$ force that
$\dot{f}\restr \alpha_{i_1}=s_{i_1}\restr \alpha_{i_1}$. Moreover, $\bar{p}\Vdash \dot{f}\restr (\alpha_{i_0}+1)=s_{i_0}$ and $\bar{q}\Vdash \dot{f}\restr (\alpha_{i_0}+1)=t_{i_0}$,
but $s_{i_0}\neq t_{i_0}$,
which easily yields (using $\alpha_{i_0}<\alpha_{i_1}$) a contradiction.
\end{proof}

\subsection{The filters are $\B$-Canjar}\label{subsec:The_filters_are_B_Canjar}

To finish the proof of Main Theorem~\ref{maintheo:maintheo_general}, we have to show that
$\bfrak = \om_1$
holds true in the final extension.

Recall that the setup
is the following.
Our very ground model~$V_0$ is a model of CH; therefore, its set of reals
$$\B = \omega^\omega \cap V_0$$
has size $\omega_1$. Clearly,
$\B$ is an unbounded family in~$V_0$.
We will show that $\B$ remains unbounded in the course of the iteration, thereby witnessing $\bfrak = \om_1$ in the final model.

First, observe that $\B$ is still unbounded in~$V$, the extension of $V_0$ by
$\mu$ many Cohen reals (due to the fact that $\CC_\mu$ does not add dominating reals).

In Section~\ref{subsec:layering_and_F_beta}, we have defined filters $\F_\alpha^\beta$ for $\alpha < \lambda$ and $\beta < \Lambda_\alpha$
(and their canonical filter bases~$\FBa_\alpha^\beta$)
and have shown that $\QQ_\alpha$ is equivalent to the finite support iteration of the Mathias forcings $\Mathias{\F_\alpha^\beta}$.
In particular,
$\PP_\alpha*\QQ_\alpha^{<\beta}
* \Mathias{\F_\alpha^\beta}
=
\PP_\alpha*\QQ_\alpha^{\leq\beta}$,
and
$\PP_\alpha * \QQ_\alpha^{<\Lambda_\alpha} = \PP_{\alpha + 1}$.
Note that $\B$ is countably directed 
(see~\eqref{eq:countably_bounded} from Theorem~\ref{theo:preservation_unboundedness_JudahShelah});
therefore
it suffices to show that $\B$ remains unbounded at successor steps of our ``fine'' iteration:
for each $\alpha < \lambda$ and each $\beta<\Lambda_\alpha$, the unboundedness of~$\B$ is preserved by~$\Mathias{\F_\alpha^\beta}$.
To achieve this,
we will show that
the filters $\F_\alpha^\beta$ are $\B$-Canjar in $V[\PP_\alpha*\QQ_\alpha^{<\beta}]$
(see
Lemma~\ref{lemm:main_Canjar_sum_induction}(2)).
Actually, we show for every $\beta^*$ that $\F_\alpha^\beta$ is $\B$-Canjar in $V[\PP_\alpha*\QQ_\alpha^{<\beta^*}]$ whenever $\FBa_\alpha^\beta\in V[\PP_\alpha*\QQ_\alpha^{<\beta^*}]$, i.e., we show the $\B$-Canjarness of a filter $\F_\alpha^\beta$ as soon as it exists.

In many cases, we use a genericity argument
to show that the filters are $\B$-Canjar at the stage where they appear, but we need the $\B$-Canjarness in
later stages
of the iteration,
for two reasons: first, we want to force with this filter in a later stage, and second, we want to use the $\B$-Canjarness of an older filter to show the $\B$-Canjarness of a filter which appears later.
As mentioned earlier,
 the notion of $\B$-Canjarness of a filter is not absolute, therefore we will use our
method from Section~\ref{subsec:method_to_preserve}
 to guarantee that
the $\B$-Canjarness of
the
filter is
not destroyed by the other Mathias forcings
along the iteration.
 This method is based on finite sums of filters, therefore we show that all finite sums of filters which exist in $V[\PP_\alpha*\QQ_\alpha^{<\beta^*}]$ are $\B$-Canjar
(see
Lemma~\ref{lemm:main_Canjar_sum_induction}(3)).

\begin{lemm}\label{lemm:main_Canjar_sum_induction}
For every $\alpha<\lambda$, for every $\beta^*<\Lambda_\alpha$,
\begin{enumerate}

\item
$\B$ is unbounded in $V[\PP_\alpha*\QQ_\alpha^{<\beta^*}]$,

\item $\F_\alpha^{\beta^*}$ is $\B$-Canjar in $V[\PP_\alpha * \QQ_\alpha^{<\beta^*}]$,

\item
if $m\in \omega$ and $\beta_0,\dots,\beta_{m-1}<\Lambda_\alpha$ with
$\FBa_\alpha^{\beta_k}\in V[\PP_\alpha*\QQ_\alpha^{<\beta^*}]$ for every $k<m$,
then
$\Fsum_{k<m}\F_\alpha^{\beta_k}$ is $\B$-Canjar in $V[\PP_\alpha*\QQ_\alpha^{<\beta^*}]$.

\end{enumerate}
\end{lemm}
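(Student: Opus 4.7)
I will prove (1), (2), and (3) by a single simultaneous induction on pairs $(\alpha, \beta^*)$ in lexicographic order. Since (2) is the special case $m=1$ of (3), only (1) and (3) need separate attention. Part~(1) follows from (2) at earlier stages by standard preservation arguments. If $\beta^* = \beta'+1$, Lemma~\ref{lemm:quotient_is_mathias} identifies the step from $V[\PP_\alpha * \QQ_\alpha^{<\beta'}]$ to $V[\PP_\alpha * \QQ_\alpha^{<\beta^*}]$ with forcing by $\Mathias{\F_\alpha^{\beta'}}$, and by IH(2) this filter is $\B$-Canjar, so unboundedness of $\B$ persists. At limits (whether of $\beta^*$, or of $\alpha$ when $\beta^* = 0$), the extension is a direct limit of a finite-support c.c.c.\ iteration over $V$; since $\B = \om^\om \cap V_0$ is countably directed (as $V_0 \models \CH$), Theorem~\ref{theo:preservation_unboundedness_JudahShelah} yields unboundedness. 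The base case reduces to $V = V_0[\CC_\mu]$, where $\B$ is unbounded since Cohen forcing adds no dominating reals.

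\textbf{Proof of (3).} Fix $\beta_0 \leq \dots \leq \beta_{m-1}$ with each $\FBa_\alpha^{\beta_k} \in V[\PP_\alpha * \QQ_\alpha^{<\beta^*}]$, which (by the level-by-level, blockwise enumeration of $T_\alpha$) is equivalent to $\beta_{m-1} \leq \beta^*$. I split into two cases. In the \emph{preservation case} $\beta_{m-1} < \beta^*$, the sum already lived at stage $\beta_{m-1}$, and I propagate its $\B$-Canjarness up to stage $\beta^*$ through the intervening Mathias forcings: at a successor step $\Mathias{\F_\alpha^{\nu}}$ for $\beta_{m-1} \leq \nu < \beta^*$, Lemma~\ref{lemm:sum_preserves_Canjar} reduces preservation to $\B$-Canjarness of the enlarged sum $(\Fsum_{k<m}\F_\alpha^{\beta_k}) \fsum \F_\alpha^{\nu}$ at the pair $(\alpha,\nu)$, which is statement (3) with $m+1$ filters at a strictly earlier stage and is available by induction; limit steps are handled by Lemma~\ref{lemm:limits_preserve_B_Canjar}. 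In the \emph{genericity case} $\beta_{m-1} = \beta^*$, the filter base $\FBa_\alpha^{\beta^*}$ appears for the first time at this stage. By Lemma~\ref{lemm:Mathias_product} the product $\prod_{k<m}\Mathias{\F_\alpha^{\beta_k}}$ is forcing equivalent to $\Mathias{\Fsum_{k<m}\F_\alpha^{\beta_k}}$, and by Lemma~\ref{lemm:quotient_is_mathias} each factor densely embeds into the quotient $\QQ_\alpha^{\leq \beta_k}/\QQ_\alpha^{<\beta_k}$, so the whole product sits as a complete subforcing of an appropriate portion of $\PP_{\alpha+1}$. To verify the characterization of Theorem~\ref{theo:OsvaldoHrusakMartinez}, given $\bar X = \seqlangle X_n \with n \in \omega \seqrangle$ in $\Osvaldo{\Fsum_{k<m}\F_\alpha^{\beta_k}}$, I plan to apply the counterpart of Lemma~\ref{lemm:finding_gamma} for the layered iteration (in the spirit of Remark~\ref{rema:eta_tail}) to choose a name for $\bar X$ hereditarily below some $\gamma < \lambda$; Lemma~\ref{lemm:hereditarily_is_complete} then places this name in a complete subforcing which ``sees'' only boundedly many of the generators $a_\tau$; a reduction argument in the spirit of Lemma~\ref{lemm:statt_oben}, combined with Lemma~\ref{lemm:finde_r_und_m}, then uses the genericity of the freshly added Mathias real $a_{\sigma_\alpha^{\beta^*}}$ to extract an $f \in \B$ witnessing $\bar X_f \in \Osvaldo{\Fsum_{k<m}\F_\alpha^{\beta_k}}$.

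\textbf{Main obstacle.} The genericity case is the technical heart of the proof: the filters $\F_\alpha^{\beta_k}$ are typically uncountably generated, so Lemmas~\ref{lemm:countable_Canjar} and~\ref{lemm:Canjar_oplus_countably} are insufficient, and no earlier stage contains the full new filter base. The plan is to exploit the complete-subforcing apparatus of Section~\ref{subsection:auxiliary} to restrict a given witness sequence $\bar X$ to a manageable piece of the iteration, on which the inductive hypothesis and the genericity of the Mathias reals constituting the filter base combine to produce the required $f \in \B$.
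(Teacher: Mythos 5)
There is a genuine gap, and it starts with your reading of the hypothesis of~(3): the condition $\FBa_\alpha^{\beta_k}\in V[\PP_\alpha*\QQ_\alpha^{<\beta^*}]$ is \emph{not} equivalent to $\beta_{m-1}\leq\beta^*$. Only the implication $\beta\leq\beta^*\Rightarrow\FBa_\alpha^{\beta}\in V[\PP_\alpha*\QQ_\alpha^{<\beta^*}]$ holds; conversely, many filter bases with index far above~$\beta^*$ already exist at stage~$\beta^*$, e.g.\ $\FBa_\alpha^{\beta}$ for $\sigma_\alpha^{\beta}=\rho\concat 0$ with $\rho$ pre-$T_\alpha$-minimal (these exist already in $V[\PP_\alpha]$, i.e.\ at $\beta^*=0$), or for $\sigma_\alpha^{\beta}=\bar\rho\concat 0$ once all $a_{\bar\rho\restr(\xi+1)}$ have been added, which typically happens long before stage~$\beta$. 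The whole point of stating~(3) with this hypothesis is that $\B$-Canjarness is not absolute (Example~\ref{exam:not_absolute}), so one must establish the Canjarness of each $\F_\alpha^{\beta}$ (in all finite sums) \emph{as soon as its base exists} and then carry it forward through the intervening Mathias iterands via Lemma~\ref{lemm:sum_preserves_Canjar} and Lemma~\ref{lemm:limits_preserve_B_Canjar} until stage~$\beta$, where item~(2) is finally needed. Under your restricted reading the induction cannot express this propagation: when stage~$\beta$ arrives, all generators of $\FBa_\alpha^{\beta}$ are old, there is no fresh generic material tied to that filter, and you would have to prove its $\B$-Canjarness from scratch in a model obtained after many intermediate Mathias forcings --- which is exactly the non-absoluteness problem the sum machinery was built to avoid.

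The second problem is the ``genericity case'' itself, which is the technical core and is misdescribed in your sketch. The real $a_{\sigma_\alpha^{\beta^*}}$ is only added by $\Mathias{\F_\alpha^{\beta^*}}$, i.e.\ it does not exist in $V[\PP_\alpha*\QQ_\alpha^{<\beta^*}]$ and is not a generator of any of the filters in question, so ``the genericity of the freshly added Mathias real'' cannot drive the argument. What the paper actually does, for each kind of newly appearing (possibly uncountably generated) filter, is: choose a complete subforcing capturing the hereditarily countable sequence $\bar X$ while \emph{missing cofinally many generators} of the new filter bases --- this requires the cofinality analysis (pre-$T_\alpha$-minimal nodes at $\beta^*=0$, where one needs $\sigma$-centeredness of $\PP_\alpha/\PP_\eta$ and the fresh-initial-segment Lemma~\ref{lemm:square_cc_fresh}; the case $\cf(|\rho|)>\omega$ at limit stages; and the eligible set $C$ built from the name of $\bar X$ when $\sigma_\alpha^{\beta^*}=\rho\concat i$ with $i$ limit); obtain $f\in\B$ for a countably generated approximation via Lemmas~\ref{lemm:countable_Canjar}, \ref{lemm:Canjar_countable_ext}, \ref{lemm:Canjar_oplus_countably}; and then run a density argument in the appropriate quotient ($\QQ_{\eta'}$ along the coarse iteration, $\QQ_\alpha^{<\beta^*}/\QQ_\alpha^{\lambda^{\leq\gamma}}$, or $\QQ_\alpha^{<\beta^*}/\QQ_\alpha^{C}$) extending conditions of $\QQ_\alpha$ by hand (filling the relevant $s_\tau$ using some $a_{\sigma_k}$ on the branch) to show $\bar X_f$ remains positive for the full sum. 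None of these steps --- the cofinality case split, the choice of the submodel missing generators, or the explicit density argument --- appears in your plan, and the hereditarily-below-$\gamma$ machinery you invoke does not by itself isolate ``boundedly many generators'' in the relevant cases (e.g.\ for $\rho\concat i$ with $i$ limit the paper needs an $\alpha$-eligible set tailored to the name of $\bar X$, not a cut at a level $\gamma$). So the proposal's preservation case matches the paper's successor/limit steps for old filters, but the statement being inducted on is too weak and the heart of the proof is missing.
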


\begin{proof}
First note that, for each $\alpha<\lambda$ and $\beta^*<\Lambda_\alpha$, (2) is a special instance of (3): in fact,
$\FBa_\alpha^{\beta^*}\in V[\PP_\alpha*\QQ_\alpha^{<\beta^*}]$, so (3) for $m=1$ and $\beta_0 = \beta^*$ is (2). However, we need (3) in order to carry out the induction (to preserve $\B$-Canjarness of our filters).

We prove (1) and (3) (and hence (2))
by (simultaneous)
induction
on
the pairs $(\alpha, \beta^*)$
(with the lexicographical ordering).
So suppose that (1) and (3) hold
for each
$(\alpha', \beta')<_{lex}(\alpha,\beta^*)$, i.e., for each pair
with
$\alpha' < \alpha$ (and $\beta'$ arbitrary)
or $\alpha' = \alpha$ and $\beta' < \beta^*$.

\vspace{0.4em}
\textbf{Proof of~(1)}:
\vspace{0.4em}

\emph{For $\alpha = \beta^* = 0$},
just note that $\B$ is unbounded
in~$V[\PP_0 * \QQ_0^{<0}] = V$, since $V$ is the
extension by Cohen forcing~$\CC_\mu$ (which does not add dominating reals)
of our GCH ground model~$V_0$.

\emph{In case~$\beta^* = \beta' + 1$ is a successor ordinal},
we 
use the fact that~(1) holds for
$\alpha = \alpha'$ and
$\beta'$ by induction, so $\B$ is unbounded in the extension by
$\PP_\alpha * \FKWW_\alpha^{<\beta'}$;
by Lemma~\ref{lemm:quotient_is_mathias},
$\FKWW_\alpha^{\leq \beta'}/\FKWW_\alpha^{<\beta'}$
is forcing equivalent to $\Mathias{\F_\alpha^{\beta'}}$, i.e.,
$\FKWW_\alpha^{<\beta^*} = \FKWW_\alpha^{<\beta'} * \Mathias{\F_\alpha^{\beta'}}$; since
(2) holds for~$\beta'$ by induction,
$\Mathias{\F_\alpha^{\beta'}}$ preserves the unboundedness of~$\B$, hence the same is true for $\PP_\alpha * \FKWW_\alpha^{<\beta^*}$, as desired.

\emph{In case $(\alpha,\beta^*)$ is a limit point of the
lexicographical
ordering (i.e., $\beta^* = 0$ or
$\beta^*$ is a limit ordinal)},
we use the fact that
$\PP_\alpha * \FKWW_\alpha^{<\beta^*}$ is the limit of a finite support iteration of c.c.c.\ forcings, and that (1) holds for each
$(\alpha', \beta') <_{lex} (\alpha,\beta^*)$;
so we can apply Theorem~\ref{theo:preservation_unboundedness_JudahShelah}
to conclude (1)
for~$(\alpha,\beta^*)$.

\vspace{0.4em}
\textbf{Proof of~(3)}:
\vspace{0.4em}

Fix $\alpha$. By (1), $\B$ is unbounded in $V[\PP_\alpha]$.
We say that $\rho\in \lala$ is \emph{pre-$T_\alpha$-minimal}
 if
it is the predecessor of a minimal node of~$T_\alpha$; it is straightforward to check that
this is the case if and only if
\begin{itemize}
\item $\rho \in V[\PP_\alpha]$,
\item $\rho\notin V[\PP_\eta]$ for any $\eta<\alpha$, and
\item for every $\gamma<|\rho|$, there exists $\eta<\alpha$ with $\rho\restr \gamma\in V[\PP_{\eta}]$.
\end{itemize}
Note that for $\alpha=0$, the only pre-$T_\alpha$-minimal node is the root $\seqlangle \seqrangle$, and for $\alpha>0$, all pre-$T_\alpha$-minimal nodes have limit length.

We proceed by induction on $\beta^*$.

\vspace{0.4em}
\textbf{Base step $\beta^*=0$}:
\vspace{0.4em}

\begin{sloppypar}
Let $\beta_0,\dots,\beta_{m-1}$
be such that
$\FBa_\alpha^{\beta_k}\in V[\PP_\alpha*\QQ_\alpha^{<0}]$
for each $k < m$.
Since $\FBa_\alpha^{\beta_k}\in V[\PP_\alpha*\QQ_\alpha^{<0}]= V[\PP_\alpha]$, it follows that $\sigma_\alpha^{\beta_k}=\rho_k\concat 0$ for some pre-$T_\alpha$-minimal node~$\rho_k$: indeed, observe that
$\FBa_\alpha^\beta$ contains elements which are only added by $\QQ_\alpha$ (and hence $\FBa_\alpha^\beta \notin V[\PP_\alpha]$) whenever
$\sigma_\alpha^\beta = \rho\concat i$ with $\rho$ not pre-$T_\alpha$-minimal or $i > 0$
(because $a_\tau \in \FBa_\alpha^\beta$ or $\omega \setminus a_\tau \in \FBa_\alpha^\beta$ for some $\tau \in T_\alpha$).

\end{sloppypar}

If $\cf(|\rho_k|)$ is countable for all $k<m$, the filter $\Fsum_{k<m}\F_\alpha^{\beta_k}$ is countably generated, hence it follows by Lemma~\ref{lemm:countable_Canjar} that it is $\B$-Canjar.

In particular, for $\alpha=0$, the only pre-$T_\alpha$-minimal node is $\rho = \seqlangle \seqrangle$, hence this finishes the proof for $\alpha = \beta^* = 0$. So assume $\alpha>0$ for the rest of the proof of the base step.

If $\cf(\alpha)\leq\omega$ (and $\alpha > 0$), all pre-$T_\alpha$-minimal nodes $\rho$ have $\cf(|\rho|)=\omega$:

\begin{clai}\label{clai:proof_needs_sigma_centered}
Let $\rho$ be a pre-$T_\alpha$-minimal node and
$\cf(|\rho|)>\omega$. Then
\begin{enumerate}
\item $\cf(\alpha)>\omega$, and
\item there exists no $\eta<\alpha$ such that $\rho\restr \gamma \in V[\PP_\eta]$ for all $\gamma<|\rho|$.
\end{enumerate}
\end{clai}

\begin{proof}
Let us first show (2). Fix some $\eta<\alpha$. 
Since $\rho$ is pre-$T_\alpha$-minimal, 
$\rho \in V[\PP_\alpha] \setminus V[\PP_\eta]$, i.e.,  
$\rho$ is new (with respect to 
$\PP_\alpha/\PP_\eta$). 
By Corollary~\ref{coro:alles_sigma_centered}, $\PP_\alpha/\PP_\eta$ is $\sigma$-centered, hence  in particular $(\PP_\alpha/\PP_\eta)\times (\PP_\alpha/\PP_\eta)$ has the c.c.c.. 
Therefore, by 
Lemma~\ref{lemm:square_cc_fresh}, the new function $\rho$ has an initial segment which is 
not in $V[\PP_\eta]$ 
(because it is new with respect to 
$\PP_\alpha/\PP_\eta$).

Now let us show (1). Assume towards contradiction that $\cf(\alpha)\leq \omega$, and let $\seqlangle \alpha_n\with n\in \omega\seqrangle$ be increasing cofinal in~$\alpha$ (in case $\alpha$ is a successor, let $\alpha_n$ be its predecessor for every~$n$). For every $\gamma<|\rho|$, let $n\in \omega$ be such that $\rho\restr \gamma \in V[\PP_{\alpha_n}]$,
which is possible since $\rho$ is pre-$T_\alpha$-minimal. Since $\cf(|\rho|)>\omega$, there exists $n^*\in \omega$ such that $\rho\restr \gamma \in V[\PP_{\alpha_{n^*}}]$ for cofinally many $\gamma<|\rho|$ (and hence for all $\gamma<|\rho|$), contradicting~(2).
\end{proof}

So we can assume that
$\cf(\alpha)>\omega$.
We first argue that $\cf(|\rho|)>\omega$ for all pre-$T_\alpha$-minimal nodes~$\rho$.
Assume towards contradiction that $\cf(|\rho|)=\omega$ and $\rho$ is pre-$T_\alpha$-minimal. Let
$\seqlangle \gamma_n\with n\in \omega\seqrangle$
be increasing cofinal in $|\rho|$. For every $n<\omega$, let $\alpha_n<\alpha$ be such that $\rho\restr \gamma_n \in V[\PP_{\alpha_n}]$. Since $\cf(\alpha)>\omega$, there exists $\alpha'<\alpha$ with $\alpha_n<\alpha'$ for every $n$. There are no new countable sequences of elements of $V[\PP_{\alpha'}]$ in  $V[\PP_{\alpha}]$, because $V[\PP_{\alpha}]$ is a limit of uncountable cofinality of a c.c.c.\ iteration, hence there exists $\alpha''<\alpha$ such that $\seqlangle \rho\restr \gamma_n \with n\in \omega \seqrangle \in V[\PP_{\alpha''}]$. Hence also $\rho \in  V[\PP_{\alpha''}]$,
so it is not pre-$T_\alpha$-minimal,
a contradiction.

Now we will show that $\Fsum_{k<m} \F_\alpha^{\beta_k}$ is $\B$-Canjar in $V[\PP_\alpha]$, using the characterization from Theorem~\ref{theo:OsvaldoHrusakMartinez}.
 Let $\seqlangle X_n \with n\in \omega \seqrangle \in V[\PP_\alpha]$ be positive for $\Fsum_{k<m} \F_\alpha^{\beta_k}$.
We want to show that there exists $f\in \B$ such that $\bar{X}_f$ is positive for  $\Fsum_{k<m} \F_\alpha^{\beta_k}$. Since $\seqlangle X_n \with n\in \omega \seqrangle$ is
hereditarily
countable and $\cf(\alpha)>\omega$,
there exists $\eta<\alpha$ with $\seqlangle X_n \with n\in \omega \seqrangle\in V[\PP_\eta]$. Moreover, let $\eta$ be large enough such that for all $j<k<m$ 
with
$\rho_j\neq \rho_k$, there exists a successor $\delta<|\rho_j|, |\rho_k|$ such that $\rho_j\restr \delta \neq \rho_k\restr \delta$ and $a_{\rho_j\restr \delta}, a_{\rho_k\restr \delta} \in V[\PP_\eta]$.
For every $k<m$,
let $\gamma_k<|\rho_k|$ be
such that $a_{\rho_k\restr \gamma_k} \notin V[\PP_\eta]$. Such $\gamma_k$ exist, bexause the $\rho_k$ are pre-$T_\alpha$-minimal, using (2) from the above claim.
Clearly  $a_{\rho_k\restr \gamma_k}\in V[\PP_\alpha]$ for every $k<m$.
The filter $\Fsum_{k<m} \genlangle a_{\rho_k\restr \gamma_k} \genrangle$ is countably generated and hence $\B$-Canjar in $V[\PP_\alpha]$ (see Lemma~\ref{lemm:countable_Canjar}).
Note that  $\Fsum_{k<m} \genlangle a_{\rho_k\restr \gamma_k} \genrangle\subseteq \Fsum_{k<m}\F_\alpha^{\beta_k}$, hence $\seqlangle X_n \with n\in \omega\seqrangle$ is positive for $\Fsum_{k<m} \genlangle a_{\rho_k\restr \gamma_k} \genrangle$.
Therefore we can fix $f\in \B$ such that $\bar{X}_f$ is positive for $\Fsum_{k<m} \genlangle a_{\rho_k\restr \gamma_k} \genrangle$.
Note that $\bar{X}_f\in V[\PP_\eta]$.

We will use a genericity argument to show that $\bar{X}_f$ is positive for $\Fsum_{k<m}\F_\alpha^{\beta_k}$.
 It is enough to show that for all successors $\delta_k<|\rho_k|$, for all $l_k\in \omega$ there exists $s\in \bar{X}_f$ with $s\subseteq\Fsum_{k<m} (a_{\rho_k\restr \delta_k} \setminus l_k)$, because sets of this form are a basis for the filter.
If $\delta_k\leq\gamma_k$ for all $k$, this holds by the choice of~$f$.

We show by induction on $\eta\leq\eta' <\alpha$ that for all successors $\delta_k<|\rho_k|$ and all $l_k<\omega$, if all $a_{\rho_k\restr \delta_k} \in V[\PP_{\eta'}]$ then $V[\PP_{\eta'}]\models `` \exists s\in \bar{X}_f \; s\subseteq \Fsum_{k<m}(a_{\rho_k\restr \delta_k} \setminus l_k)$''.
Note that this holds for $\eta'=\eta$ by choice of $f$, and that at limit steps of the induction no new $a_{\rho_k\restr \delta_k}$ appear, so we only have to show it for successors.
Assume that it holds for $\eta'$ and show it for $\eta'+1$.

For every $k<m$, let $\delta_k<|\rho_k|$ with $a_{\rho_k\restr \delta_k}\in V[\PP_{\eta'+1}]$ and $l_k\in \omega$ be given. Let $p\in \QQ_{\eta'}$. We will show that there exists $q\leq p$ and $s\in \bar{X}_f$ such that $q\forces  s\subseteq \Fsum_{k<m}(a_{\rho_k\restr \delta_k} \setminus l_k)$.
Without loss of generality we can assume that $\rho_k\restr \delta_k \in \dom(p)$ for all $k<m$ with $\rho_k\restr \delta_k\in T_{\eta'}$, and that $p$ is a full condition.

For every $k<m$, define $\Sigma_k$.
If $\rho_k\restr \delta_k\in T_{\eta'}$, let
$$\Sigma_k:=\bigcup\{  \dom(f_{\rho_k\restr \gamma}^p) \cap T'_{\eta'} \with \gamma\leq \delta_k \land \rho_k\restr \gamma\in \dom(p)\}.$$
If $\rho_k\restr \delta_k\notin T_{\eta'}$, let $\Sigma_k:= \{ \rho_k\restr \delta_k\}$.
Let $\Sigma:= \bigcup_{k<m}\Sigma_k$.
For every $k<m$, let $\sigma_k$ be the longest initial segment of $\rho_k$ which belongs to~$\Sigma$ (if there exists one; let\footnote{We just have to choose any initial segment of $\rho_k$ which belongs to $T'_{\eta'}$ and make sure that $\sigma_k=\sigma_j$ if $\rho_k=\rho_j$. Alternatively, in such cases, we could replace $a_{\sigma_k}$ by $\omega$ below.} $\sigma_k:=\rho_k\restr 1$ otherwise).
Note that $\sigma_k=\sigma_j$ if $\rho_k=\rho_j$, and
that $a_{\sigma_k}\in V[\PP_{\eta'}]$ for every $k<m$.
Now let $N\in \omega$ be large enough such that
\begin{itemize}
\item $N\geq l_k$ for every $k<m$,
\item $N\geq|s_\sigma^p|$ for every $\sigma\in \dom(p)$,
\item $a_{\sigma_k}\setminus N \subseteq a_\tau$ for all $\tau\in \Sigma_k$, for all $k<m$.
\end{itemize}
By hypothesis, in $V[\PP_{\eta'}]$,
we can fix
$s\in \bar{X}_f$ with $s\subseteq   \Fsum_{k<m} (a_{\sigma_k} \setminus N)$.

To get $q$, extend $p$ as follows. For every $k<m$, for every $\gamma \leq \delta_k$ with $\rho_k\restr \gamma \in \dom(p)$, let
$$s_{\rho_k\restr \gamma}^q:={ s_{\rho_k\restr \gamma}^p} \concat ( \zero \restr [| s_{\rho_k\restr \gamma}^p|,N)) \concat (a_{\sigma_k}\restr [ N, \max(s)]).$$
Observe that, if $\rho_k\neq \rho_j$,
there is no $\tau\in\dom(p)$ with $\tau\unlhd \rho_k$ and $\tau\unlhd\rho_j$ (by choice of $\eta$ and since $\eta'\geq \eta$); so the above is well-defined, since $\sigma_k=\sigma_j$ if $\rho_k=\rho_j$.

Note that $\eta$ was chosen large enough such that for all $\gamma_k \leq \delta_k$ and $\gamma_j\leq \delta_j$, if $\rho_j\restr \gamma_j \neq \rho_k\restr \gamma_k$, then they are not in the same block; so, in particular, $\rho_j\restr \gamma_j \notin \dom(h_{\rho_k\restr \gamma_k}^p)$ and  $\rho_k\restr \gamma_k \notin\dom( h_{\rho_j\restr \gamma_j}^p)$.
Therefore,
the requirement (8) from Definition~\ref{defi:main_definition} is fulfilled. It is easy to see that the other requirements of Definition~\ref{defi:main_definition} are fulfilled as well, hence $q$ is a condition.

It is easy to check that $q$ forces
$s\subseteq \Fsum_{k<m} (a_{\rho_k\restr \delta_k} \setminus l_k)$, as desired.

\vspace{0.4em}
\textbf{Successor step}:
\vspace{0.4em}

Let us say that a filter $\F_\alpha^\beta$ (and
its filter base $\FBa_\alpha^\beta$) is \emph{new in
$V[\PP_\alpha*\QQ_\alpha^{<\beta^*}]$} if
$\FBa_\alpha^\beta \in V[\PP_\alpha*\QQ_\alpha^{<\beta^*}]$ and  $\FBa_\alpha^\beta \notin V[\PP_\alpha*\QQ_\alpha^{<\delta^*}]$
for all
$\delta^*<\beta^*$.

Now assume that we have shown (3)
for $\beta^*$; let us show
it
for $\beta^*+1$.

If $\FBa_\alpha^{\beta_k}\in V[\PP_\alpha*\QQ_\alpha^{<\beta^*}]$ for every $k<m$, then by induction hypothesis
$\left(\Fsum_{k<m}\F_\alpha^{\beta_k}\right) \fsum \F_\alpha^{\beta^*}$
is $\B$-Canjar in $ V[\PP_\alpha*\QQ_\alpha^{<\beta^*}]$, hence, by Lemma~\ref{lemm:sum_preserves_Canjar},
$\Fsum_{k<m}\F_\alpha^{\beta_k}$ is $\B$-Canjar in $ V[\PP_\alpha*\QQ_\alpha^{<\beta^*}*\Mathias{\F_\alpha^{\beta^*}}] = V[\PP_\alpha*\QQ_\alpha^{<\beta^*+1}]$.

It is easy to check that
there are
exactly
two new filters in $V[\PP_\alpha*\QQ_\alpha^{<\beta^*+1}]$: $\F_\alpha^\beta$, where $\beta$ is such that $\sigma_\alpha^\beta= {\sigma^{\beta^*}_\alpha} \concat 0$, and  $\F_\alpha^{\beta'}$, where $\beta'
= \beta^* + 1$
(i.e., $\sigma_\alpha^{\beta'}= \rho\concat( i+1)$
and
 $\sigma_\alpha^{\beta^*} =\rho\concat i$). Both $\F_\alpha^{\beta}$ and $\F_\alpha^{\beta'}$ are extensions of $\F_\alpha^{\beta^*}$
 by
 one new set.
  Therefore, 
  the filter $\Fsum_{k<m} \F_\alpha^{\beta_k}$ is an extension of
   $\Fsum_{k<m} \F_\alpha^{\tilde{\beta}_k}$ 
   by finitely many sets,
   where $\tilde{\beta}_k=\beta^*$ if $\beta_k = \beta$ or $\beta_k = \beta'$, and
   $\tilde{\beta}_k=\beta_k$ otherwise.
By the above,
$\Fsum_{k<m} \F_\alpha^{\tilde{\beta}_k}$
is
$\B$-Canjar in $V[\PP_\alpha*\QQ_\alpha^{<\beta^*+1}]$,
hence,
by Lemma~\ref{lemm:Canjar_countable_ext},
also $\Fsum_{k<m} \F_\alpha^{\beta_k}$
is
$\B$-Canjar in $V[\PP_\alpha*\QQ_\alpha^{<\beta^*+1}]$.

\vspace{0.4em}
\textbf{Limit step}:
\vspace{0.4em}

Now assume that $\beta^*$ is a limit, and that we have shown
(3) for all $\delta^*<\beta^*$;
let us show it for $\beta^*$. 
If for each $k<m$ there exists $\delta^*_k<\beta^*$ such that $\FBa_\alpha^{\beta_k}\in V[\PP_\alpha*\QQ_\alpha^{<\delta_k^*}]$, then there exists $\bar{\delta}^*<\beta^*$ such that $\FBa_\alpha^{\beta_k}\in V[\PP_\alpha*\QQ_\alpha^{<\bar{\delta}^*}]$ for all $k<m$.
By induction hypothesis, $\Fsum_{k<m} \F_\alpha^{\beta_k}$ is $\B$-Canjar in $ V[\PP_\alpha*\QQ_\alpha^{<\delta^*}]$ for every $\bar{\delta}^*\leq \delta^*<\beta^*$,
hence, by Lemma~\ref{lemm:limits_preserve_B_Canjar}, it is $\B$-Canjar in $ V[\PP_\alpha*\QQ_\alpha^{<\beta^*}]$.

Now we have to consider new filters.
There are two cases: either $\beta^*$ is such that $\sigma_\alpha^{\beta^*}$ has $0$ as its last entry, or such that it has a limit ordinal~$i$ as its last entry.

\vspace{0.4em}
\textbf{First case}:
$\sigma_\alpha^{\beta^*}={\rho}\concat 0$ for some $\rho$
\vspace{0.4em}

Let us first argue
that
there are no new filters unless $|\rho|$ is a limit and $\sigma_\alpha^{\beta^*}$ is the first node of its level in the enumeration (i.e., $|\sigma_\alpha^\delta| < |\rho|$ for each $\delta < \beta^*$).
If $\sigma_\alpha^{\beta^*}$ is not the first node of the level in the enumeration,
then there are no new filters in $V[\PP_\alpha*\QQ_\alpha^{<\beta^*}]$: If $\FBa_\alpha^\beta\in V[\PP_\alpha*\QQ_\alpha^{<\beta^*}]$, then there exists $\delta^*<\beta^*$ such that  $\FBa_\alpha^\beta\in V[\PP_\alpha*\QQ_\alpha^{<\delta^*}]$, because
$\FBa_\alpha^\beta$ contains
-- from the sets of this level --
only sets
within
one block and only boundedly many sets
within
this block.
Similarly, if $|\rho|$ is a successor (and $\sigma_\alpha^{\beta^*}$ is the first node of its level in the enumeration), there are no new filters in $V[\PP_\alpha*\QQ_\alpha^{<\beta^*}]$:
If $\FBa_\alpha^\beta\in V[\PP_\alpha*\QQ_\alpha^{<\beta^*}]$, then there exists $\delta^*<\beta^*$ such that  $\FBa_\alpha^\beta\in V[\PP_\alpha*\QQ_\alpha^{<\delta^*}]$, because
$\FBa_\alpha^\beta$ contains -- from the sets of level $|\rho|$ -- only boundedly many sets.

So we assume from now on that $|\rho|$ is a limit and $\sigma_\alpha^{\beta^*}$ is the first node of its level in the enumeration.
In this case, there are many new filters $\F_\alpha^\beta$ in $V[\PP_\alpha*\QQ_\alpha^{<\beta^*}]$;
in fact, it is easy to check that $\F_\alpha^\beta$ is new if and only if the following holds:
$\sigma_\alpha^\beta= \bar{\rho}\concat 0$ for some $\bar{\rho}$ with $|\bar{\rho}|=|\rho|$ and $\bar{\rho}$ not pre-$T_\alpha$-minimal.
Observe that $\FBa_\alpha^\beta =
\{ a_{\bar{\rho} \restr \gamma} \with \gamma < |\rho| \}$.
Let $\beta_0,\dots,\beta_{m-1}$
be such that
$\FBa_\alpha^{\beta_k}\in V[\PP_\alpha*\QQ_\alpha^{<\beta^*}]$
for each $k < m$.
We want to show that $\Fsum_{k<m}\F_\alpha^{\beta_k}$ is $\B$-Canjar in $V[\PP_\alpha*\QQ_\alpha^{<\beta^*}]$.

In case $\cf(|\rho|)=\omega$, we can use  Lemma~\ref{lemm:Canjar_oplus_countably} and the remark afterwards to finish the proof:  $\Fsum_{k<m}\F_\alpha^{\beta_k}$ is a sum of filters,
in which
the new filters are countably generated, whereas the sum of the filters which are not new is $\B$-Canjar (see the first paragraph of the limit step).

So let us assume from now on that $\cf(|\rho|)  > \om$.
Let $\neu \subs m$ be the set of $k < m$ such that $\F_\alpha^{\beta_k}$ is a new filter, and $\alt = m \setminus \neu$ be the set of $k < m$ such that $\F_\alpha^{\beta_k}$ is not new.
For each $k\in\neu$, we can
fix $\rho_k$ such that $\sigma_\alpha^{\beta_k} = \rho_k \concat 0$ (with $|\rho_k| = |\rho|$ and $\rho_k$ not pre-$T_\alpha$-minimal).

Let $\seqlangle X_n \with n\in \omega\seqrangle \in V[\PP_\alpha*\QQ_\alpha^{<\beta^*}]$
be positive for $\Fsum_{k<m}\F_\alpha^{\beta_k}$.
Since $\seqlangle X_n \with n\in \omega \seqrangle$ is hereditarily countable and $\FKWW_\alpha^{<\beta^*}$ has the c.c.c., there exists a hereditarily countable name~$\dot{X}$ for  $\seqlangle X_n \with n\in \omega \seqrangle$. Since the conditions in $\FKWW_\alpha^{<\beta^*}$ have finite
domain, the
 union of all the domains of conditions which occur in the name $\dot{X}$ is countable.
Let $\gamma<|\rho|$ be a successor ordinal large enough such that the following hold:
\begin{itemize}
\item $\seqlangle X_n \with n\in \omega \seqrangle \in V[\PP_\alpha*\QQ_\alpha^{\lambda^{\leq \gamma}}]$ (this is possible due to $\cf(|\rho|)>\omega$).

\item For all $j,k \in \neu$, if $\rho_j \neq \rho_k$, then $\rho_j$ and $\rho_k$ split before $\gamma$.

\item For all $k \in \alt$, either $|\sigma_\alpha^{\beta_k}|<\gamma$ or\footnotemark{} $|\sigma_\alpha^{\beta_k}|>|\rho|$.

\footnotetext{Note that $|\sigma_\alpha^{\beta_k}|>|\rho|$ is only possible if
$\sigma_\alpha^{\beta_k}=
\tilde{\rho}\concat 0$ for a pre-$T_\alpha$-minimal node $\tilde{\rho}$.}

\item $a_{\rho_k\restr \gamma}\notin V[\PP_\alpha]$ for all $k\in \neu$, i.e., $\rho_k\restr \gamma \in T_\alpha$ (which is possible because $\rho_k$ is not pre-$T_\alpha$-minimal).
\end{itemize}
For every $k<m$ with $k \in \alt$, we have
$\FBa_\alpha^{\beta_k} \in V[\PP_\alpha*\QQ_\alpha^{\lambda^{\leq \gamma}}]$
by choice of $\gamma$;
let $\tilde{\FBa}_\alpha^{\beta_k} := \FBa_\alpha^{\beta_k}$.
For $k \in \neu$, let $\tilde{\FBa}_\alpha^{\beta_k}:=
\{a_{\rho_k\restr \gamma}\}$.

\begin{sloppypar}
As above, we can use  Lemma~\ref{lemm:Canjar_oplus_countably} and the remark afterwards to show that
$\Fsum_{k<m} \genlangle \tilde{\FBa}_\alpha^{\beta_k}\genrangle$ is $\B$-Canjar in $V[\PP_\alpha*\QQ_\alpha^{<\beta^*}]$.
Indeed, $\Fsum_{k \in \alt} \genlangle \tilde{\FBa}_\alpha^{\beta_k}\genrangle$ is $\B$-Canjar in $V[\PP_\alpha*\QQ_\alpha^{<\beta^*}]$ by the first paragraph of the limit step, and
for each $k \in \neu$,
$\genlangle \tilde{\FBa}_\alpha^{\beta_k}\genrangle$ is countably generated.
Moreover, $\Fsum_{k<m}\genlangle \tilde{\FBa}_\alpha^{\beta_k}\genrangle \subseteq \Fsum_{k<m}\F_\alpha^{\beta_k}$, hence $\seqlangle X_n \with n \in \omega\seqrangle$ is positive for $\Fsum_{k<m}\genlangle \tilde{\FBa}_\alpha^{\beta_k}\genrangle$.
So we can fix $f\in\B$ such that $\bar{X}_f$ is positive for  $\Fsum_{k<m}\genlangle \tilde{\FBa}_\alpha^{\beta_k}\genrangle$.
Since $\seqlangle X_n \with n \in \omega \seqrangle$ and $\Fsum_{k<m}\genlangle \tilde{\FBa}_\alpha^{\beta_k}\genrangle$ are in $V[\PP_\alpha*\QQ_\alpha^{\lambda^{\leq \gamma}}]$, this holds in $V[\PP_\alpha*\QQ_\alpha^{\lambda^{\leq \gamma}}]$.
\end{sloppypar}

Now
we use a genericity argument in $\QQ_\alpha^{<\beta^*}/\QQ_\alpha^{\lambda^{\leq \gamma}}$ to show that $\bar{X}_f$ is positive for~$\Fsum_{k<m}\F_\alpha^{\beta_k}$.
We have to show that for all $\seqlangle A_k\with k<m\seqrangle$ with $A_k\in \F_\alpha^{\beta_k}$, there exists $s\in \bar{X}_f$ with $s\subseteq \Fsum_{k<m}A_k$.
For $k\in \neu$, we can assume that $A_k=a_{\rho_k\restr \delta_k}\setminus l_k$ with $\gamma<\delta_k<|\rho|$ and $l_k\in \omega$, because
these sets form filter bases (with respect to upwards closure).
For $k\in \alt$, let $B_k:= A_k$, and for $k\in\neu$ (in this case $|\sigma_\alpha^{\beta_k}|=|\rho|+1 >\gamma$), let $B_k:= a_{\rho_k\restr \gamma}$.
By the choice of $f$,
there exists,
for all $N \in \omega$,
an $s\in \bar{X}_f$ with $s\subseteq\Fsum_{k<m}(B_k\setminus N) $.

Let $p\in \QQ_\alpha^{<\beta^*}/\QQ_\alpha^{\lambda^{\leq \gamma}}$. Without loss of generality we can assume that $\rho_k\restr \delta_k\in \dom(p)$ if  $k\in \neu$.
For every $k \in \neu$, define
$$\Sigma_k:=\bigcup\{  \dom(f_{\rho_k\restr \delta}^p) \cap \lambda^{\leq \gamma} \with \delta\leq \delta_k \land \rho_k\restr \delta\in \dom(p)\}.$$
Now let $N\in \omega$ be large enough such that
\begin{itemize}
\item $N\geq l_k$ for every $k\in \neu$,
\item $N\geq|s_\sigma^p|$ for every $\sigma\in \dom(p)$,
\item $a_{\rho_k\restr \gamma}\setminus N \subseteq a_\tau$ for all $\tau\in \Sigma_k$, for all $k\in \neu$.
\end{itemize}
By the above,
we can fix
$s\in \bar{X}_f$ with
 $s\subseteq   \Fsum_{k<m}(B_k\setminus N)$.

To get $q$, extend $p$ as follows.
For every $k \in \neu$,
for every $\delta \leq \delta_k$ with
$\rho_k \restr \delta \in \dom(p)$,
let
$$s_{\rho_k\restr \delta}^q:={ s_{\rho_k\restr \delta}^p} \concat ( \zero \restr [| s_{\rho_k\restr \delta}^p|,N)) \concat (a_{\rho_k\restr \gamma}\restr [ N, \max(s)]).$$

Note that $\gamma$ was chosen large enough
so that for $j,k\in \neu$, if $\rho_j\neq \rho_k$, then they split before $\gamma$,
therefore for $\gamma<\delta<|\rho|$ either $\rho_k\restr \delta = \rho_j\restr \delta$ or they are not in the same block.  In particular $\rho_j\restr \delta \notin \dom(h_{\rho_k\restr \delta}^p)$ and  $\rho_k\restr \delta \notin \dom(h_{\rho_j\restr \delta}^p)$. So the requirement (8) from Definition~\ref{defi:main_definition} is fulfilled. It is easy to see that the other requirements of Definition~\ref{defi:main_definition} are fulfilled as well, hence $q$ is a condition.

It is easy to check that $q$ forces
$s\subseteq \Fsum_{k<m} A_k$, as desired.

\vspace{0.4em}
\textbf{Second case}: $\sigma_\alpha^{\beta^*}=\rho\concat i$ with $i>0$ 
limit
\vspace{0.4em}

In this case, $\F_\alpha^{\beta^*}$ is the only new filter in $V[\PP_\alpha*\QQ_\alpha^{<\beta^*}]$.
Indeed,
for all other $\beta$ either $\FBa_\alpha^\beta$ appeared in an earlier step already or it is not in this model, because for $\beta\neq \beta^*$ the filter base $\FBa_\alpha^\beta$
either
uses only boundedly many elements of
$\{ a_{\rho\concat j}\with j<i\}$ or it uses $a_{\rho\concat i}$ as well.
Let $\beta_0,\dots,\beta_{m-1}$
be such that
$\FBa_\alpha^{\beta_k}\in V[\PP_\alpha*\QQ_\alpha^{<\beta^*}]$
for each $k < m$.
We want to show that $\Fsum_{k<m}\F_\alpha^{\beta_k}$ is $\B$-Canjar in $V[\PP_\alpha*\QQ_\alpha^{<\beta^*}]$.

Let $\seqlangle X_n \with n\in \omega \seqrangle \in V[\PP_\alpha*\QQ_\alpha^{<\beta^*}]$ be positive for~$\Fsum_{k<m}\F_\alpha^{\beta_k}$.
Since $\seqlangle X_n \with n\in \omega \seqrangle$ is hereditarily countable and $\FKWW_\alpha^{<\beta^*}$ has the c.c.c., there exists a hereditarily countable name $\dot{X}$ for  $\seqlangle X_n \with n\in \omega \seqrangle$. Since the conditions in $\FKWW_\alpha^{<\beta^*}$ have finite
domain, the
 union~$D$ of all the domains of conditions which occur in the name $\dot{X}$ is countable.
Let $\beta^{**}<\beta^*$ be such that $\rho\concat 0=\sigma_\alpha^{\beta^{**}}$.
Let $\bar{D}:= \{ \tau \in D \with \exists j<i \;(\tau=\rho\concat j)\}$.
Let $C := \{\sigma_\alpha^{\nu}\with \nu<\beta^{**}\}  \cup \bar{D}$.
Note that
$$
C=\{ \sigma_\alpha^{\nu}\with \nu<\beta^{**}\land |\sigma_\alpha^\nu|\leq|\rho|\} \cup \bar{C}
$$
with
$
\bar{C} = \{ \sigma_\alpha^{\nu}\with \nu<\beta^{**}\land |\sigma_\alpha^\nu|=|\rho|+1\} \cup
\bar{D}
$.
Since
$\{ \sigma_\alpha^{\nu}\with \nu<\beta^{**}\land |\sigma_\alpha^\nu|\leq|\rho|\} =
\lambda^{\leq |\rho|} \cap T_\alpha$
is $\alpha$-$\leftup$-closed, $C$ 
is $\alpha$-eligible, 
so, 
by Lemma~\ref{lemm:C_complete_subforcing}, 
$\FKWW_\alpha^C$ is a complete subforcing of
$\FKWW_\alpha$, and it is a subset of $\QQ_\alpha^{<\beta^*}$; recall that $\FKWW_\alpha^{<\beta^*}$ is also a complete subforcing of~$\FKWW_\alpha$, so, by Lemma~\ref{lemm:compl_compl_is_compl}, $\FKWW_\alpha^C$ is complete in~$\FKWW_\alpha^{<\beta^*}$.

Observe that for all $k<m$ either $ \FBa_\alpha^{\beta_k}\cap V[\PP_\alpha*\QQ_\alpha^{<\beta^{**}}]=\FBa_\alpha^{\beta_k}$ or $ \FBa_\alpha^{\beta_k}\cap V[\PP_\alpha*\QQ_\alpha^{<\beta^{**}}]=\FBa_\alpha^{\beta^{**}}$.
In particular, for every $k<m$ there exists $\beta'_k$ such that $ \FBa_\alpha^{\beta_k}\cap V[\PP_\alpha*\QQ_\alpha^{<\beta^{**}}]=\FBa_\alpha^{\beta'_k}$ and
$\FBa_\alpha^{\beta'_k}\in V[\PP_\alpha*\QQ_\alpha^{<\beta^{**}}]$.
 Hence $\Fsum_{k<m}\genlangle \FBa_\alpha^{\beta_k}\cap V[\PP_\alpha*\QQ_\alpha^{<\beta^{**}}] \genrangle$ is $\B$-Canjar in $V[\PP_\alpha*\QQ_\alpha^{<\beta^{*}}]$.
 For every $k<m$, $ \FBa_\alpha^{\beta_k}\cap V[\PP_\alpha*\QQ_\alpha^{C}]$ is the set $ \FBa_\alpha^{\beta_k}\cap V[\PP_\alpha*\QQ_\alpha^{<\beta^{**}}]$ together with countably many new sets (some of the sets $\omega\setminus a_\tau$ with $\tau\in \bar{D}$), therefore  $\Fsum_{k<m}\genlangle \FBa_\alpha^{\beta_k}\cap V[\PP_\alpha*\QQ_\alpha^{C}] \genrangle$ is a filter generated by  $\Fsum_{k<m}\genlangle \FBa_\alpha^{\beta_k}\cap V[\PP_\alpha*\QQ_\alpha^{<\beta^{**}}]\genrangle$ together with countably many new sets. Hence, by Lemma~\ref{lemm:Canjar_countable_ext}, $\Fsum_{k<m}\genlangle \FBa_\alpha^{\beta_k}\cap V[\PP_\alpha*\QQ_\alpha^{C}]\genrangle$ is $\B$-Canjar in  $V[\PP_\alpha*\QQ_\alpha^{<\beta^*}]$.
Since $\Fsum_{k<m} \genlangle \FBa_\alpha^{\beta_k}\cap V[\PP_\alpha*\QQ_\alpha^{C}]\genrangle\subseteq \Fsum_{k<m} \F_\alpha^{\beta_k}$, the sets $\seqlangle X_n \with n\in \omega \seqrangle$ are also positive for $\Fsum_{k<m} \genlangle\FBa_\alpha^{\beta_k}\cap V[\PP_\alpha*\QQ_\alpha^{C}]\genrangle$.
So we can fix $f\in\B$ such that $\bar{X}_f$ is positive for
$\Fsum_{k<m}\genlangle \FBa_\alpha^{\beta_k}\cap V[\PP_\alpha*\QQ_\alpha^{C}]\genrangle$.
Since $\seqlangle X_n \with n \in \omega \seqrangle$ and $\Fsum_{k<m}\genlangle\FBa_\alpha^{\beta_k}\cap V[\PP_\alpha*\QQ_\alpha^{C}]\genrangle$ are in $V[\PP_\alpha*\QQ_\alpha^{C}]$, this holds in~$V[\PP_\alpha*\QQ_\alpha^{C}]$.

Now
we use a genericity argument in $\QQ_\alpha^{<\beta^*}/\QQ_\alpha^C$ to show that $\bar{X}_f$ is positive for~$\Fsum_{k<m}\F_\alpha^{\beta_k}$.
We have to show that for all $\seqlangle A_k\with k<m\seqrangle$ with $A_k\in \F_\alpha^{\beta_k}$ there exists $s\in \bar{X}_f$ with~$s\subseteq \Fsum_{k<m}A_k$.
For easier notation, assume that there exists $m'\leq m$ such that $A_k\in V[\PP_\alpha*\QQ_\alpha^C]$
if and only if $k<m'$.

For $m'\leq k<m$ there exists $B_k\in\genlangle \FBa_\alpha^{\beta_k}\cap V[\PP_\alpha*\QQ_\alpha^C]\genrangle$, $\ell_k\in \omega$ and $\seqlangle j_r^k \with r< \ell_k\seqrangle \subs i$
such that $A_k= B_k \cap \bigcap_{r<\ell_k} (\omega \setminus a_{\rho \concat j_r^k})$.
So $\Fsum_{k<m}A_k =\Fsum_{k<m'}A_k\fsum \Fsum_{m'\leq k <m} (B_k \cap \bigcap_{r<\ell_k} (\omega \setminus a_{\rho \concat j_r^k})).$
Let $p\in \QQ_\alpha^{<\beta^*}/\QQ_\alpha^C $. Without loss of generality assume that $\rho \concat j_r^k \in \dom(p)$ if $\rho \concat j_r^k \notin C$.
Let $N>|s_\tau^p|$ for every $\tau\in \dom(p)$.
We can fix $s\in \bar{X}_f$ with $s\subseteq  \Fsum_{k<m'}A_k\fsum \Fsum_{m'\leq k <m} (B_k\setminus N)$.
To get $q$, extend each $s_{\rho\concat j_r^k}^p$ with $0$'s to have length $\max(s) + 1$.

It is easy to
check
that $q$ is a condition, and that $q$
forces
$s \subseteq \Fsum_{k<m}A_k$, as desired.
\end{proof}

By the above Lemma~\ref{lemm:main_Canjar_sum_induction}(1),
$\B$ is unbounded in
$V[\PP_\alpha]$ for every $\alpha < \lambda$,
so by applying
Theorem~\ref{theo:preservation_unboundedness_JudahShelah}
once again, it follows that $\B$ is unbounded in our final model~$V[\PP_\lambda]$.
Since $|\B| = \om_1$, the bounding number~$\bfrak$ is $\om_1$ in our final model, and 
therefore also $\h$ is $\om_1$, 
as desired.

This concludes the proof of Main Theorem~\ref{maintheo:maintheo_general}.

\section{Further discussion and questions}

In this section, we 
discuss the structure of distributivity matrices 
as well as 
a notion of distributivity spectrum. 
For 
basic 
definitions and facts, see Section~\ref{sec:preliminaries}.

\subsection{Branches through distributivity matrices}\label{sec:Dordal_branches}

For the nature of a maximal branch 
 through a distributivity matrix,  
 there are two possibilities: either it is cofinal 
 or not.
It is straightforward to check that 
every maximal branch 
which is not cofinal is a tower. 
We call a distributivity matrix 
normal if  
no single element of~$[\om]^\om$ intersects it. 
Recall from Section~\ref{sec:preliminaries} that 
whenever there 
is a distributivity matrix 
of height~$\lambda$, 
then there is also a normal distributivity matrix 
of height~$\lambda$. 
It is easy to see that 
a distributivity matrix is normal if and only if all its maximal branches are towers.

In case $\tfrak = \h$ (so in particular under~$\h = \om_1$) 
there are no towers of length strictly less than~$\h$, hence
all maximal branches of a distributivity matrix of height~$\h$ are cofinal.

On the other hand, it is possible to have
a distributivity matrix of height~$\h$
which has no cofinal branches.
In fact,
it was shown by Dow
that this is the case in the Mathias model (see~\cite[Lemma~2.17]{Dow_tree_pi_bases}):

\begin{theo} Assume CH. 
In the extension by the 
countable support
iteration
of Mathias forcing
of length~$\omega_2$, 
there exists a 
distributivity matrix 
of
height~$\h$
without cofinal branches\footnote{In fact, there is even a base matrix of this kind.} 
(and $\om_1 = \tfrak <  \h = \cc = \om_2$).
\end{theo}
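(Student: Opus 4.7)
The plan is to work in the Mathias model $W$, i.e., the extension of a CH ground model $V$ by the countable support iteration of Mathias forcing of length $\omega_2$. First I would record the cardinal arithmetic in $W$: standard arguments give $\cc = \omega_2$, and $\h = \omega_2$ because the iteration can be arranged to diagonalize every mad family appearing in an intermediate model, so no family of fewer than $\omega_2$ many mad families is without a common refinement. The equality $\tfrak = \omega_1$ is the more delicate part, relying on the well-known preservation argument that a tower of length $\omega_1$ present in an early stage survives as a tower throughout the iteration.

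With $\h = \omega_2$ in hand, a refining system $\{A_\xi \with \xi < \omega_2\}$ of mad families can be built by transfinite recursion: at successor steps, take any mad refinement of $A_\xi$; at limit steps $\xi$, use $|\xi| < \h$ to obtain a common refinement of $\{A_{\xi'} \with \xi' < \xi\}$. What remains is to arrange the construction so that the resulting matrix admits no cofinal branch. Crucially, ruling out cofinal branches automatically verifies condition~(3) of Definition~\ref{defi:com}: if $B$ were a common refinement, then for any $b \in B$ the unique elements $a_\xi \in A_\xi$ with $b \subseteq^* a_\xi$ would form a cofinal branch. So it suffices to guarantee that every potential cofinal branch $\seqlangle a_\xi \with \xi < \omega_2 \seqrangle$ gets blocked at some intermediate level of cofinality $\omega_1$, i.e., its initial segment of length $\omega_1$ is already a tower.

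The main obstacle is the bookkeeping that simultaneously kills all potential branches. By the $\omega_2$-c.c.\ of the iteration, any cofinal branch in $W$ already lives in some intermediate model $V[G_\alpha]$ with $\alpha < \omega_2$, so the potential branches can be enumerated along~$\omega_2$. For each such potential branch, one exploits the freedom in the construction at appropriate successor stages to align its sequence of length $\omega_1$ (modulo finite) with a fixed tower of length $\omega_1$ from an earlier intermediate model, whose existence is guaranteed by $\tfrak = \omega_1$ there. The verification that later Mathias reals do not add pseudo-intersections to such specifically chosen towers uses the preservation properties of Mathias forcing, and coordinating these requirements across all potential branches while maintaining the refining mad structure is the heart of Dow's argument in \cite{Dow_tree_pi_bases}.
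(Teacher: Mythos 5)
The paper does not actually prove this statement: it is quoted from Dow, with the citation \cite[Lemma~2.17]{Dow_tree_pi_bases} standing in for the proof, so there is no argument in the paper to match yours against. Judged on its own terms, your proposal establishes only the easy reductions. The observation that a common refinement would yield a cofinal branch (so ``no cofinal branches'' gives condition~(3) of Definition~\ref{defi:com}) is correct, and the values $\cc=\h=\om_2$, $\tfrak=\om_1$ are standard facts about the Mathias model. But the core of the theorem --- constructing a refining system of mad families of height $\om_2$ in which \emph{every} maximal branch is a tower of length of cofinality $\om_1$ --- is exactly the part you defer back to Dow, so the proposal is not a proof.

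Moreover, the mechanism you sketch for that core has concrete problems. First, the $\om_2$-c.c.\ does not place a cofinal branch (an $\om_2$-sequence of reals) into an intermediate model $V[G_\alpha]$; only its initial segments of length $\leq\om_1$ are captured at bounded stages, and the collection of potential branches can have size $2^{\om_2}$, so ``enumerate the potential branches along $\om_2$ and kill them one by one'' is unjustified as stated. Second, you cannot ``align'' a given branch of length $\om_1$ with a prescribed tower at later successor stages: whether such a branch is a tower is already determined by the sets chosen on earlier levels, and if a branch through levels ${<}\delta$ has a pseudo-intersection $b$, then \emph{any} element of the mad refinement $A_\delta$ meeting $b$ automatically lies below that branch, so the branch extends no matter what you do at level $\delta$. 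Hence the only way to block a thread is to have arranged in advance that it is a tower, while simultaneously not all threads below a level can be towers (each element of $A_\delta$ is itself a pseudo-intersection of the thread above it). Balancing these two demands --- killing every potential cofinal thread while keeping enough threads alive to continue the matrix, and (if the construction is carried out along the iteration) ensuring the chosen $\om_1$-towers remain towers in the final model, a preservation issue the paper itself treats as delicate in Sections~\ref{sec:B_Canjar_filters}--\ref{section:h_b_om_1} --- is precisely the content of Dow's lemma, and your proposal supplies no argument for it.
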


We do not know whether there exists a 
normal\footnote{Here and in similar cases, 
it is necessary to demand that the distributivity matrix is normal, due to the fact that there are always trivial examples of distributivity matrices with constant cofinal branches.}
distributivity matrix of height $\h$ \emph{with} cofinal branches in the Mathias model; 
this would imply that $\h = \omega_2\in \tomspec$. We also do not know whether $\omega_2\in \tomspec$ holds in the Mathias model. 

 It is actually consistent that
\emph{no} 
normal 
distributivity matrix of height~$\h$
has cofinal branches.
This
was
proved by Dordal
by constructing
a model in which $\h$ does not belong to the tower spectrum 
(see~\cite{Dordal_model} or\footnotemark~
\cite[Corollary~2.6]{Dordal_tower_spectrum}):

\footnotetext{In fact, 
\cite[Corollary~2.6]{Dordal_tower_spectrum}) 
also works for getting 
$\h = \cc$ larger than~$\om_2$,
and for certain tower spectra which are more complicated than~$\{\om_1\}$.
}

\begin{theo}\label{theo:dordal}
It is consistent with ZFC that $\tomspec = \{ \om_1 \}$ and
$\h = \om_2 = \cc$.
\end{theo}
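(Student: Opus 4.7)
The plan follows Dordal's construction from \cite{Dordal_model}. Start with a ground model $V_0 \models \ZFC + \CH$ and perform a countable support iteration $\{\PP_\alpha, \dot{\QQ}_\alpha : \alpha < \omega_2\}$ of $\sigma$-closed forcings. Since a countable support iteration of $\sigma$-closed forcings remains $\sigma$-closed, $\omega_1$ is preserved, and a standard nice-name counting argument using CH yields the $\omega_2$-cc, preserving $\omega_2$ and making $\cc = \omega_2$ in the final model $W = V_0^{\PP_{\omega_2}}$. Each iterand $\dot{\QQ}_\alpha$ is chosen, via a bookkeeping enumerating all potential $\omega_1$-sized refining systems of mad families appearing in intermediate extensions, to generically add a mad family refining the next pending such system.

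The bookkeeping ensures that in $W$ every $\omega_1$-sized refining system of mad families admits a common refinement, so $\h \geq \omega_2$; together with $\cc = \omega_2$ we obtain $\h = \omega_2$. To get $\omega_1 \in \tomspec$, fix a specific $\omega_1$-tower in $V_0$ (which exists by CH) and arrange the bookkeeping so that this tower is never assigned a pseudo-intersection; the $\sigma$-closure of the iteration prevents any ``accidental'' pseudo-intersection from appearing, so $\tfrak = \omega_1$ in $W$.

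The main obstacle is showing $\omega_2 \notin \tomspec$. Given a putative tower $\langle a_\xi : \xi < \omega_2\rangle \in W$, invoke the $\omega_2$-cc to observe that each initial segment of length $\gamma$ with $\cf(\gamma) = \omega_1$ lies in some intermediate extension $V_0^{\PP_{\beta_\gamma}}$ with $\beta_\gamma < \omega_2$. The bookkeeping is arranged so that this $\omega_1$-initial segment is eventually included in some refining system to which a common refinement is added at a later stage; that common refinement contains an element almost-contained in every set of the segment, supplying it with a pseudo-intersection in $W$. A diagonal argument along a club of $\gamma < \omega_2$ of cofinality $\omega_1$ then glues these partial pseudo-intersections into a pseudo-intersection of the whole $\omega_2$-sequence, contradicting the tower assumption. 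Verifying that the two bookkeepings are compatible -- so that the same iteration simultaneously makes $\h$ large, prevents $\omega_2$-towers, and preserves a chosen $\omega_1$-tower -- is the delicate technical core of the construction, and is precisely what is carried out in \cite{Dordal_model} and \cite{Dordal_tower_spectrum}.
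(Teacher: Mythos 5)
There are two genuine gaps, and the first is fatal to the whole framework. A countable support iteration of $\sigma$-closed forcings is itself $\sigma$-closed, hence adds no new reals at all: in your final model $W$ you would have $\cc = \om_1$ (and trivially $\h = \om_1$), not $\h = \cc = \om_2$. The same $\sigma$-closedness you invoke to protect the chosen $\om_1$-tower also makes the iterands useless for raising $\h$: a common refinement of an $\om_1$-sized refining system whose branches are towers must consist of \emph{new} pseudo-intersections of those branches, and a forcing that adds no new subsets of $\om$ cannot produce them. So the iterands in any construction of this kind (including Dordal's actual one, which the paper only cites rather than proves) must add reals, and the burden of the proof is then a preservation theorem tailored to those iterands; it cannot be discharged by appealing to $\sigma$-closure.

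The second gap is the step ruling out $\om_2$-towers. If $\seqlangle a_\xi \with \xi < \om_2 \seqrangle$ is $\subseteq^*$-decreasing, then every proper initial segment automatically has a pseudo-intersection, namely any later term $a_\gamma$; so arranging by bookkeeping that $\om_1$-initial segments acquire pseudo-intersections has no content. The entire content is in your final ``diagonal argument along a club'' gluing the partial pseudo-intersections into one for the whole sequence, and no such gluing exists in general: if it did, no tower of any uncountable limit length could exist (every $\om_1$-tower has all countable initial segments with pseudo-intersections), contradicting the fact that $\tomspec \neq \emptyset$. What Dordal actually proves is a preservation/reflection property of his specific iteration implying that every $\subseteq^*$-decreasing $\om_2$-sequence in the extension fails to be a tower (together with a separate argument that some $\om_1$-tower survives, which is in tension with making $\h = \om_2$ and is exactly the delicate point your sketch defers). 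As written, neither of these two essential steps is supplied by your proposal.
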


Let us now discuss distributivity matrices of regular height strictly above~$\h$. 
Recall that $\om_1 = \tfrak = \h$ 
holds true
in the model of 
Main Theorem~\ref{maintheo:maintheo_general}; in particular, 
there are distributivity matrices of height~$\om_1$ (all whose maximal branches are cofinal). 
All maximal branches 
through the generic distributivity matrix of height~$\lambda > \om_1$ are cofinal, 
because the forcing construction is based on the tree~$\lambda^{<\lambda}$. 
Moreover, as shown in Section~\ref{subsection:branches}, 
all these maximal branches are actually towers (i.e., the matrix is normal). 
In particular, 
$\lambda$ belongs to $\tomspec$.

In the Cohen model, the situation is 
different.
Again, 
$\om_1 = \tfrak = \h$ 
holds true, 
so 
there are distributivity matrices of height~$\om_1$ 
(all whose maximal branches are cofinal). 
We do not know the following:

\begin{ques}
Is there a distributivity matrix of
regular
height larger
than~$\h$ in the Cohen model?
\end{ques}

In any case,
there is a crucial difference to
the model of our main theorem: in the Cohen model, 
there is no normal distributivity matrix of regular height~$\lambda>\omega_1$ with cofinal branches,  
due to the following 
well-known 
fact.

\begin{prop}
Assume CH, and let $\mu$ be a cardinal with $\cf(\mu) >\om$. 
Then $\tomspec = \{ \om_1 \}$ holds true\footnotemark{} 
in the extension by~$\CC_\mu$ (where $\CC_\mu$ is the forcing for adding $\mu$ many 
Cohen reals).
\end{prop}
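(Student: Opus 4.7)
First, $\omega_1 \in \tomspec$ follows from standard facts: $V \models \CH$ gives $\bfrak^V = \om_1$, and $\CC_\mu$ adds no dominating reals, so $\bfrak = \om_1$ in $V[\CC_\mu]$; combining with $\om_1 \leq \tfrak \leq \bfrak$ from~\eqref{eq:ZFC_inequ} forces $\tfrak = \om_1$, yielding a tower of length~$\om_1$. For the reverse inclusion, the plan is to argue by contradiction: suppose $T = \langle a_\xi \with \xi < \lambda \rangle$ is a tower in $V[G]$ (with $G$ generic for~$\CC_\mu$) of regular length $\lambda > \om_1$. By ccc, fix nice names $\dot a_\xi$ with countable supports $A_\xi \subs \mu$, and apply the $\Delta$-system lemma to obtain $D \subs \lambda$ of size $\lambda$ and a countable root $R$ with $A_\xi \cap A_\eta = R$ for all distinct $\xi, \eta \in D$. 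Let $M := V[G \restr R]$; since $R$ is countable and $V \models \CH$, $M$ still models $\CH$, so $\cc^M = \om_1$.

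The central step will be a mutual genericity argument over~$M$. For $\xi \in D$, view $\dot a_\xi$ in $M$ as a $\CC_{A_\xi \setminus R}$-name; the factors $\CC_{A_\xi \setminus R}$ for distinct $\xi \in D$ are then mutually generic over~$M$, since their index sets are pairwise disjoint. For $\xi < \eta$ in $D$, the relation $a_\eta \subs^* a_\xi$ is forced by some product condition $(p_\xi, p_\eta)$ with a finite error $N$; unpacking this using independence of the two factors should show that
\[
Z_{\xi, \eta} := \{n \with \exists q \leq p_\eta \ (q \Vdash n \in \dot a_\eta)\}
\]
belongs to $M$ and satisfies $a_\eta \subs Z_{\xi, \eta}$ as well as $Z_{\xi, \eta} \subs^* a_\xi$. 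Enumerating $D = \{\xi_\alpha \with \alpha < \lambda\}$ increasingly and setting $Z_\alpha := Z_{\xi_\alpha, \xi_{\alpha+1}}$ then yields a $\subs^*$-decreasing sequence $\langle Z_\alpha \with \alpha < \lambda\rangle$ of length $\lambda$ living entirely inside~$M$.

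Since $\cc^M = \om_1 < \lambda$ and $\lambda$ is regular, pigeonhole produces $Z^* \in M \cap [\om]^\om$ and $I \subs \lambda$ of size $\lambda$ with $Z_\alpha =^* Z^*$ for every $\alpha \in I$. The set $Z^*$ is infinite (it contains $a_{\xi_{\alpha+1}}$ for any $\alpha \in I$) and satisfies $Z^* \subs^* a_{\xi_\alpha}$ for every $\alpha \in I$; since $I$ is cofinal in~$\lambda$, the $\subs^*$-decrease of the tower then forces $Z^* \subs^* a_\xi$ for all $\xi < \lambda$, so $Z^*$ is a pseudo-intersection of~$T$, contradicting maximality. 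The hard part will be the mutual genericity step: verifying carefully that independence of the Cohen factors converts the $\subs^*$-relation in~$V[G]$ into a forcing-theoretic statement about names in~$M$, and that the ``upper envelope'' construction $Z_{\xi, \eta}$ really sandwiches $a_\eta$ and $a_\xi$ as claimed.
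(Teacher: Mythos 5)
The paper itself offers no proof of this proposition (it is quoted as a well-known fact, with the footnote recording the stronger ``eventually constant'' statement), so I can only judge your argument on its own terms. Your central mechanism is the right one and is essentially the standard argument: over $M=V[G\restr R]$ the two Cohen factors $\CC_{A_\xi\setminus R}$ and $\CC_{A_\eta\setminus R}$ are mutually generic, the partially evaluated name for $a_\eta$ lies in $M$, and the ``upper envelope'' $Z_{\xi,\eta}$ indeed satisfies $a_\eta\subseteq Z_{\xi,\eta}\subseteq^* a_\xi$ and belongs to $M$; since $M\models$ CH, a pigeonhole over the $\aleph_1$ many reals of $M$ finishes the proof. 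Those steps are fine (modulo routine care that the product condition is taken in $G$ and with support inside $A_\xi\cup A_\eta$).

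The genuine gap is the $\Delta$-system step. First, as you set it up, the lemma is applied in $V[G]$: the assignment $\xi\mapsto\dot a_\xi$ (hence $\xi\mapsto A_\xi$) is only chosen there. But in $V[G]$ we have $2^{\aleph_0}=\mu\geq\lambda$, so $\omega_1^{\aleph_0}\geq\lambda$ and the hypothesis ``$\theta^{\aleph_0}<\lambda$ for all $\theta<\lambda$'' of the $\Delta$-system lemma for countable sets fails in \emph{every} instance of the proposition; you cannot simply invoke the lemma where you invoke it. The standard repair is to fix in $V$ a single $\CC_\mu$-name for the whole sequence and derive from it, \emph{in $V$}, nice names $\dot a_\xi$ with countable supports (equivalently: cover each support by the ground-model countable set of its possible values), so that the family of supports is indexed in $V$, where CH holds. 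Second, even then the lemma's hypothesis can fail in $V$ for some regular $\lambda\leq\mu$: e.g.\ for $\lambda=\aleph_{\omega+1}$ one always has $\aleph_\omega^{\aleph_0}\geq\aleph_{\omega+1}$, and the proposition allows $\mu$ (hence candidate tower lengths) that large. So your proof, run directly at an arbitrary regular $\lambda>\om_1$, does not go through. The usual fix is a preliminary reduction: a tower of regular length $\lambda>\om_1$ cannot be eventually $=^*$-constant, so one extracts a strictly $\subseteq^*$-decreasing sequence of length $\om_2$ and refutes the existence of \emph{that}; at $\lambda=\om_2$, with supports chosen in $V$, CH gives $\om_1^{\aleph_0}=\om_1<\om_2$ and the $\Delta$-system lemma applies. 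Your envelope/pigeonhole argument then yields two indices $\xi<\eta$ with $a_\eta=^*a_\xi$ (the sandwich $a_{\xi_{\alpha+1}}\subseteq Z_\alpha\subseteq^* a_{\xi_\alpha}$ plus a repeated value $Z^*$), contradicting strictness — note that in this reduced form the contradiction is against strict decrease rather than against being a pseudo-intersection, since the extracted $\om_2$-sequence need not be cofinal in the original tower.
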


\footnotetext{In fact, the following stronger statement 
holds true 
in the extension:
Let $\lambda>\omega_1$ be regular, and let $\seqlangle a_\alpha\with \alpha<\lambda \seqrangle$ be a $\subseteq^*$-decreasing sequence; 
then there exists
an 
$\alpha_0<\lambda$ such that $a_{\alpha_0}=^* a_\beta$ for every $\beta\geq \alpha_0$.}

Finally, let us remark that 
the generic distributivity matrix from Main Theorem~\ref{maintheo:maintheo_general} cannot be a base matrix. This can 
be seen 
by a 
slight generalization of the proof of Lemma~\ref{lemm:infinite_set} 
which 
yields the following. 
For each infinite ground model set $b\subseteq \omega$, each $a_\sigma$ has infinitely many $1$'s (and also infinitely many $0$'s) within $b$. If $b$ is infinite and co-infinite, it follows that $b$ splits $a_\sigma$. In particular, $a_\sigma \not\subseteq^* b$, so $b$ witnesses that the generic matrix is not 
a base matrix.

\subsection{The distributivity spectrum}\label{sec:h_spectrum}

The study of distributivity matrices of various heights naturally gives rise to the
following notion. 
Let
\[
\homspec := \{ \lambda \with \lambda \textrm{ is regular and there is a distributivity matrix of height } \lambda \}
\]
be 
the 
\emph{distributivity spectrum}. 
Recall that the existence of
distributivity matrices
is only a matter of cofinality, i.e., there exists a distributivity matrix of height $\delta$ if and only if there exists one of height $\cf(\delta)$. 
Therefore,
the restriction of the
definition of $\homspec$ to regular cardinals makes sense.
Clearly, the minimum of~$\homspec$ is the distributivity number~$\h$.

Spectra have been considered
for several cardinal characteristics,
but not for~$\h$.
For example,
spectra for
the tower number~$\tfrak$ have been
investigated
in~\cite{Hechler} and \cite{Dordal_tower_spectrum},
spectra for
the almost disjointness number~$\afrak$
in~\cite{Hechler},
\cite{Brendle_mad},
and \cite{Otmar_a_spectrum},
spectra for
the bounding number~$\bfrak$
in~\cite{Dordal_tower_spectrum},
spectra for
the ultrafilter number~$\mathfrak{u}$
in \cite{Shelah_u_spectrum_1},\cite{Shelah_u_spectrum_2}, and \cite{Garti_Magidor_Shelah_u_spectrum},
and spectra for
the independence number~$\mathfrak{i}$ in~\cite{Vera_i_spectrum}. Furthermore, \cite{Blass_simple} develops a framework for dealing with several spectra.

Let $[\h,\cc]_\reg$ denote the set of regular cardinals~$\delta$ with $\h \leq \delta \leq \cc$. 
As already mentioned, 
it is easy to check that there can never be 
a distributivity matrix of regular height larger than~$\cc$, 
hence  
$\homspec\subseteq [\h,\cc]_\reg$. 
Recall that 
the model of Main Theorem~\ref{maintheo:maintheo_general} satisfies
$$\{\om_1, \lambda\} \subs \homspec$$
(where $\lambda > \om_1$ is the regular cardinal chosen there). 
In particular, by choosing $\lambda = \mu = \om_2$, we obtain 
a model in which 
$\{\omega_1,\omega_2\} = \homspec = [\h,\cc]_\reg$.

\begin{ques}
Is it consistent that $\homspec$
contains
more than 2 elements?
\end{ques}

\subsubsection*{Acknowledgment}

We 
want to thank
Osvaldo Guzm\'an for his inspiring tutorial
at
the Winter School 2020
in Hejnice and for helpful discussion
about $\B$-Canjar filters.

\bibliography{bib_files_Vera_h}
\bibliographystyle{plain}

\end{document}